\documentclass[a4paper,12pt]{article} 
\usepackage[utf8]{inputenc}
\usepackage[T1]{fontenc}
\usepackage{amsmath,amssymb,amsthm}
\usepackage[]{authblk}
\usepackage{bbm}
\usepackage{mathrsfs}

\usepackage{verbatim}
\usepackage{mathtools}
\usepackage{mathabx}
\usepackage{bbm}
\usepackage{float}
\usepackage{array}
\usepackage{multirow}
\usepackage{diagbox}
\usepackage{caption}
\usepackage{subcaption}
\usepackage[numeric,initials,nobysame,msc-links,abbrev]{amsrefs}
\renewcommand{\eprint}[1]{\href{https://arxiv.org/abs/#1}{arXiv:#1}}
\newcommand{\pageafter}[1]{#1~pp.}
\BibSpec{article}{%
+{} {\PrintAuthors} {author}
+{,} { \textit} {title}
+{.} { } {part}
+{:} { \textit} {subtitle}
+{,} { \PrintContributions} {contribution}
+{.} { \PrintPartials} {partial}
+{,} { } {journal}
+{} { \textbf} {volume}
+{} { \PrintDatePV} {date}
+{,} { \issuetext} {number}
+{,} { \pageafter} {pages}
+{,} { } {status}
+{,} { \PrintDOI} {doi}
+{,} { available at \eprint} {eprint}
+{} { \parenthesize} {language}
+{} { \PrintTranslation} {translation}
+{;} { \PrintReprint} {reprint}
+{.} { } {note}
+{.} {} {transition}
+{} {\SentenceSpace \PrintReviews} {review}
}
\BibSpec{collection.article}{%
+{} {\PrintAuthors} {author}
+{,} { \textit} {title}
+{.} { } {part}
+{:} { \textit} {subtitle}
+{,} { \PrintContributions} {contribution}
+{,} { \PrintConference} {conference}
+{} {\PrintBook} {book}
+{,} { } {booktitle}
+{,} { \PrintDateB} {date}
+{,} { \pageafter} {pages}
+{,} { } {status}
+{,} { \PrintDOI} {doi}
+{,} { available at \eprint} {eprint}
+{} { \parenthesize} {language}
+{} { \PrintTranslation} {translation}
+{;} { \PrintReprint} {reprint}
+{.} { } {note}
+{.} {} {transition}
+{} {\SentenceSpace \PrintReviews} {review}
}
\usepackage{enumitem}
\usepackage{pgf,tikz}
\usetikzlibrary{arrows}
\usetikzlibrary[patterns]
\usetikzlibrary{shapes.misc}
\usetikzlibrary{hobby}

\tikzset{cross/.style={cross out, draw=black, minimum size=2*(#1-\pgflinewidth), inner sep=0pt, outer sep=0pt},
cross/.default={1pt}}

\makeatletter
\pgfdeclarepatternformonly[\LineSpace,\tikz@pattern@color]{my north east lines}{\pgfqpoint{-1pt}{-1pt}}{\pgfqpoint{\LineSpace}{\LineSpace}}{\pgfqpoint{\LineSpace}{\LineSpace}}%
{
    \pgfsetcolor{\tikz@pattern@color}
    \pgfsetlinewidth{0.4pt}
    \pgfpathmoveto{\pgfqpoint{0pt}{0pt}}
    \pgfpathlineto{\pgfqpoint{\LineSpace + 0.1pt}{\LineSpace + 0.1pt}}
    \pgfusepath{stroke}
}
\makeatother

\makeatletter
\pgfdeclarepatternformonly[\LineSpace,\tikz@pattern@color]{my horizontal lines}{\pgfqpoint{-1pt}{-1pt}}{\pgfqpoint{\LineSpace}{\LineSpace}}{\pgfqpoint{\LineSpace}{\LineSpace}}%
{
    \pgfsetcolor{\tikz@pattern@color}
    \pgfsetlinewidth{0.2pt}
    \pgfpathmoveto{\pgfqpoint{0pt}{0pt}}
    \pgfpathlineto{\pgfqpoint{\LineSpace + 0.1pt}{0pt}}
    \pgfusepath{stroke}
}
\makeatother

\makeatletter
\pgfdeclarepatternformonly[\LineSpace,\tikz@pattern@color]{my vertical lines}{\pgfqpoint{-1pt}{-1pt}}{\pgfqpoint{\LineSpace}{\LineSpace}}{\pgfqpoint{\LineSpace}{\LineSpace}}%
{
    \pgfsetcolor{\tikz@pattern@color}
    \pgfsetlinewidth{0.2pt}
    \pgfpathmoveto{\pgfqpoint{0pt}{0pt}}
    \pgfpathlineto{\pgfqpoint{0pt}{\LineSpace + 0.1pt}}
    \pgfusepath{stroke}
}
\makeatother

\newdimen\LineSpace
\tikzset{
    line space/.code={\LineSpace=#1},
    line space=10pt
}

\definecolor{ffqqqq}{rgb}{1,0,0}
\definecolor{qqffqq}{rgb}{0,1,0}
\definecolor{ffffff}{rgb}{1,1,1}

\usepackage{hyperref}

\newtheorem{thm}{Theorem}
\newtheorem{mainthm}[thm]{Theorem}
\newtheorem{cor}[thm]{Corollary}
\newtheorem{lem}[thm]{Lemma}
\newtheorem{prop}[thm]{Proposition}

\theoremstyle{definition}
\newtheorem{defn}[thm]{Definition}
\newtheorem{rem}[thm]{Remark}

\newtheorem{obs}[thm]{Observation}
\newtheorem{claim}[thm]{Claim}

\numberwithin{thm}{section}


\newcommand{\cB}{\ensuremath{\mathcal B}}
\newcommand{\cC}{\ensuremath{\mathcal C}}
\newcommand{\cD}{\ensuremath{\mathcal D}}

\newcommand{\cF}{\ensuremath{\mathcal F}}

\newcommand{\cH}{\ensuremath{\mathcal H}}

\newcommand{\cK}{\ensuremath{\mathcal K}}
\newcommand{\cL}{\ensuremath{\mathcal L}}

\newcommand{\cP}{\ensuremath{\mathcal P}}

\newcommand{\cS}{\ensuremath{\mathcal S}}
\newcommand{\cT}{\ensuremath{\mathcal T}}
\newcommand{\cU}{\ensuremath{\mathcal U}}
\newcommand{\cV}{\ensuremath{\mathcal V}}

\newcommand{\cZ}{\ensuremath{\mathcal Z}}


\newcommand{\bbE}{{\ensuremath{\mathbb E}} }

\newcommand{\bbH}{{\ensuremath{\mathbb H}} }

\newcommand{\bbP}{{\ensuremath{\mathbb P}} }
\newcommand{\bbQ}{{\ensuremath{\mathbb Q}} }
\newcommand{\bbR}{{\ensuremath{\mathbb R}} }

\newcommand{\bbT}{{\ensuremath{\mathbb T}} }
\newcommand{\bbU}{{\ensuremath{\mathbb U}} }

\newcommand{\bbZ}{{\ensuremath{\mathbb Z}} }

\newcommand{\<}{\langle}
\renewcommand{\>}{\rangle}

\newcommand{\diam}{{\ensuremath{\mathrm{diam}}} }

\newcommand{\1}{{\ensuremath{\mathbbm{1}}} }

\newcommand{\trel}{\ensuremath{T_{\mathrm{rel}}}}
\newcommand{\pl}{\ensuremath{p_{\leftarrow}}}
\newcommand{\pd}{\ensuremath{p_{\downarrow}}}

\renewcommand{\leq}{\leqslant}
\renewcommand{\geq}{\geqslant}
\renewcommand{\le}{\leqslant}
\renewcommand{\ge}{\geqslant}
\renewcommand{\to}{\rightarrow}

\begin{document}
\title{Refined universality for critical KCM:\\lower bounds}
\author[,1]{Ivailo Hartarsky\thanks{\textsf{hartarsky@ceremade.dauphine.fr}}}
\author[,2]{Laure Mar\^ech\'e\thanks{\textsf{laure.mareche@math.unistra.fr}}}
\affil[1]{CEREMADE, CNRS, UMR 7534, Universit\'e Paris-Dauphine, PSL University\protect\\Place du Mar\'echal de Lattre de Tassigny, 75016 Paris, France}
\affil[2]{Institut de Recherche Mathématique Avancée\protect\\ UMR 7501 Université de Strasbourg et CNRS \protect\\
7 rue René-Descartes, 67000 Strasbourg, France}
\date{\vspace{-0.25cm}\today}
\maketitle
\vspace{-0.75cm}
\begin{abstract}
We study a general class of interacting particle systems called kinetically constrained models (KCM) in two dimensions tightly linked to the monotone cellular automata called bootstrap percolation. There are three classes of such models \cite{Bollobas15}, the most studied being the critical one. In a recent series of works \cites{Hartarsky20,Hartarsky21a,Martinelli19a} it was shown that the KCM counterparts of critical bootstrap percolation models with the same properties \cite{Bollobas14} split into two classes with different behaviour.

Together with the companion paper by the first author \cite{Hartarsky20II}, our work determines the logarithm of the infection time up to a constant factor for all critical KCM, which were previously known only up to logarithmic corrections. This improves all previous results except for the Duarte-KCM, for which we give a new proof of the best result known \cite{Mareche20Duarte}. We establish that on this level of precision critical KCM have to be classified into seven categories instead of the two in bootstrap percolation \cite{Bollobas14}. In the present work we establish lower bounds for critical KCM in a unified way, also recovering the universality result of Toninelli and the authors \cite{Hartarsky20} and the Duarte model result of Martinelli, Toninelli and the second author \cite{Mareche20Duarte}.
\end{abstract}

\noindent\textbf{MSC2020:} Primary 	60K35; Secondary 82C22, 60J27, 60C05
\\
\textbf{Keywords:} Kinetically constrained models, bootstrap percolation, universality,  classification, Glauber dynamics, spectral gap.

\section{Introduction and results}
\label{sec:intro}

\subsection{Kinetically constrained models}
\label{subsec:KCM}
Kinetically constrained models (KCM) are a class of interacting particle systems used since the 1980s to model the liquid-glass transition \cites{Fredrickson84,Fredrickson85} (see \cites{Ritort03,Garrahan11} for reviews). We will be interested in the very general class of $\cU$-KCM first introduced by Cancrini, Martinelli, Roberto and Toninelli in 2008 \cite{Cancrini08}, which includes all previously considered cases on $\bbZ^d$.

Let us start by introducing these models directly on the two-dimensional lattice, to which we restrict our attention. A KCM is a Markov process with state space $\Omega=\{0,1\}^{\bbZ^2}$. For a configuration $\eta\in\Omega$ and a site $x\in\bbZ^2$, we denote $\eta_x$ the value of $\eta$ at $x$. We say that $x$ is \emph{empty} or \emph{infected} if $\eta_x=0$ and that it is \emph{occupied} or \emph{healthy} if $\eta_x=1$. We thus naturally identify a configuration $\eta\in\Omega$ with the set of its infected sites, so that $\eta\subset \bbZ^2$.

A $\cU$-KCM is specified by two parameters---an \emph{update family} $\cU$ and an \emph{equilibrium measure} $\mu$. The measure $\mu$ on $\Omega$ is chosen to be the product Bernoulli measure such that each site is infected with probability $q>0$ and healthy with probability $1-q$. All asymptotics hereafter are taken as $q\to0$. The update family $\cU$ is a finite set of finite nonempty subsets of $\bbZ^2\setminus\{0\}$ called \emph{update rules}. The dynamics is the following. For each site $x\in\bbZ^2$ at rate 1 (i.e.\ at the times given by a Poisson point process on $\mathbb{R}_+$ with intensity 1) we attempt to update $\eta_x$ by replacing it by an independent Bernoulli$(1-q)$ variable. However, the update is only performed if there exists an update rule $U\in\cU$ such that $\eta\supset (x+U)$, while otherwise the configuration remains unchanged. In more formal terms the generator of the Markov process is the following for any function $f:\Omega\mapsto\bbR$ depending on a finite number of sites.
\[\cL(f)(\eta)=\sum_{x\in \bbZ^2}\1_{\{\exists U\in\cU,\eta_{x+U}=0\}}(\mu_x(f)-f)(\eta),\]
where $\eta_X$ denotes the restriction of $\eta$ to $X\cap\bbZ^2$ for $X\subset\bbR^2$ and $\mu_x(f)$ denotes the average of $f$ with respect to $\eta_x$ conditionally on all other occupation variables. We further view $\eta_X$ as the element of $\{0,1\}^{\bbZ^2}$ equal to $\eta$ in $X\cap\bbZ^2$ and to $1$ elsewhere. For background on the theoretical foundations of such interacting particle systems the reader is referred to \cite{Liggett05}. In particular, $\mu$ is indeed a reversible invariant measure for the $\cU$-KCM.

For any KCM arguably the most natural quantity of interest describing the speed at which memory of the initial state is lost is the \emph{first infection time} of the origin
\[\tau_0=\inf\{t\ge 0, (\eta(t))_0=0\}\in[0,\infty],\]
where $(\eta(t))_{t \geq 0}$ denotes the $\cU$-KCM process. As we will concentrate on the equilibrium properties of the KCM, we will rather be concerned with $\bbE_\mu[\tau_0]$, that is the expectation of $\tau_0$ with respect to the law of the stationary $\cU$-KCM with initial condition distributed according to its equilibrium measure $\mu$.

\subsection{Bootstrap percolation and universality}
\label{subsec:bootstrap}
Bootstrap percolation is a close relative of KCM, though the two fields remained relatively independent for decades. Formally, the continuous time version of $\cU$-bootstrap percolation is the $\cU$-KCM with $q=1$. However, bootstrap percolation has important additional properties and has attracted a great deal of attention with different motivation, also in non-lattice settings, as well as from computer science and sociological perspectives. We direct the reader to \cites{Morris17a,DeGregorio09} and the references therein for an overview of this rich field. 

In $\cU$-bootstrap percolation (the update family $\cU$ being as above), for each integer $t \geq 0$, a set $A_t$ of infected sites at time $t$ is constructed as follows. Given an initial set of infected sites $A_0\subset \bbZ^2$ we set for all integers $t\ge 0$
\[A_{t+1}=A_t\cup\{x\in\bbZ^2:\exists U\in\cU,x+U\subset A_t\}.\]
That is, at each discrete time step the sites such that the translate of a rule by the corresponding site is already fully infected also become infected, while infections never heal. Thus, given the initial condition, $\cU$-bootstrap percolation is a monotone deterministic cellular automaton. For any set $A_0\subset \bbZ^2$ we denote by $[A_0]=\bigcup_{t\ge0}A_t$ its \emph{closure} with respect to the $\cU$-bootstrap percolation process, i.e.\ the sites that are eventually infected when the initial infection is $A_0$ (we may also use this notation when $A_0 \not\subset\bbZ^2$; it will then mean $[A_0 \cap \bbZ^2]$). The best-studied setting, which is also the one relevant to us, is taking $A_0$ random with law $\mu$, so that each site is infected independently with probability $q$. In this case the most prominent questions are for which values of $q$ we have $[A_0]=\bbZ^2$ a.s.\ and, for such values of $q$, what is the typical order of magnitude of the infection time of the origin $\tau_0$ defined as for KCM.

The general $\cU$-bootstrap percolation framework only gained visibility after the work of Bollobás, Smith and Uzzell \cite{Bollobas15}. They introduced several crucial notions, which we discuss next. We invite the reader unfamiliar with these notions to systematically consult the examples in Figure \ref{fig:example} and apply the definitions to them.

We denote by $S^1=\{z\in\bbR^2,\|z\|=1\}$ the unit circle, which we standardly identify with $\bbR/2\pi\bbZ$, where $\|\cdot\|$ is the Euclidean norm of $\bbR^2$ (all distances in this work will be Euclidean, denoted by $d(\cdot,\cdot)$). A direction $u \in S^1$ will be called \emph{rational} when $\tan u \in \bbQ \cup \{\infty\}$. For a direction $u\in S^1$ and a scalar $x\in\bbR$, we define the open half-plane $\bbH_u(x)=\{y\in\bbR^2,\<u,y\><x\}$ directed by $u$ translated by $x$, and $\bar \bbH_u(x)=\{y\in\bbR^2,\<u,y\>\leq x\}$ the corresponding closed half-plane. We further set $\bbH_u=\bbH_u(0)$. A direction $u\in S^1$ is said to be \emph{unstable} (for $\cU$) if there exists $U\in\cU$ such that $U\subset \bbH_u$ and \emph{stable} otherwise. It is not hard to see (Theorem~1.10 of~\cite{Bollobas15}, Lemma~2.6 of~\cite{Bollobas14}) that the set of stable directions is a finite union of closed intervals of $S^1$ with rational endpoints. Bollobás, Smith and Uzzell \cite{Bollobas15} introduced the following partition of update families into three classes. In order to state their results and others, we will need the following standard asymptotic notation. For any real functions $f(q)$, $g(q)$ defined for $q>0$ sufficiently small, with $g>0$, we write
\begin{itemize}
    \item $f(q)=\Theta(g(q))$ when $c g(q) \leq f(q) \leq C g(q)$,
    \item $f(q)=\Omega(g(q))$ when $f(q) \geq c g(q)$,
    \item $f(q)=O(g(q))$ when $|f(q)| \leq Cg(q)$
\end{itemize}
for some constants $0 < c \leq C < +\infty$ when $q>0$ is sufficiently small. Finally, we write $f(q)=o(g(q))$ when $\frac{|f(q)|}{g(q)} \rightarrow 0$ when $q \rightarrow 0$. Let us note that all such implicit constants are allowed to depend on the update family $\cU$ and all (finite sets of) directions considered, but never on $q$.
\begin{defn}[Definition~1.3 of~\cite{Bollobas15}]
\label{def:preuniv}
An update family $\cU$ is called 
\begin{itemize}
\item \emph{supercritical} if there exists an open semicircle of unstable directions,
\item \emph{critical} if it is not supercritical, but there exists an open semicircle with a finite number of stable directions,
\item \emph{subcritical} otherwise.
\end{itemize}
\end{defn}
In \cite{Bollobas15} it was proved that for supercritical models $\tau_0=q^{-\Theta(1)}$ with high probability\footnote{This means there exist constants $0 < c < C < +\infty$ such that $\mu(q^{-c} \leq \tau_0 \leq q^{-C}) \to 1$ when $q \to 0$.
} as $q\to0$, while for critical ones $\tau_0=\exp(q^{-\Theta(1)})$. Completing the justification of Definition \ref{def:preuniv}, Balister, Bollobás, Przykucki and Smith \cite{Balister16} proved that for subcritical models $\tau_0=\infty$ with positive probability for $q$ small enough. Albeit very general, these results were much less precise than what was known for most specific models studied previously, so one may view them as only qualitatively identifying the three different possible behaviours.

Supercritical models are fairly simple from the bootstrap percolation point of view, while still very little is known in general about subcritical ones \cite{Hartarsky21}. The remaining critical models are the most pursued. Bollobás, Duminil-Copin, Morris and Smith \cite{Bollobas14} established much more precise quantitative results for this class. To state their results, we need some more notation.
\begin{defn}[Definition 1.2 of \cite{Bollobas14}]
\label{def:diff}
Let $\cU$ be a critical update family and $u\in S^1$ be a direction. Then the \emph{difficulty} of $u$, $\alpha(u)$, is defined as follows.
\begin{itemize}
    \item If $u$ is unstable, then $\alpha(u)=0$.
    \item If $u$ is an isolated stable direction (isolated in the topological sense), then
    \[
    \alpha(u)=\min\{n\in[1,\infty) \colon \exists Z\subset\bbZ^2,|Z|=n,|[\bbH_u\cup Z  ]\setminus\bbH_u|=\infty\},\]
    i.e.\ the minimal number of additional infections allowing $\bbH_u$ to infect an infinite set of sites.\footnote{This number is indeed finite for isolated stable directions (see \cite{Bollobas14}*{Lemma 2.8}, \cite{Bollobas15}*{Lemma 5.2}).}
    \item Otherwise, $\alpha(u)=\infty$.
\end{itemize}
We define the \emph{difficulty} of $\cU$ by
\begin{equation}
\label{eq:def:alpha}
\alpha(\cU)=\min_{C\in\cC}\max_{u\in C}\alpha(u)\in[1,\infty),
\end{equation}
where $\cC=\{\bbH_u\cap S^1 \colon u\in S^1\}$ is the set of open semicircles of $S^1$. 
\end{defn}

\begin{defn}[Definition 1.3 of \cite{Bollobas14}]
\label{def:balanced}
A critical update family with difficulty $\alpha$ is \emph{balanced} if there exists a closed semicircle in which all directions have difficulty at most $\alpha$ and is \emph{unbalanced} otherwise.
\end{defn}
With these definitions, the main result of Bollobás, Duminil-Copin, Morris and Smith \cite{Bollobas14}*{Theorem 1.5}, states
\begin{equation}
\label{eq:bootstrap:universality}
\tau_0=
\begin{cases}
    \exp\left(\frac{\Theta(1)}{q^{\alpha(\cU)}}\right)&\text{for critical balanced models,}\\
    \exp\left(\frac{\Theta((\log q)^2)}{q^{\alpha(\cU)}}\right)&\text{for critical unbalanced models}
\end{cases}
\end{equation}
with high probability as $q\to0$. These are the best general estimates currently known, though for some specific choices of $\cU$ sharper results are available \cites{Hartarsky19,Duminil-Copin13,Bollobas17, Gravner08,Holroyd03}.

\subsection{Universality for KCM}
\paragraph{Supercritical KCM} As we shall see, the situation for $\cU$-KCM is far more complex. Some new features already appear for supercritical models. The general setting was treated only recently by Martinelli, Morris and Toninelli \cite{Martinelli19a} and Martinelli, Toninelli and the second author \cite{Mareche20Duarte}, who identified the two relevant classes of models.
\begin{defn}[Definition 2.11 of \cite{Martinelli19a}]
A supercritical update family $\cU$ is \emph{rooted} if there exist two non-opposite stable directions and \emph{unrooted} otherwise.
\end{defn}
In \cite{Martinelli19a} the upper bounds in the following result for $\cU$-KCM were established: as $q \to 0$, 
\begin{equation}
\label{eq:supercritical}
\bbE_\mu[\tau_0]=
\begin{cases}
    \exp\left(\Theta(\log (1/q))\right)&\text{supercritical unrooted,}\\
    \exp\left(\Theta((\log (1/q))^2)\right)&\text{supercritical rooted},
\end{cases}
\end{equation}
while the lower ones were supplied in \cite{Mareche20Duarte}, the lower bound for unrooted models being trivial. The lower bound for rooted models relied mostly on a combinatorial result by the second author \cite{Mareche20combi} roughly stating that in order to infect the origin starting from a configuration in which the infection closest to the origin is at distance $d$ from it, the dynamics has to go through a configuration with at least $\log d$ infections at distance at most $d$ from the origin. This combinatorial bottleneck was originally identified in \cite{Sollich99} (see also \cite{Chung01}) for the archetypal one-dimensional supercritical rooted model, which is, perhaps, the simplest and best-studied KCM---the East model (see \cite{Faggionato13} for a review). Extending the one-dimensional result to higher dimensions in \cite{Mareche20combi} required the development of a new approach, which will be the starting point for our analysis, although we will not be able to apply the result of \cite{Mareche20combi} itself.

\paragraph{Critical KCM} Turning to critical models, a first observation is that bootstrap percolation provides an automatic lower bound for $\bbE_\mu[\tau_0]$ in the $\cU$-KCM with the same update family. In particular, it is easy to show (see \cite{Martinelli19}*{Lemma 4.3} and its improved version from \cite{Hartarsky20FA}*{Section 2}) that the expressions in \eqref{eq:bootstrap:universality} are lower bounds for $\bbE_\mu[\tau_0]$ for the corresponding $\cU$-KCM.

Led by the intuition from the results for supercritical models, Morris \cite{Morris17}, presenting his work with Martinelli and Toninelli, introduced the following notion (we simplify his terminology slightly). 
\begin{defn}[Definition 2.3 of \cite{Morris17}]
A critical update family of difficulty $\alpha$ is called 
\emph{rooted} if there exist two non-opposite directions of difficulty strictly larger than $\alpha$ and \emph{unrooted} otherwise.
\end{defn}
Initially it was believed, as explained in \cite{Morris17}, that if a critical model is unrooted, it satisfies $\bbE_\mu[\tau_0]=\exp(q^{-\alpha(\cU)+o(1)})$ as $q \to 0$, but not if it is rooted. This conjecture was made more precise by the same authors in \cite{Martinelli19a}, who then suggested a different definition of rooted/unrooted reflecting the upper bounds they proved and conjectured to be tight.

However, both conjectures were disproved by Martinelli, Toninelli and the first author \cite{Hartarsky21a}, who proved stronger upper bounds than the ones in \cite{Martinelli19a}, in particular refuting the above conjectures for some rooted models and showing that the automatic bootstrap percolation lower bound is essentially sharp for them as well. In a parallel work Toninelli and the present authors \cite{Hartarsky20} proved for all other models lower bounds essentially matching the upper ones from \cite{Martinelli19a}. Hence, the combined results of \cites{Martinelli19a,Hartarsky20,Hartarsky21a} proved the following universality picture featuring yet a different partition of critical models. As $q \to 0$, 
\[\bbE_\mu[\tau_0]=
\begin{cases}
    \exp\left(\frac{(\log(1/q))^{O(1)}}{q^{\alpha(\cU)}}\right)&\text{critical, finitely many stable directions,}\\
    \exp\left(\frac{(\log(1/q))^{O(1)}}{q^{2\alpha(\cU)}}\right)&\text{critical, infinitely many stable directions}.
\end{cases}\]

This result shows that each universality class in bootstrap percolation (models with the same value of $\alpha(\cU)$) splits into two universality classes of KCM. This should indeed be viewed as the critical counterpart of \eqref{eq:supercritical}. While the lower bound of \cite{Hartarsky20} establishing the result for models with infinitely many stable directions does reflect (and actually uses) the combinatorial bottleneck of the one-dimensional supercritical rooted East model, the upper bound of \cite{Hartarsky21a} is based on a very peculiar efficient mechanism that uses a ``quasi-local'' East-type movement resulting in a supercritical unrooted dynamics on larger length scales. However, it remained unclear whether this fairly unnatural mechanism of \cite{Hartarsky21a} and the purely East-like one used in \cite{Martinelli19a} are indeed the correct ones for all models concerned.

\subsection{Results}
Our goal is to identify the dominant relaxation mechanisms for each model, which are reflected in the scaling of $\tau_0$. Therefore, together with the companion work by the first author \cite{Hartarsky20II}, we determine $\log\bbE_\mu[\tau_0]$ up to a constant factor. In the present paper we establish all lower bounds by identifying the correct bottleneck (see Section \ref{sec:sketch}), while the companion one \cite{Hartarsky20II} provides the remaining matching upper bounds by exhibiting efficient relaxation mechanisms for all critical $\cU$-KCM.

In order to state our results we introduce the last bit of notation needed to define the seven refined universality classes of critical $\cU$-KCM, which we identify.
\begin{defn}
A critical update family of difficulty $\alpha$ is called
\begin{itemize}
    \item \emph{isotropic} if there is no direction of difficulty strictly greater than $\alpha$,
    \item \emph{semi-directed} if there exists exactly one direction of difficulty strictly greater than $\alpha$.
\end{itemize}
\end{defn}
Notice that the isotropic/semi-directed critical models form a partition of unrooted balanced critical ones.

\begin{table}[t]
    \centering
    \begin{tabular}{r|p{4.5cm}| p{2.25cm}|p{2.25cm}}
         &\multicolumn{1}{c|}{\multirow{2}{*}{infinite stable directions}}
         & \multicolumn{2}{c}{finite stable directions}\\\cline{3-4}
         & & \multicolumn{1}{c|}{rooted} & \multicolumn{1}{c}{unrooted}\\
        \hline
        unbalanced & \ref{log4} $2,4,0$ & \ref{log3} $1,3,0$ & \ref{log2} $1, 2,0$\\
        \hline
        balanced & \ref{log0} $2,0,0$ & \ref{log1} $1,1,0$ & \diagbox[dir=NE,innerwidth=2.25cm,height=3\line]{\ref{loglog} $1,0,1$\\
        \footnotesize{s.-dir.}}{\footnotesize{iso.}\\\ref{iso} $1,0,0$}
    \end{tabular}
    \caption{Classification of critical $\cU$-KCM with difficulty $\alpha$. Assuming \cite{Hartarsky20II}, for each class $\bbE_\mu[\tau_0]=\exp\left(\Theta(1)\left(\frac{1}{q^\alpha}\right)^{\beta}\left(\log \frac{1}{q}\right)^\gamma \left(\log\log\frac{1}{q}\right)^\delta\right)$ as $q \to 0$. The label of the class and the exponents $\beta,\gamma,\delta$ are indicated in that order.}
    \label{tab:universality}
\end{table}

\begin{mainthm}
\label{th:infinite}
Let $\cU$ be a critical update family with difficulty $\alpha$ and infinite number of stable directions. We have the following alternatives as $q\to0$.
\begin{enumerate}[label=(\alph*), series=th]
    \item\label{log4} If $\cU$ is unbalanced, i.e.\ there exist two opposite directions $u\in S^1$ with $\alpha(u)>\alpha$, then
    \[\bbE_\mu[\tau_0]=\exp\left(\frac{\Theta\left(\left(\log (1/q)\right)^4\right)}{q^{2\alpha}}\right).\]
    \item\label{log0} If $\cU$ is balanced, i.e.\ there do not exist two opposite directions $u\in S^1$ with $\alpha(u)>\alpha$, then
    \[\bbE_\mu[\tau_0]=\exp\left(\frac{\Theta(1)}{q^{2\alpha}}\right).\]
\end{enumerate}
\end{mainthm}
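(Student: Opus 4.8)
The plan is to reduce both bounds to the standard bottleneck estimate for KCM (see \cite{Martinelli19}*{Lemma~4.3} and \cite{Hartarsky20FA}*{Section~2}, the tool that also gives the automatic bootstrap lower bounds for $\bbE_\mu[\tau_0]$) and to spend the effort on the combinatorics of the bottleneck. For a box $B$ around the origin of side $\exp(\Theta(1)/q^{2\alpha})$, it suffices to produce an event of the form $\cG=\{0\notin\langle\eta\cap B\rangle\}$, where $\langle\cdot\rangle$ is a (model-dependent) monotone ``certificate'' closure operator, weaker than the true $\cU$-bootstrap closure $[\cdot]$, such that (a)~$\langle\cdot\rangle$ performs at least the elementary updates of $\cU$, so that in a configuration of $\cG$ the origin is healthy and its constraint unsatisfiable; and (b)~the $\cU$-KCM started in $\cG$ cannot reach $\{\eta_0=0\}$ without first leaving $\cG$, i.e.\ $\langle\cdot\rangle$ is a genuine certificate for the KCM dynamics. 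Given such a $\cG$, the bottleneck inequality gives $\bbE_\mu[\tau_0]\ge c\,\mu(\cG)/\mu(\partial\cG)$, where by~(a) the constrained boundary $\partial\cG$ consists only of configurations one infection short of satisfying $0\in\langle\eta\cap B\rangle$, whence $\mu(\partial\cG)\le|B|^{O(1)}q^{-1}\,\mu(0\in\langle\eta\cap B\rangle)$. So everything reduces to choosing $\langle\cdot\rangle$ with $\mu(0\in\langle\eta\cap B\rangle)=\exp(-\Theta(1)/q^{2\alpha})$ in case~\ref{log0} and $\exp(-\Theta((\log(1/q))^4)/q^{2\alpha})$ in case~\ref{log4}, the prefactor $|B|^{O(1)}q^{-1}$ being absorbed once the constant in the side of $B$ is small enough. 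The matching upper bounds are proved in the companion paper \cite{Hartarsky20II}, so only the lower bounds concern us.

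\textbf{The certificate and the critical droplet.} Since $\cU$ has infinitely many stable directions, its stable set contains a nondegenerate arc; I fix $u$ in its interior, so $\alpha(u)=\infty$ and a whole neighbourhood of $u$ is stable. From the finite-growth estimates of \cite{Bollobas14} one extracts the key geometric fact that $\bbH_u$ together with any bounded set of extra infections infects only a bounded enlargement of $\bbH_u$: growth ``into'' direction $u$ (and nearby directions) is impossible on a fixed scale, so any spreading structure must be East-like transverse to $u$. This is also what separates KCM from bootstrap percolation: an internally spanned region of the bootstrap critical scale heals --- the $\cU$-KCM keeps resampling spins towards $1$ --- before it can drive infection over that scale, so the certificate $\langle\cdot\rangle$ may only credit growth through \emph{robust}, i.e.\ self-sustaining, structures. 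For these models a robust seed (a ``critical droplet'') is a quasi-one-dimensional droplet transverse to $u$ maintaining a critical amount of transverse structure at once, and the first quantitative heart of the proof is to show that such a droplet occupies a given location with $\mu$-probability $\exp(-\Theta(1)/q^{2\alpha})$ in case~\ref{log0} and $\exp(-\Theta((\log(1/q))^4)/q^{2\alpha})$ in case~\ref{log4}; the extra factor $2$ over the bootstrap exponent in \eqref{eq:bootstrap:universality} is precisely the price of self-sustainability, the compounding of two difficulty-$\alpha$ nucleation costs.

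\textbf{The two cases.} It then remains to analyse how $\langle\cdot\rangle$ can reach the origin from robust seeds. In case~\ref{log0} ($\cU$ balanced) the directions of difficulty $\infty$ all lie in an open semicircle, so a single robust droplet suffices: once present it spreads to the origin through the complementary ``easy'' directions, which $\langle\cdot\rangle$ credits freely, giving $\mu(0\in\langle\eta\cap B\rangle)=|B|^{O(1)}\exp(-\Theta(1)/q^{2\alpha})=\exp(-\Theta(1)/q^{2\alpha})$ and $\bbE_\mu[\tau_0]\ge\exp(\Theta(1)/q^{2\alpha})$; this recovers the corresponding case of \cite{Hartarsky20}. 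In case~\ref{log4} ($\cU$ unbalanced) the difficulty-$\infty$ directions contain an antipodal pair $u',-u'$, so a structure able to feed the origin must have grown in both antipodal hard directions; that cannot issue from a single seed, and $\langle\cdot\rangle$ can credit it only through an East-type hierarchy of robust sub-droplets forced to coexist --- in the spirit of the combinatorial bottlenecks of \cite{Mareche20combi} (themselves descended from \cite{Sollich99}, \cite{Chung01}), though not that result verbatim. Optimising over the hierarchy yields $\mu(0\in\langle\eta\cap B\rangle)=\exp(-\Theta((\log(1/q))^4)/q^{2\alpha})$ and $\bbE_\mu[\tau_0]\ge\exp(\Theta((\log(1/q))^4)/q^{2\alpha})$; taking for $u$ the Duarte directions recovers \cite{Mareche20Duarte}.

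\textbf{Main obstacle.} I expect the difficulty to lie entirely in designing the certificate $\langle\cdot\rangle$ and controlling it uniformly over \emph{all} critical $\cU$ with infinitely many stable directions --- no model-by-model computation --- and, within that, in the two sharp quantitative inputs. The delicate point is to make $\langle\cdot\rangle$ \emph{weak} enough that $\mu(0\in\langle\eta\cap B\rangle)$ carries the exponent $2\alpha$ rather than the bootstrap value $\alpha$ (which requires a KCM-aware geometric lemma ruling out that any internally spanned bootstrap-critical region can serve the KCM before it heals, so that $\langle\cdot\rangle$ may legitimately ignore such regions), while keeping it \emph{strong} enough that properties (a) and (b) hold --- in particular that it really is a certificate, i.e.\ the KCM cannot outrun it, which is the step where a monotone coarse-graining of the KCM trajectory must be set up carefully. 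The second sharp input is pinning the power of $\log(1/q)$ in case~\ref{log4} to exactly $4$, upgrading the $(\log(1/q))^{O(1)}$ of \cite{Hartarsky20}: this needs a tight analysis of the East-type hierarchy --- the correct number of coexisting robust sub-droplets and how their costs compound across the nested East structures transverse to $u$ and along the antipodal hard pair. The remaining ingredients (the bottleneck inequality itself, the union bounds keeping $\cG$ typical, and the free spreading in the easy directions) should be comparatively routine.
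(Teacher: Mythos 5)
There is a genuine gap, and it sits exactly where you locate the ``delicate point.'' Your framework is a \emph{static} bottleneck: an event $\cG=\{0\notin\langle\eta\cap B\rangle\}$ defined configuration by configuration, with the cost carried by $\mu(0\in\langle\eta\cap B\rangle)=\exp(-\Theta(1)/q^{2\alpha})$ for a single ``robust droplet'' of probability $\exp(-\Theta(1)/q^{2\alpha})$. No such certificate exists. The only deterministic constraint the KCM obeys configuration-wise is the bootstrap closure, and the probability that the origin is bootstrap-infectable in a box of the relevant size has exponent $\alpha$, not $2\alpha$ (this is precisely the automatic lower bound of \cite{Martinelli19}*{Lemma 4.3}). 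Any monotone $\langle\cdot\rangle$ weak enough to ignore bootstrap-spanned critical regions can be outrun by the KCM on some realisation of the dynamics: whether a spanned region ``serves the KCM before it heals'' is a property of the clock rings and coin flips, not of the configuration, so the ``KCM-aware geometric lemma'' you invoke cannot be a deterministic statement about configurations. Correspondingly, there is no local structure of cost $\exp(-\Theta(1)/q^{2\alpha})$ whose presence is necessary for infection; the factor $2$ in the exponent is not ``two compounded nucleation costs'' of a single self-sustaining seed.

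The paper's resolution is intrinsically dynamical. The bottleneck event is the \emph{simultaneous} presence, at a single time, of $n+1$ disjointly spanned critical droplets, each of probability only $\rho=\exp(-\Theta(1)/q^{\alpha})$ (resp.\ $\exp(-\Theta((\log(1/q))^2)/q^{\alpha})$ in the unbalanced case); the exponent $2\alpha$ arises as $\rho^{n+1}$ with $n=\Theta(q^{-\alpha})$ (resp.\ $n=\Theta((\log(1/q))^2q^{-\alpha})$, giving $(\log(1/q))^4$). The necessity of $n+1$ simultaneous droplets is a deterministic statement about \emph{legal paths} (Lemma \ref{lem:rec}), proved by a double induction on the scale and on the step of the path, using reversibility of legal paths, a buffer frame of non-locally-infectable sites, and the auxiliary notion of crossing to forbid droplets from drifting in the hard directions without leaving a trail. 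The probability that the trajectory avoids this bottleneck up to time $T$ is then controlled by a union bound over the $\Theta(TLH)$ clock rings together with stationarity, which is why $T$ enters multiplicatively in condition \eqref{eq:hyp:span} rather than through a ratio $\mu(\cG)/\mu(\partial\cG)$. Your instinct that the East-type combinatorics of \cite{Mareche20combi} is the right ancestor is correct, but it must be implemented at the level of trajectories and simultaneous disjoint occurrence (via the BK inequality), not folded into a single static certificate.
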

The upper bounds are proved in \cite{Martinelli19a}*{Theorem 2(a)} and \cite{Hartarsky20II}*{Theorem 1(b)} respectively. Furthermore, for one specific model of the class \ref{log4}, the Duarte model, this result was known from \cite{Mareche20Duarte}. A different proof of the lower bound for case \ref{log0} was given in \cite{Hartarsky20}.

\begin{mainthm}
\label{th:finite}
Let $\cU$ be a critical update family with difficulty $\alpha$ and finite number of stable directions. We have the following alternatives as $q\to0$.
\begin{enumerate}[resume*=th]
\item\label{log3} If $\cU$ is unbalanced rooted, i.e.\ there exist at least three directions $u\in S^1$ with $\alpha(u)>\alpha$, two of which are opposite, then 
\[\bbE_\mu[\tau_0]=\exp\left(\frac{\Theta\left(\left(\log (1/q)\right)^3\right)}{q^{\alpha}}\right).\]
\item\label{log2} If $\cU$ is unbalanced unrooted, i.e.\ there exist exactly two directions $u\in S^1$ such that $\alpha(u)>\alpha$ and they are opposite, then
\[\bbE_\mu[\tau_0]=\exp\left(\frac{\Theta\left(\left(\log(1/q)\right)^2\right)}{q^{\alpha}}\right).\]
\item\label{log1} If $\cU$ is balanced rooted, i.e.\ there exist at least two directions $u\in S^1$ with $\alpha(u)>\alpha$, but not two opposite ones, then
\[\bbE_\mu[\tau_0]=\exp\left(\frac{\Theta\left(\log (1/q)\right)}{q^{\alpha}}\right).\]
\item\label{loglog} If $\cU$ is semi-directed, i.e.\ there exists exactly one direction $u\in S^1$ such that $\alpha(u)>\alpha$, then
\[\bbE_\mu[\tau_0]=\exp\left(\frac{\Theta\left(\log\log (1/q)\right)}{q^{\alpha}}\right).\]
\item\label{iso} If $\cU$ is isotropic, i.e.\ there exists no direction $u\in S^1$ such that $\alpha(u)>\alpha$, then
\[\bbE_\mu[\tau_0]=\exp\left(\frac{\Theta(1)}{q^{\alpha}}\right).\]
\end{enumerate}
\end{mainthm}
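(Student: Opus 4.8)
We only need the lower bounds, the matching upper bounds being provided by \cite{Hartarsky20II}. The plan is to prove all five of them through one and the same bottleneck (Poincaré‑type) argument, tuning only a ``bottleneck'' set of configurations to the class. The starting point is the classical variational lower bound for hitting times of reversible Markov chains: if $\cW$ is a set of configurations and $\cA$ is the set of those from which one cannot reach $\{\eta_0=0\}$ by legal moves of the $\cU$-KCM without first entering $\cW$, then $\bbE_\mu[\tau_0]\ge\mu(\cA)^2/\cD(\id_\cA)$ with $\cD$ the Dirichlet form; since the legal‑move boundary of $\cA$ is contained in $\cW$, the quantity $\cD(\id_\cA)$ is bounded by $\mu(\cW)$ times a combinatorial factor (the number of pivotal sites and the update rate) that will be negligible against the target at the length scales involved. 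So, after localising everything to a bounded region around the origin (a standard step), it suffices, for each class, to exhibit a $\cW$ with (a) the property that the origin cannot be infected without passing through $\cW$, so that the typical configuration lies in $\cA$ and $\mu(\cA)\ge c$; and (b) $\mu(\cW)$ at most the reciprocal of the claimed value of $\bbE_\mu[\tau_0]$, up to the negligible factor.

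The set $\cW$ is the event that, near the origin, the configuration contains a ``crossing hierarchy'' of \emph{critical droplets} — bounded regions whose $\cU$-closure is infinite, each occurring with $\mu$-probability $\exp(-\Theta(1)/q^{\alpha})$ when $\cU$ is balanced and $\exp(-\Theta((\log q)^2)/q^{\alpha})$ when $\cU$ is unbalanced, by the droplet estimates behind \eqref{eq:bootstrap:universality} (\cite{Bollobas14}). When $\cU$ is \emph{isotropic} no direction is harder than $\alpha$, so one droplet next to the origin grows and infects it, and $\cW$ reduces to the classical bootstrap‑percolation bottleneck — reproving case~\ref{iso}; the same is true for case~\ref{log2}, where the only hard directions form a single self‑opposite axis along which one unbalanced bootstrap droplet already obstructs growth. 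In all other cases the hard directions ($\alpha(u)>\alpha$) prevent a single droplet from advancing to the origin on its own — pushing a droplet across a hard direction forces another droplet to be placed alongside it — and iterating produces an \emph{East‑type hierarchy}. The deterministic core, which generalises the one‑dimensional East bottleneck of \cite{Sollich99} and its planar versions in \cites{Mareche20combi,Hartarsky20,Mareche20Duarte} (none of which applies directly), is that along any legal trajectory infecting the origin there is an instant at which an entire hierarchy of droplets must be present \emph{simultaneously}.

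It then remains to count the droplets in such a hierarchy and to fix the spatial scale $L$ over which it extends, which are linked through how fast a droplet's ``reach'' (the distance it may traverse before a new helper is needed) grows from one hierarchy level to the next. With two non‑opposite hard directions a helper must itself be helped, the reach grows only polynomially per level, and optimising $L$ up to (a fixed fraction of) the bootstrap length scale forces $\Theta(\log(1/q))$ droplets; with a single hard direction the reach grows much faster per level, forcing only $\Theta(\log\log(1/q))$; with none (isotropic, or the self‑opposite axis of case~\ref{log2}) a single droplet suffices. Multiplying the droplet count by the per‑droplet cost exponent — $\Theta(1)/q^{\alpha}$ for balanced $\cU$, $\Theta((\log q)^2)/q^{\alpha}$ for unbalanced $\cU$ — gives $\mu(\cW)=\exp(-\Theta(\log(1/q))/q^{\alpha})$ for case~\ref{log1}, $\exp(-\Theta((\log q)^3)/q^{\alpha})$ for case~\ref{log3}, and $\exp(-\Theta(\log\log(1/q))/q^{\alpha})$ for case~\ref{loglog}; a union bound over the (sufficiently rigid) admissible hierarchies, together with the choice of $L$, keeps these estimates sharp, so that the bounds match the upper bounds of \cite{Hartarsky20II}. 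I expect the genuine obstacle to be the deterministic step of the second paragraph — establishing, uniformly over all critical $\cU$ with finitely many stable directions, that a spanning hierarchy is unavoidable. The result of \cite{Mareche20combi} cannot be invoked; instead one must argue, from the sole structural input of which directions are hard, how the infection can conceivably reach the origin and read off the East‑type bottleneck directly. This is where the bulk of the work lies, the probabilistic estimates being comparatively routine.
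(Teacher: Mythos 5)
Your high-level plan --- a bottleneck set $\cW$ forcing many spanned critical droplets to be present simultaneously, with droplet counts $\Theta(\log(1/q))$, $\Theta(\log\log(1/q))$ or $\Theta(1)$ according to the class and per-droplet cost $\exp(-\Theta(1)/q^{\alpha})$ (balanced) or $\exp(-\Theta((\log q)^2)/q^{\alpha})$ (unbalanced) --- matches the paper's strategy and its arithmetic, and you correctly dispose of cases \ref{log2} and \ref{iso} via the automatic bootstrap percolation lower bound. However, the step you yourself flag as ``the genuine obstacle'' is the entire content of the proof, and, more importantly, the statement you propose to establish there is not the right one. The claim that \emph{along any legal trajectory infecting the origin there is an instant at which an entire hierarchy of droplets is simultaneously present} is false as a purely deterministic assertion: a single droplet is not deterministically blocked from advancing towards the origin across a hard direction, since rare local configurations (e.g.\ $\alpha(u)>\alpha$ infections suitably placed on its hard side) let it creep forward without any second droplet ever being spanned. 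What the paper proves (Proposition \ref{prop:main} via Lemma \ref{lem:rec}) is a \emph{dichotomy}: along any trajectory infecting the origin, either some strip is ``crossed'' without containing a spanned critical parallelogram (Definition \ref{def:crossing}), or $n+1$ critical parallelograms are \emph{disjointly} spanned at some single time. The crossing events are then excluded probabilistically (Lemmas \ref{lem:crossing:body:finite} and \ref{lem:crossing:body:loglog}), and it is precisely these crossing estimates that dictate the admissible box dimensions $L,H,\ell,h$ and hence the droplet count $n$ --- for instance, in the semi-directed case the height can only be taken $H=(\log(1/q))^{1/4}/q^{\alpha}$ because $\alpha(u_1)=\alpha$, which is what caps $n$ at $\Theta(\log\log(1/q))$ and simultaneously makes the crossing probability in the unique hard direction the binding constraint. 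Your sketch contains no substitute for this mechanism, nor for the double induction (on the scale $n$ and on the steps of the reversed path) with the buffer of non-locally-infectable sites that actually implements the East-type rigidity.

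Two further, more minor, discrepancies. First, ``multiplying the droplet count by the per-droplet cost'' is only legitimate if the $n+1$ droplets occur \emph{disjointly}; the paper defines the bottleneck via disjoint spanning and invokes the BK inequality to get $\rho^{n+1}$, and the deterministic lemma must produce disjointly spanned parallelograms, which requires care (Lemma \ref{lem:rec:main} produces a spanned parallelogram \emph{outside} the central box so that it can be added to any disjoint family inside). Second, the paper does not use the variational bound $\mu(\cA)^2/\cD(\id_\cA)$ but a direct union bound over clock rings using stationarity; your variational route could be made to work, but as stated the inequality and the claim that $\cD(\id_\cA)$ is controlled by $\mu(\cW)$ times a ``negligible factor'' would still need the same deterministic input to identify the legal boundary of $\cA$ inside $\cW$.
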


The upper bound in \ref{log3} was proved in \cite{Hartarsky21a}*{Theorem 1}, while the remaining upper bounds are from \cite{Hartarsky20II}*{Theorem 1}. The lower bounds for cases \ref{log2} and \ref{iso} follow automatically from bootstrap percolation results and \cite{Martinelli19}*{Lemma 4.3} as discussed in the previous section.

The classification results, assuming the remaining matching bounds of \cite{Hartarsky20II}, are summarised in Table \ref{tab:universality}. In addition, a simple representative of each class is given in Figure \ref{fig:example} for the reader's convenience.

\begin{rem}
The lower bounds we prove in Theorems \ref{th:infinite} and \ref{th:finite} for $\bbE_\mu[\tau_0]$ also hold for another important characteristic timescale of the corresponding $\cU$-KCM, the \emph{relaxation time} $\trel$ (see e.g.\ \cite{Hartarsky20}*{Definition 2.5}), which is another measure of the speed at which the memory of the initial state is lost. Indeed, \cite{Martinelli19a}*{Equation (2.8)} yields $\trel \geq q\bbE_\mu[\tau_0]$. 
\end{rem}

\begin{figure}
	\centering
	\begin{subfigure}{0.48\textwidth}
		\centering
		\begin{subfigure}{0.49\textwidth}
		\centering
		\begin{subfigure}{0.48\textwidth}
		\centering
		\begin{tikzpicture}[line cap=round,line join=round,x=0.22\textwidth,y=0.22\textwidth]
				\clip (-2.2,-1.2) rectangle (2.2,1.2);
				\draw [color=gray, xstep=1,ystep=1] (-2,-1) grid (2,1);
				\fill (-1,0) circle (2pt);
				\fill (0,1) circle (2pt);
				\draw (0,0) node[cross=3pt,rotate=0] {};
		\end{tikzpicture}
		\end{subfigure}
		\begin{subfigure}{0.48\textwidth}
		\centering
		\begin{tikzpicture}[line cap=round,line join=round,x=0.22\textwidth,y=0.22\textwidth]
				\clip (-2.2,-1.2) rectangle (2.2,1.2);
		\end{tikzpicture}
		\end{subfigure}
		\\
		\begin{subfigure}{0.48\textwidth}
		\centering
		\begin{tikzpicture}[line cap=round,line join=round,x=0.22\textwidth,y=0.22\textwidth]
				\clip (-2.2,-1.2) rectangle (2.2,1.2);
				\draw [color=gray, xstep=1,ystep=1] (-2,-1) grid (2,1);
				\fill (-1,0) circle (2pt);
				\fill (0,-1) circle (2pt);
				\fill (-2,0) circle (2pt);
				\draw (0,0) node[cross=3pt,rotate=0] {};
		\end{tikzpicture}
		\end{subfigure}
		\begin{subfigure}{0.48\textwidth}
		\centering
		\begin{tikzpicture}[line cap=round,line join=round,x=0.22\textwidth,y=0.22\textwidth]
				\clip (-2.2,-1.2) rectangle (2.2,1.2);
				\draw [color=gray, xstep=1,ystep=1] (-2,-1) grid (2,1);
				\fill (1,0) circle (2pt);
				\fill (0,-1) circle (2pt);
				\fill (2,0) circle (2pt);
				\draw (0,0) node[cross=3pt,rotate=0] {};
		\end{tikzpicture}
		\end{subfigure}		
		\end{subfigure}
		\begin{subfigure}{0.49\textwidth}
		\centering
		\begin{tikzpicture}[line cap=round,line join=round,x=0.35\textwidth,y=0.35\textwidth]
				\draw(0,0) circle (0.35\textwidth);
				\draw (0,0)-- (1,0);
				\draw (0,1)-- (0,0);
				\draw (0,0)-- (-1,0);
				\draw (0,0)-- (0,-1);
			    \fill [color=ffqqqq] (0,1) circle (3pt) node[anchor=south,black] {$2$};
			    \fill [color=ffqqqq] (1,0) circle (3pt) node[anchor=west,black] {$1$};
			    \draw [shift={(0,0)},line width=2pt,color=ffqqqq]
					plot[domain=pi:4.71,variable=\t]
					({cos(\t r)},{sin(\t r)});
				\draw (-0.9,-0.9) node {$\infty$};
		\end{tikzpicture}
		\end{subfigure}
		\caption{\label{fig:log4}A model of Theorem~\ref{th:infinite}\ref{log4}.}
	\end{subfigure}\quad%
	\begin{subfigure}{0.48\textwidth}
		\centering
		\begin{subfigure}{0.49\textwidth}
		\centering
		\begin{subfigure}{0.48\textwidth}
		\centering
		\begin{tikzpicture}[line cap=round,line join=round,x=0.22\textwidth,y=0.22\textwidth]
				\clip (-1.2,-1.2) rectangle (1.2,1.2);
				\draw [color=gray, xstep=1,ystep=1] (-1,-1) grid (1,1);
				\fill (-1,0) circle (2pt);
				\fill (0,1) circle (2pt);
				\draw (0,0) node[cross=3pt,rotate=0] {};
		\end{tikzpicture}
		\end{subfigure}
		\begin{subfigure}{0.48\textwidth}
		\centering
		\begin{tikzpicture}[line cap=round,line join=round,x=0.22\textwidth,y=0.22\textwidth]
				\clip (-1.2,-1.2) rectangle (1.2,1.2);
		\end{tikzpicture}
		\end{subfigure}
		\\
		\begin{subfigure}{0.48\textwidth}
		\centering
		\begin{tikzpicture}[line cap=round,line join=round,x=0.22\textwidth,y=0.22\textwidth]
				\clip (-1.2,-1.2) rectangle (1.2,1.2);
				\draw [color=gray, xstep=1,ystep=1] (-1,-1) grid (1,1);
				\fill (-1,0) circle (2pt);
				\fill (0,-1) circle (2pt);
				\draw (0,0) node[cross=3pt,rotate=0] {};
		\end{tikzpicture}
		\end{subfigure}
		\begin{subfigure}{0.48\textwidth}
		\centering
		\begin{tikzpicture}[line cap=round,line join=round,x=0.22\textwidth,y=0.22\textwidth]
				\clip (-1.2,-1.2) rectangle (1.2,1.2);
				\draw [color=gray, xstep=1,ystep=1] (-1,-1) grid (1,1);
				\fill (1,0) circle (2pt);
				\fill (0,-1) circle (2pt);
				\draw (0,0) node[cross=3pt,rotate=0] {};
		\end{tikzpicture}
		\end{subfigure}		
		\end{subfigure}
		\begin{subfigure}{0.49\textwidth}
		\centering
		\begin{tikzpicture}[line cap=round,line join=round,x=0.35\textwidth,y=0.35\textwidth]
				\draw(0,0) circle (0.35\textwidth);
				\draw (0,0)-- (1,0);
				\draw (0,1)-- (0,0);
				\draw (0,0)-- (-1,0);
				\draw (0,0)-- (0,-1);
			    \fill [color=ffqqqq] (0,1) circle (3pt) node[anchor=south,black] {$1$};
			    \fill [color=ffqqqq] (1,0) circle (3pt) node[anchor=west,black] {$1$};
			    \draw [shift={(0,0)},line width=2pt,color=ffqqqq]
					plot[domain=pi:4.71,variable=\t]
					({cos(\t r)},{sin(\t r)});
				\draw (-0.9,-0.9) node {$\infty$};
		\end{tikzpicture}
		\end{subfigure}
		\caption{\label{fig:log0}A model of Theorem~\ref{th:infinite}\ref{log0}.}
	\end{subfigure}
	\\
	
	\begin{subfigure}{0.48\textwidth}
		\centering
		\begin{subfigure}{0.49\textwidth}
		\centering
		\begin{subfigure}{0.48\textwidth}
		\centering
		\begin{tikzpicture}[line cap=round,line join=round,x=0.22\textwidth,y=0.22\textwidth]
				\clip (-2.2,-2.2) rectangle (2.2,2.2);
				\draw [color=gray, xstep=1,ystep=1] (-2,-2) grid (2,2);
				\fill (-1,0) circle (2pt);
				\fill (0,1) circle (2pt);
				\fill (-2,0) circle (2pt);
				\draw (0,0) node[cross=3pt,rotate=0] {};
		\end{tikzpicture}
		\end{subfigure}
		\begin{subfigure}{0.48\textwidth}
		\centering
		\begin{tikzpicture}[line cap=round,line join=round,x=0.22\textwidth,y=0.22\textwidth]
				\clip (-2.2,-2.2) rectangle (2.2,2.2);
				\draw [color=gray, xstep=1,ystep=1] (-2,-2) grid (2,2);
				\fill (1,0) circle (2pt);
				\fill (0,1) circle (2pt);
				\fill (0,2) circle (2pt);
				\fill (2,0) circle (2pt);
				\draw (0,0) node[cross=3pt,rotate=0] {};
		\end{tikzpicture}
		\end{subfigure}
				\\
		\begin{subfigure}{0.48\textwidth}
		\centering
		\begin{tikzpicture}[line cap=round,line join=round,x=0.22\textwidth,y=0.22\textwidth]
				\clip (-2.2,-2.2) rectangle (2.2,2.2);
				\draw [color=gray, xstep=1,ystep=1] (-2,-2) grid (2,2);
				\fill (-1,0) circle (2pt);
				\fill (0,-1) circle (2pt);
				\fill (-2,0) circle (2pt);
				\draw (0,0) node[cross=3pt,rotate=0] {};
		\end{tikzpicture}
		\end{subfigure}
		\begin{subfigure}{0.48\textwidth}
		\centering
		\begin{tikzpicture}[line cap=round,line join=round,x=0.22\textwidth,y=0.22\textwidth]
				\clip (-2.2,-2.2) rectangle (2.2,2.2);
				\draw [color=gray, xstep=1,ystep=1] (-2,-2) grid (2,2);
				\fill (1,0) circle (2pt);
				\fill (0,-1) circle (2pt);
				\fill (0,-2) circle (2pt);
				\fill (2,0) circle (2pt);
				\draw (0,0) node[cross=3pt,rotate=0] {};
		\end{tikzpicture}
		\end{subfigure}
		\end{subfigure}
		\begin{subfigure}{0.49\textwidth}
		\centering
		\begin{tikzpicture}[line cap=round,line join=round,x=0.35\textwidth,y=0.35\textwidth]
				\draw(0,0) circle (0.35\textwidth);
				\draw (0,0)-- (1,0);
				\draw (0,1)-- (0,0);
				\draw (0,0)-- (-1,0);
				\draw (0,0)-- (0,-1);
			    \fill [color=ffqqqq] (0,1) circle (3pt) node[anchor=south,black] {$2$};
			    \fill [color=ffqqqq] (1,0) circle (3pt) node[anchor=west,black] {$1$};
				\fill [color=ffqqqq] (-1,0) circle (3pt) node[anchor=east,black] {$2$};
				\fill [color=ffqqqq] (0,-1) circle (3pt) node[anchor=north,black] {$2$};
		\end{tikzpicture}
		\end{subfigure}
		\caption{\label{fig:log3}A model of Theorem~\ref{th:finite}\ref{log3}.}
	\end{subfigure}
	\begin{subfigure}{0.48\textwidth}
		\centering
		\begin{subfigure}{0.49\textwidth}
		\centering
		\begin{subfigure}{0.48\textwidth}
		\centering
		\begin{tikzpicture}[line cap=round,line join=round,x=0.22\textwidth,y=0.22\textwidth]
				\clip (-2.2,-1.2) rectangle (2.2,1.2);
				\draw [color=gray, xstep=1,ystep=1] (-2,-1) grid (2,1);
				\fill (-1,0) circle (2pt);
				\fill (0,1) circle (2pt);
				\fill (-2,0) circle (2pt);
				\draw (0,0) node[cross=3pt,rotate=0] {};
		\end{tikzpicture}
		\end{subfigure}
		\begin{subfigure}{0.48\textwidth}
		\centering
		\begin{tikzpicture}[line cap=round,line join=round,x=0.22\textwidth,y=0.22\textwidth]
				\clip (-2.2,-1.2) rectangle (2.2,1.2);
				\draw [color=gray, xstep=1,ystep=1] (-2,-1) grid (2,1);
				\fill (1,0) circle (2pt);
				\fill (0,1) circle (2pt);
				\fill (2,0) circle (2pt);
				\draw (0,0) node[cross=3pt,rotate=0] {};
		\end{tikzpicture}
		\end{subfigure}
		\\
		\begin{subfigure}{0.48\textwidth}
		\centering
		\begin{tikzpicture}[line cap=round,line join=round,x=0.22\textwidth,y=0.22\textwidth]
				\clip (-2.2,-1.2) rectangle (2.2,1.2);
				\draw [color=gray, xstep=1,ystep=1] (-2,-1) grid (2,1);
				\fill (-1,0) circle (2pt);
				\fill (0,-1) circle (2pt);
				\fill (-2,0) circle (2pt);
				\draw (0,0) node[cross=3pt,rotate=0] {};
		\end{tikzpicture}
		\end{subfigure}
		\begin{subfigure}{0.48\textwidth}
		\centering
		\begin{tikzpicture}[line cap=round,line join=round,x=0.22\textwidth,y=0.22\textwidth]
				\clip (-2.2,-1.2) rectangle (2.2,1.2);
				\draw [color=gray, xstep=1,ystep=1] (-2,-1) grid (2,1);
				\fill (1,0) circle (2pt);
				\fill (0,-1) circle (2pt);
				\fill (2,0) circle (2pt);
				\draw (0,0) node[cross=3pt,rotate=0] {};
		\end{tikzpicture}
		\end{subfigure}		
		\end{subfigure}
		\begin{subfigure}{0.49\textwidth}
		\centering
		\begin{tikzpicture}[line cap=round,line join=round,x=0.35\textwidth,y=0.35\textwidth]
				\draw(0,0) circle (0.35\textwidth);
				\draw (0,0)-- (1,0);
				\draw (0,1)-- (0,0);
				\draw (0,0)-- (-1,0);
				\draw (0,0)-- (0,-1);
			    \fill [color=ffqqqq] (0,1) circle (3pt) node[anchor=south,black] {$2$};
			    \fill [color=ffqqqq] (0,-1) circle (3pt) node[anchor=north,black] {$2$};
			    \fill [color=ffqqqq] (1,0) circle (3pt) node[anchor=west,black] {$1$};
			    \fill [color=ffqqqq] (-1,0) circle (3pt) node[anchor=east,black] {$1$};
		\end{tikzpicture}
		\end{subfigure}
		\caption{\label{fig:log2}A model of Theorem~\ref{th:finite}\ref{log2}.}
	\end{subfigure}
	\\
	\begin{subfigure}{0.48\textwidth}
		\centering
		\begin{subfigure}{0.49\textwidth}
		\centering
		\begin{subfigure}{0.48\textwidth}
		\centering
		\begin{tikzpicture}[line cap=round,line join=round,x=0.22\textwidth,y=0.22\textwidth]
				\clip (-2.2,-2.2) rectangle (2.2,2.2);
				\draw [color=gray, xstep=1,ystep=1] (-2,-2) grid (2,2);
				\fill (-1,0) circle (2pt);
				\fill (0,1) circle (2pt);
				\fill (-2,0) circle (2pt);
				\draw (0,0) node[cross=3pt,rotate=0] {};
		\end{tikzpicture}
		\end{subfigure}
		\begin{subfigure}{0.48\textwidth}
		\centering
		\begin{tikzpicture}[line cap=round,line join=round,x=0.22\textwidth,y=0.22\textwidth]
				\clip (-2.2,-2.2) rectangle (2.2,2.2);
				\draw [color=gray, xstep=1,ystep=1] (-2,-2) grid (2,2);
				\fill (1,0) circle (2pt);
				\fill (2,0) circle (2pt);
				\fill (0,1) circle (2pt);
				\fill (0,2) circle (2pt);
				\draw (0,0) node[cross=3pt,rotate=0] {};
		\end{tikzpicture}
		\end{subfigure}
				\\
		\begin{subfigure}{0.48\textwidth}
		\centering
		\begin{tikzpicture}[line cap=round,line join=round,x=0.22\textwidth,y=0.22\textwidth]
				\clip (-2.2,-2.2) rectangle (2.2,2.2);
				\draw [color=gray, xstep=1,ystep=1] (-2,-2) grid (2,2);
				\fill (-1,0) circle (2pt);
				\fill (0,-1) circle (2pt);
				\draw (0,0) node[cross=3pt,rotate=0] {};
		\end{tikzpicture}
		\end{subfigure}
		\begin{subfigure}{0.48\textwidth}
		\centering
		\begin{tikzpicture}[line cap=round,line join=round,x=0.22\textwidth,y=0.22\textwidth]
				\clip (-2.2,-2.2) rectangle (2.2,2.2);
				\draw [color=gray, xstep=1,ystep=1] (-2,-2) grid (2,2);
				\fill (1,0) circle (2pt);
				\fill (0,-1) circle (2pt);
				\fill (0,-2) circle (2pt);
				\draw (0,0) node[cross=3pt,rotate=0] {};
		\end{tikzpicture}
		\end{subfigure}
		\end{subfigure}
		\begin{subfigure}{0.49\textwidth}
		\centering
		\begin{tikzpicture}[line cap=round,line join=round,x=0.35\textwidth,y=0.35\textwidth]
				\draw(0,0) circle (0.35\textwidth);
				\draw (0,0)-- (1,0);
				\draw (0,1)-- (0,0);
				\draw (0,0)-- (-1,0);
				\draw (0,0)-- (0,-1);
			    \fill [color=ffqqqq] (0,1) circle (3pt) node[anchor=south,black] {$1$};
			    \fill [color=ffqqqq] (1,0) circle (3pt) node[anchor=west,black] {$1$};
				\fill [color=ffqqqq] (-1,0) circle (3pt) node[anchor=east,black] {$2$};
				\fill [color=ffqqqq] (0,-1) circle (3pt) node[anchor=north,black] {$2$};
		\end{tikzpicture}
		\end{subfigure}
	\caption{\label{fig:log1}A model of Theorem~\ref{th:finite}\ref{log1}.}
\end{subfigure}\quad%
	\begin{subfigure}{0.48\textwidth}
		\centering
		\begin{subfigure}{0.49\textwidth}
		\centering
		\begin{subfigure}{0.48\textwidth}
		\centering
		\begin{tikzpicture}[line cap=round,line join=round,x=0.22\textwidth,y=0.22\textwidth]
				\clip (-2.2,-1.2) rectangle (2.2,1.2);
				\draw [color=gray, xstep=1,ystep=1] (-2,-1) grid (2,1);
				\fill (-1,0) circle (2pt);
				\fill (0,1) circle (2pt);
				\fill (-2,0) circle (2pt);
				\draw (0,0) node[cross=3pt,rotate=0] {};
		\end{tikzpicture}
		\end{subfigure}
		\begin{subfigure}{0.48\textwidth}
		\centering
		\begin{tikzpicture}[line cap=round,line join=round,x=0.22\textwidth,y=0.22\textwidth]
				\clip (-2.2,-1.2) rectangle (2.2,1.2);
				\draw [color=gray, xstep=1,ystep=1] (-2,-1) grid (2,1);
				\fill (1,0) circle (2pt);
				\fill (2,0) circle (2pt);
				\fill (0,1) circle (2pt);
				\draw (0,0) node[cross=3pt,rotate=0] {};
		\end{tikzpicture}
		\end{subfigure}
				\\
		\begin{subfigure}{0.48\textwidth}
		\centering
		\begin{tikzpicture}[line cap=round,line join=round,x=0.22\textwidth,y=0.22\textwidth]
				\clip (-2.2,-1.2) rectangle (2.2,1.2);
				\draw [color=gray, xstep=1,ystep=1] (-2,-1) grid (2,1);
				\fill (-1,0) circle (2pt);
				\fill (0,-1) circle (2pt);
				\draw (0,0) node[cross=3pt,rotate=0] {};
		\end{tikzpicture}
		\end{subfigure}
		\begin{subfigure}{0.48\textwidth}
		\centering
		\begin{tikzpicture}[line cap=round,line join=round,x=0.22\textwidth,y=0.22\textwidth]
				\clip (-2.2,-1.2) rectangle (2.2,1.2);
				\draw [color=gray, xstep=1,ystep=1] (-2,-1) grid (2,1);
				\fill (1,0) circle (2pt);
				\fill (0,-1) circle (2pt);
				\draw (0,0) node[cross=3pt,rotate=0] {};
		\end{tikzpicture}
		\end{subfigure}
		\end{subfigure}
		\begin{subfigure}{0.49\textwidth}
		\centering
		\begin{tikzpicture}[line cap=round,line join=round,x=0.35\textwidth,y=0.35\textwidth]
				\draw(0,0) circle (0.35\textwidth);
				\draw (0,0)-- (1,0);
				\draw (0,1)-- (0,0);
				\draw (0,0)-- (-1,0);
				\draw (0,0)-- (0,-1);
			    \fill [color=ffqqqq] (0,1) circle (3pt) node[anchor=south,black] {$1$};
			    \fill [color=ffqqqq] (1,0) circle (3pt) node[anchor=west,black] {$1$};
				\fill [color=ffqqqq] (-1,0) circle (3pt) node[anchor=east,black] {$1$};
				\fill [color=ffqqqq] (0,-1) circle (3pt) node[anchor=north,black] {$2$};
		\end{tikzpicture}
		\end{subfigure}
		\caption{\label{fig:loglog}A model of Theorem~\ref{th:finite}\ref{loglog}.}
	\end{subfigure}
	\\
	\begin{subfigure}{0.48\textwidth}
		\centering
		\begin{subfigure}{0.49\textwidth}
		\centering
		\begin{subfigure}{0.48\textwidth}
		\centering
		\begin{tikzpicture}[line cap=round,line join=round,x=0.22\textwidth,y=0.22\textwidth]
				\clip (-1.2,-1.2) rectangle (1.2,1.2);
				\draw [color=gray, xstep=1,ystep=1] (-1,-1) grid (1,1);
				\fill (-1,0) circle (2pt);
				\fill (0,1) circle (2pt);
				\draw (0,0) node[cross=3pt,rotate=0] {};
		\end{tikzpicture}
		\end{subfigure}
		\begin{subfigure}{0.48\textwidth}
		\centering
		\begin{tikzpicture}[line cap=round,line join=round,x=0.22\textwidth,y=0.22\textwidth]
				\clip (-1.2,-1.2) rectangle (1.2,1.2);
				\draw [color=gray, xstep=1,ystep=1] (-1,-1) grid (1,1);
				\fill (1,0) circle (2pt);
				\fill (0,1) circle (2pt);
				\draw (0,0) node[cross=3pt,rotate=0] {};
		\end{tikzpicture}
		\end{subfigure}
		\\
		\begin{subfigure}{0.48\textwidth}
		\centering
		\begin{tikzpicture}[line cap=round,line join=round,x=0.22\textwidth,y=0.22\textwidth]
				\clip (-1.2,-1.2) rectangle (1.2,1.2);
				\draw [color=gray, xstep=1,ystep=1] (-1,-1) grid (1,1);
				\fill (-1,0) circle (2pt);
				\fill (0,-1) circle (2pt);
				\draw (0,0) node[cross=3pt,rotate=0] {};
		\end{tikzpicture}
		\end{subfigure}
		\begin{subfigure}{0.48\textwidth}
		\centering
		\begin{tikzpicture}[line cap=round,line join=round,x=0.22\textwidth,y=0.22\textwidth]
				\clip (-1.2,-1.2) rectangle (1.2,1.2);
				\draw [color=gray, xstep=1,ystep=1] (-1,-1) grid (1,1);
				\fill (1,0) circle (2pt);
				\fill (0,-1) circle (2pt);
				\draw (0,0) node[cross=3pt,rotate=0] {};
		\end{tikzpicture}
		\end{subfigure}		
		\end{subfigure}
		\begin{subfigure}{0.49\textwidth}
		\centering
		\begin{tikzpicture}[line cap=round,line join=round,x=0.35\textwidth,y=0.35\textwidth]
				\draw(0,0) circle (0.35\textwidth);
				\draw (0,0)-- (1,0);
				\draw (0,1)-- (0,0);
				\draw (0,0)-- (-1,0);
				\draw (0,0)-- (0,-1);
			    \fill [color=ffqqqq] (0,1) circle (3pt) node[anchor=south,black] {$1$};
			    \fill [color=ffqqqq] (0,-1) circle (3pt) node[anchor=north,black] {$1$};
			    \fill [color=ffqqqq] (1,0) circle (3pt) node[anchor=west,black] {$1$};
			    \fill [color=ffqqqq] (-1,0) circle (3pt) node[anchor=east,black] {$1$};
		\end{tikzpicture}
		\end{subfigure}
		\caption{\label{fig:iso}A model of Theorem~\ref{th:finite}\ref{iso}.}
	\end{subfigure}
	\caption{Representative models of each of the seven refined universality classes of critical $\cU$-KCM. For each one the update rules are depicted on the left with $0$ marked by a cross and the sites of the rule denoted by dots. The figure on the right gives the stable directions, which are thickened and have their difficulties next to them. The isolated stable directions are marked by dots. In all cases the difficulty $\alpha$ of the model is $1$, as witnessed by the right-hand open semicircle.}\label{fig:example}
\end{figure}

\subsection{Comments}
Let us emphasise that, contrary to what is the case in bootstrap percolation, where the exact asymptotics and corrective terms of $\log \tau_0$ are sometimes known, for KCM there exists only one model (the Duarte model treated by Martinelli, Toninelli and the second author \cite{Mareche20Duarte}) for which $\log\bbE_\mu[\tau_0]$ is determined up to a constant factor and none for which the exact asymptotics is known. Thereby, the combined results of \cites{Hartarsky20II,Martinelli19a,Hartarsky21a} and the present work improve the best known results for all critical $\cU$-KCM except the Duarte-KCM and give a new proof of the best known results for that last model. In view of the state of the art in the simpler setting of $\cU$-bootstrap percolation, it does not seem currently feasible to pursue higher precision in full generality for critical $\cU$-KCM. Theorem \ref{th:infinite} establishes the lower bounds of \cite{Hartarsky20}*{Conjecture 7.1}, while Theorem \ref{th:finite} proves the ones of \cite{Hartarsky21a}*{Conjecture 6.1} (that conjecture was intentionally not as precise concerning the semi-directed case \ref{loglog}, which seemed quite mysterious at the time of writing of the conjecture).

What is more, the present work treats all lower bounds in a systematic and comprehensive way, by showing that the main obstacle for the dynamics in all cases corresponds to a combinatorial bottleneck similar to the one of the two-dimensional East model (see Section \ref{sec:sketch} for more details). Moreover, the core of our argument carries over to higher dimensions with no further difficulty, thus reducing such lower bounds for higher-dimensional KCM to estimates for their bootstrap percolation counterparts.

\subsection{Organisation of the paper}
The remainder of the paper is organised as follows. In Section \ref{sec:sketch} we provide an outline of the ideas of the proof. The core of the proof is Section \ref{subsec:general}, where we establish the general combinatorial bottleneck for KCM dynamics. In Section \ref{sec:proof} we deduce our main results, Theorems \ref{th:infinite} and \ref{th:finite}, very directly from Proposition \ref{prop:main}. It is not until Section \ref{sec:proof} that the distinction between the different classes of models becomes relevant. In addition to the central Proposition \ref{prop:main}, Section \ref{sec:proof} requires several estimates regarding bootstrap percolation, which are established in the appendices. Appendix \ref{app:spanning} is rather standard and provides bounds on the probability of ``spanning'' by uniting arguments of Bollob\'as, Duminil-Copin, Morris and Smith \cite{Bollobas14} and of Toninelli and the authors \cite{Hartarsky20} without significant new input. This appendix may still be of interest to bootstrap percolation specialists, as it gives a simplification of the most technical part of the proof of the main result of \cite{Bollobas14}.
Finally, in Appendix \ref{app:crossing} we establish bounds on the probability of a notion of ``crossing'' inspired from \cite{Bollobas14} but modified to suit our setting (see Section \ref{sec:sketch}), which are not particularly difficult, given Appendix \ref{app:spanning}.

\section{Outline of the proof}
\label{sec:sketch}
Let us outline the main highlights of the proof, emphasising new ideas with respect to our main sources of inspiration \cites{Mareche20combi,Bollobas14,Hartarsky20,Hartarsky20II}. The differences between the various universality classes will be much more apparent in \cite{Hartarsky20II}, where efficient mechanisms for the infection of the origin are implemented for each class. Since one of the main virtues of our work is that the core argument is independent of the universality class, we invite the reader willing to understand the origin of the choices of length scales appearing in Section \ref{sec:proof} to consult \cite{Hartarsky20II}, which is also our inspiration for choosing them. For the sake of concreteness, unless otherwise indicated, in this section we restrict our attention to the representative of the class \ref{log1} of rooted balanced models with finitely many stable directions given in Figure~\ref{fig:log1}. 

The proof relies on the notion of ``bottleneck'': we show that before the origin can be infected, the dynamics has to go through a set of configurations with a probability small enough so that this does not happen for a very long time.

Morally speaking, in this model the smallest mobile entity (``droplet'') is an infected square of size roughly $1/q$. Indeed, typically on its right and top sides one can find an infection, which allows it to infect the column of sites on its right and the row of sites above it. However, it is essentially impossible for the infection to grow down or left, as this requires two consecutive infections and those are typically only available at distance $1/q^2$ from the droplet. We will only work in a region $R$ of size $1/q^{7/4}$ around the droplet, so, morally, such couples of infections are not available. Thus, we can think of the droplet as performing the following simpler dynamics. If there is a droplet present at a certain position, it may create/destroy another one above it and to its right. Therefore, the droplets follow the dynamics of the two-dimensional supercritical rooted KCM called East model.

As mentioned in the introduction, for supercritical rooted models, including the (two-dimensional) East one, the second author established in \cite{Mareche20combi} the following bottleneck: in order for an infection (representing a droplet in our original model) to reach the center of a box $R$ of size $3^{n}$ initially fully healthy it is necessary to visit a configuration with at least $n$ infections simultaneously present in the box. This was achieved by an inductive argument that we describe next. It is enough to show that if $R$ is initially fully healthy and we can only have strictly less than $n$ infections at the same time in $R$, we cannot reach a configuration with all infections in the middle of $R$. Indeed, this implies that there are strictly less than $n-1$ infections at the same time in the middle of $R$, and one can make an induction on $n$. To show that we cannot reach a configuration with all infections in the middle of $R$, by reversibility we may instead prove that if we start with infections only in the middle of $R$, but we are never allowed to have $n$ infections simultaneously in $R$, we cannot reach a configuration fully healthy in $R$. The idea is to ensure that for any path of the dynamics starting with all infections in the middle of $R$ in which we never have $n$ infections at the same time in $R$, the following two conditions remain true at all times. Firstly, a buffer zone (see the shaded frame $B$ in Figure \ref{fig:main}) with no infections remains intact. Secondly, there is always an infection in the internal region encircled by the buffer, so the dynamics cannot reach a configuration completely healthy in $R$.

In order to achieve that, we use a second induction, on the step of the path. We know that so far an infection remains trapped in the internal region encircled by the buffer, so we only have $n-1$ infections available for disrupting the buffer from the outside, which is impossible by induction on $n$. Therefore, it suffices to show that we may not disrupt the buffer from the inside either. By projecting the two-dimensional East model on each axis it is clear that no infection can enter the left and bottom parts of the buffer from the inside, and the projections of the lowest and leftmost particles in the region inside the buffer need to remain where they were initially. The right part of the buffer (and similarly the top one) cannot be reached from the inside, because at least one infection needs to remain as far left as the leftmost initial one was, so we only have $n-1$ infections with which to reach the right part of the buffer, which is impossible by induction on $n$.

On a very high level we will proceed in the same way for critical models. However, there are several obvious problems in making the above reasoning rigorous for these models (in fact we do not think that a direct mapping to the two-dimensional East model can be made rigorous for our purposes). Firstly, we said above that the smallest mobile entities, ``droplets,'' were infected squares of size $1/q$, but the smallest mobile entities are actually more complicated. One needs to identify an event, which says whether or not something is a droplet and this event should be both deterministically necessary for infection to spread and sufficiently unlikely, so that having many droplets at the same time has probability small enough to be a good bottleneck. It turns out that the notion of ``spanning'' introduced in \cite{Bollobas14}, following \cite{Cerf99} is flexible enough for us. Roughly speaking (see Definition \ref{def:span}), a droplet is spanned if the infections present inside it are sufficient to infect a connected set touching all its sides. We call a droplet critical if it has size roughly $1/q$. It is known from \cite{Bollobas14} and obtained again in Appendix \ref{app:spanning} in a more adapted form that the probability of a specific critical droplet being spanned is roughly $\exp(-1/q)$. Unfortunately, given a configuration, spanned critical droplets may overlap, so in order to obtain good bounds on the probability of the configuration, one needs to consider disjointly occurring ones. We may then define the number of spanned critical droplets as the maximal number of disjointly occurring ones.

Having fixed these notions, we encounter a more significant issue---the (spanned critical) droplets may move a bit without creating another droplet, by changing their internal structure. Worse yet, they are not really forbidden to move left or down, but simply are not likely to be able to do so wherever they want: it depends on the dynamic environment. Indeed, being able to move by a single step down is allowed by the presence of a couple of infections on the side of the droplet, which has probability only as small as $q$ and is by far not something we can ensure never happens up to time $T=\exp(\log (1/q)/q)$.

In order to handle these problems we introduce the crucial notion of crossing (not to be confused e.g.\ with the one of \cite{Bollobas14}). Consider a vertical strip $S$ of width $1/q^{3/2}$ of our domain, $R$, which is a square of size $1/q^{7/4}$. Roughly speaking (see Definition \ref{def:crossing} for a more precise statement), we say that $S$ has a crossing if the following two events occur. Firstly, the infections in $S$ together with the entire half-plane to the right of $S$ are enough to infect a path from right to left in $S$ (this is essentially the notion of crossing in \cite{Bollobas14}). Secondly, $S$ does not contain a spanned critical droplet. Notice that these two events go opposite ways---the former is favoured by infections, while the latter is not. In Appendix \ref{app:crossing} we show that the probability of a crossing decays exponentially with the width of $S$ at our scales.

Having established such a bound on the probability of crossings, we may safely assume (it happens with high probability) that they never occur until time $T$ and this is the property we use to formalise the intuition that ``moving down or left is impossible.'' More precisely, assume that initially the only critical droplet is on the right of $S$ and $S$ never has a crossing. Then, simply because the KCM dynamics can never infect more than what bootstrap percolation can, starting from the same initial condition, the droplet will not be able to reach the left side of $S$. Indeed, if it could, there would be a ``trail'' of infectable sites from the right of $S$ to its left, which would imply a crossing.

\section{Supercritical rooted dynamics of droplets}
\label{subsec:general}
\subsection{Setting and preliminaries} \label{subsec:setting}Let $\mathcal{U}$ be an update family. Assuming they exist, we further fix two non-collinear rational stable directions $u_1$ and $u_2$. We set $u_3=u_1+\pi$, $u_4=u_2+\pi$ and $\cT=\{u_1,u_2,u_3,u_4\}$. We will simply call \emph{parallelogram} a set of the form 
\begin{align*}
R(a,b;c,d)&{}=\left\{x\in\bbR^2\,|\,\<x,u_3\>\in[a,c],\<x,u_4\>\in[b,d]\right\}\\
&{}=\bar\bbH_{u_1}(-a) \cap \bar\bbH_{u_2}(-b) \cap \bar\bbH_{u_3}(c) \cap \bar\bbH_{u_4}(d)
\end{align*}
for real numbers $a \leq b,c \leq d$ and denote by $\ring R(a,b;c,d)$ its topological interior. For parallelograms we will systematically extend definitions by translation and interchange of $u_1$ and $u_2$ (resp. $u_3$ and $u_4$).

Finally,
\[C_6\gg C_5\gg C_2'\gg C_1\gg r=\max\{\|s-s'\| \,|\, s,s' \in U\cup\{0\},U \in \mathcal{U}\}\]
are constants not depending on $q$, but only on $\cU$ and $\cT$, each one sufficiently large with respect to functions of the next.\footnote{We use $C_2'$ instead of $C_2$ and avoid $C_3$ and $C_4$ for coherence with the appendices.} In addition, $K=K(q)$ will be a function such that $K \to +\infty$ when $q \to 0$, so that $K\gg C_6$. The value of $K$ will depend on the class of models studied, hence will be specified in Section \ref{sec:proof}. Furthermore, we systematically assume that $q$ is small enough, as we are interested in $q\to0$.
\begin{defn}[\cite{Bollobas14}*{Definitions 2.3 and 2.4}]
\label{def:span}
A set $Z\subset \bbZ^2$ is \emph{strongly connected} if it is connected in the graph with vertex set $\bbZ^2$ defined by $x\sim y$ if $\|x-y\|\le C_2'$.

We say a parallelogram is \emph{critical} when its diameter is contained between $K/C_1$ and $K$.

A parallelogram $D$ is \emph{spanned} in $\eta$ if there exists a strongly connected set $X\subset[D\cap \eta]$ such that the smallest parallelogram containing $X$ is $D$.
\end{defn}
If $\eta,\eta'$ are two configurations, we say that $\eta\le\eta'$ when $\eta_s\le\eta_s'$ for all $s$. For instance, if a parallelogram $D$ is spanned for $\eta$, then is is also spanned for any $\eta'\le\eta$. This order should not be confused with the (inverted) one induced by inclusion when viewing $\eta$ as its set of infections.

Notice that the event that a given parallelogram $D$ is spanned depends only on $\eta_D$ and does not occur when $\eta_D$ contains no infections. We further state two immediate consequences of Definition \ref{def:span} for future reference.
\begin{obs}
\label{obs:obvious}
Let $R\subset\bbR^2$. Then every parallelogram $D$ spanned in $\eta_R$ intersects $R$.
\end{obs}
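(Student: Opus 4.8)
The plan is to reduce immediately to the fact recorded just before the statement: the event that a parallelogram $D$ is spanned depends only on $\eta_D$ and does not occur when $\eta_D$ contains no infection. First I would apply this fact with $\eta$ replaced by $\eta_R$. Since restricting a configuration to $D$ is idempotent in the obvious way, $D$ is spanned in $\eta_R$ if and only if $D$ is spanned in $(\eta_R)_D$, and by the quoted fact this forces $(\eta_R)_D$ to contain at least one infected (i.e.\ value-$0$) site.

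Next I would locate that infected site inside $R$. By the convention for $\eta_X$ adopted in the paper, $\eta_R$ agrees with $\eta$ on $R\cap\bbZ^2$ and is identically $1$ (healthy) everywhere else; hence the set of infections of $\eta_R$, and a fortiori of $(\eta_R)_D$, is contained in $R\cap\bbZ^2$. Therefore any infected site $x$ of $(\eta_R)_D$ satisfies $x\in D\cap R\cap\bbZ^2$, so in particular $D\cap R\neq\emptyset$, which is the claim.

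I do not anticipate any real obstacle; the only point requiring care is the bookkeeping between the two uses of the restriction operator $\eta\mapsto\eta_X$ (restrict to $X\cap\bbZ^2$, pad with $1$ outside), which is precisely what pins the witnessing infection down inside $R$. If one prefers to avoid invoking the parenthetical remark, one can argue directly from Definition \ref{def:span}: a critical spanned parallelogram $D$ is nonempty, so there is a \emph{nonempty} strongly connected $X\subset[D\cap\eta_R]$ whose minimal enclosing parallelogram is $D$; nonemptiness of $X$ gives $D\cap\eta_R\neq\emptyset$ (the closure of the empty set being empty), and since all infections of $\eta_R$ lie in $R$ this again produces a point of $D\cap R$.
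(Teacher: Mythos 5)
Your argument is correct and is exactly the reasoning the paper leaves implicit (Observation \ref{obs:obvious} is stated without proof as an immediate consequence of Definition \ref{def:span}): a spanned $D$ requires a nonempty witness $X\subset[D\cap\eta_R]$, hence an infection of $\eta_R$ inside $D$, and by the padding convention all infections of $\eta_R$ lie in $R$. The only cosmetic slip is the word ``critical'' in your fallback argument — the observation concerns arbitrary spanned parallelograms — but nothing in the argument uses criticality, so this is harmless.
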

\begin{obs}\label{obs:connected:component}
Let $\eta$ be a configuration and $X$ be a strongly connected component of $[\eta]$. Then $X = [\eta \cap X]$.
\end{obs}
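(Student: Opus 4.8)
The plan is to prove the two inclusions $[\eta\cap X]\subseteq X$ and $X\subseteq[\eta\cap X]$ separately, in each case by induction along the bootstrap percolation dynamics. The single fact driving both inductions is that an update rule cannot straddle two distinct strongly connected components of $[\eta]$. Concretely: for any $U\in\cU$, any $s\in U$ and any $x\in\bbZ^2$, the sites $x$ and $x+s$ are strongly connected, since $\|s\|=\|s-0\|\le r\le C_2'$ (using $C_2'\gg C_1\gg r$); hence if $x,x+s\in[\eta]$ and one of them lies in the strongly connected component $X$, then so does the other. I will also use the elementary fact that $[\eta]$ is closed under the $\cU$-bootstrap dynamics, i.e.\ $[[\eta]]=[\eta]$, which is immediate from the definition since update rules are finite and the $A_t$ increase.

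For $[\eta\cap X]\subseteq X$, write $D_0=\eta\cap X$ and $D_{t+1}=D_t\cup\{x:\exists U\in\cU,\ x+U\subseteq D_t\}$, so that $[\eta\cap X]=\bigcup_t D_t$, and argue by induction on $t$ that $D_t\subseteq X$. The base case holds by definition. If $x\in D_{t+1}\setminus D_t$, pick $U$ with $x+U\subseteq D_t\subseteq X\subseteq[\eta]$; closedness of $[\eta]$ forces $x\in[\eta]$, and then the fact above (applied with any $s\in U$, so $x+s\in X$) gives $x\in X$. For $X\subseteq[\eta\cap X]$, recall $X\subseteq[\eta]=\bigcup_t A_t$ with $A_0=\eta$, so it suffices to show $A_t\cap X\subseteq[\eta\cap X]$ for all $t$ by induction; the base case is $A_0\cap X=\eta\cap X$, and for the inductive step a site $x\in(A_{t+1}\setminus A_t)\cap X$ comes with a rule $U$ such that $x+U\subseteq A_t$, and for each $s\in U$ we have $x+s\in A_t\subseteq[\eta]$ and $x\in X$, so $x+s\in X$ by the fact above; thus $x+U\subseteq A_t\cap X\subseteq[\eta\cap X]$ by induction, whence $x\in[\eta\cap X]$ because the latter is closed under the bootstrap dynamics. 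Combining the two inclusions yields $X=[\eta\cap X]$.

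There is no real obstacle here; the only point deserving care is the elementary inequality $r\le C_2'$, which is exactly what prevents an update rule from connecting two different strongly connected components and thereby makes the bootstrap dynamics compatible with the decomposition of $[\eta]$ into such components. As a by-product, the same argument shows that $\eta\cap X\neq\emptyset$ for every strongly connected component $X$ of $[\eta]$ --- otherwise, letting $t$ be minimal with $A_t\cap X\neq\emptyset$ and $x\in A_t\cap X$, there would be a rule $U$ with $x+U\subseteq A_{t-1}$, hence $x+U\subseteq A_{t-1}\cap X=\emptyset$ by the fact above, contradicting $U\neq\emptyset$ --- although this is not needed for the statement.
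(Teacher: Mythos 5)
Your proof is correct and rests on exactly the mechanism underlying the paper's proof: update rules have diameter at most $r<C_2'$, so the bootstrap dynamics can never straddle two distinct strongly connected components of $[\eta]$, and hence respects the decomposition of $[\eta]$ into such components. The paper packages this as the terse statement $[\eta]=\bigsqcup_Y[\eta\cap Y]$ together with $[\eta\cap X]\subset X$; your two inductions along the bootstrap time are simply the fully spelled-out version of that same argument.
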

\begin{proof}By maximality of a strongly connected component, $[\eta\cap X]\subset X$ is at distance at least $C_2'>2r$ from other strongly connected components $X$ of $[\eta]$. Thus, 
\[[\eta]=\bigsqcup_Y [\eta\cap Y],\]where the union is on all strongly connected components of $[\eta]$.\end{proof}

Another standard fact is the following Aizenman-Lebowitz lemma originating from \cite{Aizenman88}, whose proof can be found in Appendix \ref{app:spanning} (Lemma \ref{lem:AL:spanned}).
\begin{lem}
\label{lem:AL:body}
Let $D$ be a spanned parallelogram with diameter $d \geq C_1C_2'$ and let $C_1d\ge k\ge C_1C_2'$. Then there exists a spanned parallelogram with diameter $d'$ such that $k/C_1\le d'\le k$. In particular, if $d\ge K/C_1$, then there exists a spanned critical parallelogram.
\end{lem}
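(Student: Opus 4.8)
The plan is to carry out the classical Aizenman--Lebowitz hierarchical decomposition, adapted to the present notion of spanning, and then to read off the required scale $k$ from the resulting binary hierarchy. The heart of the argument is a dichotomy (a \emph{splitting step}): there is a constant $c_0$ depending only on $\cU$ and $\cT$ such that every parallelogram $D$ spanned in $\eta$ with $\diam(D)>c_0C_2'$ is the smallest parallelogram containing $D_1\cup D_2$ for two parallelograms $D_1,D_2\subset D$, each spanned in $\eta$ and of strictly smaller diameter, satisfying $\diam(D)\le\diam(D_1)+\diam(D_2)+c_0C_2'$. To see this one starts from a strongly connected $X\subset[D\cap\eta]$ whose smallest bounding parallelogram is $D$; using Observation \ref{obs:connected:component} one reduces to the situation where $X$ is the $\cU$-closure of the infections it contains, and then represents the formation of $X$ as a merge tree over the closures of single infections, two closures being joined as soon as they lie within strong-connectivity reach (Euclidean distance at most $C_2'$). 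Deleting an edge of this tree partitions the infections into two groups whose closures $Y_1,Y_2$ are strongly connected, generate $X$ together, and lie within distance $C_2'$ of each other; letting $D_i$ be the bounding parallelogram of $Y_i$ gives two spanned parallelograms, and the $C_2'$-closeness forces the ranges of $D_1$ and $D_2$ in each of the coordinates $\<\cdot,u_3\>$, $\<\cdot,u_4\>$ to overlap up to a gap of size $O(C_2')$, which bounds $\diam(D)$ as claimed because the angle between $u_3$ and $u_4$ is a fixed constant of $\cT$.

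\emph{The iteration.} Fix $D$ with $\diam(D)=d\ge C_1C_2'$ and $k$ with $C_1C_2'\le k\le C_1d$. If $d\le k$ then $D$ itself is the required parallelogram, since $k\le C_1d$ gives $d\ge k/C_1$. Otherwise, repeatedly apply the splitting step, each time retaining the child of larger diameter, as long as the current diameter exceeds $c_0C_2'$. This produces a finite chain of spanned parallelograms $D=D^{(0)}\supset D^{(1)}\supset\dots\supset D^{(N)}$ with strictly decreasing diameters $d=d_0>d_1>\dots>d_N$ and $d_N\le c_0C_2'$. Since $c_0C_2'<C_1C_2'\le k<d_0$, there is a first index $i^\star$ with $d_{i^\star}<k$; then $d_{i^\star-1}\ge k\ge C_1C_2'>c_0C_2'$, so the splitting step was applied at step $i^\star-1$ and, $d_{i^\star}$ being the larger of the two children's diameters, $d_{i^\star-1}\le 2d_{i^\star}+c_0C_2'$. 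Were $d_{i^\star}<d_{i^\star-1}/C_1$, this would give $d_{i^\star-1}(1-2/C_1)<c_0C_2'$, hence $d_{i^\star-1}<2c_0C_2'<C_1C_2'$ (using that $C_1\gg c_0$ and $C_1\ge 4$), contradicting $d_{i^\star-1}\ge C_1C_2'$. Therefore $d_{i^\star}\ge d_{i^\star-1}/C_1\ge k/C_1$, and since also $d_{i^\star}<k$ the parallelogram $D^{(i^\star)}$ has diameter in $[k/C_1,k]$, as wanted.

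\emph{The critical parallelogram and the main obstacle.} For the last assertion, apply the above with $k=K$: Definition \ref{def:span} gives $K\ge C_1^2C_2'\ge C_1C_2'$, and $d\ge K/C_1$ gives $C_1d\ge K$, so the hypotheses hold and we obtain a spanned parallelogram of diameter in $[K/C_1,K]$, which is critical by definition. The only delicate point in the whole argument is the splitting step: one must be careful about which set is actually decomposed (the closure of the infections, not an arbitrary strongly connected witness) and about how the $\cU$-closure interacts with the two sides of $D$ that need not be stable, so that the two pieces really are spanned parallelograms contained in $D$ with the stated diameter control. This is exactly the content isolated in Lemma \ref{lem:AL:spanned} of Appendix \ref{app:spanning}; everything afterwards is elementary, the factor $C_1$ being available precisely because the constants are chosen with $C_2'\gg C_1\gg r$ large relative to the geometric constants of $\cU$ and $\cT$.
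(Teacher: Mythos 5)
Your argument is, at heart, the same one the paper uses: Lemma \ref{lem:AL:body} is deduced there from Lemma \ref{lem:AL:spanned}, whose proof passes to a maximal strongly connected component of $[\eta\cap D]$ (so that Observation \ref{obs:connected:component} makes it the closure of its own infections) and then invokes \cite{Bollobas14}*{Lemma 6.18} for the extraction at scale $k$ --- and that cited lemma is proved by exactly the merge-tree/subadditivity mechanism you unwind by hand. So the structure is right, and your explicit iteration (keep the larger child, stop at the first scale below $k$) is a valid way to package the extraction. One quantitative claim in your splitting step is, however, false as stated: you cannot have $\diam(D)\le\diam(D_1)+\diam(D_2)+c_0C_2'$ with a purely additive error. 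The parent label is the bounding parallelogram of $[Z_1\cup Z_2]$, which in general strictly contains $[Z_1]\cup[Z_2]$; controlling it requires first enclosing the closure in a $\cT_0$-droplet (all of whose directions are stable) and then converting back to a parallelogram whose $u_3,u_4$ sides need not be stable, and each of these steps costs a multiplicative constant depending on $\cU$ and $\cT$. The correct inequality is $\diam(D)\le\lambda(\diam(D_1)+\diam(D_2)+O(C_2'))$ for some $\lambda=\lambda(\cU,\cT)$, which is precisely why the paper's version of the statement carries a factor $C_1$ (and a $\sqrt{C_1}$ slack inside the proof of Lemma \ref{lem:AL:spanned}) rather than a constant close to $2$. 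Your iteration survives this correction, since $d_{i^\star-1}\le 2\lambda d_{i^\star}+O(C_2')$ still yields $d_{i^\star}\ge d_{i^\star-1}/C_1$ once $C_1\gg\lambda$; but as written the additive bound is the one step that would not hold. Finally, note that deferring the ``delicate point'' to Lemma \ref{lem:AL:spanned} is mildly circular, since that lemma is the paper's proof of the very statement you are proving; the content you actually need to import is the closure-vs-droplet comparison just described, not the full extraction statement.
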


We next import and adapt the notion of crossing from \cite{Bollobas14}*{Definition 8.16}.
\begin{defn}[Crossing]
\label{def:crossing}
We say that a parallelogram $R=R(a,b;0,d)$ is \emph{$u_1$-crossed} if there exists a strongly connected set in $[\bbH_{u_1}\cup (R\cap\eta)]$ intersecting both $\bbH_{u_1}$ and $\bar\bbH_{u_3}(a)$.

Let $C_R^{u_1}$ denote the event that there exists $\eta'\ge \eta$ such that $R$ is $u_1$-crossed for $\eta'$, but there is no spanned critical parallelogram for $\eta'_R$.

We say that a parallelogram $\Lambda=R(0,0;L,H)$ \emph{has no $(\ell,h)$-crossing} (or simply crossing) if the event $C_R^{u_1}$ does not occur for any $R\subset\Lambda$ of the form $R(a,0;a+\ell,H)$ and the event $C_R^{u_2}$ does not occur for any $R\subset\Lambda$ of the form $R(0,b;L,b+h)$.
\end{defn}

In words, a parallelogram is $u_1$-crossed if, given an infected boundary condition on its side opposite to direction $u_1$, the infections inside it are sufficient to infect a ``path'' reaching its side in direction $u_1$. Thus, a $u_1$-crossing corresponds to the propagation of infection across the parallelogram in the direction $u_1$. The event $C_R^{u_1}$ further demands that this crossing is achieved without the help of large spanned parallelograms (in view of Lemma \ref{lem:AL:body}). We should note that, while parallelograms smaller than critical are increasingly unlikely as their size grows, parallelograms larger than critical ones roughly become more likely with their size (hence the name of critical ones), so $C_R^{u_1}$ forces us to use only small unlikely parallelograms all along the crossing. Relying on this fact, it will be possible to establish very strong bounds on the probability of $C_R^{u_1}$. This leads to the notion of a $(\ell,h)$-crossing, which will one of the ``good'' properties we will require the dynamics to satisfy.

Finally, we say that a site $s \in \bbZ^2$ is \emph{locally infectable} in a configuration $\eta$ if $s \in [\eta \cap (s+R(-2K,-2K;2K,2K))]$. We also denote by $\eta^s$ the configuration equal to $\eta$ everywhere except at $s$, i.e.\ $\eta^s_{s}=1-\eta_s$ and $\eta^s_{s'}=\eta_{s'}$ for any $s' \neq s$. We then have the following property, originating from \cite{Hartarsky20FA}*{Section 2}.
\begin{lem}
\label{lem:locally:infectable}
Let $\eta\in\{0,1\}^{\bbZ^2}$, $s\in \bbZ^2$, $U\in\cU$ be such that $s+U\subset\eta$ and let $R=R(-2K,-2K;2K,2K)$. Assume that the origin is not locally infectable in $\eta$, but is locally infectable in $\eta^s$. Then there exists a critical parallelogram $D$ spanned in $\eta^s_{R}$ such that $D\subset R(-3K,-3K;3K,3K)$.
\end{lem}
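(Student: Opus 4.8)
The plan is to compare the bootstrap closures of $\eta\cap (s+R)$ and $\eta^s\cap (s+R)$ and extract a large spanned parallelogram, then apply the Aizenman--Lebowitz Lemma \ref{lem:AL:body} to cut it down to critical size. First I would observe that by hypothesis $0\notin [\eta\cap R]$ but $0\in[\eta^s\cap R]$, where I write $R=R(-2K,-2K;2K,2K)$ as in the statement. Since $\eta^s$ and $\eta$ differ only at $s$, and since $s+U\subset\eta$ means $s$ is already infectable from $\eta$ alone inside a bounded neighbourhood (of radius $r$), the site $s$ itself lies in $[\eta\cap (s+R(-r,-r;r,r))]$, hence a fortiori in $[\eta\cap R']$ for $R'=R(-3K,-3K;3K,3K)$ provided $s\in R$ and $K$ is large compared to $r$; so the extra infection at $s$ present in $\eta^s$ is ``redundant'' once one enlarges the window slightly. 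This is the key point: enlarging the box from $R$ to $R'$ lets us replace the spurious infection at $s$ by genuine infections of $\eta$.

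Next I would localise where the discrepancy comes from. Consider the strongly connected component $X$ of $[\eta^s\cap R]$ containing the origin; it exists because $0\in[\eta^s\cap R]$. By Observation \ref{obs:connected:component} applied within the finite box, $X=[\eta^s\cap X]$, and $X$ must actually use the infection at $s$, for otherwise $X\subset[\eta\cap R]$ would give $0\in[\eta\cap R]$, contradicting the hypothesis; in particular $s\in X$. Now let $D$ be the smallest parallelogram containing $X$. Then $D$ is spanned in $\eta^s_R$ by definition (with witnessing strongly connected set $X\subset[D\cap\eta^s]$). I would then argue $D$ has diameter at least of order $K$: indeed $X$ contains both the origin and $s$, and more importantly $X$ must ``escape'' the small neighbourhood of $s$ in which $s$ is already infectable from $\eta$ — if $D$ were small, say of diameter $\le K/C_1$, one could absorb it into a region where $\eta$ already infects everything $\eta^s$ does (using $s+U\subset\eta$ and strong connectivity constants), again contradicting $0\notin[\eta\cap R]$. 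This gives $\operatorname{diam}(D)\ge K/C_1 \ge C_1 C_2'$ by the choice $K\ge C_1^2 C_2'$.

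With a spanned parallelogram $D$ of diameter $d\ge K/C_1$ in hand, Lemma \ref{lem:AL:body} (its ``in particular'' clause) directly produces a spanned critical parallelogram $D'$, still spanned in $\eta^s_R$ since the strongly connected witness for $D'$ can be taken inside $[D\cap\eta^s]\subset[\eta^s\cap R]$. It remains to check the containment $D'\subset R(-3K,-3K;3K,3K)$: the witnessing set for $D'$ lies in $[\eta^s\cap R]$, and by Observation \ref{obs:obvious} every spanned parallelogram in $\eta^s_R$ meets $R$; since $D'$ has diameter at most $K$ and meets $R=R(-2K,-2K;2K,2K)$, it is contained in the $K$-neighbourhood of $R$, which lies inside $R(-3K,-3K;3K,3K)$ (here I use that the parallelogram coordinates are comparable to Euclidean distance up to a constant depending only on $\cT$, absorbed into the $K$ versus $3K$ slack by taking $q$, hence $K$, large — or more carefully, by enlarging the constants so the geometry works).

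The main obstacle I expect is the second paragraph: cleanly arguing that the discrepancy parallelogram $D$ cannot be too small, i.e.\ that a small spanned parallelogram built using $s$ could be ``simulated'' by $\eta$ alone so as to contradict $0\notin[\eta\cap R]$. This requires carefully using $s+U\subset\eta$ together with the strong-connectivity scale $C_2'$ and the relation between $r$, $C_1$, $C_2'$ and $K$, and possibly a short monotonicity/closure argument showing $[\eta^s\cap Y]\subset[\eta\cap (Y\cup(s+U))]=[\eta\cap Y']$ for a slightly larger region $Y'$ — everything else (Aizenman--Lebowitz, the containment bookkeeping) is routine given the earlier lemmas.
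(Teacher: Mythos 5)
Your overall architecture is the same as the paper's: take the strongly connected component $X$ of the origin in $[\eta^s\cap R]$, note $s\in X$ via Observation \ref{obs:connected:component}, pass to the smallest parallelogram containing $X$, bound its diameter below, apply Lemma \ref{lem:AL:body}, and finish with Observation \ref{obs:obvious} plus the diameter bound $\le K$ for the containment in $R(-3K,-3K;3K,3K)$. The last two steps are fine. The problem is the one non-routine step, the diameter lower bound, where the mechanism you commit to does not close. You propose to show that if the discrepancy parallelogram were small, one could ``absorb'' the spurious infection at $s$ by enlarging the window, i.e.\ $[\eta^s\cap R]\subset[\eta\cap(R\cup(s+U))]=[\eta\cap Y']$ for some $Y'\supsetneq R$. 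But the hypothesis only says $0\notin[\eta\cap R]$; concluding $0\in[\eta\cap Y']$ for a strictly larger $Y'$ contradicts nothing, since local infectability is defined with the window $R(-2K,-2K;2K,2K)$ and nothing larger. So as written the contradiction you need in the small-diameter case is not obtained.

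The correct observation — which is what the paper uses, and which makes the whole thing one line — is that no enlargement is permissible or needed: since $0\in[\eta^s\cap R]\setminus[\eta\cap R]$, necessarily $s\in R$ \emph{and} $s+U\not\subset R$, because if $s+U\subset R$ then $s+U\subset\eta\cap R$ would give $s\in[\eta\cap R]$ and hence $[\eta^s\cap R]=[\eta\cap R]\not\ni 0$, a contradiction. Since $U$ has range at most $r$, the site $s$ must lie within distance $r$ of the boundary of $R$, so $d(0,s)>K$, and the smallest parallelogram containing $X\ni 0,s$ has diameter at least $K$ outright (no need for the weaker $K/C_1$). Your ``absorption'' idea can be salvaged by running exactly this in contrapositive form — if the parallelogram were small then $s$ would be close to $0$, hence $s+U\subset R$, hence the closure \emph{inside $R$ itself} is unchanged — but the version you wrote down, which enlarges the region, is the one reading of the idea that fails, and it is the reading you explicitly flag as your intended fix.
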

\begin{proof}
By definition, $0 \in [\eta^s\cap R]\setminus[\eta \cap R]$. Therefore, $s \in R$ and $s+U\not\subset R$. In particular, $d(s,0)> K$. Let $X$ be the strongly connected component of $0$ in $[\eta^s\cap R]$. We claim that $s\in X$. Indeed, by Observation \ref{obs:connected:component} we have $0\in X=[\eta^s\cap R \cap X]$, so if $s\not\in X$, we have $0\in [\eta\cap R]$ which is not the case. 

Let $D_0$ be the smallest parallelogram containing $X$. Since $D_0$ contains $0$ and $s$, its diameter is at least $K$. Moreover, $X=[\eta^s\cap R \cap X]\subset[\eta_R^s \cap D_0]$, so $X$ is a strongly connected set in  $[\eta_R^s \cap D_0]$. Therefore, $D_0$ is spanned in $\eta_R^s$ and has diameter at least $K$. In particular, by Lemma \ref{lem:AL:body} there exists a critical parallelogram $D$ spanned in $\eta^s_{R}$. By Observation \ref{obs:obvious}, $D$ intersects $R$, and by Definition \ref{def:span} $\diam(D)\le K$. Thus, we have $D\subset R(-3K,-3K;3K,3K)$, which ends the proof of the lemma.
\end{proof}

\subsection{The combinatorial bottleneck}
With the notation above we are ready to prove a very general deterministic bottleneck (Lemma \ref{lem:rec} below), constituting the core of our work, which relatively straightforwardly translates into the following bound on $\bbE_{\mu}[\tau_0]$.

The idea behind it is that for the center of a parallelogram of size roughly $3^n$ to become infected, either an $(\ell,h)$-crossing should occur or we should witness $n+1$ spanned critical parallelograms simultaneously. Assuming upper bounds on the probabilities of these two events, we deduce a lower bound on $\bbE_\mu[\tau_0]$.

\begin{prop}
\label{prop:main}
Let $T,L,H,K,\ell,h$ be positive real numbers sufficiently large with respect to $C_2'$. Denote
\[\rho=\max_D\mu(D\text{ is spanned}),\]
where the max is over all critical parallelograms. Also set
\begin{align*}
\pl&{}=\max_{a,b\in\bbR}\mu\left(C_{R(a,b;a+\ell,b+H)}^{u_1}\right)\\
\pd&{}=\max_{a,b\in\bbR}\mu\left(C_{R(a,b;a+L,b+h)}^{u_2}\right).
\end{align*}
Assume that for some integer $n\ge0$ we have the following inequalities on geometry:
\begin{align}
L&{}\ge 3^{n}(11K+\ell)&H&{}\ge 3^n(11K+h),\label{eq:LHbounds}
\end{align}
and probability: 
\begin{align}
1/8&{}\ge \mu(0\text{ is locally infectable})\label{eq:hyp:loc:inf}\\
1&{}\ge T(LH)^2\max(\pd,\pl)\label{eq:hyp:cross}\\
1&{}\ge TLH(LHK^3\rho)^{n+1}\label{eq:hyp:span}.
\end{align}
Then the $\cU$-KCM on $\bbZ^2$ satisfies $\bbE_\mu[\tau_0]\ge T$.
\end{prop}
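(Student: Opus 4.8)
I would prove the proposition in two stages: first a purely \emph{deterministic} combinatorial bottleneck (this is the content of Lemma~\ref{lem:rec} below), and then a routine equilibrium/flow estimate converting it into the bound on $\bbE_\mu[\tau_0]$. Concretely, fix a parallelogram $\Lambda=R(0,0;L,H)$, translated so that the origin lies near its centre, and consider concentric sub‑parallelograms $\Lambda=\Lambda_n\supset\Lambda_{n-1}\supset\dots\supset\Lambda_0$, each obtained from the next by inflation by a factor $\approx 3$, with the $n$ annular ``buffers'' between them wide enough to accommodate a clean crossing strip of width $\ell$ (resp.\ $h$) together with a slack of order $11K$; condition~\eqref{eq:LHbounds} is exactly what guarantees there is room for this. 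The deterministic lemma I would aim for reads: \emph{if $(\eta(t))_{t\ge0}$ is a $\cU$-KCM trajectory with $0$ not locally infectable in $\eta(0)$, such that $\Lambda$ has no $(\ell,h)$-crossing in $\eta(t)$ for all $t\in[0,s]$ and $\eta(t)$ never contains $n+1$ disjointly occurring spanned critical parallelograms meeting $\Lambda$, then $0$ is not locally infectable in $\eta(s)$}; since $\{\eta_0=0\}\subset\{0\text{ locally infectable}\}$, this forces $\tau_0>s$.

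\textbf{The deterministic lemma.} This is the heart of the matter, and I would prove it by the double induction of \cite{Mareche20combi}: an outer induction on $n$ and, for each $n$, an inner induction along the updates of the trajectory, maintaining the invariant that the innermost buffer contains no locally infectable site and that at least one locally infectable site survives strictly inside it. To propagate the invariant one shows the buffer cannot be corrupted ``from the outside'': since one disjointly occurring spanned critical parallelogram is already pinned in $\Lambda_{j-1}$, at most $n$ remain for the outer annulus, which is ruled out by the outer induction hypothesis applied to a rescaled copy of the problem (here one uses reversibility of the KCM and the fact that ``locally infectable'', ``spanned'' and ``crossed'' are all monotone in the configuration, so the argument may be run on the time‑reversed trajectory). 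Nor can the buffer be corrupted ``from the inside'': because bootstrap percolation dominates the KCM, any such corruption would produce in bootstrap percolation either a $u_i$-crossing of the strip forming that side of the buffer --- excluded by hypothesis --- or, via Lemma~\ref{lem:AL:body}, a spanned parallelogram so large it must contain a critical one, which, being confined to the strip, again contradicts Definition~\ref{def:crossing}. Lemma~\ref{lem:locally:infectable} is the bridge between dynamics and combinatorics: it turns the single update that first makes a site locally infectable into the appearance of a nearby spanned critical parallelogram, so that the droplet count is genuinely controlled along the trajectory.

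\textbf{From the lemma to the bound.} Now run the stationary KCM and let $\cE_0=\{0\text{ not locally infectable in }\eta(0)\}$, $\cE_{\mathrm{cr}}=\{\Lambda\text{ has an }(\ell,h)\text{-crossing}\}$ and $\cE_{\mathrm{sp}}=\{n+1\text{ disjointly occurring spanned critical parallelograms meet }\Lambda\}$. By the deterministic lemma, $\bbP_\mu(\tau_0\le T)\le\mu(\cE_0^{\mathrm c})+\bbP_\mu(\exists\,t\le T:\eta(t)\in\cE_{\mathrm{cr}})+\bbP_\mu(\exists\,t\le T:\eta(t)\in\cE_{\mathrm{sp}})$; the first term is $\le1/8$ by~\eqref{eq:hyp:loc:inf}. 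For the other two I would use the elementary stationary inequality $\bbP_\mu(\exists\,t\le T:\eta(t)\in A)\le\mu(A)+T\cdot k(A)$, where $k(A)$ is the equilibrium flux into $A$ (equal to the expected number of entrances of the stationary process into $A$ per unit time). Since $\cE_{\mathrm{cr}}$ and $\cE_{\mathrm{sp}}$ are increasing under adding infections, only $1\to0$ flips (rate $q$) can trigger them and the Bernoulli weights cancel the $q$; moreover $C_R^{u_1}$ depends only on $\eta$ in a strip of $\le\ell H$ sites, giving $k(C_R^{u_1})\le\ell H\,\mu(C_R^{u_1})\le\ell H\pl$ and likewise for $C_R^{u_2}$, while $\cE_{\mathrm{cr}}$ is a union of at most $L+H$ such events; summing and using~\eqref{eq:hyp:cross} makes $\mu(\cE_{\mathrm{cr}})+T\,k(\cE_{\mathrm{cr}})$ as small as we like once $L,H$ are large. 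Similarly, van den Berg--Kesten together with the bound $\le LHK^3$ on the number of critical parallelograms meeting $\Lambda$ gives $\mu(\cE_{\mathrm{sp}})\le(LHK^3\rho)^{n+1}$, and a one‑flip analysis of how the $(n+1)$-st disjoint spanned droplet can appear gives $k(\cE_{\mathrm{sp}})=O\big((LHK^3\rho)^{n+1}\big)$, so~\eqref{eq:hyp:span} makes this term small too. Hence $\bbP_\mu(\tau_0\le T)<1/2$, so $\bbE_\mu[\tau_0]\ge T\,\bbP_\mu(\tau_0>T)\ge T/2$; the constant $1$ is then recovered from the supermultiplicativity of $t\mapsto\bbP_\mu(\tau_0>t)$, which yields $\bbP_\mu(\tau_0>kT)\ge\bbP_\mu(\tau_0>T)^k$ and hence $\bbE_\mu[\tau_0]\ge T\sum_{k\ge1}\bbP_\mu(\tau_0>T)^k\ge T$ (this point is anyway immaterial downstream, where only $\log\bbE_\mu[\tau_0]$ matters).

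\textbf{Main difficulty.} The genuinely hard step is the deterministic lemma. The East‑model argument of \cite{Mareche20combi} does not transfer directly: here the ``droplets'' (spanned critical parallelograms) are not pinned --- they may drift by rearranging their internal infections --- and moving ``backwards'', towards $u_3$ or $u_4$, is only improbable rather than impossible. The crossing event of Definition~\ref{def:crossing} is engineered precisely to convert this probabilistic statement into a deterministic prohibition on the good event (note it is simultaneously increasing in infections, through the crossing part, and decreasing, through the ``no spanned critical parallelogram'' part); weaving this, together with Lemmas~\ref{lem:AL:body} and~\ref{lem:locally:infectable}, through the nested induction so that buffers are provably impervious from the inside is where essentially all the work lies.
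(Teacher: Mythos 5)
Your architecture is the same as the paper's: a deterministic combinatorial bottleneck proved by a double induction (on the scale $n$ and along the reversed legal path), combined with a stationary union bound over the events ``crossing'' and ``$n+1$ disjointly spanned critical parallelograms''. Your probabilistic bookkeeping differs in two harmless ways: you use an entrance-flux inequality $\bbP_\mu(\exists t\le T:\eta(t)\in A)\le\mu(A)+T\,k(A)$ for increasing events plus supermultiplicativity of $t\mapsto\bbP_\mu(\tau_0>t)$ (true for reversible chains, by a spectral/H\"older argument), whereas the paper counts Poisson clock rings in $R$ and uses stationarity of the configuration seen at each ring, then simply works with horizon $2T$ to absorb the factor $2$. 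Both routes close, since the hypotheses \eqref{eq:hyp:cross} and \eqref{eq:hyp:span} carry slack of order $LH$ and $K^{n+1}$ respectively over the raw entropy counts.

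There is, however, one genuine gap: your deterministic lemma is missing a hypothesis on the \emph{initial} configuration, namely that $\eta(0)$ contains no spanned critical parallelogram in $\Lambda$ at all (the paper's event $E_2$, which is what places $(\eta(0))_R$ in $G(R)$ and anchors the base of the induction). As you state it, the lemma is false for every $n\ge1$: take an initial configuration with a single spanned critical parallelogram positioned just outside $0+R(-2K,-2K;2K,2K)$ so that $0$ is not locally infectable, but such that one legal update makes it locally infectable (Lemma~\ref{lem:locally:infectable} shows this is exactly what happens at the first such update, and such configurations have positive $\mu$-probability). This trajectory never exhibits $2\le n+1$ disjointly spanned critical parallelograms and needs no crossing, yet $0$ becomes locally infectable immediately. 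The repair is cheap: add the hypothesis ``no spanned critical parallelogram in $\eta(0)\cap\Lambda$'' to the lemma and the term $\mu(\exists\text{ spanned critical parallelogram in }\eta(0)\cap\Lambda)\le O(LHK^2)\rho$ to your union bound, which \eqref{eq:hyp:span} already controls --- but as written the decomposition of $\bbP_\mu(\tau_0\le T)$ does not follow from the lemma you propose. Two smaller points on the hard inductive step: the count should be that at most $n-1$ (not $n$) droplets remain available outside once one is pinned near the centre, which is what lets you invoke the level-$(n-1)$ hypothesis on the four overlapping sub-rectangles; and the invariant the paper actually propagates is stronger than ``one locally infectable site survives inside'' --- it pins a spanned critical parallelogram inside each of two overlapping regions $R_\ell$, $R_h$, and freezes the closure $[\eta_{\ring\gamma}]$ inside a contour of non-locally-infectable sites, which is what makes the ``no corruption from inside'' step via crossings actually go through.
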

\begin{rem}
Although the boostrap percolation estimates needed to make use of this statement in higher dimensions are not yet available, let us mention that our argument is not dimension sensitive.
\end{rem}
The proof of Proposition \ref{prop:main} will occupy the rest of the present section. We start by fixing $T,L,H,K,\ell,h$ as in the statement and introducing the following definitions.

Recall that for any $R \subset \bbZ^2$, we identify configurations $\eta\in\{0,1\}^R$ with their zero set $\{x\in R,\eta_x=0\}$. Unless otherwise specified, configurations $\eta\in\{0,1\}^R$ are extended to $\{0,1\}^{\bbZ^2}$ by keeping the same zero set.

\begin{defn}[Good paths and configurations]
\label{def:legal}
For any parallelogram $R \subset \bbR^2$, configuration $\eta \in \{0,1\}^{R \cap \bbZ^2}$ and integer $n\ge0$, we say that $\eta$ is \emph{$n$-good} when the maximum number of critical parallelograms that are disjointly\footnote{As is standard \cite{BK85}, we say that the parallelograms $R_1,\dots,R_k$ are disjointly spanned in $\eta$ if one can find disjoint sets $X_1,\dots,X_k\subset\eta$ such that $\eta'_{X_i}=0$ implies that $R_i$ is spanned in $\eta'$ for all $1\le i\le k$.} spanned in $\eta$ is at most $n$ and $R$ has no crossing for $\eta$.

A \emph{legal path} in $R$ is a sequence $(\eta^{(j)})_{0 \leq j \leq m}$ of configurations in $\{0,1\}^{R \cap \bbZ^2}$ such that for every $j \in \{0,\dots,m-1\}$, there exists $s\in R\cap\bbZ^2$ such that $\eta^{(j+1)}=(\eta^{(j)})^s$ and $(s+U)\cap R\subset \eta^{(j)}$ for some $U \in \mathcal{U}$. For any integer $n\ge 0$, the path is \emph{$n$-good} if for every $j \in\{0,\dots,m\}$, $\eta^{(j)}$ is $n$-good. For any $A,B \subset \{0,1\}^{R \cap \bbZ^2}$, we say $(\eta^{(j)})_{0 \leq j \leq m}$ is a path from $A$ to $B$ when $\eta^{(0)} \in A$ and $\eta^{(m)} \in B$ (if $A$ or $B=\{\eta\}$, we will write $\eta$ to simplify).

We denote by $G(R)$ the set of configurations in $\{0,1\}^{R \cap \bbZ^2}$ that contain no spanned critical parallelogram and such that $R$ contains no crossing, i.e.\ the 0-good configurations. For any $n \in \mathbb{N}$, we define 
\[V(n,R) = \left\{\eta \in \{0,1\}^{R \cap \bbZ^2} \,|\text{ there is an $n$-good legal path from $G(R)$ to $\eta$}\right\}.\]
\end{defn}

Finally, we define our domain sizes for the induction to come:
\begin{align}
L_n &{}= \frac{3^n-1}{2}\left(9K+\ell\right)+3^nK&H_n &{}= \frac{3^n-1}{2}\left(9K+h\right)+3^nK,\label{eq:def:Ln}\end{align}
so that $L_n-L_{n-1}=2L_{n-1}+9K+\ell$ and $H_{n}-H_{n-1}=2H_{n-1}+9K+h$.

\begin{lem}\label{lem:rec}
For any non-negative integer $n$, for any parallelogram $R=R(a,b;c,d)$ such that $c-a \geq 2L_n$ and $d-b \geq 2H_n$, we have that for all $\eta \in V(n,R)$, there is no spanned critical parallelogram in $\eta$ intersecting $R(a+L_n,b+H_n;c-L_n,d-H_n)$.
\end{lem}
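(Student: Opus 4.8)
The plan is to prove Lemma \ref{lem:rec} by induction on $n$, following the two-layer strategy sketched in Section \ref{sec:sketch}. The base case $n=0$ is essentially a restatement of the definitions: if $\eta \in V(0,R) = G(R)$ then by definition there is no spanned critical parallelogram in $\eta$ at all, so there is certainly none intersecting the middle region. For the inductive step, fix $n \geq 1$, assume the statement for $n-1$, and take $\eta \in V(n,R)$ together with an $n$-good legal path $(\eta^{(j)})_{0 \leq j \leq m}$ from $G(R)$ to $\eta$. The key idea is to surround the middle box $R(a+L_n,b+H_n;c-L_n,d-H_n)$ (or a slightly enlarged copy of it, with $K$-neighbourhoods added to absorb the range of the rules and of criticality) by a ``buffer'' frame $B$ of width $\Theta(K)$ that is a union of four parallelogram strips, positioned so that each strip is itself a parallelogram with one side length $\geq 2L_{n-1}$ or $\geq 2H_{n-1}$ (this is exactly why the recursion $L_n - L_{n-1} = 2L_{n-1}+9K+\ell$ was chosen). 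I would then prove by a \emph{second induction, on $j$}, that along the path the buffer $B$ never contains a locally infectable site of $\eta^{(j)}$ relative to the inside, i.e.\ no infection of $\eta^{(j)}$ inside $B$ can be ``explained'' using infections strictly interior to the buffer plus the region outside, and simultaneously that no spanned critical parallelogram of $\eta^{(j)}$ meets the innermost box. Since a spanned critical parallelogram meeting the inner box must, by Observation \ref{obs:obvious} and $\diam \leq K$, lie within $K$ of it and hence be entirely inside the region bounded by $B$, this gives the conclusion for $\eta = \eta^{(m)}$.

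To run the inner induction I would use the ingredients assembled in Section \ref{subsec:setting}. First, because the path is $n$-good, there are never more than $n$ disjointly spanned critical parallelograms; Lemma \ref{lem:AL:body} (Aizenman–Lebowitz) then says any large spanned parallelogram forces a spanned critical one, so the number of infected connected structures that can ``cross'' a buffer strip from outside is controlled. More precisely, suppose at step $j+1$ an update at site $s$ creates a new locally infectable site in the buffer (or creates a spanned critical parallelogram meeting the inner box). By Lemma \ref{lem:locally:infectable} applied in a translate of $R(-2K,-2K;2K,2K)$, or directly by the definition of spanning, this produces a spanned critical parallelogram $D$ near $s$. Restricting the configuration $\eta^{(j+1)}$ to one of the four buffer strips $S$ — which is a parallelogram of one of the two required shapes — and using that, by the inductive hypothesis of the inner induction, at least one infection of $\eta^{(j)}$ is trapped in the interior, I want to say that at most $n-1$ disjointly spanned critical parallelograms are available inside $S$. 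Then applying the \emph{outer} inductive hypothesis (Lemma \ref{lem:rec} for $n-1$) to the strip $S$ — after checking $S \in V(n-1,S)$, which follows because restricting an $n$-good legal path to $S$ yields an $(n-1)$-good legal path starting from a $0$-good configuration on $S$ — shows no spanned critical parallelogram lies in the middle third of $S$, contradicting the presence of $D$ there. The genuinely left/down-type obstruction — that the trapped infection cannot itself leak out through the left or bottom of $B$ without help — is precisely what the crossing hypothesis rules out: if it could, the infectable trail it leaves would, together with the half-plane boundary condition coming from the interior, realise the event $C_S^{u_1}$ or $C_S^{u_2}$ for the relevant strip $S$, contradicting $n$-goodness (no crossing) of $\eta^{(j+1)}$.

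There are a number of bookkeeping points I would be careful about. One must verify the geometry: that each of the four buffer strips, with the chosen widths $9K+\ell$ or $9K+h$ and the side constraints $c-a\geq 2L_n$, $d-b\geq 2H_n$, is a parallelogram of a shape to which the $(n-1)$ case applies, and that the union of the four strips genuinely separates the inner box (of ``radius'' $K$-enlarged) from the complement of $R$, with enough slack ($2K$ or $3K$) so that Lemma \ref{lem:locally:infectable}'s conclusion $D \subset R(-3K,\dots;3K,3K)$ keeps $D$ on the correct side. One must also handle the disjointness carefully: an infection trapped inside the buffer by the inner induction can be chosen disjoint from any witnessing sets for spanned critical parallelograms inside a buffer strip $S$, since strongly connected components at distance $>C_2'$ are genuinely separate (Observation \ref{obs:connected:component}), and this is what drops the budget from $n$ to $n-1$ inside $S$. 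Finally the restriction-of-path argument needs the remark just before Definition \ref{def:legal} about extending configurations on $R$ to $\bbZ^2$ by keeping the zero set, so that spanning and crossing events computed in $S$ with the external half-plane infected match the events defining $n$-goodness.

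I expect the main obstacle to be the inner induction step in the case where the trapped-infection argument must simultaneously (i) guarantee that one infection stays on the ``inner side'' of each of the four buffer strips at every time — this is the two-dimensional analogue of ``the leftmost/bottommost East particle never moves'' and is the part the naive East-model mapping does not rigorously provide — and (ii) convert that trapping into a genuine reduction of the droplet budget inside each strip, rather than merely inside the whole frame. Getting the quantifiers right (which infection is trapped by which strip, and why the trapping is preserved by a single update that does not touch that strip's interior, using both that moving ``inward'' is allowed but harmless and that moving ``outward'' through the left/bottom would be a crossing) is where the argument is delicate; the probabilistic estimates \eqref{eq:hyp:loc:inf}–\eqref{eq:hyp:span} and the deduction of $\bbE_\mu[\tau_0]\geq T$ from Lemma \ref{lem:rec} should then be comparatively routine, essentially a first-moment/union bound over the at most $T(LH)^{O(1)}$ relevant events along the process up to time $T$.
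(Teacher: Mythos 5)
Your overall architecture (outer induction on $n$, a buffer frame of four strips each of shape fitting the $(n-1)$ case, an inner induction along the legal path, no-crossing to block left/down leakage, and Lemma \ref{lem:locally:infectable} plus Lemma \ref{lem:AL:body} to convert any breach into a spanned critical parallelogram) matches the paper's proof. But there is a genuine gap at the heart of the inner induction: you run it \emph{forward} from $G(R)$, and your budget reduction from $n$ to $n-1$ inside a buffer strip $S$ is justified by ``at least one infection of $\eta^{(j)}$ is trapped in the interior.'' In the forward direction this trapped object does not exist: the path starts in $G(R)$, which contains no spanned critical parallelogram at all, and your own invariant (b) asserts the inner box stays droplet-free. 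So when a spanned critical parallelogram $D$ first threatens the buffer, nothing disjoint from $S$ is available to spend one unit of the $n$-droplet budget, the restriction to $S$ is only $n$-good rather than $(n-1)$-good, and $\cH_{n-1}$ cannot be invoked on $S$ (whose dimensions are only $\geq 2L_{n-1}$, not $\geq 2L_n$). The argument therefore does not close.

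The missing idea is \emph{reversibility} of legal paths, which the paper uses as the pivot of the whole proof. One first isolates the statement (Lemma \ref{lem:rec:main}): every $\eta\in V(n,R)\setminus G(R)$ has a spanned critical parallelogram \emph{outside} the middle box $R'$. This is proved by reversing the path: assume all spanned critical parallelograms of $\eta$ meet $R'$ and show no $n$-good legal path from $\eta$ can reach $G(R)$. Now the starting configuration of the (reversed) inner induction \emph{does} contain a droplet trapped near $R'$, and it is exactly this droplet — kept in place by the properties $\cP^1_j$–$\cP^4_j$, the dynamically chosen contour $\gamma$ of non-locally-infectable sites (which your fixed geometric frame does not supply for free; one needs the renormalisation argument of Claim \ref{claim:gamma}), and the no-crossing condition — that is disjoint from each strip $R_{\zeta,\xi}$ and legitimately lowers the budget there to $n-1$. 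Lemma \ref{lem:rec:main} is then fed back into a forward pass over the path to conclude that $\eta_{R'}\in V(n-1,R')$ and apply $\cH_{n-1}$. Without this reversal your (i) is not merely delicate but vacuous, so the proposal as written cannot be completed.
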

We first deduce Proposition \ref{prop:main} from Lemma \ref{lem:rec}.

\begin{proof}[Proof of Proposition \ref{prop:main}, assuming Lemma \ref{lem:rec}]

Clearly, it suffices to prove that $\bbP_{\mu}(\tau_0> 2T)\ge 1/2$. Let $\tau'=\inf\{t\ge 0,0\text{ is locally infectable in $\eta(t)$}\}$. Clearly, $\tau'\le \tau_0$. We denote $R=R(-L/2,-H/2;L/2,H/2)$ and define the following events.
\begin{itemize}
    \item[$E_1$:] $\tau'>0$, i.e.\ $0$ is not locally infectable in $\eta(0)$.
    \item[$E_2$:] There is no critical parallelogram spanned in $(\eta(0))_R$.
    \item[$E_3$:] For all $0\le t\le 2T$, no $n+1$ critical parallelograms are disjointly spanned in $(\eta(t))_R$.
    \item[$E_4$:] For all $0\le t\le 2T$, $R$ has no crossing for $(\eta(t))_R$.
\end{itemize}

We claim that $\bigcap_{i=1}^4E_i\subset \{\tau'>2T\}$. Indeed, let us assume $\bigcap_{i=1}^4E_i$ occurs. By Definition \ref{def:legal}, $E_2$ and $E_4$ imply $(\eta(0))_R\in G(R)$. Moreover, $E_3$ and $E_4$ give that $(\eta(t))_R$ is $n$-good for all $t\le 2T$. Thus, for all $t\le 2T$, the sequence of configurations $(\eta(t'))_R$ for $t'\in[0,t]$ yields a $n$-good legal path from $(\eta(0))_R\in G(R)$ to $(\eta(t))_R$, therefore $(\eta(t))_R\in V(n,R)$. Applying Lemma \ref{lem:rec}, this yields that for all $t\in[0,2T]$ there is no critical parallelogram spanned in $(\eta(t))_R$ intersecting \begin{multline*}R(-L/2+L_n,-H/2+H_n;L/2-L_n,H/2-H_n)\\\supset R\left(-\frac{9K+\ell}{2},-\frac{9K+h}{2};\frac{9K+\ell}{2},\frac{9K+h}{2}\right)\supset R(-3K,-3K;3K,3K),\end{multline*}
recalling \eqref{eq:LHbounds} and \eqref{eq:def:Ln}. Finally, notice that 0 being locally infectable depends only on the configuration in $R(-2K,-2K;2K,2K)$. Thus, if $E_1$ occurs, we can apply Lemma \ref{lem:locally:infectable} with $\eta^s:=(\eta(\tau'))_R$ and $\eta:=\lim_{t\to\tau',t<\tau'}(\eta(t))_R$, to deduce that there is a critical parallelogram spanned in $(\eta(\tau'))_R$ and contained in $R(-3K,-3K;3K,3K)$. Hence, $\tau'>2T$, as claimed.

We next brutally bound the probability of $E_1,\dots,E_4$. By \eqref{eq:hyp:loc:inf}, $1-\bbP_{\mu}(E_1)\le 1/8$. By the union bound on all (discrete) critical parallelograms intersecting $R$ (recall Observation \ref{obs:obvious}) and \eqref{eq:hyp:span} we have \[1-\bbP_{\mu}(E_2)\le O(LHK^2)\rho\le \frac{LHK^3\rho}{8}\le\frac{1}{8(TLH)^{1/(n+1)}}\le \frac 1 8.\]

In order to treat $E_3$ and $E_4$, recall from Section \ref{subsec:KCM} that the $\cU$-KCM may be constructed by associating to each site $x\in\bbZ^2$ a standard Possion process and attempting to update $\eta_x$ at the times given by its Poisson process. We will refer to these times as \emph{clock rings}. Let $N$ denote the number of clock rings in $R$ between $0$ and $2T$. Since $N$ is a Poisson random variable with parameter $2T|R|=\Theta(TLH)>1$, we have $\bbP_{\mu}(N\ge C_1TLH)\le 1/16$ (e.g.\ by the Bienaym\'e--Chebyshev inequality).

Let $\eta^{(j)}$ denote the restriction of the configuration to $R \cap\bbZ^2$ after the $j$-th clock ring in $R$. We next claim that $\eta^{(j)}$ is at equilibrium, the formal proof being postponed for the moment.
\begin{claim}
\label{obs:stationarity}
With the above notation, $\eta^{(j)}$ has the product Bernoulli distribution with parameter $1-q$ for all $j$.
\end{claim}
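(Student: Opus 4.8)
The plan is to prove the slightly stronger statement that the \emph{full} configuration $\eta(T_j)$, where $T_j$ denotes the (a.s.\ finite) time of the $j$-th clock ring in $R$, has law $\mu$; the claim then follows at once by restricting to the finite set $R\cap\bbZ^2$, on which $\mu$ is the asserted product measure (for $j=0$ there is nothing to do, as $\eta^{(0)}=(\eta(0))_R$ and $\eta(0)\sim\mu$ by assumption). The starting point is the elementary fact underlying the reversibility of $\mu$: for any fixed site $x\in\bbZ^2$, the ``constrained resampling'' map that, given $\eta$ and a fresh ${\rm Bernoulli}(1-q)$ variable $\xi$, replaces $\eta_x$ by $\xi$ if there is $U\in\cU$ with $x+U\subset\eta$ and leaves $\eta$ unchanged otherwise, sends $\mu$ to $\mu$. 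Indeed, the constraint at $x$ does not depend on $\eta_x$, and under $\mu$ the coordinate $\eta_x$ is ${\rm Bernoulli}(1-q)$ independently of all other coordinates; hence in both cases the updated value at $x$ is ${\rm Bernoulli}(1-q)$ and independent of the rest, so the law is preserved.

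The key observation is that this fact holds irrespective of \emph{when} and \emph{where} the resamplings are performed, provided those times and sites are chosen independently of the configuration. Accordingly, I would condition on the entire realisation of the driving Poisson clocks, both inside and outside $R$, but crucially \emph{not} on the associated update coins. Conditionally on the clocks, the ring times $T_1<T_2<\dots$ and their sites $x_1,x_2,\dots\in R\cap\bbZ^2$ become deterministic, the clocks have no accumulation point, and the trajectory $(\eta(t))_{t\ge0}$ is obtained from $\eta(0)\sim\mu$ by applying, at each clock ring (in $R$ or not), an independent constrained resampling at the relevant site. By the observation above and induction over the finitely many clock rings preceding any given time, $\eta(t)\sim\mu$ for every fixed $t\ge0$, for every fixed clock realisation.

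It remains to pass from fixed times to the random time $T_j$. For $s$ in the open interval between the clock ring immediately preceding $T_j$ (at any site) and $T_j$ itself, one has $\eta(s)=\eta(T_j^-)$, whence $\eta(T_j^-)\sim\mu$ conditionally on the clocks; applying the constrained resampling at $x_j$ with its fresh coin then gives $\eta(T_j)\sim\mu$, conditionally and hence unconditionally, and restricting to $R\cap\bbZ^2$ finishes the argument. There is no deep obstacle here, only one point that must not be overlooked: the conditioning is on the clocks but \emph{not} on the update coins, since freezing the coins would turn ``resample'' into ``set to a prescribed value'', which does not preserve $\mu$; keeping the coins random is exactly what makes each elementary move a genuine constrained resampling and lets the pathwise stationarity go through.
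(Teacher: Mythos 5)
Your overall strategy coincides with the paper's: condition on the realisation of the Poisson clocks but not on the update coins, use that a constrained resampling at a fixed site preserves $\mu$ (since the constraint does not look at the resampled coordinate), and conclude by noting that the time of the $j$-th ring in $R$ is measurable with respect to the clocks alone. However, there is a genuine gap at the step ``induction over the finitely many clock rings preceding any given time \dots for every fixed clock realisation''. The process lives on all of $\bbZ^2$, so conditionally on the clocks there are almost surely \emph{infinitely} many clock rings in $\bbZ^2\times[0,t]$ for any $t>0$, and their times are dense in $[0,t]$; your assertion that the clocks have no accumulation point is true site by site (or on any finite set of sites) but false globally. There is therefore no first relevant ring to start from and no finite list to induct over, so the induction as stated cannot be carried out. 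Nor can you sidestep this by invoking the known invariance of $\mu$ for the infinite-volume dynamics at fixed deterministic times: what you need is the \emph{conditional-on-clocks} version of that statement in order to substitute the clock-measurable random time $T_j$, and that conditional version is precisely what requires an argument.

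This is exactly the point to which the paper devotes most of its proof. It constructs, for the window $R$ at the time $t$ of the $j$-th ring in $R$, a backward cone of influence: an $\cF$-measurable family of a.s.\ finite sets $X_s\supset R\cap\bbZ^2$, $s\in[0,t]$, such that the configuration on $R$ at time $t$ is a function only of the configuration on $X_s$ at time $s$ and of the updates occurring inside the cones after time $s$. Only the finitely many clock rings falling inside the cones matter; ordering those chronologically yields a well-founded induction, and at each such ring the constraint is checked within the current $X_s$, so the restricted configuration remains product Bernoulli at every step. To repair your argument you need this cone construction (or an equivalent finite-speed-of-propagation input); with it, the rest of your proof --- including your correct insistence on conditioning on the clocks but not the coins --- goes through.
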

For any $\eta \in \{0,1\}^{R \cap \bbZ^2}$ we write $\cD_{n+1}(\eta) = \{$there are $n+1$ critical parallelograms disjointly spanned in $\eta\}$, so that $E_3$ does not occur iff $\bigcup_{j=0}^N\cD_{n+1}(\eta^{(j)})$ does not. Then the union bound and Claim \ref{obs:stationarity} give
\begin{align}1-\bbP_{\mu}(E_3)\le{}& \bbP_{\mu}(N\ge C_1TLH)+\sum_{j=0}^{C_1TLH}\bbP_{\mu}\left(\cD_{n+1}\left(\eta^{(j)}\right)\right)\nonumber\\
{}\le{}& \frac{1}{16}+2C_1TLH\mu(\cD_{n+1}(\eta_R)).\label{eq:E3:bound}
\end{align}
In order to bound $\mu(\cD_{n+1}(\eta_R))$, we use the union bound on all $(O(LHK^2))^{n+1}$ possible choices of $n+1$ critical parallelograms intersecting $R$ (by Observation \ref{obs:obvious}, a parallelogram spanned in $\eta_R$ intersects $R$) together with the BK inequality \cite{BK85} to get that \eqref{eq:E3:bound} is at most
\[\frac{1}{16}+2C_1TLH(O(LHK^2)\rho)^{n+1}\le \frac{1}{8},
\]
using \eqref{eq:hyp:span} in the last inequality.

Similarly, using \eqref{eq:hyp:cross}, we have
\[1-\bbP_{\mu}(E_4)\le \frac{1}{16}+2C_1TLH\cdot O(L+H)\max(\pd,\pl) \le \frac{1}{8}.\]
Putting the bounds $1-\bbP_\mu(E_i)\le 1/8$ for all $i$ together and recalling that $\bigcap_{i=1}^4E_i\subset \{\tau'>2T\}$, we conclude that
\[\bbP_{\mu}(\tau_0> 2T)\ge \bbP_\mu(\tau'>2T)\ge \bbP_\mu\left(\bigcap_{i=1}^4E_i\right)\ge 1- \sum_{i=1}^4(1-\bbP_{\mu}(E_i))\ge \frac{1}{2}.\qedhere\]
\end{proof}
\begin{proof}[Proof of Claim \ref{obs:stationarity}]
We fix $j$, and denote by $t$ the time of the $j$-th clock ring in $R$. For all $s\in[0,t]$, we will construct a (random) set of sites $X_s$ such that the configuration in $R$ at time $t$ can be reconstructed from the configuration in $X_s$ at time $s$ and the updates since time $s$. The construction is as follows. For all $s\in[0,t]$, $X_s$ contains all sites in $R$. Moreover, for any $x_0\in R$ which had a clock ring before time $t$, for any $s$ before this clock ring we add to $X_s$ the sites $x_1$ such that $x_1-x_0\in\bbU$, where $\bbU=\bigcup_{U\in\mathcal{U}}U$. Now, for any of these $x_1$ that had a clock ring before the clock ring at $x_0$, for any $s$ before this clock ring at $x_1$ we add the sites $x_2$ such that $x_2-x_1\in\bbU$, etc.
It is classical to see that the $X_s$ are a.s.\ finite. For example, one can see there are exactly $j$ clock rings in $R$ before time $t$, so clock rings in $R$ may add at most $j|\bbU|$ sites to the $X_s$. Now, for any of these sites that is not in $R$, the number of sites it brings can be bounded from above by a continuous time branching process with reproduction law $\delta_{|\bbU|+1}$, so stays finite.

We now consider all the clock rings such that there exists $s\in[0,t]$ such that the clock ring occurs in $X_s$ at time at most $s$, and order them chronologically. If $s$ is the time of the $j'$-th such clock ring, $\bar\eta^{(j')}$ will denote the configuration in $X_s$ at time $s$. Let us denote by $\mathcal{F}$ the sigma-algebra generated by all clock rings in $\bbZ^2$. In particular, for any $s\in[0,t]$, $X_s$ is $\cF$-measurable. By induction on $j'$, one can prove that conditionally to $\cF$, $\bar\eta^{(j')}$ has law $\mu$. Now, the $j'$ corresponding to $j$ depends only on $\cF$, hence $\eta^{(j)}$ has law $\mu$, which proves the claim.
\end{proof}

\begin{proof}[Proof of Lemma \ref{lem:rec}.]
We will prove the lemma by induction on $n$. For any $n$ let us call $\mathcal{H}_n$ the statement of the lemma for $n$. $\mathcal{H}_0$ holds by definition. Let $n \geq 1$ and assume that $\cH_{n-1}$ holds. Let $R=R(a,b;c,d)$ be such that $c-a \geq 2L_n$ and $d-b \geq 2H_n$. We define a smaller parallelogram $R'$ in the middle of $R$:
\[
R'=R(a+L_n-L_{n-1},b+H_n-H_{n-1};c-(L_n-L_{n-1}),d-(H_n-H_{n-1}))
\]
(see Figure \ref{fig:main}). We will prove Lemma \ref{lem:rec} by showing $\cH_{n}$, using the following result, whose proof we postpone for the moment.
\begin{lem}\label{lem:rec:main}
For all $\eta \in V(n,R) \setminus G(R)$ (recall Definition \ref{def:legal}), there exists a critical parallelogram not intersecting $R'$ that is spanned in $\eta$.
\end{lem}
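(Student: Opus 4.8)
I will argue by contradiction: assume $\eta\in V(n,R)\setminus G(R)$ but every critical parallelogram spanned in $\eta$ meets $R'$. Since $\eta$ is $n$-good it contains no crossing, so $\eta\notin G(R)$ does force at least one spanned critical parallelogram; fix such a $D^\star$, and fix an $n$-good legal path $(\eta^{(j)})_{0\le j\le m}$ with $\eta^{(0)}\in G(R)$ and $\eta^{(m)}=\eta$.

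The first ingredient is a deterministic \emph{confinement fact}: along any legal path in $R$, the set of infected sites is, at every step, contained in the bootstrap closure of the initial set of infected sites computed with an infected boundary condition outside $R$ (that is, with the constraint at a site $s$ declared satisfied as soon as $(s+U)\cap R$ is infected). This is proved by a one-line induction on the step — an infection step only adds a site already forced by that closure, a healing step only removes infections — and, since spanning is monotone and a witnessing strongly connected set remains available once it appears, it shows that every critical parallelogram spanned along the path, in particular $D^\star$, is already spanned using only the closure of $\eta^{(0)}$'s infections.

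Next I set up the nested parallelograms dictated by \eqref{eq:def:Ln}. Let $R_{\mathrm{out}}$ be $R$ shrunk by $(L_{n-1},H_{n-1})$ on each side; using $c-a\ge 2L_n$ and the analogue for $d-b$, one gets that $\mathcal H_{n-1}$ (the induction hypothesis of Lemma \ref{lem:rec}) applies to $R_{\mathrm{out}}$, whose middle is exactly $\tilde R:=R$ shrunk by $(2L_{n-1},2H_{n-1})$. From \eqref{eq:def:Ln} one checks $R'\subset\tilde R$, the buffer $B:=\tilde R\setminus R'$ having width $9K+\ell$, resp.\ $9K+h$ — much larger than the diameter $K$ of a critical parallelogram — so $D^\star$ (meeting $R'$) lies well inside $\tilde R$, hence well inside $R_{\mathrm{out}}$, and by the confinement fact it is also spanned in the restriction $\eta_{R_{\mathrm{out}}}$. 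If I can show $\eta_{R_{\mathrm{out}}}\in V(n-1,R_{\mathrm{out}})$ I am done: $\mathcal H_{n-1}$ applied to $R_{\mathrm{out}}$ forbids any critical parallelogram spanned in $\eta_{R_{\mathrm{out}}}$ from meeting its middle $\tilde R\supset R'$, contradicting the existence of $D^\star$. To get $\eta_{R_{\mathrm{out}}}\in V(n-1,R_{\mathrm{out}})$ I restrict the given path to $R_{\mathrm{out}}$ (recording it at its change times): restriction only weakens the update constraints, so this is again a legal path, and it starts from $\eta^{(0)}_{R_{\mathrm{out}}}\in G(R_{\mathrm{out}})$ — fewer infections cannot span a parallelogram, and a crossing-with-no-spanned-critical-parallelogram of a sub-strip of $R_{\mathrm{out}}$ that were not already excluded for $\eta^{(0)}$ would, after widening the strip to full height, still be such a crossing (as $\eta^{(0)}$ has no spanned critical parallelogram at all), contradicting $\eta^{(0)}\in G(R)$. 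The substantive point, to be proved by a second induction on the path step, is that this restricted path is $(n-1)$-good: it never acquires a crossing of $R_{\mathrm{out}}$ (same argument run along the whole $n$-good path, with any obstructing critical parallelogram localised in the frame $R\setminus R_{\mathrm{out}}\subset R\setminus R'$ via Lemma \ref{lem:AL:body} and a Lemma~\ref{lem:locally:infectable}-style argument), and at no step does it carry $n$ disjointly spanned critical parallelograms inside $R_{\mathrm{out}}$ — this last point being the bootstrap-percolation incarnation of the two-dimensional East argument sketched in Section~\ref{sec:sketch}: since the dynamics never creates a crossing, a critical parallelogram can become spanned inside $\tilde R$ only through an East-type chain of spanned critical parallelograms threaded across the buffer $B$, and threading such a chain across a buffer of width $\gg K$ keeps one spanned critical parallelogram trapped inside $\tilde R$, so that only $n-1$ remain available in $R_{\mathrm{out}}$; disruption of the buffer ``from inside'' is excluded by the no-crossing hypothesis, while disruption ``from outside'' with $n-1$ droplets is impossible by $\mathcal H_{n-1}$ on $R_{\mathrm{out}}$.

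The hard part is exactly this last ingredient: turning ``no crossing of the width-$(9K+\ell)$ buffer'' together with ``no critical parallelogram straddling it'' into the statement that one spanned critical parallelogram stays trapped inside $\tilde R$, and interleaving this buffer argument correctly with the path-step induction so that the $(n-1)$-goodness of the restricted path is maintained (in particular the bookkeeping of the one trapped droplet against the budget $n$). Everything else — the confinement fact, the stability of legality and of $0$-goodness under restriction, the localisation of obstructing critical parallelograms in the frame, and the arithmetic relating $R'\subset\tilde R\subset R_{\mathrm{out}}\subset R$ to \eqref{eq:def:Ln} — is routine bookkeeping.
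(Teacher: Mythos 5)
Your proposal correctly identifies the high-level shape of the argument (a buffer of width $\gg K$ around $R'$, the no-crossing hypothesis to forbid droplets from sliding across it, $\cH_{n-1}$ applied to subregions, a second induction along the path), but the step you yourself flag as ``the hard part'' is precisely the content of the lemma, and the scaffolding you build around it does not support it. Two structural problems. First, you run the path forward from $G(R)$ to $\eta$ and try to show the restriction to a single centered rectangle $R_{\mathrm{out}}\supset R'$ is $(n-1)$-good. But a spanned critical parallelogram ``trapped inside $\tilde R$'' lies \emph{inside} $R_{\mathrm{out}}$, so it does not reduce the droplet budget available in $R_{\mathrm{out}}$; to lower the count in a subregion you need the extra droplet to be \emph{disjoint} from that subregion. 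This is why the paper applies $\cH_{n-1}$ not to one centered rectangle but to four overlapping side rectangles $R_{\ell,\downarrow},R_{\ell,\uparrow},R_{h,\downarrow},R_{h,\uparrow}$, each chosen so that the tracked droplet (shown to lie in $R_\ell$, resp.\ $R_h$, resp.\ in the half-planes of Claim~\ref{claim:restriction1}) avoids it. Second, establishing ``at every step there is a spanned critical parallelogram outside the subregion'' is itself (a strengthening of) the statement of Lemma~\ref{lem:rec:main} applied to $\eta^{(j)}\in V(n,R)\setminus G(R)$, so your forward induction is circular unless you supply an independent mechanism for tracking the trapped droplet — which is exactly what is missing.

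The missing mechanism in the paper is the reversal of the legal path together with a shield construction. Reversibility turns the hypothesis ``every spanned critical parallelogram of $\eta$ meets $R'$'' into an \emph{initial} condition, from which one builds (Claim~\ref{claim:gamma}) a contour $\gamma$ of non-locally-infectable sites inside the frame $B'$ surrounding $R'$; the invariants $\cP_j^2$ and $\cP_j^4$ then guarantee that the shield survives and that $[\eta^{(j)}_{\ring\gamma}]$ never changes along the path, so the initial droplet's witnessing cluster persists inside $\ring\gamma$ for all time. The crossing events (Claims~\ref{claim:restriction1} and \ref{claim:betterP1}\ref{item:b}) then pin this persistent cluster down to $R_\ell\cap R_h$, and $\cH_{n-1}$ on the side rectangles prevents escape in the remaining directions and repairs the shield. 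None of the reversal, the contour, or the closure-invariance of the interior appears in your proposal; your ``confinement fact'' ($\eta^{(j)}\subset[\eta^{(0)}\cup R^c]$) is true but far too weak a substitute, since $G(R)$ controls spanning for $\eta^{(0)}$ itself and not for its closure, and it discards exactly the dynamical information the bottleneck argument needs. The peripheral bookkeeping you describe (restriction of legal paths, monotonicity of spanning, the arithmetic of \eqref{eq:def:Ln}) is fine, but the proof as proposed does not close.
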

Let $\eta \in V(n,R)$. The idea is that since $\eta$ is $n$-good, there are at most $n$ spanned critical parallelograms in $\eta$, and Lemma \ref{lem:rec:main} implies one of them has to be outside $R'$. Thus, there can only be $n-1$ such parallelograms in $R'$. This will allow us to use $\mathcal{H}_{n-1}$ in $R'$, which will prevent the existence of a spanned critical parallelogram intersecting $R(a+L_n,b+H_n;c-L_n,d-H_n)$. 

We now give the rigorous argument. Since $\eta \in V(n,R)$, there exists an $n$-good legal path from $G(R)$ to $\eta$, denoted by $(\eta^{(j)})_{0 \leq j \leq m}$. For any $j\in\{0,\dots,m\}$, we have $\eta^{(j)}\in V(n,R)$. Let us prove that there are at most $n-1$ critical parallelograms disjointly spanned in $\eta^{(j)}_{R'}$. If $\eta^{(j)}\in G(R)$, it is clear. If $\eta^{(j)}\not\in G(R)$, Lemma \ref{lem:rec:main} guarantees the existence of a critical parallelogram $D$ spanned in $\eta^{(j)}$ that does not intersect $R'$. Now, let $k$ be the maximal number of critical parallelograms disjointly spanned in $\eta^{(j)}_{R'}$, and $D_1,\dots,D_k$ be such parallelograms. Since $D$ does not intersect $R'$, we have that $D,D_1,\dots,D_k$ are critical parallelograms disjointly spanned in $\eta^{(j)}$. Since $\eta^{(j)}$ is $n$-good, it contains at most $n$ disjointly spanned critical parallelograms, so $k\leq n-1$. Furthermore, if $R'$ had a crossing for $\eta^{(j)}_{R'}$, $R$ would have a crossing for $\eta^{(j)}$, which is not the case, as $\eta^{(j)}$ is $n$-good. Hence, $R'$ has no crossing for $\eta^{(j)}_{R'}$. We deduce that $\eta^{(j)}_{R'}$ is $(n-1)$-good in the parallelogram $R'$. Thus, if we consider the path $(\eta^{(j)}_{R'})_{0 \leq j \leq m}$ and keep only the steps $\eta^{(j)}_{R'}$ that differ from $\eta^{(j-1)}_{R'}$, we obtain a $(n-1)$-good legal path, going from $G(R')$ to $\eta_{R'}$. This implies $\eta_{R'}\in V(n-1,R')$. Therefore, we can apply $\mathcal{H}_{n-1}$ to $\eta_{R'}$, which yields that there is no spanned critical parallelogram in $\eta_{R'}$ intersecting $R(a+L_n,b+H_n;c-L_n,d-H_n)$. This implies $\mathcal{H}_n$ and concludes the proof of Lemma \ref{lem:rec}.
\end{proof}

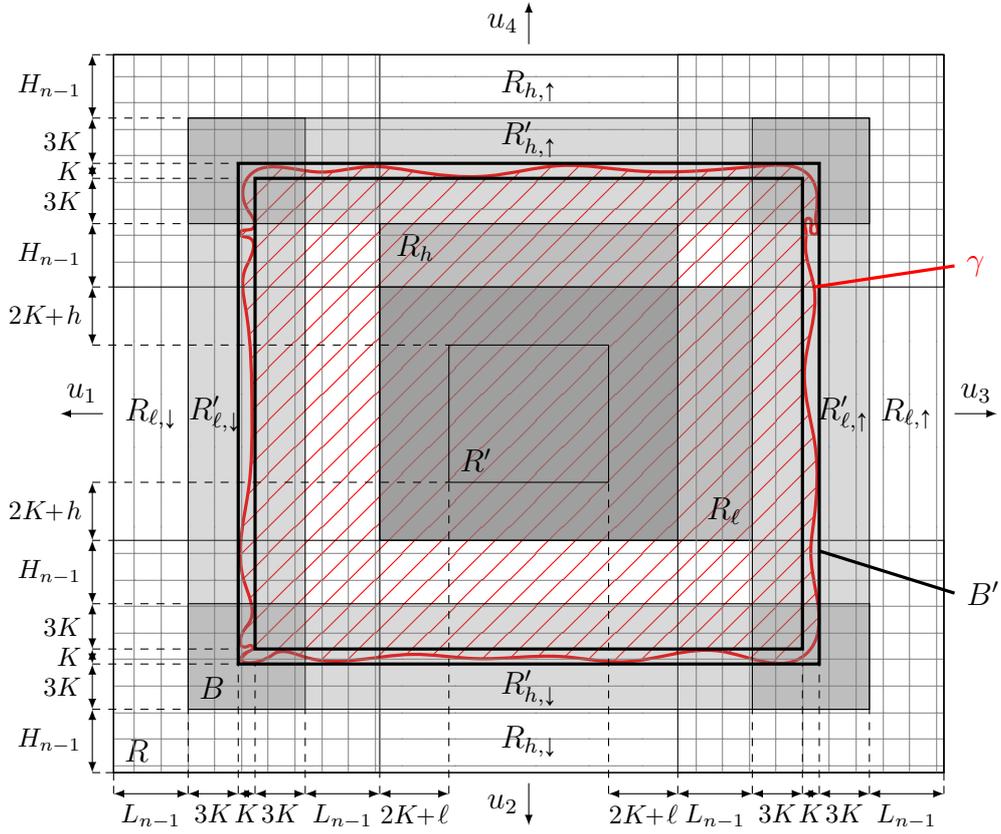
\begin{figure}
    \centering
    \begin{tikzpicture}[scale=0.14]
    \draw (-39,-34)--(39,-34)--(39,34)--(-39,34)--cycle ;
    \draw (-39,-34) node [above right] {$R$} ;
    \draw[fill opacity=1,pattern=my north east lines, pattern color=red, draw=red,very thick] (-26,-22) to [closed, curve through={(-26.8,-22.8)..(-24,-22.8)..(-21.2,-23)..(-11,-22.8)..(-5,-23.2)..(3,-23)..(9,-23.5)..(18,-22.5)..(26,-22.8)..(27,-17)..(26.5,-13)..(26.8,-10)..(26.9,-3)..(25.9,4)..(26.8,12)..(26,16)..(26,17.5)..(26.3,18.5)..(26.9,17)..(27,18)..(27,19)..(26.3,22.6)..(22.4,23.5)..(16,23)..(10,23)..(3,23.5)..(-4,22.5)..(-11,23)..(-15,23.4)..(-20,22.9)..(-26.5,22.2)..(-26.2,19)..(-26,17.5)..(-27.1,17)..(-26.4,16.7)..(-26.5,14)..(-26.6,10)..(-26,0)..(-26.3,-7)..(-26.7,-16)..(-26.4,-20)}] (-26.6,-22.1);
    \draw[black,fill=gray,fill opacity=0.5] (-14,-12)--(14,-12)--(14,18)--(-14,18)--cycle;
    \draw (-13.5,18) node[below right] {$R_h$};
    \draw[black,fill=gray,fill opacity=0.5] (-14,-12)--(21,-12)--(21,12)--(-14,12)--cycle;
    \draw (21,-11.5) node [above left] {$R_\ell$};
    \draw (7.5,6.5)--(7.5,-6.5)--(-7.5,-6.5)--(-7.5,6.5)--cycle ;
    \draw (-7.5,-6.5) node [above right] {$R'$} ;
    \draw[pattern=my vertical lines, pattern color=gray] (-39,-34)--(-14,-34)--(-14,34)--(-39,34)--cycle;
    \draw (-35.5,0) node {$R_{\ell,\downarrow}$};
    \draw[pattern=my horizontal lines, pattern color=gray] (-39,-34)--(39,-34)--(39,-12)--(-39,-12)--cycle;
    \draw (0,-31) node {$R_{h,\downarrow}$};
    \draw[pattern=my vertical lines, pattern color=gray] (14,-34)--(39,-34)--(39,34)--(14,34)--cycle;
    \draw (35.5,0) node {$R_{\ell,\uparrow}$};
    \draw[pattern=my horizontal lines, pattern color=gray] (-39,12)--(39,12)--(39,34)--(-39,34)--cycle;
    \draw (0,31) node {$R_{h,\uparrow}$};
    \draw[black,fill=gray,fill opacity=0.3](-32,-28)--(-21,-28)--(-21,28)--(-32,28)--cycle;
    \draw[black,fill=gray,fill opacity=0.3] (-32,-28)--(32,-28)--(32,-18)--(-32,-18)--cycle;
    \draw[black,fill=gray,fill opacity=0.3] (32,-28)--(21,-28)--(21,28)--(32,28)--cycle;
    \draw[black,fill=gray,fill opacity=0.3] (-32,28)--(32,28)--(32,18)--(-32,18)--cycle;
    \draw[very thick] (-27.29,-23.71)--(27.29,-23.71)--(27.29,23.71)--(-27.29,23.71)--cycle ;
    \draw[very thick] (-25.71,-22.29)--(25.71,-22.29)--(25.71,22.29)--(-25.71,22.29)--cycle ;
    \draw[red,very thick] (26.8,12)--(40,14) node [right, red] {$\gamma$};
    \draw (-29.5,0) node{$R_{\ell,\downarrow}'$};
    \draw (0,-26) node {$R_{h,\downarrow}'$};
    \draw (29.5,0) node{$R_{\ell,\uparrow}'$}; 
    \draw (0,26) node {$R_{h,\uparrow}'$};
    \draw[very thick] (27.29,-13)--(40,-17) node [right] {$B'$};
    \draw (-32,-28) node [above right] {$B$} ; 
    \draw[->,>=latex] (-40,0)--(-44,0) node [midway,above] {$u_1$} ;
    \draw[->,>=latex] (0,-35)--(0,-39) node [midway,left] {$u_2$}; 
    \draw[->,>=latex] (40,0)--(44,0) node [midway,above] {$u_3$} ;
    \draw[->,>=latex] (0,35)--(0,39) node [midway,left] {$u_4$}; 
    \draw[<->,>=latex] (-39,-36)--(-32,-36) node [midway,below]{\footnotesize{$L_{n-1}$}};
    \draw[densely dashed] (-39,-36)--(-39,-34);
    \draw[dashed] (-32,-36)--(-32,-28) ;
    \draw[<->,>=latex] (-32,-36)--(-27.29,-36) node [midway,below]{\footnotesize{$3K$}};
    \draw[dashed] (-25.71,-36)--(-25.71,-23.71);
    \draw[<->,>=latex] (-25.71,-36)--(-27.29,-36) node [midway,below]{\footnotesize{$K$}};
    \draw[dashed] (-27.29,-36)--(-27.29,-22.29);
    \draw[<->,>=latex] (-25.71,-36)--(-21,-36) node [midway,below]{\footnotesize{$3K$}};
    \draw[dashed] (-21,-36)--(-21,-28);
    \draw[<->,>=latex] (-21,-36)--(-14,-36) node [midway,below]{\footnotesize{$L_{n-1}$}};
    \draw[dashed] (-14,-36)--(-14,-34);
    \draw[<->,>=latex] (-14,-36)--(-7.5,-36) node [midway,below]{\footnotesize{$2K\!\!+\!\ell$}};
    \draw[dashed] (-7.5,-36)--(-7.5,-6.5);
    \draw[<->,>=latex] (39,-36)--(32,-36) node [midway,below]{\footnotesize{$L_{n-1}$}};
    \draw[densely dashed] (39,-36)--(39,-34);
    \draw[dashed] (32,-36)--(32,-28) ;
    \draw[<->,>=latex] (32,-36)--(27.29,-36) node [midway,below]{\footnotesize{$3K$}};
    \draw[dashed] (25.71,-36)--(25.71,-23.71);
    \draw[<->,>=latex] (25.71,-36)--(27.29,-36) node [midway,below]{\footnotesize{$K$}};
    \draw[dashed] (27.29,-36)--(27.29,-22.29);
    \draw[<->,>=latex] (25.71,-36)--(21,-36) node [midway,below]{\footnotesize{$3K$}};
    \draw[dashed] (21,-36)--(21,-28);
    \draw[<->,>=latex] (21,-36)--(14,-36) node [midway,below]{\footnotesize{$L_{n-1}$}};
    \draw[dashed] (14,-36)--(14,-34);
    \draw[<->,>=latex] (14,-36)--(7.5,-36) node [midway,below]{\footnotesize{$2K\!\!+\!\ell$}};
    \draw[dashed] (7.5,-36)--(7.5,-6.5);
    \draw[<->,>=latex] (-41,-34)--(-41,-28) node [midway,left] {\footnotesize{$H_{n-1}$}};
    \draw[densely dashed] (-41,-34)--(-39,-34);
    \draw[dashed] (-41,-28)--(-32,-28);
    \draw[<->,>=latex] (-41,-28)--(-41,-23.71) node [midway,left] {\footnotesize{$3K$}};
    \draw[dashed] (-41,-23.71)--(-25.71,-23.71);
    \draw[<->,>=latex] (-41,-23.71)--(-41,-22.29) node [midway,left] {\footnotesize{$K$}};
    \draw[dashed] (-41,-22.29)--(-27.29,-22.29);
    \draw[<->,>=latex] (-41,-22.29)--(-41,-18) node [midway,left] {\footnotesize{$3K$}};
    \draw[dashed] (-41,-18)--(-32,-18);
    \draw[<->,>=latex] (-41,-18)--(-41,-12) node [midway,left] {\footnotesize{$H_{n-1}$}};
    \draw[dashed] (-41,-12)--(-39,-12);
    \draw[<->,>=latex] (-41,-12)--(-41,-6.5) node [midway,left] {\footnotesize{$2K\!\!+\!h$}};
    \draw[dashed] (-41,-6.5)--(-7.5,-6.5);
    \draw[<->,>=latex] (-41,34)--(-41,28) node [midway,left] {\footnotesize{$H_{n-1}$}};
    \draw[densely dashed] (-41,34)--(-39,34);
    \draw[dashed] (-41,28)--(-32,28);
    \draw[<->,>=latex] (-41,28)--(-41,23.71) node [midway,left] {\footnotesize{$3K$}};
    \draw[dashed] (-41,23.71)--(-25.71,23.71);
    \draw[<->,>=latex] (-41,23.71)--(-41,22.29) node [midway,left] {\footnotesize{$K$}};
    \draw[dashed] (-41,22.29)--(-27.29,22.29);
    \draw[<->,>=latex] (-41,22.29)--(-41,18) node [midway,left] {\footnotesize{$3K$}};
    \draw[dashed] (-41,18)--(-32,18);
    \draw[<->,>=latex] (-41,18)--(-41,12) node [midway,left] {\footnotesize{$H_{n-1}$}};
    \draw[dashed] (-41,12)--(-39,12);
    \draw[<->,>=latex] (-41,12)--(-41,6.5) node [midway,left] {\footnotesize{$2K\!\!+\!h$}};
    \draw[dashed] (-41,6.5)--(-7.5,6.5);
    \end{tikzpicture}  
    \caption{The setting of the proof of Lemma \ref{lem:rec:main}. For the figure we assume that $u_3=0$ and $u_4=\pi/2$. $B'$ is the frame with thickened boundary, $R_\ell$ and $R_h$ are the overlapping regions in dark gray. The regions $R_{\ell,\downarrow}'$, $R_{h,\downarrow}'$, $R_{\ell,\uparrow}'$ and $R_{h,\uparrow}'$ are in lighter gray and the frame formed by their union is $B$. The horizontally (resp. vertically) hatched regions are $R_{h,\downarrow}$ and $R_{h,\uparrow}$ (resp. $R_{\ell,\downarrow}$ and $R_{\ell,\uparrow}$). The contour inside $B'$ is $\gamma$ and its diagonally hatched interior is $\ring\gamma$. All the regions drawn are closed subsets of $\bbR^2$ with the exception of $R_\ell$, $R_h$ and $\ring\gamma$, which are open. The thicker version, $\bar\gamma$, of $\gamma$ and the set $F\subset\bbZ^2$ defined in \eqref{eq:def:F} are not drawn.}
    \label{fig:main}
\end{figure}

Consequently, it remains to prove Lemma \ref{lem:rec:main}.
\begin{proof}[Proof of Lemma \ref{lem:rec:main}]
Another way to state Lemma \ref{lem:rec:main} is to say that if we start a $n$-good legal path from $G(R)$, we cannot reach a configuration in which all spanned critical parallelograms intersect $R'$. As legal paths are reversible, we will prove that if we start an $n$-good legal path from a configuration in which all spanned critical parallelograms intersect $R'$, it is impossible to reach $G(R)$, because there will always be a spanned critical parallelogram ``near'' $R'$. 

In order to do that, we start by introducing the following geometric regions, represented in Figure~\ref{fig:main}.
\begin{align*}
R_\ell={}&\ring R(a+2L_{n-1}+7K,b+2H_{n-1}+7K;c-L_{n-1}-7K,d-2H_{n-1}-7K)\\
R_{\ell,\downarrow}={}&R(a,b;a+2L_{n-1}+7K,d)\\
R_{\ell,\uparrow}={}&R(c-2L_{n-1}-7K,b;c,d)\\
R'_{\ell,\downarrow}={}&R(a+L_{n-1},b+H_{n-1},a+L_{n-1}+7K,d-H_{n-1})\\
R'_{\ell,\uparrow}={}&R(c-L_{n-1}-7K,b+H_{n-1},c-L_{n-1},d-H_{n-1})
\end{align*}
$R_\ell$ will contain the aforementioned spanned critical parallelogram ``near'' $R'$. In the parallelograms $R_{\ell,\downarrow}$ and $R_{\ell,\uparrow}$, we will use $\mathcal{H}_{n-1}$, and $R'_{\ell,\downarrow}$, $R'_{\ell,\uparrow}$ are the respective ``central'' parallelograms inside that will not be intersected by spanned critical parallelograms. We also define similar regions with index $h$ instead of $\ell$.

We further define the two frames (see Figure \ref{fig:main})
\begin{align*}B={}&R_{\ell,\downarrow}' \cup R_{\ell,\uparrow}' \cup R_{h,\downarrow}' \cup R_{h,\uparrow}'\\
B'={}&\begin{multlined}[t]
R(a+L_{n-1}+3K,b+H_{n-1}+3K;c-L_{n-1}-3K,d-H_{n-1}-3K)\setminus \\
\ring R(a+L_{n-1}+4K,b+H_{n-1}+4K;c-L_{n-1}-4K,d-H_{n-1}-4K).
\end{multlined}
\end{align*}
As the union of $R'_{\ell,\downarrow}$, $R'_{\ell,\uparrow}$, $R'_{h,\downarrow}$, $R'_{h,\uparrow}$, the frame $B$ will not be intersected by spanned critical parallelograms. $B$ will be a ``buffer'' between the inner and outer parts of $R$, and $B'$ its central part. We will be able to find a contour $\gamma$ contained in $B'$ such that the dynamics in the interior of the contour is ``isolated'' from the dynamics outside in a specific way. This is the goal of the following claim.

\begin{claim}
\label{claim:gamma}
Let $\eta\in\{0,1\}^{R \cap \bbZ^2}$ be such that every critical parallelogram spanned in $\eta$ intersects $R'$. Then there exists a closed contour $\gamma\subset \bbR^2$ (that is, a self-avoiding and closed path obtained by connecting sites of $\bbZ^2$ by straight lines linking a site to its left, top, right or bottom neighbour) satisfying the following properties:
\begin{itemize}
    \item $\gamma\subset B'$.
    \item $d(\gamma,R\setminus B')\ge C_1^2$.
    \item Every $s\in\bar\gamma$ is not locally infectable in $\eta$, where
    \[\bar\gamma=\{s\in\bbZ^2|d(s,\gamma)\le C_1\}.\]
    \item The (topologically open) interior, $\ring\gamma\subset \bbR^2$, defined by $\gamma$ contains $R'$.
\end{itemize}
\end{claim}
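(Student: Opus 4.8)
The plan is to argue by contradiction using planar duality. Write $F$ for the set of sites locally infectable in $\eta$, and let $\tilde B\subset B'$ be the sub-frame obtained by pushing the outer boundary of $B'$ inwards and its inner boundary outwards by $C_1^2$; since $K\gg C_1^2$ this is still a frame of width $\ge K/2$, and by \eqref{eq:def:Ln} the parallelogram $R'$ is contained in the bounded complementary component of $\tilde B$. Set $W=\{s\in\tilde B\cap\bbZ^2 : d(s,F)\le C_1+2\}$. By the standard planar duality for annuli, either $(\tilde B\cap\bbZ^2)\setminus W$ contains a lattice circuit winding around the hole of $\tilde B$, or $W$ contains a $\ast$-connected lattice path $S$ joining the two boundary components of $\tilde B$. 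In the first case such a circuit $\gamma$ immediately satisfies all four requirements: it lies in $\tilde B\subset B'$ (first bullet), winds around $R'$ (last bullet), is at distance $\ge C_1^2$ from $R\setminus B'$ because in a parallelogram coordinate gaps control Euclidean distances (second bullet), and its vertices lie outside $W$, so every $s\in\bar\gamma$ satisfies $d(s,F)>C_1+2-(C_1+1)>0$ (third bullet). So it remains to rule out the second case.

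Assume a crossing path $S$ as above and put $\tilde S=\{s\in F : d(s,S)\le C_1+2\}$. Since consecutive sites of $S$ are within $\sqrt2$ of each other and each is within $C_1+2$ of $\tilde S$, the set $\tilde S$ is strongly connected (as $2(C_1+2)+\sqrt2<C_2'$); it meets the $(C_1+3)$-neighbourhood of each boundary component of $\tilde B$, hence contains sites $s_1$ and $s_m$ with $d(s_1,s_m)\ge K/3$, say; and $\tilde S$ lies in the $(C_1+3)$-neighbourhood of $B'$, so by \eqref{eq:def:Ln} every $s\in\tilde S$ satisfies that $s+R(-4K,-4K;4K,4K)$ is disjoint from $R'$ (the offsets $3K,4K$ defining $B'$ are much smaller than $2L_{n-1}+9K+\ell$, and one of the four coordinate inequalities witnessing $s\in B'$ survives the translation). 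For each $s\in\tilde S$, local infectability gives $s\in[\eta\cap W_s]$ with $W_s=s+R(-2K,-2K;2K,2K)$; let $Y_s$ be the strongly connected component of $s$ in $[\eta\cap W_s]$, which by Observation~\ref{obs:connected:component} satisfies $Y_s=[\eta\cap Y_s]$, and let $D_0(s)\subset W_s$ be the smallest parallelogram containing $Y_s$. Then $D_0(s)$ is spanned both in $\eta$ and in $\eta_{D_0(s)}$.

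Now I produce a critical spanned parallelogram disjoint from $R'$, contradicting the hypothesis on $\eta$. If $\diam(Y_s)>K$ for some $s$, then $D_0(s)$ is spanned with diameter in $(K,C_0K]$ for a constant $C_0=C_0(\cT)$, so Lemma~\ref{lem:AL:body} with $k=K$ gives a critical parallelogram $D$ spanned in $\eta_{D_0(s)}$, hence in $\eta$; by Observation~\ref{obs:obvious}, $D$ meets $D_0(s)\subset W_s$, and $\diam(D)\le K$ forces $D\subset s+R(-3K,-3K;3K,3K)$, which is disjoint from $R'$. Otherwise $\diam(Y_s)\le K$ for all $s\in\tilde S$; choose a walk $s_1,\dots,s_m$ inside $\tilde S$ with consecutive sites within $C_2'$, and set $Z_i=\bigcup_{j\le i}Y_{s_j}$. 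Each $Z_i$ is strongly connected and satisfies $Z_i\subset[\eta\cap Z_i]$; moreover $\diam(Z_{i+1})\le\diam(Z_i)+C_2'+K\le\diam(Z_i)+2K$ while $\diam(Z_m)\ge d(s_1,s_m)\ge K/3$, so for the first index $i^\ast$ with $\diam(Z_{i^\ast})\ge K/C_1$ one has $K/C_1\le\diam(Z_{i^\ast})\le 3K$. The smallest parallelogram $D^\ast$ containing $Z_{i^\ast}$ is spanned in $\eta$ and in $\eta_{D^\ast}$, has diameter in $[K/C_1,3K]$, and, since $s_1\in Z_{i^\ast}$ and $\diam(Z_{i^\ast})\le 3K$, satisfies $D^\ast\subset s_1+R(-3K,-3K;3K,3K)$. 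If $\diam(D^\ast)\le K$ then $D^\ast$ is itself critical and disjoint from $R'$; otherwise Lemma~\ref{lem:AL:body} with $k=K$ gives a critical $D$ spanned in $\eta$ meeting $D^\ast$ (Observation~\ref{obs:obvious}), whence $D\subset s_1+R(-4K,-4K;4K,4K)$, again disjoint from $R'$. In all cases we contradict the assumption that every critical spanned parallelogram meets $R'$, so the second alternative is impossible and the first delivers the contour $\gamma$.

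\emph{The hard part} lies entirely in the second case: one must distil from a mere crossing of locally infectable sites a strongly connected set of \emph{exactly critical} diameter whose location is controlled. This is why one works throughout in the $u_3,u_4$-coordinates (so that ``$\diam\le K$'' propagates with no angle-dependent loss), treats oversized $Y_s$ separately (so the chain $Z_i$ never jumps by more than $2K$, keeping $D^\ast$ within $O(K)$ of $\tilde S$), and relies on the specific gap between the offsets $3K,4K$ in $B'$ and the offset $2L_{n-1}+9K+\ell$ defining $R'$. The planar-duality dichotomy and the verification of the four bullet points in the first case are routine once $\tilde B$ and $W$ are set up with this much slack.
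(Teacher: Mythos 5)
Your proof is correct and follows essentially the same strategy as the paper's: a planar duality dichotomy in the frame $B'$ producing either the desired contour or a crossing of locally infectable sites, and in the latter case the extraction (via Observation \ref{obs:connected:component} and Lemma \ref{lem:AL:body}) of a spanned critical parallelogram located within $O(K)$ of $B'$ and hence disjoint from $R'$, contradicting the hypothesis. The only differences are cosmetic: you run the duality directly on $\bbZ^2$ with the thickened set $W$ where the paper renormalises onto a hexagonal tiling at scale $C_1^3$, and you assemble the critical-diameter strongly connected set by a stopping-time union of the local components $Y_s$ where the paper takes the strongly connected component of the closure in an enlarged side of the frame.
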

\begin{proof}
We proceed by renormalization. Let $H$ denote the regular hexagon centered at the origin with diameter $C_1^3$ and having two horizontal sides. Consider the tiling of the plane with translates of $H$ and denote by $\bbT$ the triangular lattice formed by their centers. Let $T=\{t\in\bbT|H+t\subset B'\}$ be the sites of $\bbT$ corresponding to $B'$. We say that a site $t\in T$ is \emph{open} if no site in $(t+H)\cap\bbZ^2$ is locally infectable in $\eta$.

If there exists a contour of open sites in $T$ surrounding $R'$ (where a contour in $\bbT$ is a self-avoiding and closed path in the graph $(\bbT,\{(t,t')\in\bbT^2|t+H$ and $t'+H$ share a side$\}$), we may choose $\gamma$ approximating this contour, which clearly satisfies the conditions of the claim. Assume for a contradiction that such a contour does not exist. In this case, there is a path of closed sites in $T$ from the inner to the outer boundary of $T$. In particular, this path yields a strongly connected (recall Definition \ref{def:span}) set $X$ of sites of $\bbZ^2$ that are locally infectable in $\eta$, with diameter at least $K-4C_1^3$, contained in either the ``left part'' of the frame $B'$, defined as \[R''_{\ell,\downarrow} = R(a+L_{n-1}+3K,b+H_{n-1}+3K;a+L_{n-1}+4K,d-H_{n-1}-3K),\]
(see Figure \ref{fig:main}) or in the top, right or bottom part of $B'$, defined similarly. Without loss of generality, assume that $X$ is contained in $R''_{\ell,\downarrow}$. Since the sites of $X$ are locally infectable in $\eta$, they are infectable in $\eta_{R'''_{\ell,\downarrow}}$, where $R'''_{\ell,\downarrow}$ is ``$R''_{\ell,\downarrow}$ enlarged by $2K$ on each side,'' that is \[R'''_{\ell,\downarrow} = R(a+L_{n-1}+K,b+H_{n-1}+K;a+L_{n-1}+6K,d-H_{n-1}-K).\]
$X$ is then a strongly connected set contained in $[\eta_{R'''_{\ell,\downarrow}}]$. 

We denote by $X'$ the strongly connected component of $[\eta_{R'''_{\ell,\downarrow}}]$ containing $X$, and we consider the smallest parallelogram $D$ containing $X'$. We claim that $D$ is spanned in $\eta_{R'''_{\ell,\downarrow}}$. Indeed, Observation \ref{obs:connected:component} yields $X'=[\eta_{R'''_{\ell,\downarrow}} \cap X'] \subset [\eta_{R'''_{\ell,\downarrow}} \cap D]$, so $X'$ is a strongly connected set in $[\eta_{R'''_{\ell,\downarrow}} \cap D]$ such that the smallest parallelogram containing $X'$ is $D$, which means that $D$ is spanned in $\eta_{R'''_{\ell,\downarrow}}$. Furthermore, $D$ contains $X$, which has diameter at least $K-4C_1^3$. Therefore, by Lemma \ref{lem:AL:body} there exists a critical parallelogram $D'$ spanned in $\eta_{R'''_{\ell,\downarrow}}$. $D'$ is then spanned in $\eta$. Moreover, by Observation \ref{obs:obvious}, $D'$ intersects $R'''_{\ell,\downarrow}$, and, since $D'$ is critical, it has diameter at most $K$. Hence, $D'$ is contained in $R'_{\ell,\downarrow}$. We deduce the existence of a critical parallelogram spanned in $\eta$ not intersecting $R'$, hence a contradiction.
\end{proof}

Now that most of the needed geometric regions are defined, we may start the proof of Lemma \ref{lem:rec:main} itself. We fix $\eta \in \{0,1\}^{R \cap \bbZ^2} \setminus G(R)$ such that every critical parallelogram spanned in $\eta$ intersects $R'$. We will prove that there is no $n$-good legal path from $\eta$ to $G(R)$. Since legal paths can be reversed, this implies $\eta \not\in V(n,R)$, which proves Lemma \ref{lem:rec:main}.

We fix a contour $\gamma$ as provided by Claim \ref{claim:gamma} for the configuration $\eta$ (see Figure \ref{fig:main}), as well as its thickened version $\bar\gamma$ and its interior $\ring\gamma$. In order to isolate the dynamics in $\ring\gamma$ from the dynamics outside, we need to define 
\begin{equation}
\label{eq:def:F}
F=\{s\in B'\cap\bbZ^2|s\text{ is not locally infectable in }\eta\}.
\end{equation}
We then have $\bar\gamma\subset F$. $F$ will ``shield $\ring\gamma$ from outside interference.''

Let $(\eta^{(j)})_{0 \leq j \leq m}$ be an $n$-good legal path with $\eta^{(0)}=\eta$. We will use an induction on $j \in \{0,\dots,m\}$ to prove that $\eta^{(m)} \not\in G(R)$. More precisely, we will prove by induction on $j$ that the following properties hold for $j \in \{0,\dots,m\}$.
\begin{itemize}
    \item[$\cP_j^1$] For every $\zeta \in \{\ell,h\}$, there exists a critical parallelogram contained in $R_\zeta$ spanned in $\eta^{(j)}$.
    \item[$\cP_j^2$] The sites of $F$ are not locally infectable in $\eta^{(j)}$.
    \item[$\cP_j^3$] For every $(\zeta,\xi) \in \{\ell,h\}\times\{\downarrow,\uparrow\}$, $\eta^{(j)}_{R_{\zeta,\xi}} \in V(n-1,R_{\zeta,\xi})$.
    \item[$\cP_j^4$] $[\eta^{(j)}_{\ring\gamma}] = [\eta^{(0)}_{\ring\gamma}]$.
\end{itemize}
$\cP_j^4$ is what we mean by ``the dynamics inside $\ring\gamma$ is isolated from the outside'': from time 0 to time $j$, the sites infectable by the configuration in $\ring\gamma$ have not changed. $\cP_j^2$ is necessary for $F$ to play its role as a shield throughout the path. $\cP_j^3$, along with $\mathcal{H}_{n-1}$, will ensure that there are no spanned critical parallelograms at the center of the $R_{\zeta,\xi}$, which will help to preserve the shield. Finally, if $\cP^1_m$ is satisfied, then there exists a critical parallelogram spanned in $\eta^{(m)}$, so $\eta^{(m)} \not \in G(R)$, which proves Lemma~\ref{lem:rec:main}, so it suffices to establish the induction.

We begin with a quick claim.
\begin{claim}\label{claim:rec:easy}
There exists a critical parallelogram spanned in $\eta$ intersecting $R'$.
\end{claim}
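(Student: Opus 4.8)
The plan is to unwind the definitions; this is precisely the ``quick claim'' announced in the text. We are inside the proof of Lemma~\ref{lem:rec:main}, with $\eta\in\{0,1\}^{R\cap\bbZ^2}\setminus G(R)$ such that every critical parallelogram spanned in $\eta$ intersects $R'$, and with an $n$-good legal path $(\eta^{(j)})_{0\le j\le m}$ satisfying $\eta^{(0)}=\eta$ at our disposal.

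First I would observe that $\eta=\eta^{(0)}$, being the initial configuration of an $n$-good legal path, is itself $n$-good in the sense of Definition~\ref{def:legal}; in particular $R$ has no crossing for $\eta$. Next, recall that $G(R)$ is the set of $0$-good configurations, so a configuration in $\{0,1\}^{R\cap\bbZ^2}$ lies outside $G(R)$ exactly when it contains a spanned critical parallelogram or $R$ has a crossing for it. Since $\eta\notin G(R)$ and the crossing possibility has just been ruled out, $\eta$ must contain a spanned critical parallelogram. Finally, by the standing hypothesis on $\eta$, every such parallelogram meets $R'$, which yields the claim.

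I do not expect any genuine obstacle: the argument is a pure unfolding of Definitions~\ref{def:span}, \ref{def:crossing} and~\ref{def:legal}. The one point deserving attention is that ``$\eta\notin G(R)$'' on its own does not force a spanned critical parallelogram to exist — a crossing event $C_R^{u_1}$ is constructed so as to carry \emph{no} spanned critical parallelogram, so a configuration could fail to be in $G(R)$ merely because of a crossing. The needed extra information, namely that $R$ has no crossing for $\eta$, comes for free from $\eta$ being $n$-good, i.e.\ from the existence of the $n$-good legal path starting at $\eta$.
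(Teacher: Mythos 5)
Your proof is correct and is essentially identical to the paper's own one-line argument: use that $\eta$ is $n$-good to exclude a crossing, deduce from $\eta\notin G(R)$ that a spanned critical parallelogram exists, and invoke the standing hypothesis that all such parallelograms meet $R'$. Your remark on why the no-crossing information is genuinely needed is accurate and matches the paper's reasoning.
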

\begin{proof}
Since $\eta$ belongs to an $n$-good legal path, $R$ has no crossing for $\eta$, but by the definition of $\eta$, $\eta\not\in G(R)$, hence there exists a critical parallelogram spanned in $\eta$. Moreover, by assumption, every critical parallelogram spanned in $\eta$ intersects $R'$, hence the claim.
\end{proof}
With this, we can start work on the induction.

\textbf{Base: j=0.}
$\cP_0^4$ is trivial, and the definition \eqref{eq:def:F} of $F$ implies $\cP_0^2$. For $\cP_0^1$, Claim \ref{claim:rec:easy} gives the existence of a critical parallelogram spanned in $\eta$ intersecting $R'$, hence contained in $R_\ell\cap R_h$, which yields $\cP_0^1$. 

We now show $\cP_0^3$. Let $(\zeta,\xi) \in \{\ell,h\}\times\{\downarrow,\uparrow\}$. We claim that there is no critical parallelogram spanned in $\eta_{R_{\zeta,\xi}}$. Indeed, by Observation \ref{obs:obvious} such a parallelogram would have to intersect $R_{\zeta,\xi}$ so could not intersect $R'$, which would contradict the definition of $\eta$. Moreover, $R$ has no crossing for $\eta$ since $\eta$ is $n$-good, so $R_{\zeta,\xi}$ has no crossing for $\eta_{R_{\zeta,\xi}}$. Thus, $\eta_{R_{\zeta,\xi}} \in G(R_{\zeta,\xi})$, hence $\eta_{R_{\zeta,\xi}} \in V(n-1,R_{\zeta,\xi})$, so $\cP_0^3$ holds.

\textbf{Induction step.} Let $j \in \{0,\dots,m-1\}$, and suppose that $\cP^1_{j}$, $\cP^2_{j}$, $\cP^3_{j}$ and $\cP_j^4$ hold. Since $(\eta^{(k)})_{0 \leq k \leq m}$ is a legal path, we have $\eta^{(j+1)}=(\eta^{(j)})^s$ and $(s+U)\cap R\subset \eta^{(j)}$ for some $s\in R \cap \bbZ^2$ and $U\in\cU$.

We first prove $\cP^4_{j+1}$, that is $[\eta^{(j+1)}_{\ring\gamma}] = [\eta^{(0)}_{\ring\gamma}]$, using that ``$F$ shields $\ring\gamma$ from the influence of the exterior.''
\begin{claim} \label{claim:P4}
$\cP^4_{j+1}$ holds.
\end{claim}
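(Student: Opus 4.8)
The plan is to reduce the statement to the effect of a single update and then use that, inside $\ring\gamma$, the site at which the path fires is both ``shielded'' by $F$ and far from the boundary of $R$. Since $(\eta^{(k)})_{0\le k\le m}$ is a legal path, $\eta^{(j+1)}=(\eta^{(j)})^s$ with $(s+U)\cap R\subset\eta^{(j)}$ for some $s\in R\cap\bbZ^2$ and $U\in\cU$. By the induction hypothesis $\cP^4_j$ we have $[\eta^{(j)}_{\ring\gamma}]=[\eta^{(0)}_{\ring\gamma}]$, so it suffices to prove $[\eta^{(j+1)}_{\ring\gamma}]=[\eta^{(j)}_{\ring\gamma}]$. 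As $\eta^{(j+1)}$ and $\eta^{(j)}$ differ only at $s$, if $s\notin\ring\gamma$ then $\eta^{(j+1)}_{\ring\gamma}=\eta^{(j)}_{\ring\gamma}$ and there is nothing to do; so from now on I would assume $s\in\ring\gamma$ and argue that flipping the occupation variable at $s$ does not change the bootstrap closure of the configuration seen inside $\ring\gamma$.

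First I would locate $s$ and the rule $U$ well inside $R$. By Claim~\ref{claim:gamma} the self-avoiding closed curve $\gamma$ lies in the frame $B'$ and its interior contains $R'$, so $\ring\gamma$ is contained in the outer parallelogram defining $B'$, hence every point of $\ring\gamma$ is at distance much larger than $r$ from $\partial R$ (this is routine bookkeeping with the definitions of $B'$, $L_{n-1}$, $H_{n-1}$, $K$). Since every site of $U$ is within distance $r\ll K$ of the origin, this gives $s+U\subset R$ and $s+U\subset s+R(-2K,-2K;2K,2K)$. In particular $(s+U)\cap R=s+U\subset\eta^{(j)}$, i.e.\ all of $s+U$ is infected in $\eta^{(j)}$, so $s\in[\eta^{(j)}\cap(s+R(-2K,-2K;2K,2K))]$, that is, $s$ is locally infectable in $\eta^{(j)}$. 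By $\cP^2_j$ no site of $F$ is locally infectable in $\eta^{(j)}$, and $\bar\gamma\subset F$, so $s\notin\bar\gamma$, i.e.\ $d(s,\gamma)>C_1$. Finally, since $C_1\gg r\ge\|u\|$ for every $u\in U$, any segment from $s$ to a point of $s+U$ stays at distance $>C_1-r>0$ from $\gamma$ and hence inside the open region $\ring\gamma$ (which contains $s$); thus $s+U\subset\ring\gamma$.

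To conclude, I would observe that the sites of $s+U$ are infected in $\eta^{(j)}$ and, as $0\notin U$ forces $s\notin s+U$, they remain infected in $\eta^{(j+1)}=(\eta^{(j)})^s$; since they also lie in $\ring\gamma$, the translate $s+U$ of a rule is fully infected both in $\eta^{(j)}_{\ring\gamma}$ and in $\eta^{(j+1)}_{\ring\gamma}$, whence $s\in[\eta^{(j)}_{\ring\gamma}]$ and $s\in[\eta^{(j+1)}_{\ring\gamma}]$. As $\eta^{(j)}_{\ring\gamma}$ and $\eta^{(j+1)}_{\ring\gamma}$ agree off $s$, their unions with $\{s\}$ coincide, so $[\eta^{(j)}_{\ring\gamma}]=[\eta^{(j)}_{\ring\gamma}\cup\{s\}]=[\eta^{(j+1)}_{\ring\gamma}\cup\{s\}]=[\eta^{(j+1)}_{\ring\gamma}]$, which combined with $\cP^4_j$ yields $\cP^4_{j+1}$. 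The only genuinely delicate point, and the one I expect to take most care, is the geometric step: checking that $\ring\gamma$ sits at distance $\gg r$ from $\partial R$ using Claim~\ref{claim:gamma} and the buffer sizes, and the Jordan-curve observation that a short segment issued from an interior point lying at distance $>C_1$ from the curve cannot escape the interior; the rest is elementary monotonicity and stability of the bootstrap closure.
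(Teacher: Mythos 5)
Your proof is correct and follows essentially the same route as the paper's: reduce to the single flip at $s$, dispose of $s\notin\ring\gamma$ trivially, show $s+U\subset\ring\gamma$ using the shield $\bar\gamma\subset F$ together with $\cP_j^2$, and conclude that the two closures agree because $s$ belongs to both. The only (harmless) variation is in the middle step: the paper notes that no site of $\bar\gamma$ is infected while all of $(s+U)\cap R$ is, so $s+U\subset\ring\gamma\cup\bar\gamma$ must avoid $\bar\gamma$; you instead observe that $s$ itself is locally infectable in $\eta^{(j)}$, hence $s\notin\bar\gamma$, hence $d(s,\gamma)>C_1\gg r$, and a Jordan-curve argument keeps $s+U$ inside $\ring\gamma$.
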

\begin{proof}
If $s \not\in \ring\gamma$, then $\eta^{(j+1)}_{\ring \gamma} = \eta^{(j)}_{\ring \gamma}$, so $[\eta^{(j+1)}_{\ring \gamma}]=[\eta^{(j)}_{\ring \gamma}] = [\eta^{(0)}_{\ring \gamma}]$ by $\cP_j^4$. Moreover, if $s \in \ring \gamma$, then $s+U \subset \ring\gamma \cup \bar \gamma$. Furthermore, the sites of $\bar \gamma$ are in $F$, so by $\cP_{j}^2$ they are not locally infectable in $\eta^{(j)}$ and, in particular $s'\not\in \eta^{(j)}$ for all $s'\in\bar\gamma$. Since $(s+U)\cap R\subset \eta^{(j)}$, this implies $s+U \subset \ring\gamma$ and so $[\eta^{(j+1)}_{\ring \gamma}]=[\eta^{(j)}_{\ring \gamma}] = [\eta^{(0)}_{\ring \gamma}]$ by $\cP_j^4$.
\end{proof}

We cannot prove $\cP^1_{j+1}$ yet, which would be that $R_\ell$ and $R_h$ contain a critical parallelogram spanned in $\eta^{(j+1)}$. Instead, we establish a weaker result, that there exists at least one spanned critical parallelogram of $\eta^{(j+1)}_{\ring \gamma}$ ``to the left'' of $R_{\ell,\uparrow}$ (see Figure \ref{fig:main}), as well as one ``below'' $R_{h,\uparrow}$ (these two parallelograms may be the same). The idea of the proof is that since at the beginning of the path all spanned critical parallelograms intersect $R'$, if at step $j+1$ they have moved too much to the right, then they must have left a trail of infectable sites behind them, which constitutes a crossing.
\begin{claim}\label{claim:restriction1}
There exists a critical parallelogram contained in $\bbH_{u_3}(c-2L_{n-1}-7K)$ that is spanned in $\eta^{(j+1)}_{\ring\gamma}$ and similarly for $\bbH_{u_4}(d-2H_{n-1}-7K)$ (recall from Section \ref{subsec:bootstrap} that $\bbH_u(x)$ is the open half-plane directed by $u$ translated by a distance $x$ and $\bar\bbH_u(x)$ is its closure).
\end{claim}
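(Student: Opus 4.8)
The plan is to prove the $u_1$‑half of the claim (a spanned critical parallelogram of $\eta^{(j+1)}_{\ring\gamma}$ contained in $\bbH_{u_3}(c-2L_{n-1}-7K)$); the $u_4$‑half follows by the symmetry of the whole set‑up under interchanging $(u_1,u_3,\ell)\leftrightarrow(u_2,u_4,h)$, since $\cP^4_{j+1}$, the $n$‑goodness of $\eta^{(j+1)}$ (which involves the $u_1$‑ and $u_2$‑crossing events symmetrically), Claims~\ref{claim:rec:easy} and \ref{claim:gamma}, and $\ring\gamma$, $R'$ are all symmetric in these. First I would fix a \emph{seed}: by Claim~\ref{claim:rec:easy} there is a critical parallelogram $D_0$ spanned in $\eta^{(0)}=\eta$ and meeting $R'$. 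Since $\diam(D_0)\le K$ and $R'$ sits in the ``hole'' of the frame $B'$, while $\gamma\subset B'$, the geometry of Claim~\ref{claim:gamma} gives $D_0\subset\ring\gamma$, hence $D_0\cap\eta^{(0)}_{\ring\gamma}=D_0\cap\eta^{(0)}$ and $D_0$ is spanned in $\eta^{(0)}_{\ring\gamma}$ too. Its strongly connected witness $X_0$ then satisfies $X_0\subset[D_0\cap\eta^{(0)}_{\ring\gamma}]\subset[\eta^{(0)}_{\ring\gamma}]=[\eta^{(j+1)}_{\ring\gamma}]$ by $\cP^4_{j+1}$ (Claim~\ref{claim:P4}), with $K/C_1\le\diam(X_0)\le K$ and $X_0\subset D_0$. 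Setting $t_0=c-(L_n-L_{n-1})+K$, one checks using $L_n-L_{n-1}=2L_{n-1}+9K+\ell$ and $\ell>0$ that $X_0\subset\bar\bbH_{u_3}(t_0)$ while $\bar\bbH_{u_3}(t_0+K)\subset\bbH_{u_3}(c-2L_{n-1}-7K)$; put $A=\ring\gamma\cap\bar\bbH_{u_3}(t_0)$, so $X_0\subset A$ and $\ring\gamma\setminus A\subset\bbH_{u_1}(-t_0)$.

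Next I would split on the configuration $\eta^{(j+1)}_A$. In \textbf{Case 1}, $[\eta^{(j+1)}_A]$ contains a strongly connected set of diameter at least $K/C_1$: then by Observation~\ref{obs:connected:component} its strongly connected component $Y$ equals $[\eta^{(j+1)}_A\cap Y]$, so the smallest parallelogram $D_Y\supset Y$ is spanned in $\eta^{(j+1)}_A$ (with witness $Y$), and $\diam(D_Y)\ge K/C_1\ge C_1C_2'$. Lemma~\ref{lem:AL:body} then yields a critical parallelogram $D'$ spanned in $\eta^{(j+1)}_A$; since $\eta^{(j+1)}_{\ring\gamma}\le\eta^{(j+1)}_A$ it is also spanned in $\eta^{(j+1)}_{\ring\gamma}$, and by Observation~\ref{obs:obvious} it meets $A\subset\bar\bbH_{u_3}(t_0)$, whence $D'\subset\bar\bbH_{u_3}(t_0+K)\subset\bbH_{u_3}(c-2L_{n-1}-7K)$; this is what we wanted. (Running the same dichotomy ``below $R_{h,\uparrow}$'' gives the $u_4$‑parallelogram.)

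In \textbf{Case 2}, every strongly connected component of $[\eta^{(j+1)}_A]$ has diameter $<K/C_1$; in particular $X_0\not\subset[\eta^{(j+1)}_A]$, so fix $x_0\in X_0\setminus[\eta^{(j+1)}_A]$. Splitting the infections of $\eta^{(j+1)}$ in $\ring\gamma$ as $(\eta^{(j+1)}\cap A)\cup(\eta^{(j+1)}\cap(\ring\gamma\setminus A))$, with the second term inside $\bbH_{u_1}(-t_0)$, monotonicity of the bootstrap closure gives $x_0\in[(\eta^{(j+1)}\cap A)\cup\bbH_{u_1}(-t_0)]$, so $\bbH_{u_1}(-t_0)$ is essential for infecting $x_0$. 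Let $\bar t\ge t_0$ be the largest value with $x_0\in[(\eta^{(j+1)}\cap A)\cup\bbH_{u_1}(-\bar t)]$ and let $Z$ be the strongly connected component of $x_0$ there; Observation~\ref{obs:connected:component} forces $Z$ to meet $\bbH_{u_1}(-\bar t)$ (otherwise $Z=[(\eta^{(j+1)}\cap A)\cap Z]\subset[\eta^{(j+1)}_A]$, contradicting the choice of $x_0$). Since $u_1$ is stable, half‑planes directed by $u_1$ are preserved by the dynamics, and together with Case~2's control on $[\eta^{(j+1)}_A]$ (which stays within distance $K$ of $A$) this both bounds $\bar t\le t_0+K+C_2'$ and confines the ``action'' of $Z$ to a band of $u_3$‑width $O(K)$ abutting $A$. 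A last‑passage/threshold argument along $Z$ — following the crossing lemma of \cite{Bollobas14} and its quantitative form in Appendix~\ref{app:crossing} — then produces a value $\tilde t$ and the width‑$\ell$ strip $S\subset R$ with $u_3$‑range $[\tilde t-\ell,\tilde t]$, together with an $\eta'\ge\eta^{(j+1)}$ equal to $\eta^{(j+1)}$ on $A$ and healthy on $S\setminus A$, such that $S$ is $u_1$‑crossed for $\eta'$ while $[\eta'_S]\subset[\eta^{(j+1)}_A]$ contains no strongly connected set of diameter $\ge K/C_1$, hence no spanned critical parallelogram. Thus $C^{u_1}_S$ occurs, contradicting the fact that the $n$‑good configuration $\eta^{(j+1)}$ has no crossing (Definitions~\ref{def:crossing} and \ref{def:legal}). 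So Case~2 is impossible, which proves the claim.

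The step I expect to be the main obstacle is Case~2, specifically converting ``$\bbH_{u_1}(-t_0)$ is essential'' into a bona fide $C^{u_1}_S$ event: one must place the width‑$\ell$ strip $S$ so that it lies inside $R$ — this is exactly what the $7K$‑ and $3K$‑offsets built into \eqref{eq:def:Ln} and Claim~\ref{claim:gamma} leave room for — and so that the crossing of $S$ it witnesses uses only infections of $\eta^{(j+1)}$ that we still control, namely those in $A$, which is what licenses the passage to the thinned configuration $\eta'$ and the removal of any stray spanned droplet sitting in $S\setminus A$. By contrast, Case~1, the geometric bookkeeping ($D_0\subset\ring\gamma$, $X_0\subset A$, $D'\subset\bbH_{u_3}(c-2L_{n-1}-7K)$), and the reduction of the $u_4$‑statement to the $u_1$‑one are routine.
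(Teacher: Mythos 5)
Your overall architecture is the same as the paper's: take the seed parallelogram from Claim~\ref{claim:rec:easy}, transport its witness to $\eta^{(j+1)}_{\ring\gamma}$ via $\cP^4_{j+1}$, and show that if no spanned critical parallelogram sits in the half-plane then a forbidden $u_1$-crossing occurs. Your Case~1, the geometric bookkeeping, and the reduction of the $u_4$-statement to the $u_1$-one are all fine. The problem is exactly where you flag it, and it is not a matter of fitting the strip inside $R$: it is a missing \emph{lower bound on the $u_3$-extent} of the strongly connected set that is supposed to witness the crossing.

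Concretely, in Case~2 your set $Z$ (the component of $x_0$ in $[(\eta^{(j+1)}\cap A)\cup\bbH_{u_1}(-\bar t)]$) is only known to reach from its left end $a_0$ to the half-plane at $\bar t\ge t_0$. Since $X_0\subset D_0$, $D_0$ meets $R'$ and $\diam(D_0)\le K$, all you can guarantee is $a_0\le t_0-K$, i.e.\ an extent of order $K$. But Definition~\ref{def:crossing} requires the crossing set, restricted to a strip of width $\ell$ placed at its \emph{left} end (the only placement for which $Z=[\eta\cap Z]$ lets you replace everything outside the strip by the right half-plane), to still reach the right side of that strip; this needs extent at least $\ell$, and in every application $\ell\gg K$ (e.g.\ $\ell=q^{-4\alpha}$ versus $K\approx q^{-\alpha}$). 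If $a_0>t_0-\ell$, the strip $R(a_0,b;a_0+\ell,d)$ overlaps $\bbH_{u_1}(-t_0)$, so the "crossing" you produce uses free infections inside the strip that do not come from $\eta$, and $C^{u_1}_S$ is not witnessed. The "last-passage/threshold argument" you invoke does not supply this extent, and your Case-2 hypothesis (all components of $[\eta^{(j+1)}_A]$ have diameter $<K/C_1$) cannot, since it only forces $Z$ across the single line $\<\cdot,u_3\>=t_0$.

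The paper closes this gap by running the argument as a contradiction from the start: it takes $X'$, the component of $X$ in $[\eta^{(j+1)}_{\ring\gamma}]$ (no auxiliary half-plane), applies Observation~\ref{obs:connected:component} and Lemma~\ref{lem:AL:body} to extract a critical parallelogram $D''$ spanned in $\eta^{(j+1)}_{\ring\gamma\cap X'}$, and then uses the assumed absence of spanned critical parallelograms in $\bbH_{u_3}(c-2L_{n-1}-7K)$ to force $D''$, hence $X'$, to reach $\bar\bbH_{u_1}(-(c-2L_{n-1}-8K))$. Combined with the left end being at most $c-2L_{n-1}-9K-\ell$, this gives extent at least $K+\ell$, which is what legitimises placing the width-$\ell$ strip at $a_0$. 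You need to import this step (or an equivalent use of the negation of the claim) into your Case~2; without it the case analysis does not close.
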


\begin{proof}
We will only treat $H=\bbH_{u_3}(c-2L_{n-1}-7K)$ as the argument for the other half-plane is the same. Assume for a contradiction that there is no critical parallelogram contained in $H$ that is spanned in $\eta^{(j+1)}_{\ring\gamma}$. We will construct a crossing for $\eta^{(j+1)}$, which contradicts the fact that $\eta^{(j+1)}$ is $n$-good.

By Claim \ref{claim:rec:easy}, there exists a critical parallelogram $D$ spanned in $\eta^{(0)}$ intersecting $R'$. $D$ is then contained in $\ring\gamma$ (see Figure \ref{fig:main}), hence spanned in $\eta^{(0)}_{\ring\gamma}$. Let $X$ be a strongly connected set of $[D \cap \eta^{(0)}_{\ring\gamma}]$ such that the smallest parallelogram containing $X$ is $D$. Then, since $D$ is ``sufficiently to the left'' to intersect $R'$, $X$ intersects $\bar\bbH_{u_3}(c-2L_{n-1}-9K-\ell)$ (see Figure \ref{fig:main}). By Claim \ref{claim:P4} we have $[\eta_{\ring\gamma}^{(j+1)}]=[\eta^{(0)}_{\ring\gamma}]$, and $X\subset[D \cap \eta^{(0)}_{\ring\gamma}]\subset[\eta^{(0)}_{\ring\gamma}]$, so we can consider $X'$ the strongly connected component of $[\eta^{(j+1)}_{\ring\gamma}]$ containing $X$. Then $X'$ intersects $\bar\bbH_{u_3}(c-2L_{n-1}-9K-\ell)$.

We will now prove that $X'$ also intersects $\bar\bbH_{u_1}(-(c-2L_{n-1}-8K))$. The smallest parallelogram $D'$ containing $X'$ contains $X$, thus it contains $D$, so it has diameter at least $K/C_1$, since $D$ is critical (recall Definition \ref{def:span}). By Observation \ref{obs:connected:component}, $X'=[\eta^{(j+1)}_{\ring\gamma} \cap X'] \subset [\eta^{(j+1)}_{\ring\gamma \cap X'} \cap D']$, so $D'$ is spanned in $\eta^{(j+1)}_{\ring\gamma \cap X'}$. Hence, by Lemma \ref{lem:AL:body}, there exists a critical parallelogram $D''$ spanned for $\eta^{(j+1)}_{\ring\gamma \cap X'}$. By the assumption made at the beginning of the proof of the claim, $D''$ cannot be contained in $H$, so $D''$ intersects $\bar\bbH_{u_1}(-(c-2L_{n-1}-7K))$. Furthermore, $D''$ is critical, so its diameter is at most $K$. Thus, $D''$ is contained in $\bar\bbH_{u_1}(-(c-2L_{n-1}-8K))$. In addition, since $D''$ is spanned for $\eta^{(j+1)}_{\ring\gamma \cap X'}$, by Observation \ref{obs:obvious} $D''$ intersects $X'$, so $X'$ intersects $\bar\bbH_{u_1}(-(c-2L_{n-1}-8K))$.

We now construct the rectangle in which the crossing will take place. We denote $a_0=\max\{a'\,|\,X' \subset \bar\bbH_{u_1}(-a')\}$ (the ``left end'' of $X'$
). We claim $a_0 \geq a$. Indeed, $X'\subset[\eta^{(j+1)}_{\ring\gamma}]$, and since we have $\ring\gamma \subset \bbH_{u_1}(-a)$ and $u_1$ is a stable direction, we have $[\eta^{(j+1)}_{\ring\gamma}] \subset \bbH_{u_1}(-a)$, so $X' \subset \bbH_{u_1}(-a)$. Moreover, $a_0 \leq c-2L_{n-1}-9K-\ell$, since we proved that $X'$ intersects $\bar\bbH_{u_3}(c-2L_{n-1}-9K-\ell)$. Furthermore, we saw that by Observation \ref{obs:connected:component}, $X'=[\eta^{(j+1)}_{\ring\gamma} \cap X']$, so if we denote $R_{X'}=R(a_0,b;a_0+\ell,d)$, then $X'\subset[(\eta^{(j+1)}_{\ring\gamma} \cap R_{X'})\cup\bbH_{u_1}(-(a_0+\ell))]$. Together with the fact that $X'$ intersects $\bar\bbH_{u_1}(-(c-2L_{n-1}-8K)) \subset \bbH_{u_1}(-(a_0+\ell))$, this yields that $R_{X'}$ is $u_1$-crossed for $\eta^{(j+1)}_{\ring \gamma}$ (recall Definition \ref{def:crossing}). 

Moreover, since critical parallelograms have diameter at most $K$ and $a_0 \leq c-2L_{n-1}-9K-\ell$, any critical parallelogram intersecting $R_{X'}$ is contained in $H$. This will imply that there is no critical parallelogram spanned for $\eta^{(j+1)}_{\ring \gamma} \cap R_{X'}$, as by Observation \ref{obs:obvious} such a parallelogram would intersect $R_{X'}$, thus would be contained in $H$, which is impossible by the assumption made at the beginning of the proof of the claim. Since $R_{X'}$ is $u_1$-crossed for $\eta^{(j+1)}_{\ring \gamma}$ and there is no critical parallelogram spanned for $\eta^{(j+1)}_{\ring \gamma} \cap R_{X'}$, the event $C_{R_{X'}}^{u_1}$ occurs for $\eta^{(j+1)}$ (recall Definition \ref{def:crossing}). But this which is a contradiction with the fact that $R$ has no crossing for $\eta^{(j+1)}$, as it is a configuration in a $n$-good legal path. This contradiction concludes the proof of the claim.
\end{proof}

Claim \ref{claim:restriction1} will allow us to prove half of $\cP_{j+1}^3$, more precisely $\eta^{(j+1)}_{R_{\ell,\uparrow}} \in V(n-1,R_{\ell,\uparrow})$ and $\eta^{(j+1)}_{R_{h,\uparrow}} \in V(n-1,R_{h,\uparrow})$. The idea of the proof is that since Claim \ref{claim:restriction1} yields that at least one spanned critical parallelogram is to the left of $R_{\ell,\uparrow}$, and as the configurations are $n$-good, there are at most $n$ spanned critical parallelograms in total, this implies there are at most $n-1$ spanned critical parallelograms inside $R_{\ell,\uparrow}$ (and similarly for $R_{h,\uparrow}$).
\begin{claim}\label{claim:P3:1}
$\eta^{(j+1)}_{R_{\ell,\uparrow}} \in V(n-1,R_{\ell,\uparrow})$ and $\eta^{(j+1)}_{R_{h,\uparrow}} \in V(n-1,R_{h,\uparrow})$.
\end{claim}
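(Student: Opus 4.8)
The plan is to combine Claim~\ref{claim:restriction1} with the $n$-goodness of $\eta^{(j+1)}$ to bound the number of disjointly spanned critical parallelograms inside $R_{\ell,\uparrow}$ (resp.\ $R_{h,\uparrow}$), and then use the inductive hypothesis $\cP_j^3$ to extend a witnessing legal path by at most one step. By the symmetry between the two directions it suffices to prove $\eta^{(j+1)}_{R_{\ell,\uparrow}}\in V(n-1,R_{\ell,\uparrow})$, using the half-plane $H=\bbH_{u_3}(c-2L_{n-1}-7K)$ supplied by Claim~\ref{claim:restriction1}; the case of $R_{h,\uparrow}$ is identical with $\bbH_{u_4}(d-2H_{n-1}-7K)$. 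Recall $\eta^{(j+1)}=(\eta^{(j)})^s$ with $(s+U)\cap R\subset\eta^{(j)}$ for some $s\in R\cap\bbZ^2$, $U\in\cU$. If $s\notin R_{\ell,\uparrow}$ then $\eta^{(j+1)}_{R_{\ell,\uparrow}}=\eta^{(j)}_{R_{\ell,\uparrow}}$ and the conclusion is immediate from $\cP_j^3$, so I would then assume $s\in R_{\ell,\uparrow}$.

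The core step is to show $\eta^{(j+1)}_{R_{\ell,\uparrow}}$ is $(n-1)$-good in $R_{\ell,\uparrow}$. The ``no crossing'' half is inherited from $\eta^{(j+1)}$ exactly as in the base case $j=0$: since $R_{\ell,\uparrow}\subset R$ and $\eta^{(j+1)}$ is $n$-good, $R$ has no crossing for $\eta^{(j+1)}$, hence $R_{\ell,\uparrow}$ has no crossing for $\eta^{(j+1)}_{R_{\ell,\uparrow}}$. For the count, Claim~\ref{claim:restriction1} provides a critical parallelogram $D_1\subset H$ spanned in $\eta^{(j+1)}_{\ring\gamma}$; since $\eta^{(j+1)}\le\eta^{(j+1)}_{\ring\gamma}$, $D_1$ is also spanned in $\eta^{(j+1)}$. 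Concretely, letting $W_0$ be the set of infected sites of $\eta^{(j+1)}_{\ring\gamma}$ lying in $D_1$ (so $W_0\subset D_1\cap\eta^{(j+1)}$, viewing configurations as their infection sets), the strongly connected set witnessing the spanning of $D_1$ lies in $[W_0]$, so $W_0$ is a valid disjoint-occurrence witness for ``$D_1$ is spanned''. Now I would argue by contradiction: if $\eta^{(j+1)}_{R_{\ell,\uparrow}}$ had $n$ disjointly spanned critical parallelograms, witnessed by disjoint sets $X_1,\dots,X_n$, then each $X_i$ consists of infected sites of $\eta^{(j+1)}_{R_{\ell,\uparrow}}$, hence $X_i\subset R_{\ell,\uparrow}\subset\eta^{(j+1)}$ (by Observation~\ref{obs:obvious} the corresponding parallelograms even lie in $R_{\ell,\uparrow}$, so they differ from $D_1$); since $W_0\subset D_1\subset H$ and $R_{\ell,\uparrow}\cap H=\emptyset$, the set $W_0$ is disjoint from each $X_i$. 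Then $W_0,X_1,\dots,X_n$ witness $n+1$ disjointly spanned critical parallelograms in $\eta^{(j+1)}$, contradicting its $n$-goodness. Hence at most $n-1$ critical parallelograms are disjointly spanned in $\eta^{(j+1)}_{R_{\ell,\uparrow}}$, so $\eta^{(j+1)}_{R_{\ell,\uparrow}}$ is $(n-1)$-good.

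To conclude, by $\cP_j^3$ there is an $(n-1)$-good legal path in $R_{\ell,\uparrow}$ from $G(R_{\ell,\uparrow})$ to $\eta^{(j)}_{R_{\ell,\uparrow}}$. As $s\in R_{\ell,\uparrow}$ and $(s+U)\cap R_{\ell,\uparrow}\subset(s+U)\cap R\subset\eta^{(j)}$, we have $\eta^{(j+1)}_{R_{\ell,\uparrow}}=(\eta^{(j)}_{R_{\ell,\uparrow}})^s$ with $(s+U)\cap R_{\ell,\uparrow}\subset\eta^{(j)}_{R_{\ell,\uparrow}}$, so appending $\eta^{(j+1)}_{R_{\ell,\uparrow}}$ is a legal move in $R_{\ell,\uparrow}$; since that configuration is $(n-1)$-good by the previous paragraph, this extends the path to an $(n-1)$-good legal path from $G(R_{\ell,\uparrow})$ to $\eta^{(j+1)}_{R_{\ell,\uparrow}}$, i.e.\ $\eta^{(j+1)}_{R_{\ell,\uparrow}}\in V(n-1,R_{\ell,\uparrow})$.

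The step I expect to require the most care is the disjoint-occurrence bookkeeping in the second paragraph: one has to make sure that the witness $W_0$ extracted from $\eta^{(j+1)}_{\ring\gamma}$ is genuinely a set of infected sites of $\eta^{(j+1)}$ and genuinely disjoint from the $X_i$, and it is exactly here that the geometric content of Claim~\ref{claim:restriction1} is used, namely that the spanned critical parallelogram it produces lies in $H$, which is disjoint from $R_{\ell,\uparrow}$. A minor additional point is the monotonicity of the crossing events needed to pass ``no crossing'' from $R$ down to $R_{\ell,\uparrow}$, which is the same elementary fact already invoked for $\cP_0^3$.
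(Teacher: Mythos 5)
Your proof is correct and follows essentially the same route as the paper: reduce to showing $\eta^{(j+1)}_{R_{\ell,\uparrow}}$ is $(n-1)$-good, inherit the no-crossing property from $R$, use the parallelogram from Claim~\ref{claim:restriction1} (spanned in $\eta^{(j+1)}_D$, hence with a witness set inside $D$, disjoint from $R_{\ell,\uparrow}$) to cap the number of disjointly spanned critical parallelograms at $n-1$, and then append one legal step to the path provided by $\cP_j^3$. The only blemishes are cosmetic: the chain ``$X_i\subset R_{\ell,\uparrow}\subset\eta^{(j+1)}$'' should read $X_i\subset R_{\ell,\uparrow}$ and $X_i\subset\eta^{(j+1)}$, and the parenthetical appeal to Observation~\ref{obs:obvious} gives only that the spanned parallelograms \emph{intersect} $R_{\ell,\uparrow}$ (which is anyway not needed, since disjointness of the witness sets is all the definition of disjoint spanning requires).
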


\begin{proof}
We will only prove $\eta^{(j+1)}_{R_{\ell,\uparrow}} \in V(n-1,R_{\ell,\uparrow})$, as the other proof is similar. It will suffice to prove that $\eta^{(j+1)}_{R_{\ell,\uparrow}}$ is $(n-1)$-good. Indeed, by $\cP^3_j$ we have $\eta^{(j)}_{R_{\ell,\uparrow}} \in V(n-1,R_{\ell,\uparrow})$, hence in the case $\eta^{(j+1)}_{R_{\ell,\uparrow}}\neq\eta^{(j)}_{R_{\ell,\uparrow}}$ we can say that there exists an $(n-1)$-good legal path from $G(R_{\ell,\uparrow})$ to $\eta^{(j)}_{R_{\ell,\uparrow}}$. If we add $\eta^{(j+1)}_{R_{\ell,\uparrow}}$ to this path, we then obtain an $(n-1)$-good legal path from $G(R_{\ell,\uparrow})$ to $\eta^{(j+1)}_{R_{\ell,\uparrow}}$, hence $\eta^{(j+1)}_{R_{\ell,\uparrow}} \in V(n-1,R_{\ell,\uparrow})$.

We now prove that $\eta^{(j+1)}_{R_{\ell,\uparrow}}$ is $(n-1)$-good. Firstly, $R_{\ell,\uparrow}$ has no crossing for $\eta^{(j+1)}_{R_{\ell,\uparrow}}$ because $R$ has no crossing for $\eta^{(j+1)}$. It remains only to show that the maximal number $k$ of critical parallelograms that are disjointly spanned in $\eta^{(j+1)}_{R_{\ell,\uparrow}}$ is at most $n-1$. Let $D_1,\dots,D_k$ be critical parallelograms that are disjointly spanned in $\eta^{(j+1)}_{R_{\ell,\uparrow}}$. By Claim \ref{claim:restriction1}, there exists a critical parallelogram $D\subset \bbH_{u_3}(c-2L_{n-1}-7K)$ that is spanned in $\eta^{(j+1)}_{\ring \gamma}$ and, therefore, also in $\eta^{(j+1)}_D$. 
$D$ is then disjoint from $R_{\ell,\uparrow}$ (see Figure \ref{fig:main}), so we deduce that $D_1,\dots,D_k,D$ are disjointly spanned in $\eta^{(j+1)}$, so $\eta^{(j+1)}$ contains $k+1$ disjointly spanned critical parallelograms. Since $\eta^{(j+1)}$ is $n$-good, we get $k \leq n-1$, which ends the proof.
\end{proof}

We are now ready to prove $\cP_{j+1}^1$, that is, $R_\ell$ and $R_h$ contain a critical parallelogram spanned in $\eta^{(j+1)}$. To do that, we will prove that a spanned critical parallelogram in $\bbH_{u_4}(d-2H_{n-1}-7K)$ provided by Claim \ref{claim:restriction1} is in fact in $R_\ell$ (and similarly for $R_h$). The idea of the proof is that Claim \ref{claim:restriction1} prevents the parallelogram from being ``too far up,'' being ``too far left or down'' would induce a crossing, so will be impossible, and being ``too far right'' will be prevented because Claim \ref{claim:P3:1} will allow us to use $\cH_{n-1}$ in $R_{\ell,\uparrow}$.

\begin{claim}\label{claim:betterP1}
$\cP_{j+1}^1$ holds.
\end{claim}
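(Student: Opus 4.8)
By the symmetry of the construction under exchanging $u_1\leftrightarrow u_2$, $u_3\leftrightarrow u_4$ (hence $\ell\leftrightarrow h$, $L_{n-1}\leftrightarrow H_{n-1}$, $R_\ell\leftrightarrow R_h$, $R_{\ell,\uparrow}\leftrightarrow R_{h,\uparrow}$, $C^{u_1}\leftrightarrow C^{u_2}$), under which Claims~\ref{claim:restriction1} and~\ref{claim:P3:1} are preserved, it suffices to exhibit a critical parallelogram contained in $R_\ell$ that is spanned in $\eta^{(j+1)}$. I would take the critical parallelogram $D$ with $D\subset\bbH_{u_4}(d-2H_{n-1}-7K)$ spanned in $\eta^{(j+1)}_{\ring\gamma}$ provided by Claim~\ref{claim:restriction1}, and prove $D\subset R_\ell$. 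As in the proof of Claim~\ref{claim:restriction1}, a strongly connected witness $X$ of the spanning of $D$ lies in $[\eta^{(j+1)}_{\ring\gamma}]=[\eta^{(0)}_{\ring\gamma}]$ by $\cP^4_{j+1}$ (Claim~\ref{claim:P4}); since $\bar\gamma$ is healthy in $\eta=\eta^{(0)}$ and has thickness $C_1>r$, this closure is trapped in $\ring\gamma$, so $D\subset\ring\gamma$ sits well inside $B'$. In particular $D$ is spanned in $\eta^{(j+1)}$, its $u_3$-coordinates lie in $[a+L_{n-1}+3K,c-L_{n-1}-3K]$ and its $u_4$-coordinates in $[b+H_{n-1}+3K,d-H_{n-1}-3K]$. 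It remains to rule out each way $D$ could escape the open parallelogram $R_\ell=\ring R(a+2L_{n-1}+7K,b+2H_{n-1}+7K;c-L_{n-1}-7K,d-2H_{n-1}-7K)$.

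Through the top side: impossible, since $D\subset\bbH_{u_4}(d-2H_{n-1}-7K)$ by the choice of $D$. Through the right side, i.e.\ $D$ meeting $\bar\bbH_{u_1}(-(c-L_{n-1}-7K))$: then $D$, being critical of diameter $\le K$ and lying well inside $B'$, is contained in $R_{\ell,\uparrow}=R(c-2L_{n-1}-7K,b;c,d)$, is spanned in $\eta^{(j+1)}_{R_{\ell,\uparrow}}$, and meets $R(c-L_{n-1}-7K,b+H_{n-1};c-L_{n-1},d-H_{n-1})$; but $\eta^{(j+1)}_{R_{\ell,\uparrow}}\in V(n-1,R_{\ell,\uparrow})$ by Claim~\ref{claim:P3:1}, and $R_{\ell,\uparrow}$ has $u_3$-width $2L_{n-1}+7K\ge2L_{n-1}$ and $u_4$-width $d-b\ge2H_{n-1}$, so $\cH_{n-1}$ (Lemma~\ref{lem:rec} at level $n-1$) applied to $R_{\ell,\uparrow}$ forbids precisely this, a contradiction.

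The two remaining escapes — $D$ meeting $\bar\bbH_{u_3}(a+2L_{n-1}+7K)$ (``left'') or $\bar\bbH_{u_4}(b+2H_{n-1}+7K)$ (``down'') — would each produce a crossing of $R$, contradicting that $\eta^{(j+1)}$ is $n$-good. I treat ``left'' (``down'' being symmetric, producing a $u_2$-crossing). Let $X'$ be the strongly connected component of $[\eta^{(j+1)}_{\ring\gamma}]$ containing $X$; it meets $\bar\bbH_{u_3}(a+2L_{n-1}+7K)$, so $a_0:=\max\{a':X'\subset\bar\bbH_{u_1}(-a')\}$ satisfies $a+L_{n-1}+3K\le a_0\le a+2L_{n-1}+7K$. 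Moreover $X'$ reaches near $R'$: by $\cP^4_{j+1}$, $X'=[\eta^{(0)}_{\ring\gamma}\cap X']$ is a strongly connected component of $[\eta^{(0)}_{\ring\gamma}]$, its smallest enclosing parallelogram contains the critical $D$ and hence has diameter $\ge K/C_1\ge C_1C_2'$, so by Lemma~\ref{lem:AL:body} there is a critical parallelogram spanned in $\eta^{(0)}_{\ring\gamma\cap X'}$ and thus in $\eta^{(0)}=\eta$; by the standing hypothesis on $\eta$ it meets $R'$, and by Observation~\ref{obs:obvious} it meets $X'$, so (being critical) $X'$ meets $\bar\bbH_{u_1}(-(a+2L_{n-1}+8K+\ell))\subset\bbH_{u_1}(-(a_0+\ell))$. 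Writing $X'=[\eta^{(j+1)}_{\ring\gamma}\cap X']$ (Observation~\ref{obs:connected:component}) and splitting its infections according to whether they lie in the strip $R^*=R(a_0,b;a_0+\ell,d)\subset R$ or in $\bbH_{u_1}(-(a_0+\ell))$, one sees that $R^*$ is $u_1$-crossed for $\eta^{(j+1)}_{\ring\gamma}\ge\eta^{(j+1)}$; since in addition no critical parallelogram is spanned in the strip, the event $C^{u_1}_{R^*}$ occurs, contradicting $n$-goodness of $\eta^{(j+1)}$.

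The step I expect to be the crux is this last one — certifying that the crossing is of the restrictive kind $C^{u_1}_{R^*}$, i.e.\ that no critical parallelogram is spanned inside the strip $R^*$ (and likewise for the $u_2$-crossing in the ``down'' case). The plan is to position $R^*$ so that, by its diameter bound $\le K$ together with $D\subset\ring\gamma\subset B'$, any critical parallelogram meeting $R^*$ is confined to a region that is either excluded by $\cH_{n-1}$ — invoked through $\cP^3_j$ after checking the single update site lies outside it, so that $\eta^{(j+1)}$ agrees with $\eta^{(j)}$ there — or else already contradicts the conclusion being established; arranging the $3K$, $7K$ and $(9K+\ell)$-type buffers built into $R_\ell$, $R_{\ell,\downarrow}$, $R'$ and into $L_n,L_{n-1},H_n,H_{n-1}$ so that this works simultaneously for the ``left'' and ``down'' escapes is the genuine bookkeeping cost of the claim. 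Everything else assembles from Claims~\ref{claim:P4}, \ref{claim:restriction1} and~\ref{claim:P3:1}, Lemma~\ref{lem:AL:body}, the hypothesis on $\eta$, and Observations~\ref{obs:obvious} and~\ref{obs:connected:component} exactly as in the sketch above.
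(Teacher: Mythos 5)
Your reduction by symmetry, your treatment of the ``top'' escape, and your ``right'' escape (via Claim \ref{claim:P3:1} and $\cH_{n-1}$ applied to $R_{\ell,\uparrow}$) all match the paper's proof. The gap is exactly at the point you flag as the crux, and your proposed plan for closing it does not work. For the ``left'' (and ``down'') escape you want the event $C^{u_1}_{R^*}$ to occur for $\eta^{(j+1)}$, which requires certifying that no critical parallelogram is spanned inside the strip $R^*=R(a_0,b;a_0+\ell,d)$. Nothing available at time $j+1$ rules this out: the strip runs the full height of $R$ and sits at $u_3$-coordinates roughly in $[a+L_{n-1}+3K,\,a+2L_{n-1}+7K+\ell]$, so a spanned critical parallelogram meeting it need neither intersect $R'_{\ell,\downarrow}$ (the only region excluded by $\cH_{n-1}$ through $\cP^3$) nor be contained in $R_\ell$ (the only region excluded by your contradiction hypothesis) --- for instance one sitting near the top or bottom of the strip, or at $u_3$-coordinate around $a+2L_{n-1}$, escapes both. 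An $n$-good path permits up to $n$ disjointly spanned critical parallelograms anywhere, so such a parallelogram can genuinely be present at step $j+1$ and your crossing is then not of the restrictive kind.

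The paper resolves this by inverting the logic instead of certifying absence. Since $X'=[\eta^{(0)}_{\ring\gamma}\cap X']$ (by $\cP^4_{j+1}$ and Observation \ref{obs:connected:component}), the very same witness shows that the strip is $u_1$-crossed for $\eta^{(0)}$, not for $\eta^{(j+1)}$. As $\eta^{(0)}$ is $n$-good, $C^{u_1}_{R_{X'}}$ does \emph{not} occur for $\eta^{(0)}$; combined with the crossing this \emph{forces} the existence of a critical parallelogram spanned in $\eta^{(0)}_{R_{X'}}$. That parallelogram meets $R_{X'}$ and has diameter at most $K$, hence (since $a_0\le a+2L_{n-1}+7K$) cannot intersect $R'$, contradicting the standing hypothesis that every critical parallelogram spanned in $\eta=\eta^{(0)}$ intersects $R'$. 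So you should anchor the crossing at time $0$ and exploit the forced \emph{presence} of a spanned critical parallelogram in the strip rather than trying to establish its absence; with that change the remainder of your argument goes through.
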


\begin{proof}
We only treat $R_\ell$, as $R_h$ is similar. Assume for a contradiction that there is no critical parallelogram contained in $R_\ell$ spanned in $\eta^{(j+1)}$. 

By Claim \ref{claim:restriction1}, there exists a critical parallelogram $D$ contained in $\bbH_{u_4}(d-2H_{n-1}-7K)$ that is spanned in $\eta^{(j+1)}_{\ring\gamma}$. By assumption $D\not\subset R_\ell$. There are three possibilities (see Figure \ref{fig:main}):
\begin{enumerate}[label=\alph*)]
    \item\label{item:b} $D\cap R_{\ell,\downarrow}\neq\varnothing$, i.e.\ $D$ is ``too far left;''
    \item\label{item:c} $D\cap R_{h,\downarrow}\neq \varnothing$, i.e.\ $D$ is ``too far down;''
    \item\label{item:a} $D \cap \bar\bbH_{u_1}(-(c-L_{n-1}-7K))\neq\varnothing$, i.e.\ $D$ is ``too far right.''
\end{enumerate}

We first assume case \ref{item:a} occurs. Since $D$ is spanned in $\eta^{(j+1)}_{\ring\gamma}$, by Observation \ref{obs:obvious}, $D$ intersects $\ring \gamma$. In addition, since $D$ is critical it has diameter at most $K$. This yields that the intersection of $D$ and $\bar\bbH_{u_1}(-(c-L_{n-1}-7K))$ is in $R'_{\ell,\uparrow}$ (see Figure \ref{fig:main}).  Since $D$ intersects $R'_{\ell,\uparrow}$ and has diameter at most $K$, it is contained in $R_{\ell,\uparrow}$, hence spanned in $\eta_{R_{\ell,\uparrow}}^{(j+1)}$. However, by Claim \ref{claim:P3:1} we have $\eta^{(j+1)}_{R_{\ell,\uparrow}}\in V(n-1,R_{\ell,\uparrow})$, so $\cH_{n-1}$ implies there is no critical parallelogram intersecting $R'_{\ell,\uparrow}$ spanned in $\eta_{R_{\ell,\uparrow}}^{(j+1)}$, so we get a contradiction.

Cases \ref{item:b} and \ref{item:c} being analogous, we only treat case \ref{item:b}. Assume case \ref{item:b} occurs. The argument will resemble the one in the proof of Claim \ref{claim:restriction1}. Since $D$ is spanned in $\eta^{(j+1)}_{\ring\gamma}$, there exists a strongly connected set $X \subset [D \cap \eta^{(j+1)}_{\ring\gamma}]$ such that $D$ is the smallest parallelogram containing $X$. Let $X'$ be the strongly connected component of $[\eta^{(j+1)}_{\ring\gamma}]$ containing $X$, and $D'$ be the smallest parallelogram containing $X'$. Then $D'$ contains $X$, hence it contains $D$ and, since $D$ is critical, $\diam(D')\ge\diam(D)\ge K/C_1$. Furthermore, $X'$ is a strongly connected component of $[\eta^{(j+1)}_{\ring\gamma}]= [\eta^{(0)}_{\ring\gamma}]$ by Claim \ref{claim:P4}, so $X' = [\eta^{(0)}_{\ring\gamma} \cap X']$ by Observation \ref{obs:connected:component}. This implies $X' \subset [\eta^{(0)}_{\ring\gamma \cap X'} \cap D']$, hence $D'$ is spanned in $\eta^{(0)}_{\ring\gamma\cap X'}$.

Since $\diam(D')\ge K/C_1$, Lemma \ref{lem:AL:body} implies that there exists a critical parallelogram $D''$ that is spanned in $\eta^{(0)}_{\ring\gamma\cap X'}$. Then $D''$ is spanned in $\eta^{(0)}=\eta$ and therefore intersects $R'$ by the definition of $\eta$. Since $D''$ is critical, its diameter is at most $K$, so, since $D''$ intersects $R'$, it is contained in $\bar\bbH_{u_1}(-a-2L_{n-1}-8K-\ell)$ (see Figure \ref{fig:main}). Moreover, since $D''$ is spanned in $\eta^{(0)}_{\ring\gamma\cap X'}$, by Observation \ref{obs:obvious}, $X'$ intersects $D''$ hence $X'$ intersects $\bar\bbH_{u_1}(-a-2L_{n-1}-8K-\ell)$. In addition, since $D$ intersects $R_{\ell,\downarrow}$ by assumption \ref{item:b} and is the smallest parallelogram containing $X$, then $X$ intersects $\bar\bbH_{u_3}(a+2L_{n-1}+7K)$, thus $X'$ intersects this half-plane as well.

Denote $a_0=\max\{a'\,|\,X' \subset \bar\bbH_{u_1}(-a')\}$ (the ``left end'' of $X'$). Then $a_0 \geq a$, since $X' \subset [\eta^{(0)}_{\ring\gamma}]\subset[\ring\gamma]\subset[\bbH_{u_1}(-a)]=\bbH_{u_1}(-a)$ as $u_1$ is a stable direction, and $a_0 \leq a+2L_{n-1}+7K$, as $X'$ intersects $\bar\bbH_{u_3}(a+2L_{n-1}+7K)$. As in Claim \ref{claim:restriction1} (recalling that $X'$ intersects $\bar\bbH_{u_1}(-a-2L_{n-1}-8K-\ell)$), this entails that $R_{X'}=R(a_0,b;a_0+\ell,d)$ is $u_1$-crossed for $\eta^{(0)}$ (recall Definition \ref{def:crossing}). However, $R$ has no crossing for $\eta^{(0)}$ as $\eta^{(0)}$ is $n$-good, so $C_{R_{X'}}^{u_1}$ does not occur for $\eta^{(0)}$. Consequently, there exists a critical parallelogram spanned for $\eta^{(0)}_{R_{X'}}$, hence for $\eta^{(0)}$. By Observation \ref{obs:obvious} this parallelogram intersects $R_{X'}$, so since critical parallelograms have diameter at most $K$ and $a_0 \leq a+2L_{n-1}+7K$ it does not intersect $R'$, which contradicts the fact that all critical parallelograms spanned in $\eta^{(0)}=\eta$ intersect $R'$.
\end{proof}

The previous claim will allow us to complete the proof of $\cP_{j+1}^3$. We already proved in Claim \ref{claim:P3:1} that $\eta^{(j+1)}_{R_{\ell,\uparrow}} \in V(n-1,R_{\ell,\uparrow})$ and $\eta^{(j+1)}_{R_{h,\uparrow}} \in V(n-1,R_{h,\uparrow})$, so we need $\eta^{(j+1)}_{R_{\ell,\downarrow}} \in V(n-1,R_{\ell,\downarrow})$ and $\eta^{(j+1)}_{R_{h,\downarrow}} \in V(n-1,R_{h,\downarrow})$. The argument is identical to that of Claim \ref{claim:P3:1}: by Claim \ref{claim:betterP1} we have a spanned critical parallelogram in $R_\ell$, which entails that there are at most $n-1$ critical spanned parallelograms in $R_{\ell,\downarrow}$ (and similarly for $R_{h,\downarrow}$), hence the result.
\begin{claim}\label{claim:P3:2}
$\cP_{j+1}^3$ holds.
\end{claim}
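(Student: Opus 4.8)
The plan is to mirror the proof of Claim~\ref{claim:P3:1}, now for the ``$\downarrow$'' halves. Claim~\ref{claim:P3:1} already yields $\eta^{(j+1)}_{R_{\ell,\uparrow}}\in V(n-1,R_{\ell,\uparrow})$ and $\eta^{(j+1)}_{R_{h,\uparrow}}\in V(n-1,R_{h,\uparrow})$, so only $\eta^{(j+1)}_{R_{\ell,\downarrow}}\in V(n-1,R_{\ell,\downarrow})$ and $\eta^{(j+1)}_{R_{h,\downarrow}}\in V(n-1,R_{h,\downarrow})$ remain to be shown. I will treat the first statement; the second is the same under the interchange of $u_1$ and $u_2$ (equivalently of $\ell$ and $h$).

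As in Claim~\ref{claim:P3:1}, it suffices to prove that $\eta^{(j+1)}_{R_{\ell,\downarrow}}$ is $(n-1)$-good. Indeed, $\cP_j^3$ gives $\eta^{(j)}_{R_{\ell,\downarrow}}\in V(n-1,R_{\ell,\downarrow})$, so in the case $\eta^{(j+1)}_{R_{\ell,\downarrow}}\neq\eta^{(j)}_{R_{\ell,\downarrow}}$ one appends this single step --- legal because the ambient path $(\eta^{(k)})_{0\le k\le m}$ is legal in $R$ --- to an $(n-1)$-good legal path from $G(R_{\ell,\downarrow})$ to $\eta^{(j)}_{R_{\ell,\downarrow}}$, obtaining an $(n-1)$-good legal path ending at $\eta^{(j+1)}_{R_{\ell,\downarrow}}$; and if the two configurations coincide there is nothing to prove. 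The ``no crossing'' half of $(n-1)$-goodness is immediate, since $R$ has no crossing for the $n$-good configuration $\eta^{(j+1)}$, and hence neither does $R_{\ell,\downarrow}\subset R$. So the only point is to bound by $n-1$ the maximal number $k$ of critical parallelograms disjointly spanned in $\eta^{(j+1)}_{R_{\ell,\downarrow}}$.

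For this I invoke the just-established $\cP_{j+1}^1$ (Claim~\ref{claim:betterP1}): there is a critical parallelogram $D\subset R_\ell$ spanned in $\eta^{(j+1)}$, hence in $\eta^{(j+1)}_D$. The regions $R_\ell$ and $R_{\ell,\downarrow}$ are disjoint: the $\<\cdot,u_3\>$-range of $R_{\ell,\downarrow}=R(a,b;a+2L_{n-1}+7K,d)$ is $[a,a+2L_{n-1}+7K]$, while that of the open region $R_\ell$ is $(a+2L_{n-1}+7K,\,c-L_{n-1}-7K)$, and these overlap only in the endpoint that openness of $R_\ell$ excludes (see Figure~\ref{fig:main}). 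Consequently, if $D_1,\dots,D_k$ are critical parallelograms disjointly spanned in $\eta^{(j+1)}_{R_{\ell,\downarrow}}$, their witnessing sets lie in $R_{\ell,\downarrow}\cap\bbZ^2$ and so miss $D\subset R_\ell$; therefore $D_1,\dots,D_k,D$ are disjointly spanned in $\eta^{(j+1)}$, and $n$-goodness of $\eta^{(j+1)}$ forces $k+1\le n$. This gives $\eta^{(j+1)}_{R_{\ell,\downarrow}}\in V(n-1,R_{\ell,\downarrow})$; running the same argument with $R_h$, $R_{h,\downarrow}$ (disjoint by the analogous computation on $\<\cdot,u_4\>$-ranges) in place of $R_\ell$, $R_{\ell,\downarrow}$ completes $\cP_{j+1}^3$, thereby closing the induction step and, with it, Lemma~\ref{lem:rec:main}, Lemma~\ref{lem:rec} and Proposition~\ref{prop:main}.

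The argument is essentially a transcription of Claims~\ref{claim:P3:1} and~\ref{claim:betterP1}, so I do not expect a real obstacle. The only thing genuinely needing verification is the disjointness $R_\ell\cap R_{\ell,\downarrow}=\varnothing$ (and $R_h\cap R_{h,\downarrow}=\varnothing$), which is a one-line check from \eqref{eq:def:Ln} and the definitions of the regions involved.
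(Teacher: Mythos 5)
Your proof is correct and follows exactly the route the paper takes: the paper's proof of Claim~\ref{claim:P3:2} simply says to repeat the argument of Claim~\ref{claim:P3:1} with $\bbH_{u_3}(c-2L_{n-1}-7K)$ replaced by $R_\ell$ (resp.\ $R_h$) and Claim~\ref{claim:restriction1} replaced by Claim~\ref{claim:betterP1}, which is precisely what you spell out, including the disjointness check $R_\ell\cap R_{\ell,\downarrow}=\varnothing$ that the paper delegates to Figure~\ref{fig:main}.
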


\begin{proof}
The proof is actually the same as in Claim \ref{claim:P3:1}, replacing $\bbH_{u_3}(c-2L_{n-1}-7K)$ by $R_\ell$ or $R_h$ and Claim \ref{claim:restriction1} by Claim \ref{claim:betterP1}.
\end{proof}

It remains only to prove $\cP_{j+1}^2$, i.e.\ that the sites of $F$ are not locally infectable in $\eta^{(j+1)}$.
\begin{claim}\label{claim:P2}
$\cP_{j+1}^2$ holds.
\end{claim}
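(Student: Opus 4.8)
The plan is to treat the single legal move separately according to whether the flipped site is emptied or filled, and in the hard case to invoke Lemma~\ref{lem:locally:infectable}, recentred at the offending site of $F$.

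Write $\eta^{(j+1)}=(\eta^{(j)})^{s_0}$ with $s_0\in R\cap\bbZ^2$ and $(s_0+U')\cap R\subset\eta^{(j)}$ for some $U'\in\cU$. If $\eta^{(j)}_{s_0}=0$, then $\eta^{(j+1)}$ is obtained from $\eta^{(j)}$ by deleting the infection at $s_0$, so its set of infections is contained in that of $\eta^{(j)}$; since $[\,\cdot\,]$ is monotone, every locally infectable site of $\eta^{(j+1)}$ is locally infectable in $\eta^{(j)}$, and $\cP^2_{j+1}$ follows at once from the induction hypothesis $\cP^2_j$. So the real work is the case $\eta^{(j)}_{s_0}=1$. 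There I would argue by contradiction: suppose some $s\in F$ is locally infectable in $\eta^{(j+1)}$, while by $\cP^2_j$ it is not locally infectable in $\eta^{(j)}$.

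The next step is to produce a critical parallelogram near $s$. Since local infectability of $s$ depends only on the configuration inside $R_s:=s+R(-2K,-2K;2K,2K)$, the flipped site must satisfy $s_0\in R_s$. As $s\in F\subset B'$ lies at $u_3$- and $u_4$-distance at least $L_{n-1}+3K$ from $\partial R$, the site $s_0$ lies at such distance at least $K$, hence $s_0+U'\subset R$ and therefore $s_0+U'\subset\eta^{(j)}$. Translating Lemma~\ref{lem:locally:infectable} so that $s$ plays the role of the origin, I then obtain a critical parallelogram $D$ spanned in $\eta^{(j+1)}_{R_s}$ — hence also in $\eta^{(j+1)}$ — with $D\subset s+R(-3K,-3K;3K,3K)$.

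The third step is a localisation argument. The frame $B'$ is the union of four arms, and from the explicit definitions (using $L_{n-1},H_{n-1}\ge0$ together with $d-b\ge2H_n$ and $c-a\ge2L_n$) each arm sits inside one of $R'_{\ell,\downarrow},R'_{\ell,\uparrow},R'_{h,\downarrow},R'_{h,\uparrow}$ with a margin of $3K$ on every side; say without loss of generality that $s$ lies in the arm contained in $R'_{\ell,\downarrow}$. Then $D\subset s+R(-3K,-3K;3K,3K)\subset R'_{\ell,\downarrow}\subset R_{\ell,\downarrow}$, so $D$ is a critical parallelogram spanned in $\eta^{(j+1)}_{R_{\ell,\downarrow}}$ and intersecting $R'_{\ell,\downarrow}$. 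But $\cP^3_{j+1}$ (Claim~\ref{claim:P3:2}) gives $\eta^{(j+1)}_{R_{\ell,\downarrow}}\in V(n-1,R_{\ell,\downarrow})$, and since $R_{\ell,\downarrow}$ has $u_3$-width at least $2L_{n-1}$ and $u_4$-width at least $2H_{n-1}$ with associated central region exactly $R'_{\ell,\downarrow}$, the induction hypothesis $\cH_{n-1}$ forbids any spanned critical parallelogram intersecting $R'_{\ell,\downarrow}$ — a contradiction. This proves $\cP^2_{j+1}$, completing the induction on $j$ and hence, via $\cP^1_m$ as already noted, Lemma~\ref{lem:rec:main}.

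I expect the only delicate point to be the constant bookkeeping in the last two steps: one must check that the $3K$-neighbourhood of an arbitrary site of $B'$ really stays inside the appropriate $R'_{\zeta,\xi}$, and that $R_{\zeta,\xi}$ is wide enough in both directions for $\cH_{n-1}$ to apply with $R'_{\zeta,\xi}$ as its forbidden central region; both are read off from the formulas for $L_n,H_n$ and the picture in Figure~\ref{fig:main}. Everything else is a recentred instance of Lemma~\ref{lem:locally:infectable} together with monotonicity of spanning and of local infectability.
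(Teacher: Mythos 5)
Your proposal is correct and follows essentially the same route as the paper's proof: argue by contradiction, apply Lemma~\ref{lem:locally:infectable} recentred at the offending site of $F$ to produce a spanned critical parallelogram inside some $R'_{\zeta,\xi}$, and then contradict $\cP^3_{j+1}$ via $\cH_{n-1}$. The extra details you supply (the easy case where the flip removes an infection, the check that $s_0+U'\subset R$, and the $3K$-margin bookkeeping placing $s+R(-3K,-3K;3K,3K)$ inside $R'_{\zeta,\xi}$) are exactly the steps the paper leaves implicit, and they check out.
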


\begin{proof}
Assume for a contradiction that there exists $s' \in F\subset B'$ that is locally infectable in $\eta^{(j+1)}$. By $\cP_j^2$, $s'$ is not locally infectable in $\eta^{(j)}$. Therefore, by Lemma \ref{lem:locally:infectable}, there exist $\zeta\in\{\ell,h\}$, $\xi\in\{\downarrow,\uparrow\}$ and a critical parallelogram $D\subset R'_{\zeta,\xi}$ spanned in $\eta^{(j+1)}_{R_{\zeta,\xi}}$. However, by Claim \ref{claim:P3:2} $\eta^{(j+1)}_{R_{\zeta,\xi}} \in V(n-1,R_{\zeta,\xi})$, so $\cH_{n-1}$ yields the desired contradiction.
\end{proof}

Claims \ref{claim:P4}, \ref{claim:betterP1}, \ref{claim:P3:2} and \ref{claim:P2} together establish the induction step, which completes the proof of Lemma \ref{lem:rec:main}.
\end{proof}

\section{Application of Proposition~\ref{prop:main}}
\label{sec:proof}
In this section we derive Theorems \ref{th:infinite} and \ref{th:finite} from Proposition \ref{prop:main}. For that purpose we require some estimates on the probabilities appearing in the statement of the proposition, which are mostly proved in the appendices. We restate those results below as needed. Throughout the section $\cU$ is a critical update family with difficulty $\alpha$ subject to further assumptions recalled in each subsection. Such a family admits two non-collinear rational stable directions. We set $u_1$ and $u_2$ to be two arbitrary such directions, which will be chosen differently for each class of update families. We will use the definitions of Section \ref{subsec:general} with $u_1$ and $u_2$.

Let us start with the easiest estimate.
\begin{lem}
\label{lem:loc:inf:bound}
Let $K\le q^{-2\alpha}$. Then,
\[\mu(0\text{ is locally infectable})\le 1/8.\]
\end{lem}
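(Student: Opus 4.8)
The plan is to unwind the definition, reduce the event ``$0$ is locally infectable'' to the existence of a \emph{spanned parallelogram anchored at the origin}, and then conclude by a union bound fed by the spanning estimates of Appendix~\ref{app:spanning}.

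By definition, $0$ is locally infectable in $\eta$ exactly when $0\in[\eta\cap R]$, where $R:=R(-2K,-2K;2K,2K)$ has diameter at most $c_1K\le c_1 q^{-2\alpha}$ for a constant $c_1=c_1(\cT)$. On this event, either $0\in\eta$, or, arguing exactly as in the proof of Lemma~\ref{lem:locally:infectable}, the strongly connected component $X$ of $0$ in $[\eta\cap R]$ satisfies $X=[\eta_R\cap X]$ by Observation~\ref{obs:connected:component}, so its smallest bounding parallelogram $D_0$ is spanned in $\eta_R$, hence also in $\eta$, with $0\in D_0\subset R$. In either case there is a parallelogram $D$ with $0\in D\subset R$ that is spanned in $\eta$ (the degenerate $D=\{0\}$ in the first case), and therefore
\[
\mu(0\text{ is locally infectable})\le\sum_{\substack{D\text{ a parallelogram}\\ 0\in D\subset R}}\mu(D\text{ is spanned}).
\]

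I would organise this sum according to $d:=\diam(D)$. Since the constraint $0\in D$ pins the position of $D$, for the fixed directions $u_3,u_4$ the number of parallelograms $D$ with $0\in D$ and $\diam(D)\in[d,2d)$ is at most $Cd^{4}$, a bound \emph{independent of $K$}; the weaker constraint $D\subset R$ allows at most $C'K^{4}$ of them in total. From the bounds on the spanning probability established in Appendix~\ref{app:spanning} (in the spirit of \cite{Bollobas14}) one has constants $c_2,c_3>0$ such that $\mu(D\text{ is spanned})\le(c_2^{-1}q^{\alpha}\diam(D))^{\diam(D)}$ whenever $\diam(D)\le c_2q^{-\alpha}$, while $\mu(D\text{ is spanned})\le\exp(-c_3q^{-\alpha})$ once $\diam(D)\ge c_2q^{-\alpha}$ (the degenerate and constant-size parallelograms being crudely bounded by $\mu(\eta\cap D\neq\varnothing)=O(q)$). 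Two consequences are used below: for $d\le c_2q^{-\alpha}$ the bound is at most $2^{-d}$, and for each fixed $d$ it tends to $0$ as $q\to0$.

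Finally I would split the sum at $d_\star:=c_2q^{-\alpha}$. The contribution of parallelograms with $\diam(D)\ge d_\star$ is at most $C'K^{4}\exp(-c_3q^{-\alpha})$; since $K\le q^{-2\alpha}$ we have $\log(C'K^{4})=O(\log(1/q))=o(q^{-\alpha})$, so this tends to $0$ as $q\to0$. The contribution of diameters below $d_\star$ is at most $\sum_{d\ge1}Cd^{4}\sup_{\diam(D)\in[d,2d)}\mu(D\text{ is spanned})$, which is dominated uniformly in small $q$ by the convergent series $\sum_{d\ge1}Cd^{4}2^{-d}$ while each of its terms tends to $0$ as $q\to0$; by dominated convergence it too vanishes. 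Together with the $O(q)$ from the degenerate term, this gives $\mu(0\text{ is locally infectable})\to0$ as $q\to0$, hence $\mu(0\text{ is locally infectable})\le 1/8$ for $q$ small, which is the claim. The only substantive point is having the spanning bounds available uniformly across all scales $1\le d\le O(q^{-2\alpha})$ at once; the rest is bookkeeping, and the hypothesis $K\le q^{-2\alpha}$ enters only through $\log K=o(q^{-\alpha})$, which is precisely what stops the $\mathrm{poly}(K)$ large spanned parallelograms from compensating their individual cost $\exp(-\Omega(q^{-\alpha}))$.
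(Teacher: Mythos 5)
Your proof is correct, but it takes a genuinely different and considerably longer route than the paper's. The paper's proof is essentially two lines: if $0$ is locally infectable then, since $\cU$ is critical, $\diam([\eta_R])\le C_1K$, so the origin gets infected by the $\cU$-bootstrap percolation dynamics started from $\eta_R$ within time $C_1^3K^2\le C_1^3q^{-4\alpha}$, which is polynomial in $1/q$; this has probability $o(1)$ by the lower bound $\tau_0=\exp(q^{-\Theta(1)})$ w.h.p.\ of \cite{Bollobas15}*{Theorem 1.4}. You instead rederive the smallness from the spanning machinery of Appendix \ref{app:spanning}: you reduce local infectability to the existence of a spanned parallelogram through the origin (exactly the first half of the proof of Lemma \ref{lem:locally:infectable}, without the appeal to Lemma \ref{lem:AL:body}) and then run a union bound over all scales. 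This is sound: the two consequences you actually use ($2^{-d}$ decay up to the critical scale together with each fixed scale vanishing as $q\to0$, and $\exp(-\Omega(q^{-\alpha}))$ beyond the critical scale against only $\mathrm{poly}(K)=q^{-O(1)}$ entropy) are exactly what Corollary \ref{cor:bounds:app} plus the crude $O(q)$ bound for constant-size parallelograms provide, and the hypothesis $K\le q^{-2\alpha}$ plays the same role in both arguments (subexponential entropy, resp.\ polynomial bootstrap time). What your route buys is independence from the external citation to \cite{Bollobas15}, at the cost of length and of leaning on the appendix. Two cosmetic imprecisions, neither a gap: the spanned parallelogram $D_0$ need not satisfy $D_0\subset R$, since $[\eta\cap R]$ may protrude from $R$ by $O(K)$, so the union bound should range over parallelograms meeting, say, $R(-C_1K,-C_1K;C_1K,C_1K)$ (this only changes constants); and the quantitative form of the spanning bound you quote is not literally that of Corollary \ref{cor:bounds:app}, though the consequences you use do follow from it.
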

\begin{proof}
Let $R=R(-2K,-2K;2K,2K)$. Since $\cU$ is critical, $\diam([\eta_{R}])\le C_1K$, so, starting the bootstrap percolation dynamics with $\eta_R$, the origin is either infected in time at most $C_1^3K^2$ or not at all. We conclude using e.g.\ \cite{Bollobas15}*{Theorem 1.4}, which gives that with probability tending to 1 as $q\to0$, the infection time is $\exp(q^{-\Theta(1)})$.
\end{proof}

Turning to the probability of spanning there are two cases to consider. For unbalanced models the following is essentially a reformulation of the most difficult result of \cite{Bollobas14}.
\begin{lem}
\label{lem:unbalanced:crit:proba}
Assume that $\cU$ is unbalanced and $K=q^{-\alpha-1/4}$. Then, for any critical parallelogram $D$ we have
\[\mu(D\text{ is spanned})\le\exp\left(-\frac{(\log(1/q))^2}{C_5q^\alpha}\right).\]
\end{lem}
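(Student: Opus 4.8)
The plan is to obtain Lemma~\ref{lem:unbalanced:crit:proba} directly from the upper bound on spanning probabilities proved in Appendix~\ref{app:spanning}, which is the adaptation to our framework (parallelograms rather than the polygonal droplets of~\cite{Bollobas14}, and the strong-connectivity scale $C_2'$) of the technical heart of~\cite{Bollobas14}, combined with the companion estimate from~\cite{Hartarsky20}. So the first step is simply to record, as a consequence of that appendix, the relevant inequality: for an \emph{unbalanced} update family $\cU$ and a parallelogram $D$ whose diameter lies in the range of critical diameters, $\mu(D\text{ is spanned})\le\exp\!\left(-\tfrac{c\,(\log(1/q))^2}{q^{\alpha}}\right)$ for a constant $c>0$ depending only on $\cU$ and $\cT$ (possibly up to a polynomial prefactor, which we discuss below). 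Here the $(\log(1/q))^2$ in the exponent --- in contrast with the $\Theta(1)$ that would appear for a balanced family --- is precisely the unbalanced phenomenon: by Definition~\ref{def:balanced} unbalancedness means that there are two opposite stable directions of difficulty strictly larger than $\alpha$, so that any strongly connected set whose bounding parallelogram is $D$ is forced to propagate across one such ``difficult'' direction, and bounding the probability of that propagation is exactly the delicate part of~\cite{Bollobas14}.

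Granting that inequality, the remainder is bookkeeping. A critical parallelogram has diameter at least $K/C_1=q^{-\alpha-1/4}/C_1$, which is (far) above the genuinely critical scale $\approx q^{-\alpha}$ of~\cite{Bollobas14}; if the appendix statement is phrased directly for diameters in $[K/C_1,K]$ we are already done, while if it is phrased at the scale $\approx q^{-\alpha}$ I would first apply the Aizenman--Lebowitz Lemma~\ref{lem:AL:body}: a parallelogram $D$ spanned in $\eta$ with $\diam(D)\ge K/C_1\ge C_1C_2'$ contains a spanned sub-parallelogram of diameter of order $q^{-\alpha}$, and a union bound over its $q^{-O(1)}$ possible positions inside $D$ yields $\mu(D\text{ spanned})\le q^{-O(1)}\exp(-c(\log(1/q))^2/q^{\alpha})$. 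In either case the polynomial prefactor is $\exp(O(\log(1/q)))$ and is swallowed by the exponent, since $(\log(1/q))^2/q^{\alpha}\gg\log(1/q)$; finally, since $C_5$ is chosen large enough with respect to the constant $c$ (and with respect to $C_1$, $C_2'$), we get $\mu(D\text{ spanned})\le\exp(-(\log(1/q))^2/(C_5q^{\alpha}))$, as claimed.

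The main obstacle is therefore not in this short deduction but in the input from Appendix~\ref{app:spanning}: proving the $\exp(-\Theta((\log(1/q))^2/q^{\alpha}))$ bound for the probability that a near-critical droplet of an unbalanced model is internally spanned. That is done there by the standard hierarchy decomposition of a spanned droplet into small disjointly occurring seeds linked by ``growth'' and ``merging'' steps, together with the van den Berg--Kesten inequality and sharp estimates on the cost of growing past a direction of difficulty larger than $\alpha$; as indicated in the introduction this follows~\cite{Bollobas14} and~\cite{Hartarsky20} without essential new ingredients, and nothing further is needed in the present section.
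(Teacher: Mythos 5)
Your deduction has the right shape, but the key probabilistic input is attributed to the wrong place, and as written the proof rests on a statement that does not exist in the paper. Appendix~\ref{app:spanning} (Corollary~\ref{cor:bounds:app}, and its boundary analogue Corollary~\ref{cor:bounds:app:cut}) only proves bounds that saturate at $d^{O(1)}e^{-2/(C_5q^{\alpha})}$ for droplets of diameter $d\gtrsim q^{-\alpha}$; this is the \emph{balanced}-type estimate, used for Lemma~\ref{lem:bound:crit:balanced}, and nowhere in the appendix is the enhanced $\exp(-\Theta((\log(1/q))^2)/q^{\alpha})$ bound for unbalanced families established. Your closing paragraph asserts that the hierarchy argument yielding that enhancement ``is done there,'' but it is not: the paper deliberately does not reprove the hardest result of \cite{Bollobas14}. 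Instead, the actual proof of Lemma~\ref{lem:unbalanced:crit:proba} translates the present notion of spanning into the internal spanning of \cite{Bollobas14} --- taking the strongly connected component $X'$ of the witnessing set, noting $X'=[\eta\cap D\cap X']$ by Observation~\ref{obs:connected:component}, so that the smallest $S_U$-droplet containing $X'$ is internally spanned by $\eta\cap D$ with diameter at least $K/C_1^2$ --- then applies the Aizenman--Lebowitz extraction of \cite{Bollobas14}*{Lemma 8.37} to produce an internally spanned critical droplet in their sense, and concludes by a union bound together with \cite{Bollobas14}*{Lemma 8.36}, which is where the $(\log(1/q))^2/q^{\alpha}$ exponent comes from.

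So the gap is twofold. First, the citation you lean on is unavailable: if one only invokes the appendix, one obtains $\exp(-\Omega(1)/q^{\alpha})$, which is strictly weaker than the claimed bound and insufficient for the application in Section~\ref{subsubsec:infinite:log4}. To repair this you must either import \cite{Bollobas14}*{Lemma 8.36} explicitly (as the paper does) or actually carry out the unbalanced hierarchy analysis, which is a substantial piece of work, not a routine corollary. Second, and more minor, your reduction skips the translation between the two notions of spanning (strongly connected sets and parallelograms here, versus $S_U$-droplets and internal spanning in \cite{Bollobas14}); this is exactly the step handled by Observation~\ref{obs:connected:component} and needs to be said, since \cite{Bollobas14}*{Lemma 8.36} is stated for their objects, not yours. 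The surrounding bookkeeping in your second paragraph (Aizenman--Lebowitz, union bound over $q^{-O(1)}$ positions, absorbing polynomial prefactors, choosing $C_5$ large) is fine and matches the paper.
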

\begin{proof}
By definition if $D$ is a spanned critical parallelogram, then there exists a strongly connected set $X \subset [\eta \cap D]$ with diameter at least $\diam(D)/C_1$. If $X'$ is the strongly connected component of $X$ in $[\eta \cap D]$, then by Observation \ref{obs:connected:component}, $X' = [\eta \cap D \cap X']$. Therefore, if $D'$ is the smallest $S_U$-droplet in the sense of \cite{Bollobas14}*{Definition 2.1} with the $S_U$ defined in \cite{Bollobas14}*{Lemma 6.2} such that $D'$ contains $X'$, then $D'$ is internally spanned by $\eta \cap D$ (in the sense of \cite{Bollobas14}*{Definition 2.4}\footnote{\label{foot:constants}Technically, it is not exactly the case, as \cite{Bollobas14} uses a different choice of constants. However, their results that we use still hold in our setting.}), and $\diam(D')\geq\diam(D)/C_1\ge K/C_1^2$. Repeating the proof of \cite{Bollobas14}*{Lemma 8.37}, we get that there is a critical droplet in the sense of \cite{Bollobas14}*{Definition 2.5} internally spanned by $\eta\cap D$. Then the union bound over such droplets and \cite{Bollobas14}*{Lemma 8.36} yield the desired result.
\end{proof}

Concerning balanced models, in Appendix \ref{app:spanning} (Corollary \ref{cor:bounds:app}) we establish the following, by combining the techniques of \cites{Hartarsky20,Bollobas14}.
\begin{lem}
\label{lem:bound:crit:balanced}Assume that $\cU$ is balanced and $q^{-\alpha}/C_5\le K\le q^{-2\alpha}$. Then for any critical parallelogram $D$ we have
\[\mu(D\text{ is spanned})\le \exp\left(-\frac{1}{C_5q^{\alpha}}\right).\]
\end{lem}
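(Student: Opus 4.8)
The plan is to strip away the KCM-specific packaging and reduce the statement to a purely bootstrap-percolation fact — that for a critical \emph{balanced} $\cU$ a droplet of diameter of order $q^{-\alpha}$ (or smaller) is internally spanned with probability only $\exp(-\Theta(q^{-\alpha}))$, with \emph{no} $(\log(1/q))^2$ correction — and then to do some cosmetic constant bookkeeping to put the bound in the form $\exp(-1/(C_5q^\alpha))$.

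\textbf{Reduction to internal spanning.} First I would repeat the opening of the proof of Lemma~\ref{lem:unbalanced:crit:proba}. If the critical parallelogram $D$ is spanned in $\eta$, there is a strongly connected set $X\subset[\eta\cap D]$ of diameter at least $\diam(D)/C_1\ge K/C_1^2$; replacing $X$ by its strongly connected component in $[\eta\cap D]$ (using Observation~\ref{obs:connected:component}) and letting $D'$ be the smallest $S_U$-droplet in the sense of \cite{Bollobas14}*{Definition 2.1} containing it, one gets that $D'$ is internally spanned by $\eta\cap D$ with $\diam(D')\ge K/C_1^2\ge q^{-\alpha}/(C_1^2C_5)$. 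So it is enough to bound the probability that some $S_U$-droplet of diameter at least $q^{-\alpha}/(C_1^2C_5)$ is internally spanned by $\eta$, and by the Aizenman--Lebowitz lemma (Lemma~\ref{lem:AL:body} together with its $S_U$-droplet version) that a droplet of diameter exactly $M:=q^{-\alpha}/(C_1^2C_5)$ is.

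\textbf{The growth estimate.} This is the heart of the proof. I would run the hierarchy decomposition of \cite{Bollobas14}, following \cite{Cerf99}: an internally spanned droplet of diameter $M$ admits a ``good and satisfied'' hierarchy, a rooted tree with $O(M/t)$ vertices whose leaves are internally spanned seeds of diameter at most $t$ and whose single-child edges encode the crossing of an annulus, that is the one-sided growth of a droplet from one scale to a larger one. Since $\cU$ is balanced, there is a closed semicircle all of whose directions have difficulty at most $\alpha$, so this one-sided growth is available at every edge and the probability of crossing the annuli up to scale $M$ is at most $\prod_{k\le M}(Ckq^\alpha)$ for a constant $C$ depending only on $\cU$ and $\cT$; as $M<1/(Cq^\alpha)$, a Stirling-type estimate turns this into $\exp(-\Theta(M\log(1/(CMq^\alpha))))=\exp(-\Theta(M\log(C_1^2C_5/C)))$. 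Taking the seed threshold $t=\Theta(\log(1/q))$ makes both the seed probabilities and the number $\exp(O((M/t)\log M))$ of hierarchies negligible against this cost, so the union bound over hierarchies preserves the estimate. Balancedness is exactly what kills the logarithm here: for an unbalanced model one is forced at certain scales to grow in a direction of difficulty strictly larger than $\alpha$, and the cost of that over a range of scales is what produces the $(\log(1/q))^2$ of Lemma~\ref{lem:unbalanced:crit:proba}. This is also the place where the bootstrap input of Appendix~\ref{app:spanning}, merging the arguments of \cite{Bollobas14} and \cite{Hartarsky20}, is used.

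\textbf{Constants.} Combining the two steps, $\mu(D\text{ is spanned})\le\exp(-cM\log(C_1^2C_5/C))$ for some $c>0$ and some $C$ fixed by $\cU$ and $\cT$; substituting $M=q^{-\alpha}/(C_1^2C_5)$, the exponent is of order $q^{-\alpha}c\log(C_1^2C_5/C)/(C_1^2C_5)$, which exceeds $q^{-\alpha}/C_5$ once $c\log(C_1^2C_5/C)\ge C_1^2$, that is once $C_5$ is large enough in terms of $C_1$ and $C$ — which is exactly the meaning of $C_5\gg C_2'\gg C_1$. The genuine difficulty, I expect, lies entirely in the second step: one has to reprove, uniformly over balanced $\cU$ and with explicit enough constants, the streamlined crossing and hierarchy estimates that replace the most technical part of \cite{Bollobas14}, and verify that the approach of \cite{Hartarsky20} can be fitted to the strong-connectivity and $S_U$-droplet conventions in force here; the reduction and the constant bookkeeping are routine by comparison.
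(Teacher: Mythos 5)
Your overall strategy---reduce spanning of the critical parallelogram to internal spanning of an $S_U$-droplet of diameter $M=\Theta(q^{-\alpha}/C_5)$, then bound the latter by $\exp(-\Theta(M))$ via a hierarchy---is not the route the paper takes, and the step you yourself flag as ``the genuine difficulty'' is exactly where the proposal stops being a proof. The annulus-crossing estimate $\prod_{k\le M}(Ckq^{\alpha})$, together with the existence and entropy control of hierarchies for the notion of spanning in force here (strongly connected components of closures, with freely chosen stable directions $u_1,u_2$), is asserted rather than established. Nor can it simply be imported: as the paper explains at the start of Appendix~\ref{app:spanning}, the spanning bounds of \cite{Bollobas14} are produced inside a multi-scale induction whose proof uses unbalancedness and a constrained choice of droplet directions---which is precisely why the unbalanced case (Lemma~\ref{lem:unbalanced:crit:proba}) can quote \cite{Bollobas14}*{Lemma 8.36} while the balanced case cannot. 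So, as written, the entire probabilistic content of the lemma is deferred to an unproven growth estimate; carrying it out would amount to redoing a substantial part of \cite{Bollobas14} in this spanning framework.

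The paper's actual proof (Corollary~\ref{cor:bounds:app}) avoids hierarchies altogether. It runs the covering algorithm of \cite{Hartarsky20} with $\alpha$-clusters, uses the Closure Lemma~\ref{lem:closure} to make covering essentially invariant under taking closures (the property the covering algorithm of \cite{Bollobas14} lacks), converts a spanned parallelogram into a covered $\cT_0$-droplet of comparable diameter (Observation~\ref{obs:cover:to:span}), and then observes deterministically (Lemma~\ref{lem:extremal}) that a covered droplet of diameter $d$ contains at least $d/C_4^2$ disjoint clusters, each costing $q^{\alpha}$; a one-line union bound in Lemma~\ref{lem:bounds:covered} finishes. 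Note that balancedness is never used in this argument---it appears in the statement only because this is the regime ($K\le q^{-2\alpha}$, no $(\log(1/q))^2$ gain required) in which the lemma is applied. Your reduction step and constant bookkeeping are fine, but the static cluster-counting argument obtains the bound with none of the growth machinery your middle step would require.
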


For the remaining conditions of Proposition \ref{prop:main} we need to distinguish the different classes of models.

\subsection{Proof of Theorem \ref{th:infinite}}
\label{subsec:infinite}
In this section we assume that $\cU$ has an infinite number of stable directions. We then choose two rational directions $u_1<u_2<u_1+\pi$ sufficiently close to each other, such that all directions in $[2u_1-u_2,2u_2-u_1]$ are stable and $u_1$, $u_2$ satisfy a technical condition which the reader is advised to ignore, namely that $u_1$ and $u_2$ are constructed like the eponymous directions in the proof of \cite{Hartarsky20}*{Lemma 4.1}.

\subsubsection{Proof of Theorem \ref{th:infinite}\ref{log4}}
\label{subsubsec:infinite:log4}
For this section we further assume that $\cU$ is unbalanced. We fix the values of the parameters of Proposition \ref{prop:main} as follows.
\begin{align*}
K&{}=q^{-\alpha-1/4}&
\ell&{}=q^{-4\alpha}&
L&{}=\exp\left(\frac{(\log(1/q))^2}{C_6q^{\alpha}}\right)\\
T&{}=\exp\left(\frac{(\log (1/q))^4}{C_6^2q^{2\alpha}}\right)&
h&{}=q^{-4\alpha}&
H&{}=\exp\left(\frac{(\log(1/q))^2}{C_6q^{\alpha}}\right).
\end{align*}

\begin{proof}[Proof of Theorem \ref{th:infinite}\ref{log4}]
The upper bound was proved in \cite{Martinelli19a}*{Theorem 2(a)}, so we focus on the lower one. We will apply Proposition \ref{prop:main} with the above choice of parameters, so that we obtain the desired conclusion: $\bbE_\mu[\tau_0]\ge T$. Hence, it suffices to verify the hypotheses of the proposition. Indeed, setting $n=(\log(1/q))^2/(2C_6q^\alpha)$, we have
\[L=\exp\left(\frac{(\log(1/q))^2}{C_6q^{\alpha}}\right)\ge 3^n\cdot2q^{-4\alpha}\ge 3^n(11K+\ell)\]
and similarly for $H$. By Lemma \ref{lem:loc:inf:bound} we do have $\mu(0\text{ is locally infectable})\le 1/8$. Moreover, recalling that $\rho\le \exp(-(\log(1/q))^2/(C_5q^\alpha))$ by Lemma \ref{lem:unbalanced:crit:proba}, we obtain
\begin{align*}TLH(LHK^3\rho)^{n+1}\le{}& \exp\left(\frac{2(\log(1/q))^4}{C_6^2q^{2\alpha}}+n\left(\frac{3(\log(1/q))^2}{C_6q^\alpha}-\frac{(\log(1/q))^2}{C_5q^\alpha}\right)\right)\\
\le{}&\exp\left(\frac{(\log(1/q))^4}{C_6q^{2\alpha}}\left(\frac{2}{C_6}+\frac{3}{2C_6}-\frac{1}{2C_5}\right)\right)\le 1.\end{align*}
Finally,
\[T(LH)^2\le \exp\left(\frac{2(\log(1/q))^4}{C_6^2q^{2\alpha}}\right)\le \exp\left(q^{-3\alpha}\right)\]
and Lemma \ref{lem:crossing:body:infinite} below yields $\max(\pl,\pd)\le \exp(-q^{-3\alpha})$. Therefore, we obtain $T(LH)^2\max(\pl,\pd)\le 1$ and all the hypotheses of Proposition \ref{prop:main} are verified once we establish Lemma \ref{lem:crossing:body:infinite}.
\end{proof}
\begin{lem}
\label{lem:crossing:body:infinite}
With the notation and assumptions above we have
\[\max(\pl,\pd)\le \exp\left(-q^{-3\alpha}\right).\]
\end{lem}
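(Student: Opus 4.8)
The plan is to use that a $u_1$-crossing witnessing the event $C^{u_1}_R$ has to traverse a strip of width $\ell=q^{-4\alpha}$ while never spanning a critical parallelogram, so that by Aizenman--Lebowitz the crossing is built from parallelograms of diameter less than $K$ and must therefore produce a diameter-$\Theta(K)$ spanned structure independently along $\Theta(\ell/K)$ disjoint portions of the strip. Since the roles of $(u_1,u_3,\ell,L)$ and $(u_2,u_4,h,H)$ are interchanged by the symmetry of the construction and $\ell=h$, $L=H$, it suffices to bound $\pl$.

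Fix $a,b$ and set $R=R(a,b;a+\ell,b+H)$. If $\eta'\ge\eta$ witnesses $C^{u_1}_R$, then $\eta'_R$ spans no critical parallelogram, hence by Lemma~\ref{lem:AL:body} it spans no parallelogram of diameter at least $K$ at all; thus on $C^{u_1}_R$ the strip $R$ is $u_1$-crossed using only parallelograms of diameter less than $K$. I would then slice $R$ along $u_3$ into $m=\lfloor\ell/(6K)\rfloor$ consecutive sub-parallelograms of width $6K$ and height $H$, ordered from the $\bar\bbH_{u_3}(a)$-side towards the $\bbH_{u_1}$-side. A standard left-to-right analysis of crossings (as in \cite{Bollobas14}*{Section~8}, carried out in Appendix~\ref{app:crossing}) shows that such a crossing of $R$ forces, for at least $\lfloor m/C\rfloor$ of these slices, a $u_1$-crossing of the central width-$4K$ sub-parallelogram with the infected half-plane to its right as boundary condition, achieved without spanning a critical parallelogram; a $K$-neighbourhood of a width-$4K$ sub-parallelogram of a width-$6K$ slice stays inside that slice, so these events are measurable with respect to pairwise disjoint blocks of $\eta$, and independence gives
\[\pl\ \le\ p_K^{\lfloor m/C\rfloor},\qquad p_K:=\max_{a',b'}\mu\Bigl(C^{u_1}_{R(a',b';a'+4K,b'+H)}\Bigr),\]
with $C=C(\cU,\cT)$.

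It remains to bound the single-slice probability $p_K$, which is where the content of Appendix~\ref{app:crossing} enters: a width-$\Theta(K)$ $u_1$-crossing avoiding critical parallelograms must span a diameter-$\Theta(K)$ sub-critical parallelogram along $\Theta(Kq^\alpha)$ disjoint locations, each spanning event costing at most $\exp(-\Omega((\log(1/q))^2/q^\alpha))$ by Lemma~\ref{lem:unbalanced:crit:proba}; after absorbing the harmless $\operatorname{poly}(H)$ factor for the vertical position of the crossing (note $\log H=O((\log(1/q))^2/q^\alpha)$), this yields $p_K\le\exp(-c\,Kq^\alpha(\log(1/q))^2)=\exp(-c\,q^{-1/4}(\log(1/q))^2)$ for some $c=c(\cU,\cT)>0$. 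Combining with $m\ge q^{-3\alpha+1/4}/7$ (recall $\ell=q^{-4\alpha}$, $K=q^{-\alpha-1/4}$),
\[\pl\ \le\ \exp\!\left(-\frac{c\,q^{-1/4}(\log(1/q))^2}{7C}\cdot q^{-3\alpha+1/4}\right)=\exp\!\left(-\frac{c(\log(1/q))^2}{7C}\,q^{-3\alpha}\right)\ \le\ \exp\!\left(-q^{-3\alpha}\right)\]
for $q$ small enough, and the same bound for $\pd$ follows by the symmetric argument.

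The main obstacle is the single-slice estimate on $p_K$: one must convert ``a crossing with no spanned critical parallelogram'' into ``many disjoint spannings of almost-critical parallelograms'' and feed in the sharp spanning bound of Lemma~\ref{lem:unbalanced:crit:proba} — this is exactly the part delegated to Appendix~\ref{app:crossing}. Everything else (the symmetry reduction, the Aizenman--Lebowitz step, the slicing into independent blocks, and the arithmetic with the chosen scales) is routine bookkeeping.
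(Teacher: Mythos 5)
There is a genuine gap, and it sits exactly where your proposal leans hardest. Your slicing-plus-independence step requires the deterministic implication ``$R$ is $u_1$-crossed $\Rightarrow$ each central width-$4K$ sub-parallelogram $S_i$ is $u_1$-crossed with its own half-plane boundary,'' and this is false as stated: the crossing is a single strongly connected subset of $[\bbH_{u_1}\cup(R\cap\eta)]$, and the portion of its closure lying inside $S_i$ may be created with the help of infections of $R$ lying to the \emph{left} of $S_i$, which belong neither to $S_i$ nor to the translated half-plane $\bbH_{u_1}$ attached to $S_i$. The paper's Lemma \ref{lem:partition} does establish a correct version of such a slice-by-slice reduction, but it is proved only for $0<\alpha(u_1)<\infty$, because the dichotomy ``each strip contains an $\alpha(u_1)$-cluster or is traversed by a free-standing spanned parallelogram'' is what replaces your unjustified independence claim. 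In the setting of Lemma \ref{lem:crossing:body:infinite} the directions $u_1,u_2$ are chosen inside an interval of stable directions, so $\alpha(u_1)=\infty$ and no such dichotomy is available; moreover $H=\exp((\log(1/q))^2/(C_6q^\alpha))$ is exponentially large, so the per-slice entropy is not ``harmless'': your own single-slice bound $p_K\le\exp(-cq^{-1/4}(\log(1/q))^2)$ is strictly dominated by the positional factor $\mathrm{poly}(H)=\exp(\Theta((\log(1/q))^2/q^{\alpha}))$ because $\alpha\ge1>1/4$, so the absorption step in your third paragraph does not close even numerically. (A further flaw in that paragraph: near the infected half-plane, growth along the boundary does not force free-standing spanned parallelograms at all, and Lemma \ref{lem:unbalanced:crit:proba} only applies at the critical scale $K=q^{-\alpha-1/4}$, not to the diameter-$\Theta(q^{-\alpha})$ pieces you invoke.)

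The paper's proof is structurally different and is designed precisely to handle the infected boundary in a direction with an interval of stable directions. One retraces the spanning algorithm \emph{with boundary} to build a Holroyd-type hierarchy whose labels are $\cS_{u_1}$-droplets; the absence of spanned critical parallelograms forces every droplet in the hierarchy to be a \emph{cut} droplet (a flat triangle sticking out of $\bbH_{u_1}$), seeds cost $e^{-q^{-\alpha}/C_5}$ by Corollary \ref{cor:bounds:app:cut}, each unary growth step costs $e^{-q^{-\alpha}/C_5}$ by Lemma \ref{lem:growth} (itself proved by exhibiting a smaller-scale crossing in the tilted direction $u_1^-$), the hierarchy has $\Omega(q^{-3\alpha+1/4})$ vertices, and---crucially---the entropy is only $q^{-O(v)}$ because consecutive droplets are nested and differ by $O(T)$, rather than the $H^{O(m)}$ you would pay for independently placed slice-crossings. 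If you want to salvage a slicing argument here you would have to rebuild essentially all of this machinery; as written, the proposal does not prove the lemma.
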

This bound is proved in Appendix \ref{app:crossing} (Lemma \ref{lem:crossing:bounds:boundary}), but let us provide a rough sketch for the reader's convenience.
\begin{proof}[Sketch of the proof of Lemma \ref{lem:crossing:body:infinite}]
The proof is based on the concept of \emph{hierarchies} introduced by Holroyd in \cite{Holroyd03} and extensively studied since then. 

Consider the rectangle $R=R(-\ell,0;0,H)$ and assume that $C_R^{u_1}$ occurs with $\eta'=\eta$ in Definition \ref{def:crossing}. Fix some $u_1$-crossing $X\subset[\bbH_{u_1}\cup(R\cap\eta)]$ of $R$. One can retrace how $X$ becomes infected by the bootstrap percolation process by a method known as a \emph{spanning algorithm} as follows. Starting from single infections, we lump them into groups of infections by progressively merging two groups if their closures are close to each other. Since $X$ has diameter at least $\ell$, this process will eventually produce a set of infections of diameter roughly $\ell$.
We associate to each set of infections a \emph{droplet} (appropriately shaped polygon) containing its closure. 

We can then view the spanning algorithm run backwards as a progressive shattering of the initial droplet of size $\ell$ and record its history. However, we do not wish to register the entire history of all splittings. When a droplet splits into two large droplets, we study the subsequent splittings of both droplets, but when a large droplet splits into a large one plus a small one, we ignore the small one. Still, if there are many successive such small splittings, we occasionally write down one of the resulting large droplets. We stop studying the splitting when the droplets are of size roughly $K$. We then call these droplets of size $K$ \emph{seeds}. In total, this gives us a tree of large droplets called \emph{hierarchy} recording how the droplet of size $\ell$ is produced.

Recall that the event $C_R^{u_1}$ requires the absence of spanned critical parallelograms, which prevents the existence of clusters of infectable sites of size at least $K$, hence of droplets of size at least $K$. This seemingly forbids the hierarchy to exist. However, in the above we overlooked the presence of the infected boundary $\bbH_{u_1}$. To take it into account, we actually consider hierarchies of \emph{cut droplets} which are ``very flat triangles sticking out of the boundary.'' 

To bound the probability of $C_R^{u_1}$, we bound the probability that such a hierarchy occurs. There are two possibilities: either there are many splittings into two large droplets, or much of the splitting is done into a large and a small droplet. In the first case, there will be many seeds. We establish a preliminary upper bound on the probability of occurrence of a seed of order $\exp(-q^{-\alpha})$. Since each seed should be contained in the original droplet of size $\ell$, there are few possible choices for them. We deduce an appropriate bound on the probability of such a hierarchy.

Assuming that most of the size $\ell$ of the initial droplet is gained by splittings into a large and a small droplet, let us consider one such step. This means that in the spanning algorithm, when we add to the infections of a large (cut) droplet the few infections contained in a small droplet, we get to infect a slightly larger droplet. This essentially implies the existence of an infectable set going from the boundary of the large droplet to that of the larger one. This in turn yields the occurrence of another cut droplet of size at least $K$. Hence, the preliminary bound on the probability of occurrence of a seed allows us to bound the probability of such a splitting, concluding the proof.
\end{proof}

\subsubsection{Proof of Theorem \ref{th:infinite}\ref{log0}}
\label{subsubsec:infinite:log0}
For this section we further assume that $\cU$ is balanced. We fix the values of the parameters of Proposition \ref{prop:main} as follows.
\begin{align*}
K&{}=q^{-\alpha}&
\ell&{}=q^{-4\alpha}&
L&{}=\exp\left(\frac{1}{C_6q^{\alpha}}\right)\\
T&{}=\exp\left(\frac{1}{C_6^2q^{2\alpha}}\right)&
h&{}=q^{-4\alpha}&
H&{}=\exp\left(\frac{1}{C_6q^{\alpha}}\right).
\end{align*}

Then Theorem \ref{th:infinite}\ref{log0} follows directly from Proposition \ref{prop:main} and the upper bound from \cite{Hartarsky20II}*{Theorem 1(b)}. Setting $n=1/(2C_6q^\alpha)$, the hypotheses of the proposition follow like in Section \ref{subsubsec:infinite:log4} from the choice of parameters, Lemmas \ref{lem:loc:inf:bound} and \ref{lem:bound:crit:balanced}, and Lemma \ref{lem:crossing:body:infinite}, which still applies.

\subsection{Proof of Theorem \ref{th:finite}}
\label{subsec:proof:finite}
\subsubsection{Proof of Theorem~\ref{th:finite}\ref{log3}}
\label{subsubsec:infinite:log3}
In this section we assume that $\cU$ is unbalanced, rooted and has a finite number of stable directions. Therefore, we can find rational directions $u_1,u_2,u_3$ such that $u_1+\pi=u_3$, $u_2\in(u_1,u_3)$ and $\alpha(u_i)\ge\alpha+1$ for all $i\in\{1,2,3\}$. 

We fix the values of the parameters of Proposition \ref{prop:main} as follows.
\begin{align*}
K&{}=q^{-\alpha-1/4}&
\ell&{}=q^{-\alpha-5/8}&
L&{}=q^{-\alpha-3/4}\\
T&{}=\exp\left(\frac{(\log (1/q))^3}{C_6q^{\alpha}}\right)&
h&{}=q^{-\alpha-5/8}&
H&{}=q^{-\alpha-3/4}.
\end{align*}

\begin{proof}[Proof of Theorem \ref{th:infinite}\ref{log3}]
The upper bound was proved in \cite{Hartarsky21a}*{Theorem 1}, so we focus on the lower one. As previously, it suffices to verify the hypotheses of Proposition \ref{prop:main} with the above choice of parameters. Indeed, setting $n=\log (1/q)/C_1$, we have
\[L=q^{-\alpha-3/4}\ge 3^n\cdot2q^{-\alpha-5/8}\ge 3^n(11K+\ell)\]
and similarly for $H$. By Lemma \ref{lem:loc:inf:bound} we do have $\mu(0\text{ is locally infectable})\le 1/8$. Moreover, recalling that $\rho\le \exp(-(\log(1/q))^2/(C_5q^\alpha))$ by Lemma \ref{lem:unbalanced:crit:proba}, we obtain
\begin{align*}TLH(LHK^3\rho)^{n+1}\le{}&\exp\left(\frac{2(\log(1/q))^3}{C_6q^{\alpha}}-\frac{n(\log(1/q))^2}{2C_5q^\alpha}\right)\\
\le{}&\exp\left(\frac{(\log(1/q))^3}{q^{\alpha}}\left(\frac{2}{C_6}-\frac{1}{2C_1C_5}\right)\right)\le 1.\end{align*}
Finally,
\[T(LH)^2\le \exp\left(\frac{2(\log(1/q))^3}{C_6q^{\alpha}}\right)\le \exp\left(q^{-\alpha-1/4}\right).\]
Thus, once we establish Lemma \ref{lem:crossing:body:finite} below, all the hypotheses of Proposition \ref{prop:main} are verified.
\end{proof}

\begin{lem}
\label{lem:crossing:body:finite}
With the notation and assumptions above we have
\[\max(\pl,\pd)\le \exp\left(-q^{-\alpha-1/4}\right).\]
\end{lem}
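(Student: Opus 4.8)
The plan is to adapt the argument sketched for Lemma~\ref{lem:crossing:body:infinite}, the essential change being that the difficulty governing the crossing is now the difficulty $\alpha(u_1)\ge\alpha+1$ (resp.\ $\alpha(u_2)\ge\alpha+1$) of the crossing direction, rather than the difficulty $\alpha$ of the model itself. By the symmetry of the two strips it suffices to bound $\pl$. Fix $R=R(-\ell,0;0,H)$ and suppose $C_R^{u_1}$ occurs, so that there is $\eta'\ge\eta$ for which $R$ is $u_1$-crossed but $\eta'_R$ contains no spanned critical parallelogram. Fix a $u_1$-crossing $X\subset[\bbH_{u_1}\cup(R\cap\eta')]$ and run a spanning algorithm on $X$, read backwards, to produce a hierarchy of droplets and cut droplets adapted to the three rational stable directions $u_1,u_2,u_3$ (these play the role of the flat triangles of the infinite-stable-directions case, but are now genuine bounded polygons). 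By Lemma~\ref{lem:AL:body}, the absence of a spanned critical parallelogram in $\eta'_R$ forbids any spanned parallelogram of diameter $\ge K/C_1$, so every droplet appearing in the hierarchy has diameter $<K/C_1$; in particular its leaves (``seeds'') have size $\Theta(K)$.

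First I would record two ingredients. (i) A single seed occurs with probability at most $\exp(-q^{-\alpha}/C)$: since a seed of size $\Theta(K)$, possibly sticking out of $\partial\bbH_{u_1}$, is internally spanned, it contains an internally spanned critical parallelogram (again by Lemma~\ref{lem:AL:body}), and a union bound over the $q^{-O(1)}$ such parallelograms together with the estimates of \cite{Bollobas14}*{Section~8} already invoked in Lemma~\ref{lem:unbalanced:crit:proba} (and used in the same way for Lemma~\ref{lem:crossing:body:infinite}) gives the bound. (ii) The hierarchy must contain at least $\Theta(\ell/K)=\Theta(q^{-3/8})$ seeds: because $X$ crosses a strip of width $\ell\gg K$ in direction $u_1$ and $\alpha(u_1)\ge\alpha+1$, a droplet of size $<K$ can advance only a negligible distance in direction $u_1$ unassisted, so many seeds are needed; and if instead most of the $u_1$-growth of the root droplet is produced by successive ``large plus small'' splittings, each such splitting forces an infectable path from the side of the large droplet to that of the larger one, hence the occurrence of a further cut droplet of size $\ge K$, which is again counted as a seed. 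Finally, each seed is a translate of one of boundedly many shapes lying inside $R$, so it has at most $(LH)^{O(1)}$ admissible positions; the number of admissible hierarchy shapes on $\Theta(q^{-3/8})$ leaves is $\exp(O(q^{-3/8}\log(1/q)))$; and distinct seeds may be taken to occur disjointly.

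A union bound over the hierarchy shape and the positions of the $\Theta(q^{-3/8})$ seeds, combined with the BK inequality for the seed occurrences, then yields
\[
\mu(C_R^{u_1})\le \exp(O(q^{-3/8}\log(1/q)))\cdot\exp(-q^{-\alpha}q^{-3/8}/C)\le \exp(-q^{-\alpha-3/8}/(2C))\le \exp(-q^{-\alpha-1/4})
\]
for $q$ small enough, using $\ell/K=q^{-3/8}$ and $L,H=q^{-O(1)}$; the identical argument applied to $u_2$-crossings of width-$h$ strips bounds $\pd$, whence $\max(\pl,\pd)\le\exp(-q^{-\alpha-1/4})$. I expect the genuine difficulty to lie not in these probability estimates but in the combinatorial bookkeeping: correctly defining the droplets and cut droplets attached to $u_1,u_2,u_3$, verifying that the prohibition of spanned critical parallelograms really caps all droplet diameters by $\Theta(K)$ throughout the spanning algorithm, and controlling the number of admissible hierarchies — all of which amount to importing and lightly adapting the hierarchy machinery of \cite{Bollobas14}*{Section~8} in the presence of the infected boundary $\bbH_{u_1}$, exactly as for Lemma~\ref{lem:crossing:body:infinite}, and is carried out in Appendix~\ref{app:crossing}.
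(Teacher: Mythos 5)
Your proposed route — importing the hierarchy/cut-droplet argument used for Lemma~\ref{lem:crossing:body:infinite} — does not work here, and the failure is not mere bookkeeping. The deterministic claim at the heart of your step (ii), namely that a $u_1$-crossing of $R(-\ell,0;0,H)$ with no spanned critical parallelogram forces $\Theta(\ell/K)$ spanned ``seeds'' of size $\Theta(K)$, is false precisely because $\alpha(u_1)<\infty$. By the very definition of difficulty, there is a set $Z$ of only $\alpha(u_1)$ sites with $|[\bbH_{u_1}\cup Z]\setminus\bbH_{u_1}|=\infty$: a cluster of $\alpha(u_1)$ infections sitting next to the infected half-plane lets the infection advance without any droplet of size $\Theta(K)$ ever being spanned. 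So the crossing can be realised strip by strip using one such $\alpha(u_1)$-cluster per constant-width strip and no seeds at all, and the dominant probabilistic cost is $\Theta(Hq^{\alpha(u_1)})\le q^{1/4}$ per strip — polynomially, not exponentially, small. Relatedly, the cut-droplet machinery with infected boundary (Appendix~\ref{app:sec:boundary}, Corollary~\ref{cor:bounds:app:cut}) that your step (i) relies on is explicitly restricted to families with infinitely many stable directions: when there are finitely many, the directions flanking $u_1$ are unstable, $\partial\cup D$ is not stable for a flat triangle $D$, and a bounded set of infections adjacent to $\partial$ can produce a closure of the size of the boundary, so the covering/spanning algorithms with boundary give nothing.

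The paper's actual proof (Lemma~\ref{lem:crossing:bounds}, via Lemma~\ref{lem:partition}) replaces the hierarchy by a \emph{partition} of $R$ into strips of constant width $C_1C_6$: deterministically, each group of strips either consists of a single strip containing an $\alpha(u_1)$-cluster, or is traversed by a spanned parallelogram of diameter comparable to the group's width (and, on $C_R^{u_1}$, of diameter at most $K/C_1$, for which Corollary~\ref{cor:bounds:app} gives bounds exponentially small in the diameter). Since $H=q^{-\alpha-3/4}\ll q^{-\alpha(u_1)}$, the cluster alternative costs about $q^{1/4}$ per strip, and an entropy computation over partitions yields $\mu(C_R^{u_1})\le\exp(-\ell q^{1/4}/C_6)=\exp(-q^{-\alpha-3/8}/C_6)\le\exp(-q^{-\alpha-1/4})$. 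Note that your final exponent happens to coincide numerically, but it arises there from $\ell\cdot q^{1/4}$ (many strips, each mildly unlikely), not from $(\ell/K)\cdot q^{-\alpha}$ (few seeds, each very unlikely); the latter bottleneck simply is not present when $\alpha(u_1)$ is finite.
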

This bound is proved in Appendix \ref{app:crossing} (Lemma \ref{lem:crossing:bounds}). Since the proof is quite different from the one of Lemma \ref{lem:crossing:body:infinite}, we also provide a sketch.
\begin{proof}[Sketch of the proof of Lemma \ref{lem:crossing:body:finite}]
Consider $R=R(-\ell,0;0,H)$. Our goal is essentially to prove that $\mu(C_R^{u_1})\le e^{-\ell}$. To do this, we use an improvement of the \emph{partition} method of \cite{Bollobas14}*{Section 8.3}. We cut the rectangle into strips of a large constant width and lump the strips together into groups crossed by a spanned parallelogram. We establish that it is deterministically necessary for all strips to either be intersected by a spanned parallelogram as above or to contain a set of $\alpha(u_1)\ge \alpha+1$ infections. Yet, $H$ is much smaller than $q^{-\alpha-1}$, so it is unlikely that a strip contains $\alpha+1$ infections. As for the strips intersected by spanned parallelograms, since $C_R^{u_1}$ assumes that no critical parallelogram is spanned, we only need to care about smaller spanned parallelograms, for which one may prove a preliminary probability bound exponentially small in their size. Combining these facts and taking into account that the size of $R$ is only polynomial in $1/q$, we can control the entropy and conclude that the probability of a crossing is indeed exponentially small in its width.\end{proof}

\subsubsection{Proof of Theorem~\ref{th:finite}\ref{log1}}
In this section we assume that $\cU$ is balanced, rooted and with a finite number of stable directions. Therefore, we can find non-opposite rational directions $u_1,u_2$ such that $\alpha(u_1)\ge\alpha+1$ and $\alpha(u_2)\ge \alpha+1$. We fix the values of the parameters of Proposition \ref{prop:main} as follows.
\begin{align*}
K&{}=1/(C_5q^{\alpha})&
\ell&{}=q^{-\alpha-1/2}&
L&{}=q^{-\alpha-3/4}\\
T&{}=\exp\left(\frac{\log (1/q)}{C_6q^{\alpha}}\right)&
h&{}=q^{-\alpha-1/2}&
H&{}=q^{-\alpha-3/4}.
\end{align*}

Then Theorem \ref{th:finite}\ref{log1} follows directly from Proposition \ref{prop:main} and the upper bound in \cite{Hartarsky20II}*{Theorem 1(e)}. Setting $n=\log (1/q)/C_1$, the hypotheses of the proposition follow like in Section \ref{subsubsec:infinite:log3} from the choice of parameters, Lemmas \ref{lem:loc:inf:bound} and \ref{lem:bound:crit:balanced}, and Lemma \ref{lem:crossing:body:finite}, which still holds.

\subsubsection{Proof of Theorem~\ref{th:finite}\ref{loglog}}
In this section we assume that $\cU$ is semi-directed. Therefore, we can find non-opposite rational directions $u_1,u_2$ such that $\alpha(u_1)=\alpha$ and $\alpha(u_2)\ge \alpha+1$. We fix the values of the parameters of Proposition \ref{prop:main} as follows.
\begin{align*}
K&{}=1/(C_5q^{\alpha})&\ell&{}=q^{-\alpha-1/2}&L&{}=q^{-\alpha-3/4}\\
T&{}=\exp\left(\frac{\log\log(1/q)}{C_6^3q^{\alpha}}\right)&
h&{}=\frac{\log\log (1/q)}{q^{\alpha}}&
H&{}=\frac{(\log (1/q))^{1/4}}{q^{\alpha}}.
\end{align*}

\begin{proof}[Proof of Theorem \ref{th:infinite}\ref{loglog}]
The upper bound is proved in \cite{Hartarsky20II}*{Theorem 1(f)}, so we focus on the lower one. As previously, it suffices to verify the hypotheses of Proposition \ref{prop:main} with the above choice of parameters. Indeed, setting $n=\log\log(1/q)/C_1$, we have
\begin{align*}
L={}&q^{-\alpha-3/4}\ge 3^n\cdot 2q^{-\alpha-1/2}\ge 3^n(11K+\ell),\\
H={}&\frac{(\log(1/q))^{1/4}}{q^\alpha}\ge 3^n\frac{2\log\log(1/q)}{q^\alpha}\ge 3^n(11K+h).\end{align*}
By Lemma \ref{lem:loc:inf:bound} we do have $\mu(0\text{ is locally infectable})\le 1/8$. Moreover, recalling that $\rho\le \exp(-1/(C_5q^\alpha))$ by Lemma \ref{lem:bound:crit:balanced}, we obtain
\begin{align*}TLH(LHK^3\rho)^{n+1}\le{}&\exp\left(\frac{2\log\log(1/q)}{C_6^3q^{\alpha}}-\frac{n}{2C_5q^\alpha}\right)\\
\le{}&\exp\left(\frac{\log\log(1/q)}{q^{\alpha}}\left(\frac{2}{C_6^3}-\frac{1}{2C_1C_5}\right)\right)\le 1.\end{align*}
Finally,
\[T(LH)^2\le \exp\left(\frac{2\log\log(1/q)}{C_6^3q^{\alpha}}\right)\le \exp\left(\frac{\log\log(1/q)}{2C_6^2q^\alpha}\right)\le \exp(q^{-\alpha-1/4}).\]
Thus, once we establish Lemma \ref{lem:crossing:body:loglog} below, all the hypotheses of Proposition \ref{prop:main} are verified.
\end{proof}

\begin{lem}
\label{lem:crossing:body:loglog}
With the notation and assumptions above we have
\begin{align*}
    \pl&{}\le\exp\left(-q^{-\alpha-1/4}\right)&
    \pd&{}\le\exp\left(-\frac{\log\log(1/q)}{2C_6^2q^\alpha}\right).
\end{align*}
\end{lem}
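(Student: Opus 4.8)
The plan is to deduce both estimates from the crossing bounds of Appendix~\ref{app:crossing} (Lemma~\ref{lem:crossing:bounds}), applied once to the direction $u_2$ of difficulty $\alpha(u_2)\ge\alpha+1$ and once to the direction $u_1$ of minimal difficulty $\alpha(u_1)=\alpha$. The two inequalities live in opposite regimes of that bound, but both rest on the same deterministic reduction: one slices the parallelogram into strips of bounded width perpendicular to the crossing direction, merges consecutive slices traversed by a common spanned parallelogram, and notes that for $C_R^{u_i}$ to occur each resulting block must either be crossed by a spanned parallelogram --- necessarily of diameter below $K$, as $C_R^{u_i}$ forbids spanned critical parallelograms --- or carry $\alpha(u_i)$ infections inside a bounded window. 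Since $R$ has only polynomially large sides, the entropy of the slicing is absorbed using that a spanned parallelogram of diameter $s$ has probability at most $q^{\Omega(s)}$ in the relevant subcritical range (and at most $e^{-\Omega(s)}$ in general), which makes the sum over positions and block sizes converge once $C_2'$ is taken large.

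For $\pd$ I would repeat verbatim the argument of Lemma~\ref{lem:crossing:body:finite}. Taking $R=R(0,-h;L,0)$ and assuming $C_R^{u_2}$, a single slice carrying $\alpha(u_2)\ge\alpha+1$ infections in a bounded window at one of its $O(L)$ positions has probability at most $O(L)\,q^{\alpha+1}=O(q^{1/4})$ because $L=q^{-\alpha-3/4}$, while a block of $g$ slices crossed by a spanned parallelogram of diameter $\Theta(g)$ costs $q^{\Omega(g)}$ after accounting for its $O(L)$ positions. Multiplying over the $\Theta(h)$ slices and summing over the at most $2^{\Theta(h)}$ groupings gives $\mu(C_R^{u_2})\le\exp(-\Omega(h\log(1/q)))$; since $h=\log\log(1/q)/q^{\alpha}$ and $C_6$ is large, this is far below the required $\exp(-\log\log(1/q)/(2C_6^2q^{\alpha}))$.

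For $\pl$ the point is different, because $\alpha(u_1)=\alpha$ is minimal: a single slice of $R=R(-\ell,0;0,H)$ is, on its own, \emph{likely} to contain the $\alpha$ infections needed to advance the crossing, its failure probability being only $e^{-\Omega(Hq^{\alpha})}=e^{-\Omega((\log(1/q))^{1/4})}$ as $Hq^{\alpha}=(\log(1/q))^{1/4}$. What keeps $C_R^{u_1}$ rare is that crossing the full width $\ell=q^{-\alpha-1/2}\gg K$ forces a run of $\Omega(\ell)$ consecutive slices all succeeding this way (covering slices instead by subcritical spanned parallelograms costs $e^{-\Omega(\ell)}$ in total, which is even smaller). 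Hence, for some constants $c,c'>0$,
\[
\mu\!\left(C_R^{u_1}\right)\le\exp\!\left(-c\,\ell\, e^{-c'Hq^{\alpha}}\right)=\exp\!\left(-c\,q^{-\alpha-1/2}e^{-c'(\log(1/q))^{1/4}}\right),
\]
and since $q^{-1/4}\ge e^{c'(\log(1/q))^{1/4}}$ for $q$ small, the exponent is at least $q^{-\alpha-1/4}$, as claimed.

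The genuine work, carried out in Appendix~\ref{app:crossing}, is the deterministic passage from an arbitrary $u_i$-crossing to such a sequence of \emph{disjointly occurring} local events, so that the van den Berg--Kesten inequality \cite{BK85} and a union bound apply: one exhibits the bounded-window infection patterns and the small spanned parallelograms on disjoint witness sets along the crossing, and controls the per-slice entropy against the polynomial size of $R$ by exploiting that subcritical spanned parallelograms are only $q^{\Omega(\diam)}$-likely. The new verification needed for case~\ref{loglog} is, in the $\pl$ regime, that the ``rare long run'' term $\ell\,e^{-\Omega(Hq^{\alpha})}$ dominates every mixed scenario in which part of the crossing goes through spanned parallelograms rather than advancing steps --- which holds because the failure probability $e^{-\Omega(Hq^{\alpha})}$ of an advancing step, though small, still far exceeds the $q^{\Omega(C_2')}$ cost of the cheapest admissible spanned parallelogram.
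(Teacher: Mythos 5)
Your proposal is correct and follows essentially the same route as the paper: the paper simply invokes the partition-based crossing bound of Appendix~\ref{app:crossing} (Lemma~\ref{lem:crossing:bounds}, balanced case) in both directions, which is exactly the slice-and-merge argument you sketch, with the per-slice cluster cost $O(Lq^{\alpha(u_2)})=O(q^{1/4})$ for $\pd$ and the per-slice gain $e^{-\Theta(Hq^{\alpha})}$ compensated by the $\Theta(\ell)$ slices for $\pl$. Your numerical verifications ($q^{-1/4}\ge e^{c'(\log(1/q))^{1/4}}$ and $Lq^{\alpha+1}\le q^{1/4}$) match the paper's, and the stronger $\exp(-\Omega(h\log(1/q)))$ you obtain for $\pd$ of course implies the stated bound.
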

This bound is proved in Appendix \ref{app:crossing} (Lemma \ref{lem:crossing:bounds}) like Lemma \ref{lem:crossing:body:finite}.

\section*{Acknowledgements}
This work has been supported by ERC Starting Grant 680275 MALIG. We wish to thank Cristina Toninelli and Fabio Martinelli for enlightening discussions on the universality of $\cU$-KCM. Most of this work was done when the second author was affiliated to the École Polytechnique Fédérale de Lausanne, Switzerland. The first author would also like to thank ÉPFL for the hospitality during his visit. We thank the anonymous referee for helpful comments on the presentation of the paper.
\appendix
\section{Bounds on spanning}
\label{app:spanning}
\paragraph{Relation to previous works}Let us start by explaining why additional arguments are needed, as specialists would probably expect such bounds to be automatic. In \cite{Bollobas14} two main algorithms were used---the covering and the spanning ones. The former provides bounds of the type we need but for a notion of covered droplet invoking only the initial configuration. On the other hand, the spanning algorithm works with the closure of the initial configuration inside droplets, which obstructs obtaining results analogous to those for the covering algorithm in the same way. Yet, it is the spanning algorithm which is the most useful and particularly so for unbalanced models. In \cite{Bollobas14} an inductive multi-scale scheme was used to bootstrap the bounds on the probability of droplets being spanned from a size which is easily controlled by the more rudimentary predecessor of the covering algorithm developed in \cite{Bollobas15}. This fairly technical procedure can be circumvented using our method. Indeed, if one has bounds analogous to the ones for covered droplets up to size $1/q^{\alpha(\cU)}$, one can directly prove the result of \cite{Bollobas14} in one step, which was made there as well.

The reason why in \cite{Bollobas14} one could not directly transfer the easier bounds on covering, which were established there anyway, to spanning is that the covering algorithm there lacks the key property of being essentially closure-invariant in a sense made precise below. This property was one of the main features gained in \cite{Hartarsky20} by using a less wasteful notion of cluster. Therefore, we accomplish our goal as follows. We carry through (a simplified version of) the scheme of \cite{Hartarsky20} to obtain general bounds for droplets covered in the sense of \cite{Hartarsky20} and we use the key closure lemma (see below) to directly transfer those to spanning. On the more technical level, we should mention that analogous bounds on spanning were established in \cite{Bollobas14} in the course of their induction, but the proof needlessly uses that the model is unbalanced and constrains the choice of directions used for defining droplets, which we will need to choose freely. Moreover, \cite{Hartarsky20} made unnecessary use of the existence of strongly stable directions\footnote{Strongly stable directions are those contained in the topological interior of the set of stable directions.}, which is only needed for treating the algorithm with boundary condition. We are thus obliged to review the proofs. The reader familiar with the details of \cite{Hartarsky20} would probably be satisfied by skipping directly to Appendix \ref{app:crossing} and consulting the statements as needed there.

\paragraph{Outline} The appendix is structured as follows. In section \ref{app:sec:covering} we recall several results from \cite{Hartarsky20}, leading up to Lemma \ref{lem:bounds:covered} providing good bounds on the probability of being covered (in a sense made precise below) and to the Closure lemma \ref{lem:closure} relating the results of the covering algorithm for a set and for its closure. In section \ref{app:sec:spanning}, using the latter lemma we transfer the bounds of the former one to the notion of spanning used throughout the body of the paper. Section \ref{app:sec:boundary} establishes, yet again, the same bounds on the probability of spanned droplets occurring, but in the presence of an infected boundary, following the same reasoning and relying more closely on \cite{Hartarsky20}.

\paragraph{Notation}For the remainder of the paper we fix an arbitrary critical update family $\cU$ with difficulty $\alpha$. Following \cite{Hartarsky20} we consider constants
\[1\ll C_1\ll C'_2\ll C_2\ll C_3\ll C_4'\ll C_4\ll C_5\ll C_6\]
such that each one is larger than a suitable function of the previous ones, depending on $\cT$, $\cT_0$, $\cS_u$, etc.\ to be defined below and on $\cU$. These constants do not depend on $q$, which is always assumed small enough, as we are interested in $q\to0$.

For any finite set of directions $\cV\subset S^1$ a $\cV$-droplet is a set of the form $\bigcap_{v\in\cV}\bar \bbH_{v}(a_v)$ for some $a_v\in\bbR$.

\subsection{Covering and closure}
\label{app:sec:covering}
We start by studying the covering algorithm in the spirit of \cite{Hartarsky20}*{Section 5} (but without the boundary and rugged edge present there). The reader is invited to consult that work for most proofs and more details, as indicated below. By definition \ref{def:diff} we can fix a set of non semi-isolated rational stable directions\footnote{Semi-isolated stable directions are the endpoints of intervals of stable directions with nonempty interior.} $\cT_0$ with difficulty at least $\alpha$, such that the convex envelope of the elements of $\cT_0$ contain $0$ in its interior and either
\begin{itemize}
\item $|\cT_0|=3$ or
\item $|\cT_0|=4$ and one has $\cT_0=\{u,v,u+\pi,v+\pi\}$ for some $u,v\in S^1$.
\end{itemize}

Let $\Gamma$ be the graph with vertex set $\bbZ^2$ but with $x\sim y$ iff $\|x-y\|\le C_2$. 

\begin{defn}[\cite{Hartarsky20}*{Definitions 5.1 and 5.3}]
\label{def:cluster:app}Fix a finite set $Z\subset\bbZ^2$. Let $\kappa$ be a connected component of the subgraph of $\Gamma$ induced by the vertex set $Z$. 
\begin{itemize}
    \item $\kappa$ is a \emph{crumb} for $Z$ if there exists a set $P_\kappa\subset \bbZ^2$ such that $[P_\kappa]\supset \kappa$ and $|P_\kappa|=\alpha-1$.
    \item If $\kappa$ is not a crumb for $Z$, we say that a $C\subset\kappa$ is a $\alpha$-\emph{cluster} (or simply cluster) of $Z$ if the following conditions hold
    \begin{itemize}
        \item $\diam(C)\le C_3$.
        \item $C$ is connected in $\Gamma$.
        \item For all $x\in\kappa\setminus C$ and $y\in C$ such that $x\sim y$ in $\Gamma$ we have $\diam(C\cup\{x\})>C_3$.
    \end{itemize}
\end{itemize}
\end{defn}

It can be proved \cite{Hartarsky20}*{Observation 5.4} that any cluster contains at least $\alpha$ sites. Moreover, Corollary 5.17 of \cite{Hartarsky20} yields that a crumb has diameter at most $\alpha C_2$. For a cluster $C$ we denote by $Q(C)$ the smallest $\cT_0$-droplet containing the set $\{x\in \bbR^2:d(x,C)\le C_4\}$.

We next define the covering algorithm we will use. It is an adaptation of the droplet algorithm of \cite{Hartarsky20} and should not be confused with the covering algorithms of \cites{Bollobas15,Bollobas14}.
\begin{defn}[Covering algorithm]
\label{def:covering}
Given a finite set $Z\subset\bbZ^2$ of infections the \emph{covering algorithm} outputs a set $\cD$ of disjoint $\cT_0$-droplets as follows.
\begin{itemize}
\item Form an initial collection $\cD$ of $\cT_0$-droplets consisting of $Q(C)$ for all clusters $C$ of $Z$.
\item Whenever there exist $D_1,D_2\in\cD$ with $D_1\cap D_2\neq\varnothing$, replace them with the smallest $\cT_0$-droplet containing their union, which we denote by $D_1\vee D_2$.
\item Output the collection $\cD$ obtained when all $\cT_0$ droplets in $\cD$ are disjoint.
\end{itemize}
Equivalently, $\cD$ is the minimal collection (with respect to inclusion of the union of its elements) of disjoint $\cT_0$-droplets containing the union of $Q(C)$ for all clusters $C$ of $Z$. In particular, $\cD$ does not depend on the order in which droplets are merged.

We say that a $\cT_0$-droplet $D$ is \emph{covered} by a set $Z$ of infections if the above algorithm for $Z\cap D$ outputs a $\cT_0$-droplet containing $D$.
\end{defn}
We make the convention that all $\cT_0$-droplets have diameter at least $C_4'$ and contain a site of $\bbZ^2$.

We next state some properties of the covering algorithm.
\begin{lem}[Lemma 4.6 of \cite{Bollobas15}]
\label{lem:subadd}
Let $D_1$ and $D_2$ be $\cT_0$-droplets such that $D_1\cap D_2\neq \varnothing$. Then
\[\diam(D_1\vee D_2)\le\diam(D_1)+\diam(D_2).\]
\end{lem}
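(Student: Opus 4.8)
This is precisely Lemma~4.6 of \cite{Bollobas15}, so one option is to quote it verbatim; let me instead sketch how I would argue it. The plan is to reduce the inequality to a comparison between the join $D_1\vee D_2$ and the Minkowski sum $D_1+D_2=\{x+y\colon x\in D_1,\ y\in D_2\}$. As a first step I would note that $\diam(D_1\vee D_2)$ is unchanged if the pair $(D_1,D_2)$ is translated by a common vector, since then $D_1\vee D_2$ is translated by that same vector and the diameter is translation invariant. Because $D_1\cap D_2\neq\varnothing$, I may therefore assume from now on that $0\in D_1\cap D_2$.

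Next I would do the bookkeeping via support values. To each $\cT_0$-droplet $D$ associate $a_v(D)=\sup_{x\in D}\langle x,v\rangle$ for $v\in\cT_0$, so that $D=\bigcap_{v\in\cT_0}\bar\bbH_v(a_v(D))$, so that a $\cT_0$-droplet is determined by the tuple $(a_v(D))_{v\in\cT_0}$, and so that for two $\cT_0$-droplets containment is equivalent to coordinatewise $\le$ of the tuples. Since $D_1\vee D_2$ is by definition the smallest $\cT_0$-droplet containing $D_1\cup D_2$, one has $a_v(D_1\vee D_2)=\max(a_v(D_1),a_v(D_2))$ for all $v\in\cT_0$. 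The key structural claim I would then check is that $D_1+D_2$ is again a $\cT_0$-droplet, with $a_v(D_1+D_2)=a_v(D_1)+a_v(D_2)$ for every $v\in\cT_0$. Granting this, the hypothesis $0\in D_i$ forces $a_v(D_i)\ge\langle 0,v\rangle=0$, hence $\max(a_v(D_1),a_v(D_2))\le a_v(D_1)+a_v(D_2)$ for every $v$, and therefore $D_1\vee D_2\subseteq D_1+D_2$.

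Finally, $\diam(D_1+D_2)\le\diam(D_1)+\diam(D_2)$ by the triangle inequality, since for $x_i,y_i\in D_i$ one has $\|(x_1+x_2)-(y_1+y_2)\|\le\|x_1-y_1\|+\|x_2-y_2\|$; combining this with $D_1\vee D_2\subseteq D_1+D_2$ and monotonicity of the diameter under inclusion gives $\diam(D_1\vee D_2)\le\diam(D_1+D_2)\le\diam(D_1)+\diam(D_2)$, as required. I expect the one genuinely non-formal point, and hence the main (if routine) obstacle, to be the structural claim that $D_1+D_2$ is a $\cT_0$-droplet with additive support values: this is where the restriction $|\cT_0|\in\{3,4\}$ (with the quadruple consisting of two opposite pairs) is used, since a $\cT_0$-droplet is then a triangle, respectively a parallelogram, all with the same fixed edge directions, and the Minkowski sum of two such is again one of the same shape. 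The cleanest way to see it in the quadruple case is to pass to the linear coordinates $x\mapsto(\langle x,u\rangle,\langle x,v\rangle)$ for a fixed basis $u,v$, in which $\cT_0$-droplets become axis-parallel rectangles whose Minkowski sum is visibly an axis-parallel rectangle with side lengths adding; the triangle case follows from the analogous direct computation.
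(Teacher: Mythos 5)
Your argument is correct. Note that the paper itself gives no proof of this statement: it simply imports it as Lemma~4.6 of \cite{Bollobas15}, so there is no in-paper argument to compare against; what you have written is a valid self-contained derivation. The reduction is sound: after translating a common point of $D_1\cap D_2$ to the origin, the identification of a $\cT_0$-droplet with its tuple of support values, the formula $a_v(D_1\vee D_2)=\max(a_v(D_1),a_v(D_2))$, and the inequality $\max\le{}$sum (valid because $0\in D_i$ gives $a_v(D_i)\ge 0$) all check out, and the subadditivity of the diameter under Minkowski sum is immediate from the triangle inequality. You correctly isolate the one substantive point, namely that $D_1+D_2$ is again a $\cT_0$-droplet with additive support values; this is where the standing assumption that $|\cT_0|=3$, or $|\cT_0|=4$ consisting of two opposite pairs, enters, since $\cT_0$-droplets are then positive homothets of a fixed triangle (up to degeneracy) or parallelograms with fixed edge directions, and Minkowski sums preserve these classes with support values adding. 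The only cosmetic caveat is that not every tuple $(a_v)_{v\in\cT_0}$ yields a nonempty droplet, but this plays no role in your argument since all sets involved are nonempty by hypothesis and by the paper's convention on droplets.
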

This immediately implies the Aizenman-Lebowitz lemma (see e.g.\ \cite{Bollobas15}*{Lemma 4.8}).
\begin{lem}[Aizenman-Lebowitz]
\label{lem:AL:app}
Let $Z$ be a set of infections and $D$ be a $\cT_0$-droplet covered by $Z$. Then for all $C_1C_4\le k\le \diam(D)$ there exists a $\cT_0$-droplet $D'$ covered by $Z$ with $k\le\diam(D')\le 2k$.
\end{lem}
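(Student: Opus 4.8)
The plan is to run the covering algorithm of Definition~\ref{def:covering} on the infections $Z\cap D$ and to exploit that it proceeds by successive pairwise merges, so that its execution is recorded by a binary forest. Since $D$ is covered by $Z$, one of the output $\cT_0$-droplets, call it $\hat D$, contains $D$; this $\hat D$ is the root of one of the trees of the forest, whose leaves are the initial droplets $Q(C)$ with $C$ a cluster of $Z\cap D$ lying below $\hat D$, and every internal node is the join $D_1\vee D_2$ of its two children. Along each edge of this tree we may therefore apply the subadditivity estimate $\diam(D_1\vee D_2)\le\diam(D_1)+\diam(D_2)$ of Lemma~\ref{lem:subadd}: the larger of the two children has diameter at least half that of the parent, while, being a subset of it, at most that of the parent.

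The second step is a descent. The leaves $Q(C)$ have diameter at most $C_1C_4$: a cluster has diameter at most $C_3$ and $Q(C)$ is the smallest $\cT_0$-droplet containing the $C_4$-neighbourhood of $C$, so $\diam(Q(C))\le C_1C_4\le k$ by our choice of constants. On the other hand $\diam(\hat D)\ge\diam(D)\ge k$. Walk from the root $\hat D$ towards the leaves, at each node passing to a child of maximal diameter; by the remark above the diameters along this path are non-increasing and decrease by at most a factor $2$ at each step, they start at a value $\ge k$ and end at a leaf with value $\le k$. Let $D'$ be the last droplet on this path with $\diam(D')\ge k$. If $D'$ is not a leaf, the next droplet on the path has diameter $<k$ but also $\ge\diam(D')/2$, hence $\diam(D')<2k$; and if $D'$ is a leaf then $\diam(D')=k$. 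In either case $k\le\diam(D')\le 2k$. (The degenerate case in which $\hat D$ is itself a leaf forces $\diam(D)=\diam(\hat D)=k$, and then $D'=D$ works.)

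It remains to check that $D'$ is covered by $Z$, i.e.\ that the covering algorithm applied to $Z\cap D'$ outputs a $\cT_0$-droplet containing $D'$; this is where all the content beyond arithmetic sits, and is the step I expect to require the most care. Since $D'$ is a node of the merge tree, it equals the join of the droplets $Q(C)$ over the clusters $C$ of $Z\cap D$ lying below it, and each such $C$ --- indeed each $Q(C)$ --- is contained in $D'$. One then invokes the locality of the covering algorithm: since $D'$ is large compared with $C_3$ and $C_4$, every cluster of $Z\cap D$ contained in $D'$ is absorbed by a cluster of $Z\cap D'$ (clusters, crumbs and the map $Q(\cdot)$ behave monotonically under restriction to such a droplet, as in \cite{Hartarsky20} and in the proof of \cite{Bollobas15}*{Lemma~4.8}), so running the algorithm on $Z\cap D'$ reproduces all the $Q(C)$ above and merges them into a $\cT_0$-droplet containing $D'$. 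Hence $D'$ is covered by $Z\cap D'\subset Z$, which completes the proof. The merge-tree descent is immediate from Lemma~\ref{lem:subadd}; the only genuine obstacle is this last consistency statement, which is precisely why the covering algorithm and the cluster formalism of \cite{Hartarsky20} are arranged as they are.
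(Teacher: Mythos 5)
Your argument is correct and is essentially the proof the paper has in mind: the paper gives no details, simply deducing the lemma from the subadditivity estimate of Lemma~\ref{lem:subadd} via the standard merge-tree descent of \cite{Bollobas15}*{Lemma 4.8}, which is exactly what you carry out. Your final consistency step is also sound as sketched: a cluster $C$ of $Z\cap D$ with $Q(C)\subset D'$ remains a cluster of $Z\cap D'$, since its $\Gamma$-component either is unchanged or shrinks while still containing $C$, whose diameter exceeds $C_3-2C_2\gg \alpha C_2$, so it cannot become a crumb.
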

A further consequence of Lemma \ref{lem:subadd} is the following.
\begin{lem}[Lemma 5.14 of \cite{Hartarsky20}]
\label{lem:extremal}
Let $Z$ be a set of infections and $D$ be a $\cT_0$-droplet covered by $Z$. Then $D$ contains at least $\lceil \diam(D)/C_4^2\rceil$ disjoint clusters of $Z \cap D$.
\end{lem}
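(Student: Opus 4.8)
The plan is to extract from the hypothesis ``$D$ is covered'' a family of clusters whose droplets $Q(C)$ merge under $\vee$ into a $\cT_0$-droplet containing $D$, and then to combine the subadditivity of $\vee$ on diameters (Lemma~\ref{lem:subadd}) with a crude size bound on a single $Q(C)$ and a packing argument. Throughout, I expect only the merging bookkeeping to require genuine care; everything else is geometry and entropy.

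First, since $D$ is covered by $Z$, the covering algorithm of Definition~\ref{def:covering} applied to $Z\cap D$ produces, among its pairwise disjoint output $\cT_0$-droplets, one droplet $D'\supseteq D$. By construction $D'$ is obtained from the leaves $\{Q(C):C\in\cS\}$, where $\cS$ is a finite set of clusters of $Z\cap D$, through a sequence of pairwise merges of \emph{intersecting} droplets. Following this merging tree from the leaves up to the root and applying Lemma~\ref{lem:subadd} at each merge — legitimate precisely because the algorithm only ever merges intersecting droplets — one gets, by an immediate induction,
\[\diam(D)\le\diam(D')\le\sum_{C\in\cS}\diam(Q(C)).\]
For a single term, Definition~\ref{def:cluster:app} gives $\diam(C)\le C_3$, so the set $\{x\in\bbR^2:d(x,C)\le C_4\}$ has diameter at most $C_3+2C_4$; since $0$ lies in the interior of the convex hull of $\cT_0$, the smallest $\cT_0$-droplet containing a set of diameter $\delta$ has diameter at most $\lambda\delta$ for a constant $\lambda=\lambda(\cU)$, whence $\diam(Q(C))\le\lambda(C_3+2C_4)\le 3\lambda C_4$. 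Therefore $|\cS|\ge\diam(D)/(3\lambda C_4)$.

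It remains to pass to a \emph{disjoint} subfamily, since distinct clusters may overlap a priori. I would proceed greedily: list $\cS$ in an arbitrary order and keep a cluster precisely when it is disjoint from all clusters kept so far, obtaining $\cS'\subseteq\cS$. Any cluster meeting a fixed cluster $C$ lies within the $C_3$-neighbourhood of $C$, a set of diameter at most $3C_3$, which contains at most $c'=c'(C_3)$ clusters; hence every kept cluster is met by at most $c'$ members of $\cS$, so $|\cS|\le c'|\cS'|$. Consequently $|\cS'|\ge\diam(D)/(3\lambda c' C_4)$, and since $C_4$ is large with respect to $3\lambda c'$ (which depends only on $\cU$), this is at least $\diam(D)/C_4^2$; as $|\cS'|$ is an integer, $|\cS'|\ge\lceil\diam(D)/C_4^2\rceil$. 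The clusters in $\cS'$ are pairwise disjoint clusters of $Z\cap D$ (automatically contained in $D$), which is the assertion. The only slightly delicate point is the merging-tree bookkeeping behind the displayed inequality — this is exactly the argument that already derives the Aizenman--Lebowitz Lemma~\ref{lem:AL:app} from Lemma~\ref{lem:subadd} — together with fixing the constants so that the factor $C_4^2$ in the statement comfortably absorbs both the $O(C_4)$ bound on $\diam(Q(C))$ and the constant loss incurred in the greedy step.
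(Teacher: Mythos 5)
Your proof is correct. Note that the paper itself does not prove this statement — it is imported verbatim as Lemma 5.14 of \cite{Hartarsky20} — and your argument is the standard (and, up to presentation, the intended) one: subadditivity of $\vee$ along the merge tree of the covering algorithm gives $\diam(D)\le\sum_{C\in\cS}\diam(Q(C))$, each $\diam(Q(C))=O(C_4)$, and a greedy packing step extracts a disjoint subfamily at the cost of a constant depending only on $C_3$, which the hierarchy $C_3\ll C_4$ absorbs into the factor $C_4^2$. The only imprecision is the parenthetical claim that the packing constant $c'$ ``depends only on $\cU$'' — it depends on $C_3$ — but since $C_4$ is chosen large with respect to any function of $C_3$, this is harmless.
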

We are now able to deduce the relevant bounds on covering following \cite{Hartarsky20}*{Lemma 5.15}.
\begin{lem}
\label{lem:bounds:covered}
Let $D$ be a $\cT_0$-droplet with $d=\diam(D)$. Let $1>\epsilon>0$. Then we have
\begin{equation}\label{eq:bounds:covered}
\mu(D\text{ is covered})\le\begin{cases} q^{d\epsilon/(3C_4^2)}&\text{ if } d\le \frac{C_1}{q^{\alpha-\epsilon}}\\e^{-C_1C_4d} &\text{ if }\frac{1}{C_1q^{\alpha-\epsilon}}\le d\le \frac{C_1}{e^{C_4^4}q^{\alpha}}\\d^2 e^{-C_1/(C_5q^{\alpha})}&\text{ if } \frac{1}{C_1e^{C_4^4}q^{\alpha}}\le d.
\end{cases}
\end{equation}
\end{lem}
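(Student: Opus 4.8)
The plan is to follow \cite{Hartarsky20}*{Lemma 5.15}, reducing everything to a single ``one-shot'' estimate: I will first prove that for \emph{every} $\cT_0$-droplet $D$ with $d=\diam(D)$ and every small $q$,
\begin{equation}\label{eq:covered:oneshot}
\mu(D\text{ is covered})\le\bigl(A\,d\,q^{\alpha}\bigr)^{\lceil d/C_4^2\rceil}
\end{equation}
for a constant $A=A(\cU,\cT_0)$. To see \eqref{eq:covered:oneshot}, suppose $D$ is covered by the infection set $Z$ (only $Z\cap D$ matters). By Lemma~\ref{lem:extremal}, $D$ contains $t:=\lceil d/C_4^2\rceil$ pairwise disjoint clusters of $Z\cap D$, each contained in $D$, of diameter at most $C_3$, and — since every cluster has at least $\alpha$ infections — each carrying an $\alpha$-subset of $Z$. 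Hence $\{D\text{ covered}\}$ is contained in the event that $\bbZ^2$ carries $t$ pairwise disjoint $\alpha$-subsets of $Z$, each of diameter $\le C_3$ and lying in $D$. A union bound over the at most $\binom{|D\cap\bbZ^2|}{t}$ choices of their (distinct) lexicographically minimal points, times $O(1)^{t}$ choices of the remaining $\alpha-1$ points of each within the corresponding radius-$C_3$ ball, each such configuration occurring with probability exactly $q^{\alpha t}$ by independence, yields $\mu(D\text{ covered})\le\binom{|D\cap\bbZ^2|}{t}\,O(1)^{t}\,q^{\alpha t}$. Since $|D\cap\bbZ^2|\le c_1 d^{2}$ and $\binom{c_1 d^{2}}{t}\le(ec_1 d^{2}/t)^{t}\le(ec_1 C_4^{2}d)^{t}$ (using $t\ge d/C_4^{2}$), estimate \eqref{eq:covered:oneshot} follows.

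Granting \eqref{eq:covered:oneshot}, the first two cases will be immediate. If $d\le C_1 q^{-(\alpha-\epsilon)}$ then $A d q^{\alpha}\le AC_1 q^{\epsilon}$, and for $q$ small $AC_1 q^{\epsilon}\le q^{\epsilon/3}$, so the right-hand side of \eqref{eq:covered:oneshot} is at most $(q^{\epsilon/3})^{d/C_4^{2}}=q^{\epsilon d/(3C_4^{2})}$. If $C_1 q^{-(\alpha-\epsilon)}\le d\le C_1 q^{-\alpha}/e^{C_4^{4}}$ then $A d q^{\alpha}\le AC_1/e^{C_4^{4}}<1$ (as $e^{C_4^{4}}\gg AC_1$), and since $C_4\gg C_1$ the right-hand side of \eqref{eq:covered:oneshot} is at most $\exp\!\bigl(-(C_4^{4}-O(\log C_4))\,d/C_4^{2}\bigr)\le e^{-C_1 C_4 d}$.

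For the last regime, $d\ge C_1 q^{-\alpha}/e^{C_4^{4}}$, I would reduce to the previous range via Aizenman--Lebowitz. Set $k:=C_1 q^{-\alpha}/e^{C_4^{4}}$; for $q$ small one has $C_1 C_4\le k\le d$, so Lemma~\ref{lem:AL:app} provides a covered $\cT_0$-droplet $D'$ with $k\le\diam(D')\le 2k$, contained in a bounded neighbourhood of $D$. Summing over the $O(d^{2})$ lattice points $D'$ can contain and, for each, over its at most $\mathrm{poly}(1/q)$ possible shapes, and then applying \eqref{eq:covered:oneshot} to $D'$ (note $A\diam(D')q^{\alpha}\le 2AC_1/e^{C_4^{4}}<1$), gives
\[
\mu(D\text{ is covered})\le O(d^{2})\cdot\mathrm{poly}(1/q)\cdot\bigl(2AC_1/e^{C_4^{4}}\bigr)^{\lceil k/C_4^{2}\rceil}\le O(d^{2})\cdot\mathrm{poly}(1/q)\cdot\exp\!\left(-\frac{C_4^{4}-O(\log C_4)}{C_4^{2}e^{C_4^{4}}}\,\frac{C_1}{q^{\alpha}}\right).
\]
Because $C_5$ is large compared with $e^{C_4^{4}}/C_4^{2}$, the exponent is at most $-2C_1/(C_5 q^{\alpha})$, and for $q$ small the factors $\mathrm{poly}(1/q)$ and $O(1)$ are absorbed by the remaining half, leaving $\mu(D\text{ is covered})\le d^{2}e^{-C_1/(C_5 q^{\alpha})}$.

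The one genuinely delicate point is the entropy count in \eqref{eq:covered:oneshot}: Lemma~\ref{lem:extremal} produces of order $d$ \emph{disjoint} clusters, so the number of placements is $\binom{O(d^{2})}{\Theta(d)}$, i.e.\ exponential in $d$ with base $O(d)$ rather than $O(d^{2})$. This makes $dq^{\alpha}$ the relevant small parameter, and $dq^{\alpha}=o(1)$ exactly for $d=O(q^{-\alpha})$ — the range handled by \eqref{eq:covered:oneshot} directly — while larger $d$ must be brought into this range by Aizenman--Lebowitz. The remainder is bookkeeping with the hierarchy $C_1\ll C_4\ll C_5$.
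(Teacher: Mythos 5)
Your proposal is correct and follows essentially the same route as the paper: Lemma~\ref{lem:extremal} gives $\lceil d/C_4^2\rceil$ disjoint clusters, a union bound with $\binom{n}{k}\le(ne/k)^k$ yields the bound $(O(d)q^\alpha)^{\lceil d/C_4^2\rceil}$ handling the first two regimes, and the last regime is reduced to the middle one via the Aizenman--Lebowitz Lemma~\ref{lem:AL:app} plus a union bound over the extracted droplet. The only cosmetic difference is how the entropy of cluster placements is organised (minimal points plus local choices versus the paper's direct count of admissible $\alpha$-sets), and the small sliver of the third regime where $d<k$ is, as in the paper, absorbed by the middle-regime bound.
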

\begin{proof}
Let $Z$ be the (random) set of infections in $D$. By Lemma~\ref{lem:extremal} we have that if $D$ is covered, it contains at least $\lceil d/C_4^2\rceil$ disjoint clusters of $Z$, each one having diameter at most $C_3$ and at least $\alpha$ sites. Thus, the union bound gives
\begin{align*}
\mu(D\text{ is covered})\le\binom{C_3^{2\alpha}d^2}{\lceil d/C_4^2\rceil}q^{\alpha\lceil d/C_4^2\rceil}.
\end{align*}
For $d\le C_4^2$ this gives $C_3^{2\alpha}d^2q^\alpha$, which concludes the proof. For $C_4^2\le d\le C_1/(e^{C_4^4} q^\alpha)$ we use the inequality $\binom n k\le (ne/k)^k$ to obtain the desired bounds. For the case $d\ge 1/(e^{C_4^4}q^\alpha)$ we use Lemma \ref{lem:AL:app} to extract a smaller $\cT_0$-droplet $D'$ covered by $Z$ (hence intersecting $D$) with $1/(2e^{C_4^4}q^\alpha)\le \diam(D')\le 1/(e^{C_4^4}q^{\alpha})$. We then apply the second bound to $D'$ and use the union bound to conclude.
\end{proof}

We would now like to use analogous bounds on the probability of $\cT_0$-droplets being covered with initial condition $[Z]$ instead of $Z$. Unfortunately, we do not have access to the law of $[Z]$ when $Z$ follows $\mu$. Therefore, we rather bound the output of the covering algorithm for the closure using the original output. For that purpose, we define parallel notions of $\Gamma'$, \emph{modified clusters} and \emph{modified covering}, by replacing $C_2$ by $C_2'$ and $C_4$ by $C_4'$.

We then have the following key property, whose proof is identical to the one of \cite{Hartarsky20}*{Proposition 5.20}, up to the relevant simplifications (we do not have rugged edges and there is no boundary).

\begin{lem}[Closure]
\label{lem:closure}
Let $Z\subset \bbZ^2$ be a finite set and let $\cD'$ be the collection of $\cT_0$-droplets given by the modified covering algorithm with input $[Z]$. Let $\cD$ be the output of the covering algorithm for $Z$. Then
\[\forall D'\in\cD'\,\exists D\in\cD, D'\subset D.\]
\end{lem}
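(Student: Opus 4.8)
The plan is to prove the Closure lemma \ref{lem:closure} by tracking each step of the closure operation through the covering algorithm, showing that a small enlargement of each cluster's droplet already covers anything the closure could add. First I would observe that since $\cD$ is obtained by iteratively merging the droplets $Q(C)$ over clusters $C$ of $Z$ (together with the convention that each $\cT_0$-droplet has diameter at least $C_4'$, which absorbs crumbs), it suffices to work cluster by cluster: it is enough to show that for every modified cluster $C'$ of $[Z]$, the modified droplet $Q'(C')$ is contained in some droplet of $\cD$. Indeed, if that holds, then each droplet $D'\in\cD'$, being a minimal disjoint cover of $\bigcup Q'(C')$, will be contained in a union of overlapping elements of $\cD$, which is itself one element of $\cD$ by the minimality (disjointness) defining $\cD$.

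The key step is therefore a statement at the level of a single modified cluster: if $C'\subset[Z]$ is a modified cluster (connected in $\Gamma'$, diameter at most $C_3$, and not coverable by $\alpha-1$ infections of the original configuration, which is where non-crumb-ness must be checked carefully), then the set $Z\cap N$, where $N$ is a bounded neighbourhood of $C'$, already contains a genuine cluster $C$ with $Q(C)\supset Q'(C')$; equivalently $C'$ lies inside the output of the (unmodified) covering algorithm run on $Z$. This is proved exactly as in \cite{Hartarsky20}*{Proposition 5.20}: one runs the bootstrap closure that produces $[Z]$ from $Z$ step by step, and at each step a newly infected site $x$ satisfies $x\in[W]$ for some $W\subset[Z]$ of size at most the maximal rule size $r$, all within distance $r$ of $x$; since $C_2'\gg C_2\gg r$ and $C_4'\ll C_4$, this shows the $\Gamma'$-connectivity of $[Z]$ within a region is controlled by the $\Gamma$-connectivity of $Z$ in a slightly larger region, and the extra $C_4$-versus-$C_4'$ slack in the definition of $Q(\cdot)$ versus $Q'(\cdot)$ exactly swallows the displacement. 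The crumb case is handled by the diameter-at-least-$C_4'$ convention: a modified crumb of $[Z]$ has diameter at most $\alpha C_2'$, so it fits inside a single $\cT_0$-droplet of the prescribed minimal size attached to any nearby cluster, or else there is no nearby cluster and the relevant infections of $Z$ themselves form a crumb, contradicting that $C'$ is not a crumb.

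The main obstacle I anticipate is bookkeeping the constant hierarchy $C_1\ll C_2'\ll C_2\ll C_3\ll C_4'\ll C_4\ll C_5\ll C_6$ correctly so that every inequality used in the inductive closure step genuinely holds — in particular making sure that the $C_3$-diameter bound on clusters is not violated when passing from $Z$ to $[Z]$ (the modified clusters could in principle be slightly larger, but the same constant $C_3$ is used, so one must verify the relevant connected components of $[Z]$ in $\Gamma'$ decompose into $\Gamma'$-clusters whose associated droplets each sit inside a droplet coming from a true $\Gamma$-cluster of $Z$). Everything else is routine transcription of \cite{Hartarsky20}*{Section 5} with the rugged edge and the boundary condition deleted, so I would explicitly point the reader there for the detailed case analysis and only reproduce the cluster-level claim and the reduction above. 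Concretely, I would write: ``The proof is identical to that of \cite{Hartarsky20}*{Proposition 5.20}, removing the boundary condition and the rugged edge; we only indicate the reduction to the single-cluster statement and refer to \cite{Hartarsky20} for the remaining verifications.''
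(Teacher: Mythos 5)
Your proposal takes essentially the same route as the paper: the paper's entire proof consists of the remark that the argument is identical to \cite{Hartarsky20}*{Proposition 5.20} up to deleting the boundary and the rugged edge, which is exactly what you propose to write, and your preliminary reduction to the single-cluster statement (using disjointness of $\cD$ to see that overlapping $Q'(C')$'s must land in the same $D\in\cD$) is sound. One slip in your sketch: the constant ordering is $C_2\gg C_2'\gg C_1\gg r$, not $C_2'\gg C_2$; the direction matters, since the point is that $\Gamma'$-connectivity (radius $C_2'$) of sites of $[Z]$, each within distance $O(C_1)$ of $Z$, must imply $\Gamma$-connectivity (radius $C_2$) of the corresponding sites of $Z$, which needs $C_2\geq C_2'+O(C_1)$.
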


\subsection{Spanning}
\label{app:sec:spanning}
Let $\cT$ be an arbitrary finite set of rational directions containing the origin in the interior of its convex envelope. We then generalise the notion of spanning from Definition \ref{def:span}.
\begin{defn}[Spanning]
\label{def:span:app}
Let $D$ be a $\cT$-droplet. We say that $D$ is \emph{spanned} by $Z\subset\bbZ^2$ if there exists a set $C\subset[Z\cap D]$ connected in $\Gamma'$ such that the smallest $\cT$-droplet containing $C$ is $D$.
\end{defn}
We will need the following Aizenman-Lebowitz type lemma. Though this is a very classical result, some additional arguments are needed to prove it, because $\cT$ is not composed of stable directions.
\begin{lem}[Aizenman-Lebowitz]\label{lem:AL:spanned}
Let $Z \subset \bbZ^2$ and $D$ be a $\cT$-droplet spanned by $Z$ with $\diam(D) \geq C_1C_2'$. Then for any $C_2' \leq k \leq \diam(D)$, there exists a $\cT$-droplet $D'$ spanned by $Z \cap D$ with $k \leq \diam(D') \leq C_1 k$.
\end{lem}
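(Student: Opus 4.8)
The plan is to run the $\cU$-bootstrap closure of $Z\cap D$ one site at a time, tracking the $\Gamma'$-components of the growing set together with the diameters of their smallest enclosing $\cT$-droplets, and to read off $D'$ at the first instant a component whose $\cT$-hull has diameter at least $k$ appears. Write $\langle S\rangle_\cT$ for the smallest $\cT$-droplet containing $S$, put $W=Z\cap D$, and fix---since $D$ is spanned by $Z$---a $\Gamma'$-connected $C\subseteq[W]$ with $\langle C\rangle_\cT=D$. Enumerate the closure as $W=W_0\subsetneq W_1\subsetneq\dots\subsetneq W_m=[W]$ with $W_{t+1}=W_t\cup\{y_t\}$, $y_t+U_t\subseteq W_t$, $U_t\in\cU$, and for a $\Gamma'$-component $\kappa$ set $d(\kappa)=\diam\langle\kappa\rangle_\cT$.

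Two preliminary facts are needed. First, a subadditivity estimate valid for arbitrary $\cT$: as $0$ lies in the interior of the convex hull of $\cT$, the droplet $P_\cT:=\langle B(0,1)\rangle_\cT$ (with $B(x,\rho)$ the closed Euclidean ball of radius $\rho$) is bounded and $\langle B(x,\rho)\rangle_\cT=x+\rho P_\cT$; hence if $A_1$, $A_2$ lie in balls of radii $\rho_1$, $\rho_2$ about a common point then $\diam\langle A_1\cup A_2\rangle_\cT\le\diam(P_\cT)\max(\rho_1,\rho_2)$. This replaces the exact subadditivity of Lemma~\ref{lem:subadd}, which is unavailable here precisely because $\cT$ need not consist of stable directions; the resulting loss of the multiplicative constant $\diam(P_\cT)$ is what forces the conclusion to read $C_1k$ rather than $2k$, and we assume $C_1\ge 2\diam(P_\cT)$. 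Second, an \emph{internal generation} property, proved by induction on $t$ (compare Observation~\ref{obs:connected:component}): every $\Gamma'$-component $\kappa$ of $W_t$ satisfies $\kappa\subseteq[\kappa\cap W]$. In the induction step, $\diam(U_t)\le r<C_2'$ forces $y_t+U_t$ into a single $\Gamma'$-component of $W_t$, which therefore lies in the unique component $\kappa$ newly formed at step $t$; the components $\kappa$ absorbs satisfy the hypothesis, so they are all infected from $[\kappa\cap W]$, and then $y_t+U_t\subseteq[\kappa\cap W]$ makes $y_t$ itself infected from $[\kappa\cap W]$.

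Now the main step. Since $C\subseteq W_m$ is $\Gamma'$-connected it lies in one $\Gamma'$-component $Y$ of $W_m$, and $\langle Y\rangle_\cT\supseteq\langle C\rangle_\cT=D$ gives $d(Y)\ge\diam(D)\ge k$; thus there is a least $t$ for which some $\Gamma'$-component of $W_t$ has $\cT$-diameter $\ge k$. If $t\ge1$, let $\kappa^\ast$ be the unique component created at step $t$; it has $d(\kappa^\ast)\ge k$ (all other components of $W_t$ are older, hence have $\cT$-diameter $<k$), and it is formed from components of $W_{t-1}$ of $\cT$-diameter $<k$ together with the single site $y_{t-1}$. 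Each absorbed component lies within distance $C_2'$ of $y_{t-1}$ (that being why it merged), hence inside the ball of radius $C_2'+k\le 2k$ about $y_{t-1}$, so the subadditivity estimate yields $k\le d(\kappa^\ast)\le 2k\,\diam(P_\cT)\le C_1k$; by internal generation $\kappa^\ast\subseteq[\kappa^\ast\cap W]\subseteq[(Z\cap D)\cap\langle\kappa^\ast\rangle_\cT]$, so $D':=\langle\kappa^\ast\rangle_\cT$ is spanned by $Z\cap D$ with witness $C'=\kappa^\ast$. If $t=0$, some $\Gamma'$-component $\kappa_0$ of $Z\cap D$ already has $d(\kappa_0)\ge k$, and here the classical finite-set Aizenman--Lebowitz argument applies: pick a spanning tree of $\kappa_0$ in $\Gamma'$, merge its pieces along the edges one at a time to obtain a binary merging tree with root $\kappa_0$ (droplet of diameter $\ge k$) and singleton leaves (diameter $0<k$), and descend from the root to a node $P$ with $d(P)\ge k$ whose two children have $\cT$-diameter $<k$; the two child-droplets then lie within distance $C_2'$ of one another, so both sit in a ball of radius $\le 2k$, giving $k\le d(P)\le C_1k$, and $D':=\langle P\rangle_\cT$ is spanned by $Z\cap D$ with witness $C'=P\subseteq(Z\cap D)\cap D'$.

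I expect the delicate point---and the only genuinely new ingredient over the classical argument---to be the insistence that $D'$ be spanned by $Z\cap D$ itself rather than by the larger set $[Z\cap D]$. With $\cT$ not made of stable directions, $[Z\cap D]$ can extend far outside $D$, and, crucially, intersecting it with $D'$ need not recover $[(Z\cap D)\cap D']$, so the usual rectangle-process reasoning does not transfer verbatim. The internal generation property $\kappa\subseteq[\kappa\cap W]$ is exactly the substitute that makes the localisation to $D'$ legitimate, and verifying it together with the constant-factor subadditivity is where the bulk of the work lies.
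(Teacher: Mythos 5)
Your proof is correct. The paper's own argument is much shorter but outsources the key step: it takes a largest $\Gamma'$-component $C$ of $[Z\cap D]$ (whose diameter is at least $\diam(D)/\sqrt{C_1}$ by the definition of spanning) and then invokes Observation~\ref{obs:connected:component} together with Lemma~6.18 of \cite{Bollobas14} to extract a $\Gamma'$-connected $C'\subset C$ with $C'\subset[C'\cap Z\cap D]$ and $k\le\diam(C')\le\sqrt{C_1}k$; the $\cT$-hull of $C'$ is then $D'$. What you do is reprove that extraction in place: your ``internal generation'' property is a time-resolved form of Observation~\ref{obs:connected:component}, and your first-passage argument over the site-by-site closure (together with the merging-tree argument in the $t=0$ case) is a reorganisation of the component-merging machinery behind the cited BDMS lemma. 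You correctly isolate the two points that make the statement non-automatic for a general $\cT$ of non-stable directions --- the need to witness spanning of $D'$ by $Z\cap D$ rather than by its closure, and the replacement of exact subadditivity of $\cT$-hulls by the constant-factor distortion $\diam(P_\cT)$, which is why the conclusion reads $C_1k$ instead of $2k$ --- and both are handled properly. The only implicit hypothesis is the finiteness of $[Z\cap D]$, needed for your enumeration of the closure to terminate; this holds because $\cU$ is critical and is taken for granted in the paper too. The trade-off is self-containment versus brevity: the paper's proof leans on an external lemma whose conventions do not exactly match ours (cf.\ Footnote~\ref{foot:constants}), whereas yours is longer but verifiable without leaving the paper.
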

\begin{proof}
If $k\ge\diam(D)/C_1$ there is nothing to prove, as $D'=D$ is as desired. Assume $k \leq \diam(D)/C_1$. Let $C$ be a connected component of $[Z\cap D]$ in $\Gamma'$ with maximal diameter. By Definition \ref{def:span:app} $\diam(C)\ge \diam(D)/\sqrt{C_1}$. By Observation \ref{obs:connected:component} and \cite{Bollobas14}*{Lemma 6.18} (we use it although definitions slightly differ from \cite{Bollobas14}, see Footnote \ref{foot:constants}) there exists $C'\subset C$ connected in $\Gamma'$ such that $C'\subset [C'\cap Z\cap D]$ and $k\le \diam(C')\le \sqrt{C_1}k$. Denoting $D'$ the smallest $\cT$-droplet containing $C'$, we are done.
\end{proof}
\begin{obs}
\label{obs:cover:to:span}
Let $D$ be a $\cT$-droplet spanned by $Z\subset\bbZ^2$ such that $\diam(D)\ge C_4$. Then there exists a $\cT_0$-droplet $\bar D$ covered by $Z$, intersecting $D$ and such that $\diam(\bar D)=\Theta(\diam(D))$.
\end{obs}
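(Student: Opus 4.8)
The plan is to produce $\bar D$ from the Closure lemma \ref{lem:closure}, by extracting from the spanning of $D$ a large $\Gamma'$-connected set, passing through its closure to the covering algorithm, and then controlling sizes and positions using the hierarchy $C_1\ll C_2'\ll C_3\ll C_4'\ll C_4$. First I would unpack Definition \ref{def:span:app}: the spanning of $D$ by $Z$ provides a $\Gamma'$-connected set $C\subseteq[Z\cap D]$ whose smallest $\cT$-droplet is $D$, so in particular $C\subseteq D$. Since $0$ lies in the interior of the convex hull of $\cT$, every unit vector is a nonnegative combination of directions of $\cT$ with coefficients bounded by a constant $c_\cT$ depending only on $\cT$; hence $\diam(D)\le c_\cT\diam(C)$, and trivially $\diam(C)\le\diam(D)$, so $\diam(C)=\Theta(\diam(D))\ge C_4/c_\cT$, which we may assume larger than any prescribed constant.

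Next let $X$ be the $\Gamma'$-connected component of $C$ inside $[Z\cap D]$. By Observation \ref{obs:connected:component}, $X=[W]$ where $W:=(Z\cap D)\cap X\subseteq Z$. As $X\subseteq[Z\cap D]$ and $\cU$ is critical, $\diam(X)\le C_1\diam(D)$, while $\diam(X)\ge\diam(C)$; thus $\diam(X)=\Theta(\diam(D))$, and in particular $X$ is not a modified crumb, its diameter exceeding $\alpha C_2'$. I would then run the modified covering algorithm (Definition \ref{def:covering}, with $C_2,C_4$ replaced by $C_2',C_4'$) on the infection set $X$: being a single non-crumb $\Gamma'$-component, $X$ is covered by its modified clusters $\{C_i\}$ (each of diameter $\le C_3$), as in \cite{Hartarsky20}. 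Along any $\Gamma'$-path inside $X$ consecutive points are at distance $\le C_2'$, so they lie in clusters at distance $\le C_2'+C_3\le C_4'$, hence the droplets $Q(C_i)$ (which contain the $C_4'$-neighbourhoods of the clusters) pairwise overlap along the path; therefore all of them are merged into a single output droplet $D'_0$, and $D'_0\supseteq\bigcup_i Q(C_i)\supseteq X$, so $\diam(D'_0)\ge\diam(X)$.

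Applying the Closure lemma \ref{lem:closure} with $W$ in place of $Z$ (so that $[W]=X$), the covering algorithm applied to $W$ outputs a droplet $D''\supseteq D'_0\supseteq X\supseteq W$. Consequently $\diam(D'')\ge\diam(X)=\Omega(\diam(D))$; moreover every droplet produced by this algorithm lies in the fixed $\cT_0$-droplet $\widehat D$, the smallest $\cT_0$-droplet containing the $O(C_4)$-neighbourhood of $D$, since each $Q(C_i)$ for a cluster $C_i\subseteq W\subseteq D$ lies there and, by Lemma \ref{lem:subadd}, merging preserves this; hence $\diam(D'')\le\diam(\widehat D)=O(\diam(D))$, giving $\diam(D'')=\Theta(\diam(D))$. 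Also $D''\supseteq W\ne\varnothing$ and $W\subseteq D$, so $D''\cap D\ne\varnothing$. Finally, since $W\subseteq D''$ we have $W\cap D''=W$, so the covering algorithm on $W\cap D''$ outputs $D''\supseteq D''$, i.e.\ $D''$ is covered by $W$; and as $W\subseteq Z\cap D''$, monotonicity of the covering algorithm under addition of infections (\cite{Hartarsky20}) gives that the algorithm on $Z\cap D''$ also outputs a droplet containing $D''$, so $\bar D:=D''$ is covered by $Z$, as required.

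I expect the main obstacle to be the bridge between the two descriptions involved: the $\cT$-spanning side, phrased via closures and the graph $\Gamma'$, and the $\cT_0$-covering side, phrased via clusters. Concretely, the delicate point is the claim that the modified covering algorithm applied to the connected closure-component $X$ yields a \emph{single} droplet of diameter comparable to $\diam(X)$ and containing $X$; once this is established the Closure lemma does the real work, and everything else is bookkeeping with the constant hierarchy and with the locality and monotonicity of the covering algorithm under restriction to and enlargement of the infection set.
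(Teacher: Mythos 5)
Your proposal is correct and follows essentially the same route as the paper's proof: extract the $\Gamma'$-connected witness $C$ from the definition of spanning, observe that the modified covering algorithm applied to the closure produces a single $\cT_0$-droplet containing it, transfer this to the covering algorithm on the original infections via the Closure lemma, and bound the diameter above by the smallest $\cT_0$-droplet containing the $C_4$-neighbourhood of $D$. The extra steps you add (passing to the component $X$ and its witness set $W$, and the final monotonicity argument) are harmless refinements of the same argument.
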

\begin{proof}
Let $C$ be as in Definition \ref{def:span:app}. Notice that, since $\diam(D)\ge C_3$, we can find modified clusters for $[Z\cap D]$ whose union is a connected set in $\Gamma'$ containing $C$. Then there is a $\cT_0$-droplet in the output of the modified covering algorithm for $[Z\cap D]$ containing $C$. By Lemma \ref{lem:closure} there is also a $\cT_0$-droplet $\bar D$ in the output of the covering algorithm for $Z\cap D$ containing $C$, so that $\diam(\bar D)\ge\diam(C)=\Omega(\diam(D))$. But $\bar D$ is at most the smallest $\cT_0$-droplet containing $\{x \in \bbR^2 : d(x,D) \leq C_4\}$, so $\diam(\bar D)=\Theta(\diam(D))$. Moreover, since $\bar D$ is in the output of the covering algorithm for $Z \cap D$, it is covered by $Z$ and intersects $D$.
\end{proof}
We immediately deduce from this observation and Lemma \ref{lem:bounds:covered} the desired bounds on spanning.
\begin{cor}
\label{cor:bounds:app}
Let $D$ be a $\cT$-droplet with $d=\diam(D)$ and let $1>\epsilon>0$. Then
\begin{equation}
\label{eq:bounds:app}\mu(D\text{ is spanned})\le\begin{cases} q^{d\epsilon/C_5}&\text{ if } d\le q^{-\alpha+\epsilon}\\e^{-2C_4d} &\text{ if }q^{-\alpha+\epsilon}\le d\le \frac{C_1}{C_5q^{\alpha}}\\d^{O(1)} e^{-2/(C_5q^{\alpha})}&\text{ if } \frac{1}{C_5q^{\alpha}}\le d.
\end{cases}
\end{equation}
\end{cor}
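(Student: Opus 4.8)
The strategy is the one announced just before the statement: turn a spanning event into a covering event of comparable scale via Observation~\ref{obs:cover:to:span}, and then invoke Lemma~\ref{lem:bounds:covered} together with a union bound over the possible covered droplets.

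I would first treat separately the range $d<C_4$, in which Observation~\ref{obs:cover:to:span} does not apply. Here a crude bound suffices: if $D$ is spanned by the random set $Z$ of infections, then by Definition~\ref{def:span:app} the set $[Z\cap D]$, hence also $Z\cap D$, is nonempty, so $\mu(D\text{ is spanned})\le|D\cap\bbZ^2|\,q=O(d^2)q=O(q)$. Since $d<C_4<q^{-\alpha+\epsilon}$ for $q$ small, we are in the first regime of \eqref{eq:bounds:app}, and the claimed bound $q^{d\epsilon/C_5}$ is at least $q^{C_4/C_5}\gg q$, so $O(q)\le q^{d\epsilon/C_5}$ once $q$ is small, using $C_5\gg C_4$.

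For $d\ge C_4$, Observation~\ref{obs:cover:to:span} provides, on the event that $D$ is spanned by $Z$, a $\cT_0$-droplet $\bar D$ covered by $Z$, intersecting $D$, and with $c\,d\le\diam(\bar D)\le C\,d$ for constants $0<c\le C<\infty$ depending only on $\cT$, $\cT_0$ and $\cU$. Hence $\{D\text{ spanned}\}\subset\bigcup_{\bar D}\{\bar D\text{ covered}\}$, the union being over $\cT_0$-droplets intersecting $D$ of diameter at most $Cd$, of which there are at most $d^{O(1)}$. The union bound then yields $\mu(D\text{ is spanned})\le d^{O(1)}\max_{\bar D}\mu(\bar D\text{ is covered})$, and one feeds $\diam(\bar D)=\Theta(d)$ into whichever case of \eqref{eq:bounds:covered} applies: the first case when $d\le q^{-\alpha+\epsilon}$ (giving $q^{\Theta(d)\epsilon/(3C_4^2)}$), the second or first case when $q^{-\alpha+\epsilon}\le d\le C_1/(C_5q^\alpha)$ (giving $e^{-\Theta(d)}$ with a large implied constant), and the second or third case when $d\ge1/(C_5q^\alpha)$ (giving $e^{-\Omega(1/(C_5q^\alpha))}$ up to a $d^{O(1)}$ factor).

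Since the text rightly calls this deduction essentially immediate, the only real work is the bookkeeping of constants. One must check that, after replacing $d$ by $\Theta(d)$, the thresholds of \eqref{eq:bounds:covered} fall on the correct side of those of \eqref{eq:bounds:app}: this uses the strict hierarchy $C_1\ll C_2'\ll\cdots\ll C_5$, specifically that $C_1$ exceeds $1/c$ (so that $\Theta(d)$ stays above $1/(C_1q^{\alpha-\epsilon})$, etc.) and that $C_5$ exceeds both $3C_4^2/c$ and $Ce^{C_4^4}$. It is also important to apply Lemma~\ref{lem:bounds:covered} with exactly the same $\epsilon$ as in the Corollary, so that the exponent $q^{-\alpha+\epsilon}$ matches; the residual multiplicative discrepancy in $\diam(\bar D)=\Theta(d)$ is then absorbed by the gap between $1/(3C_4^2)$ and $1/C_5$, and between the implied exponential rates and the claimed $2C_4$, resp. $2/C_5$. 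Finally one checks that the entropic prefactor $d^{O(1)}$ from the union bound is harmless: it is exactly of the type already present in the third case, and in the first two cases it is swamped, for $q$ small, by the extra factor $q^{\Omega(d)}$, respectively $e^{-\Omega(d)}$, produced by the slack in the exponents.
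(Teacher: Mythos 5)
Your proof is correct and follows exactly the route the paper intends: the paper gives no explicit argument beyond ``we immediately deduce from this observation and Lemma \ref{lem:bounds:covered}'', and your write-up is precisely that deduction — Observation \ref{obs:cover:to:span} plus a union bound over the polynomially many candidate covered $\cT_0$-droplets, with the constant hierarchy absorbing the change of scale $\diam(\bar D)=\Theta(d)$ and the entropic prefactor. The separate treatment of $d<C_4$ is a harmless extra precaution (in all applications $d\ge C_6$), and your bookkeeping of which case of \eqref{eq:bounds:covered} can apply in each regime of \eqref{eq:bounds:app} is accurate.
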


\subsection{Boundary and spanning}
\label{app:sec:boundary}
We next turn to the treatment of an infinite infected boundary condition, following \cite{Hartarsky20}, which is applicable only for models with an infinite number of stable directions. Indeed, for a model with a finite number of stable directions a bounded set of infections next to the boundary can induce a set of supplementary infections and, thereby, a droplet of the size of the boundary, making similar algorithms useless. We therefore fix an update family $\cU$ with an infinite number of stable directions and difficulty $\alpha$, to which the treatment of \cite{Hartarsky20} applies. 

For the rest of this section let $\cS_{u}=\{u^-,u^+,v_1,v_2\}$ be a set of $4$ directions chosen as in \cite{Hartarsky20}*{Lemma 4.1}\footnote{It is not hard to see that in \cite{Hartarsky20}*{Lemma 4.1}, with a finite number of exceptions, given any rational strongly stable direction $u\in S^1$ we can define $\cS_u$ correspondingly.} (we rename $(u_1,u_2)$ from that work into $(u^-,u^+)$ to avoid notational conflict) with $u=(u^-+u^+)/2$. The proof of \cite{Hartarsky20} allows us to choose $u^-$ and $u^+$ as close as we want, even depending on $v_1$ and $v_2$. We will choose them close enough for our results to hold. Let $\partial=\bbH_{u}$. For any set $Z\subset \bbZ^2$ we write $[Z]_\partial=[Z\cup\partial]\setminus\partial$. We will use the term \emph{cluster} in the sense of \cite{Hartarsky20}*{Definition 5.3}, extending Definition \ref{def:cluster:app} (crumbs close to $\partial$ are considered as clusters instead and $Z$ is replaced by $Z \setminus \partial$). We replace the notion of DYD from \cite{Hartarsky20} by that of $\cS_u$-droplet and the notion of CDYD becomes that of \emph{cut $\cS_u$-droplet}---a nonempty set of the form
\begin{equation}
\label{eq:def:cut}
\left(\bar \bbH_{u^-}(x)\cap\bar \bbH_{u^+}(y)\right)\setminus\partial
\end{equation}
for some $x,y\in\bbR$, which is a geometric triangle. We further replace the use of the diameter by considering the size $|\cdot|$ from \cite{Hartarsky20}*{Definition 5.7}. Namely for a cut $\cS_u$-droplet $D$ we denote $|D|=\diam(D)/C_1$, while for an $\cS_u$-droplet $D$, $|D|$ denotes the length of its projection parallel to $v_1$. We then define correspondingly an extension of the covering algorithm as in \cite{Hartarsky20}*{Section 5.4} and a notion of covered (cut) $\cS_u$-droplet. For the reader unfamiliar with \cite{Hartarsky20}, let us indicate that the change with respect to the covering algorithm of Definition \ref{def:covering} corresponds to replacing at each stage of the algorithm any $\cS_u$-droplet $D$ intersecting $\partial$ by the smallest cut $\cS_u$-droplet containing $D\setminus\partial$. The properties of \cite{Hartarsky20}*{Section 5.5}, analogous to Lemmas \ref{lem:subadd}-\ref{lem:extremal} and \ref{lem:closure}, remain valid for this setting. Furthermore, combining the proofs of Lemma \ref{lem:bounds:covered} and \cite{Hartarsky20}*{Lemma 5.15} shows that the following holds.

\begin{lem}
\label{lem:bounds:covered:boundary}
Let $D$ be a cut $\cS_u$-droplet or an $\cS_u$-droplet not intersecting $\partial$ with $d=|D|$. Let $1>\epsilon>0$. Then \eqref{eq:bounds:covered} holds.
\end{lem}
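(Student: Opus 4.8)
The plan is to rerun the proof of Lemma~\ref{lem:bounds:covered} inside the boundary framework of \cite{Hartarsky20}*{Sections~5.4 and 5.5}, calling on \cite{Hartarsky20}*{Lemma~5.15} for the parts that genuinely involve the infected half-plane $\partial$. I would first dispose of the easy case: if $D$ is an $\cS_u$-droplet not intersecting $\partial$, it does not see the boundary at all, the covering algorithm reduces to the one of Definition~\ref{def:covering} with $\cS_u$ in place of $\cT_0$, and $|D|=\Theta(\diam(D))$ since a droplet with a bounded number of prescribed edge directions has all its projections comparable to its diameter; thus the statement becomes exactly Lemma~\ref{lem:bounds:covered} with $\diam$ replaced by $|\cdot|$, the proportionality constant being absorbed into $C_1,\dots,C_5$. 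It then remains to treat a cut $\cS_u$-droplet $D$ with $d=|D|$.

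For such a $D$ I would follow the three-regime argument of Lemma~\ref{lem:bounds:covered}, using the analogues of Lemmas~\ref{lem:subadd}--\ref{lem:extremal} available in the boundary setting (\cite{Hartarsky20}*{Section~5.5}). First, if $D$ is covered by the random infection $Z$, subadditivity of $|\cdot|$ forces $Z\cap D$ to contain at least $\lceil d/C_4^2\rceil$ disjoint clusters, each of bounded diameter. Second, a union bound over the choices and positions of these clusters gives $\mu(D\text{ covered})\le\binom{d^{O(1)}}{\lceil d/C_4^2\rceil}q^{\alpha\lceil d/C_4^2\rceil}$ up to the contribution of the crumbs abutting $\partial$, which yields the first regime of \eqref{eq:bounds:covered} directly for small $d$ and, via $\binom nk\le(ne/k)^k$, the second regime for intermediate $d$. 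Finally, for large $d$ I would invoke the boundary Aizenman--Lebowitz lemma to extract from $D$ a covered cut $\cS_u$-droplet $D'$ meeting $D$ with $|D'|$ of order $1/(e^{C_4^4}q^\alpha)$, feed $D'$ into the second regime, and take a union bound over the $d^{O(1)}$ possible positions of $D'$ to obtain the third regime.

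The main obstacle is the treatment of the clusters abutting $\partial$: such a cluster is a promoted crumb and carries fewer than $\alpha$ infections, so one has to check that these boundary crumbs cannot by themselves account for a covered cut $\cS_u$-droplet of macroscopic size and hence do not spoil the exponent $q^{\Omega(d)}$ coming from the clusters away from $\partial$. This is precisely the boundary cluster analysis performed in \cite{Hartarsky20}*{Lemma~5.15}, so I expect the adaptation to amount to routine bookkeeping. The two points where the present setting departs from \cite{Hartarsky20} are harmless here: we never use strongly stable directions (these enter in \cite{Hartarsky20} only to handle the rugged edge, which is absent from the present, cleaner algorithm), and we allow $v_1$ and $v_2$ to be chosen freely rather than as prescribed there---but the covering algorithm, the subadditivity and Aizenman--Lebowitz lemmas, and the cluster count are all insensitive to the precise choice of $\cS_u$.
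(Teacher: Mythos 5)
Your proposal is correct and follows essentially the same route as the paper, whose proof of this lemma consists precisely of the instruction to combine the proof of Lemma~\ref{lem:bounds:covered} with that of \cite{Hartarsky20}*{Lemma 5.15}; you spell out the three-regime argument (extremal lemma, union bound, Aizenman--Lebowitz for large sizes) in the boundary framework and correctly isolate the treatment of the promoted boundary crumbs as the part supplied by \cite{Hartarsky20}*{Lemma 5.15}. No further comment is needed.
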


We similarly extend Definition \ref{def:span:app} to the setting with boundary. 
\begin{defn}[Spanning with boundary]
\label{def:span:boundary}
We call \emph{whole $\cS_u$-droplet} any $\cS_u$-droplet at distance at least $C_3$ from $\partial$ and, by abuse, we call collectively \emph{$\cS_u$-droplet} any cut or whole $\cS_u$-droplet. We say that an $\cS_u$-droplet $D$ is \emph{spanned} by $Z\subset\bbZ^2$ if there exists a set $C\subset[Z\cap D]_\partial$ connected in $\Gamma'$ such that the smallest $\cS_u$-droplet containing $C$ is $D$.
\end{defn}

We next recall several properties of the spanning algorithm following closely \cite{Bollobas14}.
\begin{defn}[Definition 6.15 of \cite{Bollobas14}]
Let $Z=\{z_1,\dots,z_{k_0}\}$ be a finite set of infections. Set $\cZ^0=\{Z_1^0,\dots,Z^0_{k_0}\}$ with $Z^0_i=\{z_i\}$. For each $t\ge 0$ do the following.
\begin{itemize}
    \item If there exist $Z^t_i$ and $Z^t_j$ such that $[Z^t_i]_\partial\cup[Z^t_j]_\partial$ is connected in $\Gamma'$, then set $\cZ^{t+1}=(\cZ^t\setminus\{Z^t_i,Z^t_j\})\cup\{Z^t_i\cup Z^t_j\}$.
    \item Otherwise, define the \emph{span of $Z$} by $\<Z\>=\{D(Z^t),Z^t\in\cZ^t\}$, where $D(Z')$ denotes the smallest $\cS_u$-droplet containing $Z'$, and terminate the algorithm.
\end{itemize}
Similarly, for any $A\subset\bbR^2$ we denote $\<A\>=\<A\cap\bbZ^2\>$.
\end{defn}

\begin{obs}[Lemma 6.16 of \cite{Bollobas14}]
We have $\<Z\>=\{D(\kappa_1),\dots,D(\kappa_k)\}$, where the $\kappa_i$ are the connected components of $[Z]_\partial$ in $\Gamma'$. 
\end{obs}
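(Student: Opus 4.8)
The plan is to recognise the span algorithm as a union--find procedure computing the connected components of $[Z]_\partial$ in $\Gamma'$, and to check that its terminal state is forced (so that, in particular, $\<Z\>$ is well defined). Concretely, I would fix an arbitrary terminal partition $\{W_1,\dots,W_k\}$ of $Z$ produced by the algorithm and prove two invariants: (i) each $[W_i]_\partial$ is connected in $\Gamma'$; (ii) for $i\neq j$ there is no $\Gamma'$-edge between $[W_i]_\partial$ and $[W_j]_\partial$. Item (ii) is immediate from the termination condition: a $\Gamma'$-edge between $[W_i]_\partial$ and $[W_j]_\partial$ would make $[W_i]_\partial\cup[W_j]_\partial$ connected, so the pair would still be mergeable. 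For (i) I would induct on the merges performed: the base case is $[\{z\}]_\partial$ connected, and the inductive step reduces to the following elementary fact about bootstrap percolation off a stable half-plane---if $S\subset\bbZ^2$ is connected in $\Gamma'$ and $S\cap\partial=\varnothing$, then $[S\cup\partial]\setminus\partial$ is connected in $\Gamma'$. This ``first new site'' fact is proved by ordering the sites added by the closure dynamics to $S\cup\partial$ and noting that each newly added $y\notin\partial$ must satisfy $(y+U)\setminus\partial\neq\varnothing$ (otherwise $y\in[\partial]=\partial$ by stability of $u$), hence $y$ lies within $r<C_2'$ of an earlier non-boundary site, keeping the set connected in $\Gamma'$.

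Next I would show $[Z\cup\partial]=\partial\sqcup\bigsqcup_{i=1}^{k}[W_i]_\partial$. The inclusion $\supseteq$ is monotonicity of closure together with the fact that the $W_i$ partition $Z$; for $\subseteq$ I take the first site $y$ added to $[Z\cup\partial]$ lying outside $\partial\cup\bigcup_i[W_i]_\partial$, observe that $(y+U)\setminus\partial$ is nonempty (again by stability of $u$) and has diameter $<C_2'$, hence by (ii) lies inside a single $[W_i]_\partial$, so $y+U\subset[W_i]_\partial\cup\partial=[W_i\cup\partial]$ and thus $y\in[W_i]_\partial$, a contradiction. Combining this identity with (i) and (ii) shows that the connected components of $[Z]_\partial$ in $\Gamma'$ are exactly $[W_1]_\partial,\dots,[W_k]_\partial$; in particular the terminal partition is unique up to relabelling, which gives well-definedness of $\<Z\>$ for free.

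Finally, to identify $\<Z\>=\{D(W_1),\dots,D(W_k)\}$ with $\{D(\kappa_1),\dots,D(\kappa_k)\}$ it remains to prove $D(W_i)=D([W_i]_\partial)$. Since $W_i\subset[W_i]_\partial$ only the inclusion $[W_i]_\partial\subset D(W_i)$ requires an argument, and this follows from stability of (cut and whole) $\cS_u$-droplets under the dynamics with boundary, namely $[D\cup\partial]\subset D\cup\partial$ for every $\cS_u$-droplet $D$; this is exactly what the choice of directions $\cS_u=\{u^-,u^+,v_1,v_2\}$ (all stable, suitably arranged around $u$) guarantees, as in \cite{Bollobas14}*{Section 6} and \cite{Hartarsky20}*{Section 5}. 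Applying it with $D=D(W_i)$ yields $[W_i\cup\partial]\subset D(W_i)\cup\partial$, hence $[W_i]_\partial\subset D(W_i)$ and equality of the smallest $\cS_u$-droplets. The main obstacle is the bookkeeping around the boundary $\partial$: stating and applying the ``first new site'' arguments cleanly, disposing of the degenerate case $Z\cap\partial\neq\varnothing$ (one simply discards such points, as they contribute nothing to $[Z]_\partial$), and quoting the correct droplet-stability statement for cut $\cS_u$-droplets; the rest is a routine induction on the steps of the algorithm.
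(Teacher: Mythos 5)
The paper offers no proof of this Observation---it is imported verbatim from \cite{Bollobas14}*{Lemma 6.16}---and your argument is correct and is essentially the standard one: the union--find invariants (each part's closure is $\Gamma'$-connected, distinct parts' closures are $\Gamma'$-separated), the ``first new site'' argument using stability of $u$ to show the closure decomposes over the parts, and droplet stability to identify $D(W_i)$ with $D([W_i]_\partial)$. Your additional remark that this forces the terminal partition to be unique (so $\<Z\>$ is well defined) is a correct and worthwhile observation.
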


\begin{obs}[Lemma 6.17 of \cite{Bollobas14}]
A nonempty $\cS_u$-droplet is spanned iff $D\in \<D\cap Z\>$.
\end{obs}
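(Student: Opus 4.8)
The plan is to obtain this statement as the exact analogue, in the present (cut/whole $\cS_u$-droplet, $\Gamma'$, modified-constants) setting, of \cite{Bollobas14}*{Lemma 6.17}, deducing it from the preceding observation (the analogue of \cite{Bollobas14}*{Lemma 6.16}, which identifies $\<Z\>$ with the collection of smallest $\cS_u$-droplets $D(\kappa)$ over the connected components $\kappa$ of $[Z]_\partial$ in $\Gamma'$) together with Definition \ref{def:span:boundary}. As in \cite{Bollobas14}, the only substantive input beyond elementary set manipulation is that the closure of infections contained in a (cut or whole) $\cS_u$-droplet, even in the presence of the infected boundary $\partial$, cannot escape that droplet; this is precisely why the directions of $\cS_u$ were chosen as in \cite{Hartarsky20}*{Lemma 4.1}. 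So the statement should be read with $D$ a fixed nonempty $\cS_u$-droplet: $D$ is spanned by $Z$ iff $D\in\<D\cap Z\>$.

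For the ``if'' direction I would argue directly: if $D\in\<D\cap Z\>$, then by \cite{Bollobas14}*{Lemma 6.16} there is a connected component $\kappa$ of $[D\cap Z]_\partial$ in $\Gamma'$ with $D=D(\kappa)$, i.e.\ the smallest $\cS_u$-droplet containing $\kappa$ is $D$. Taking $C=\kappa$ in Definition \ref{def:span:boundary} works verbatim, since $\kappa$ is connected in $\Gamma'$, $\kappa\subset[Z\cap D]_\partial$, and $D(\kappa)=D$; hence $D$ is spanned.

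For the ``only if'' direction, assume $D$ is spanned and fix $C\subset[Z\cap D]_\partial$ connected in $\Gamma'$ with $D(C)=D$. Let $\kappa$ be the connected component of $[D\cap Z]_\partial$ in $\Gamma'$ containing $C$; then $D=D(C)\subseteq D(\kappa)$. For the reverse inclusion I would use that $D$ is an $\cS_u$-droplet: a whole $\cS_u$-droplet is an intersection of closed half-planes directed by the stable directions of $\cS_u$ and lies at distance $\geq C_3$ from $\partial$, while a cut $\cS_u$-droplet is of the form $(\bar\bbH_{u^-}(x)\cap\bar\bbH_{u^+}(y))\setminus\partial$ with $u^-$, $u^+$, $u$ all stable. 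In either case, stability of the relevant directions forces $[(D\cap Z)\cup\partial]\setminus\partial\subseteq D$, so $\kappa\subseteq D$ and hence $D(\kappa)\subseteq D$, giving $D(\kappa)=D$. By \cite{Bollobas14}*{Lemma 6.16} this means $D=D(\kappa)\in\<D\cap Z\>$.

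The one point that is not purely formal—if one wants a self-contained argument rather than citing \cite{Bollobas14}*{Lemma 6.17}—is the containment $[(D\cap Z)\cup\partial]\setminus\partial\subseteq D$ for a cut $\cS_u$-droplet: one must check that feeding in the infected half-plane $\partial$ along the flat side of the triangle cannot create infections poking through the two slanted sides directed by $u^-$ and $u^+$. This is guaranteed by stability of $u^-$, $u^+$ and $u$, with $u^\pm$ taken close to the strongly stable direction $u$ exactly as in \cite{Hartarsky20}*{Lemma 4.1}, and it is the same step verified in \cite{Bollobas14}; everything else reduces to the maximality of connected components and the definition of $D(\cdot)$.
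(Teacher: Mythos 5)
Your proof is correct and is exactly the standard argument: the paper itself states this observation without proof, importing it from \cite{Bollobas14}*{Lemma 6.17}, and your two directions (reading off Definition \ref{def:span:boundary} from a component $\kappa$ with $D(\kappa)=D$, and conversely enlarging a witness $C$ to its component and using the closure containment $[D\cap Z]_\partial\subseteq D$) are the same steps as in that reference. You also correctly isolate the only non-formal input — that $\partial\cup D$ is stable for cut droplets (and the distance-$C_3$ separation for whole ones) — which the paper guarantees via the choice of $\cS_u$ from \cite{Hartarsky20}*{Lemma 4.1} and Remark \ref{rem:directions}.
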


\begin{lem}[Lemma 6.21 of \cite{Bollobas14}]
\label{lem:6.19}
Let $Z$ be a finite set of at least two infections such that $[Z]_\partial$ is connected in $\Gamma'$. Then there exists a nontrivial partition $Z=Z_1\sqcup Z_2$ such that $[Z_1]_\partial$, $[Z_2]_\partial$ and $[Z_1]_\partial\cup[Z_2]_\partial$ are connected in $\Gamma'$.
\end{lem}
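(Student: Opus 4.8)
The plan is to run the merging procedure of the spanning algorithm (the one defined just above the statement) starting from $Z$ itself, and to read off the desired partition from its final merge. Concretely, start with $\cZ^0=\{\{z\}:z\in Z\}$ and iterate the merge rule; since $Z$ is finite the process terminates at some time $\tau$ with a collection $\cZ^\tau=\{W_1,\dots,W_m\}$, and the union $\bigcup\cZ^t$ is invariant, so $\{W_1,\dots,W_m\}$ is a partition of $Z$. We may assume $Z\cap\partial=\varnothing$ (removing boundary sites changes neither $[Z]_\partial$ nor, up to harmless relabelling, the statement). The proof has two parts: show $m=1$, then take the last merge.

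First I would show $m=1$. Along the way I maintain the invariant $\cP_t$: for every $W\in\cZ^t$ the set $[W]_\partial$ is connected in $\Gamma'$ (and nonempty, since $W\subset[W]_\partial$). Granting this, at termination no two pieces can be merged, so $[W_i]_\partial\cup[W_j]_\partial$ is not connected in $\Gamma'$ for $i\neq j$; as each $[W_i]_\partial$ is itself $\Gamma'$-connected and nonempty, this forces $d([W_i]_\partial,[W_j]_\partial)>C_2'$ for all $i\neq j$. Then the boundary analogue of Observation~\ref{obs:connected:component} (closures of sets whose closures lie in distinct strongly connected components split; this is the cut-droplet version of the decomposition behind \cite{Bollobas14}*{Lemma 6.16}, whose short proof adapts verbatim with the constants $C_2',C_4'$, cf.\ Footnote~\ref{foot:constants}) gives $[Z]_\partial=\bigsqcup_{i=1}^{m}[W_i]_\partial$, a disjoint union of $m$ nonempty pieces pairwise $\Gamma'$-disconnected. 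Since $[Z]_\partial$ is connected in $\Gamma'$, necessarily $m=1$, i.e.\ $\cZ^\tau=\{Z\}$. Because $|Z|\geq 2$, at least one merge occurred; let $Z=Z_1\sqcup Z_2$ be the (nontrivial) partition produced by the last one. By the merge rule $[Z_1]_\partial\cup[Z_2]_\partial$ is connected in $\Gamma'$, and by $\cP_{\tau-1}$ both $[Z_1]_\partial$ and $[Z_2]_\partial$ are connected in $\Gamma'$, which is exactly the conclusion of the lemma.

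It remains to justify the invariant $\cP_t$, which I would prove by induction on $t$. For the base case, $[\{z\}]_\partial$ is connected in $\Gamma'$: running the bootstrap dynamics from $\{z\}\cup\partial$, every newly infected site lies within the rule-range $r\le C_2'$ of the currently infected set, and, because $u=(u^-+u^+)/2$ is a stable direction (so $[\partial]=\partial$), any such site, being outside $\partial$, must in fact be $\Gamma'$-linked back to $z$ through earlier new sites — one cannot grow out of $\partial$ alone. The induction step is the same computation: when $W,W'$ merge, $[W\cup W']_\partial=[W\cup W'\cup\partial]\setminus\partial$ is obtained from the $\Gamma'$-connected set $[W]_\partial\cup[W']_\partial$ (connected by the merge rule) by successively adjoining sites each within $C_2'$ of the current set, so $\Gamma'$-connectivity is preserved. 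This is precisely the cut-droplet analogue of the closure-connectivity facts used in \cite{Bollobas14} and \cite{Hartarsky20}, which carry over to the present choice of directions $\cS_u$ with no change.

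The main obstacle is the honest verification of the invariant in the presence of the infected boundary $\partial$ — in particular ruling out "spurious" infections appearing near $\partial$ but far from $W$ itself, which is where the stability of $u$ (and the specific construction of $\cS_u$ from \cite{Hartarsky20}*{Lemma 4.1}) is used — together with checking that the supporting lemmas of \cite{Bollobas14} and \cite{Hartarsky20} (subadditivity of droplet merging, Aizenman–Lebowitz, and the splitting of far-apart closures) remain valid with our modified constants and our freely chosen directions. Once these are in place the argument is pure bookkeeping about the merging algorithm.
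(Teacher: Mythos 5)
The paper gives no proof of this lemma: it is imported verbatim as Lemma~6.21 of \cite{Bollobas14}, and your argument is essentially the proof given there — run the spanning algorithm, observe that connectedness of $[Z]_\partial$ forces termination in a single piece (which also follows directly from the preceding Observation, i.e.\ Lemma~6.16 of \cite{Bollobas14}), and take the final merge. Your verification of the invariant (closures of the pieces remain $\Gamma'$-connected, using stability of $u$ so that no site can be infected by $\partial$ alone) is correct, so the proposal is sound.
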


The next lemma follows from the definition of size and \cite{Hartarsky20}*{Lemma 5.12}.
\begin{lem}
\label{lem:sizes}
For any $\cS_u$-droplets $D,D_1,D_2$ with $|D_1| \geq C_3$ or $|D_2| \geq C_3$ such that $\<D_1\>=\{D_1\}$, $\<D_2\>=\{D_2\}$ and $\<D_1\cup D_2\>=\{D\}$ we have $|D_1|/C_1\le |D|\le |D_1|+|D_2|+O(C_2')$.
\end{lem}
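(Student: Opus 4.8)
The plan is to identify $D$ with the merge $D_1\vee D_2$ of $D_1$ and $D_2$ in the boundary‑modified covering algorithm, and then to read off both inequalities from \cite{Hartarsky20}*{Lemma 5.12}, the size analogue of the subadditivity Lemma~\ref{lem:subadd}.

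First I would translate the span hypotheses through the characterisation \cite{Bollobas14}*{Lemma 6.16} (stated above as an Observation): $\<D_1\cup D_2\>=\{D\}$ says that $[(D_1\cup D_2)\cap\bbZ^2]_\partial$ is $\Gamma'$-connected and that $D$ is the smallest $\cS_u$-droplet containing it, while $\<D_i\>=\{D_i\}$ says that $D_i$ is the smallest $\cS_u$-droplet containing $[D_i\cap\bbZ^2]_\partial$. Since $[D_i\cap\bbZ^2]_\partial\subseteq[(D_1\cup D_2)\cap\bbZ^2]_\partial\subseteq D$ and $D$ is an $\cS_u$-droplet, minimality gives $D_1\subseteq D$ and $D_2\subseteq D$, hence $D_1\vee D_2\subseteq D$ (again by minimality of the merge). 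Conversely, because $\cS_u$-droplets and cut $\cS_u$-droplets are stable for the $\cU$-process with infected half-plane $\partial$, one has $[(D_1\cup D_2)\cap\bbZ^2]_\partial\subseteq[(D_1\vee D_2)\cap\bbZ^2]_\partial\subseteq D_1\vee D_2$, so $D\subseteq D_1\vee D_2$. Altogether $D=D_1\vee D_2$ (at worst up to an $O(C_2')$ enlargement if one only has approximate stability near $\partial$, which would not affect the conclusion).

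It then remains to control $|D_1\vee D_2|$. The lower bound $|D_1|/C_1\le|D_1\vee D_2|$ follows from $D_1\subseteq D_1\vee D_2$ and the fact that both the diameter and the $v_1$-projection length are monotone under inclusion: the only case in which a factor is lost is when $D_1$ is whole and $D_1\vee D_2$ is cut, and there $|D_1|\le\diam(D_1)\le\diam(D_1\vee D_2)=C_1|D_1\vee D_2|$ (the case $D_1$ cut and $D_1\vee D_2$ whole is impossible, a cut droplet meeting $\partial$ while a whole one does not). The upper bound $|D_1\vee D_2|\le|D_1|+|D_2|+O(C_2')$ is exactly \cite{Hartarsky20}*{Lemma 5.12}, whose hypotheses are met here thanks to the standing assumption $\max(|D_1|,|D_2|)\ge C_3$.

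The step I expect to need the most care is establishing $[(D_1\cup D_2)\cap\bbZ^2]_\partial\subseteq D_1\vee D_2$ and checking that \cite{Hartarsky20}*{Lemma 5.12} delivers the additive loss $O(C_2')$: although $D_1$ and $D_2$ need not intersect, the $\Gamma'$-connectedness of $[(D_1\cup D_2)\cap\bbZ^2]_\partial$ forces $D_1$ and $D_2$ — or one of them and $\partial$ — to lie within $\Gamma'$-distance $O(C_2')$, which is precisely what makes the merge behave subadditively with only an $O(C_2')$ overhead, and what the cut/whole bookkeeping and the stability of $\cS_u$-droplets near $\partial$ are needed for; granting \cite{Hartarsky20}*{Lemma 5.12}, the rest is routine.
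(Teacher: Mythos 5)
Your argument is correct and follows the same route the paper intends: the paper offers no proof beyond the one-line attribution "follows from the definition of size and \cite{Hartarsky20}*{Lemma 5.12}", and your write-up simply fleshes this out — the lower bound from monotonicity of the size under inclusion together with the cut/whole case analysis (where the factor $C_1$ converting diameters to sizes is exactly what is lost), and the upper bound from the cited lemma. The detour through the covering-algorithm merge $D_1\vee D_2$ and the stability of $(D_1\vee D_2)\cup\partial$ is a reasonable way to reduce to that lemma, and you correctly flag the only delicate points (the $O(C_2')$ slack near $\partial$ and the closeness of $D_1$ and $D_2$ forced by the single-component hypothesis).
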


This standardly implies (see e.g.\ \cite{Bollobas14}*{Lemma 6.18}) the following.
\begin{lem}[Aizenman-Lebowitz]
\label{lem:AL}
Let $D$ be a spanned $\cS_u$-droplet and $C_3\le k\le |D|$. Then there exists a spanned $\cS_u$-droplet $D'\subset D$ with $k\le|D'|\le 3k$.
\end{lem}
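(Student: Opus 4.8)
The plan is to carry out the classical Aizenman--Lebowitz induction, essentially repeating the proof of \cite{Bollobas14}*{Lemma 6.18} in the present setting, with Lemma \ref{lem:6.19} playing the role of their splitting lemma and Lemma \ref{lem:sizes} that of their size subadditivity estimate. Concretely, let $Z$ be a finite set of infections for which $D$ is spanned, taken minimal; we induct on $|Z|$. If $|D|\le 3k$ there is nothing to do: $D'=D$ works, since $D$ is an $\cS_u$-droplet spanned by $Z$ (hence by $Z\cap D$) and $k\le|D|$ by hypothesis. So we may assume $|D|>3k\ge 3C_3$.

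The inductive step goes as follows. First I would observe that $|Z|\ge 2$: a single infection spans only an $\cS_u$-droplet of size bounded by a constant depending on $\cU$ and $\cS_u$ (here we use that $u=(u^-+u^+)/2$ is strongly stable, hence of infinite difficulty, so that closures of finite sets against $\partial$ cannot run away), and after enlarging the constants this is $<3C_3\le 3k$, contradicting $k\le|D|$. Passing to the connected set in $\Gamma'$ realising that $D$ is spanned, I may assume $[Z]_\partial$ is connected in $\Gamma'$. Then Lemma \ref{lem:6.19} yields a nontrivial partition $Z=Z_1\sqcup Z_2$ with $[Z_1]_\partial$, $[Z_2]_\partial$ and $[Z_1]_\partial\cup[Z_2]_\partial$ all connected in $\Gamma'$. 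Setting $D_i=D(Z_i)$, each $D_i\subset D(Z)=D$ is an $\cS_u$-droplet spanned by $Z_i$, and $\<D_1\cup D_2\>=\{D\}$. Choosing $D_1$ of the larger size and noting that $|D_1|\ge C_3$ (otherwise geometric subadditivity of sizes of joins would force $|D|<3C_3$), Lemma \ref{lem:sizes} applies and gives $|D|\le |D_1|+|D_2|+O(C_2')\le 2|D_1|+O(C_2')$, so that $|D_1|\ge |D|/2-O(C_2')>3k/2-O(C_2')\ge k$, using $k\ge C_3\gg C_2'$. Since $D_1$ is spanned by $Z_1$ with $|Z_1|<|Z|$ and $C_3\le k\le|D_1|$, the induction hypothesis produces a spanned $\cS_u$-droplet $D'\subset D_1\subset D$ with $k\le|D'|\le 3k$, which is exactly what we need.

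The genuinely delicate part is not the combinatorial skeleton above but the boundary bookkeeping underlying it: that $D(Z_i)$ indeed contains $[Z_i]_\partial$, that the cut versus whole $\cS_u$-droplet distinction is respected by all the operations used (joins, passing to sub-droplets, the span algorithm), that $\<D_1\cup D_2\>=\{D\}$, and that the degenerate small-droplet cases behave as claimed. All of these are handled exactly as in \cite{Bollobas14}*{Section 6} together with the size conventions and lemmas of \cite{Hartarsky20}*{Section 5}, so I expect this to be routine, but it is where essentially all of the actual work lies.
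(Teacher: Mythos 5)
Your argument is correct and is essentially the argument the paper has in mind: the paper gives no proof of Lemma \ref{lem:AL}, stating only that it "standardly implies" from Lemma \ref{lem:sizes} via \cite{Bollobas14}*{Lemma 6.18}, and your backward splitting induction (split $Z$ by Lemma \ref{lem:6.19}, follow the larger piece, control sizes by Lemma \ref{lem:sizes}) is exactly that standard Aizenman--Lebowitz argument. The boundary bookkeeping you defer to \cite{Bollobas14} and \cite{Hartarsky20} (including the mild circularity in ruling out $|D_1|,|D_2|<C_3$ before Lemma \ref{lem:sizes} applies, and the bounded closure of a single infection against $\partial$) is indeed routine and is treated the same way implicitly by the paper.
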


Similarly to Corollary \ref{cor:bounds:app} we obtain the following.
\begin{cor}
\label{cor:bounds:app:cut}
Let $D$ be an $\cS_u$-droplet with $d=|D|\ge 1/(C_5q^\alpha)$. Then
\[\mu(D\text{ is spanned})\le d^{O(1)}e^{-2/(C_5 q^\alpha)}.\]
\end{cor}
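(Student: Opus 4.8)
The plan is to reduce the statement to a union bound over the relevant structures, exactly mirroring the proof of Corollary~\ref{cor:bounds:app} but using the boundary-aware versions of the covering machinery developed in this subsection. The key point is the analogue of Observation~\ref{obs:cover:to:span}: if an $\cS_u$-droplet $D$ is spanned by $Z$, then there is a (cut or whole) $\cS_u$-droplet $\bar D$ \emph{covered} by $Z$, intersecting $D$, with $|\bar D| = \Theta(|D|)$. First I would invoke Definition~\ref{def:span:boundary} to obtain a set $C\subset[Z\cap D]_\partial$ connected in $\Gamma'$ whose smallest enclosing $\cS_u$-droplet is $D$; since $|D|\ge 1/(C_5 q^\alpha)\gg C_3$, one can cover $C$ by modified clusters (in the boundary sense of \cite{Hartarsky20}*{Definition 5.3}) whose union is connected in $\Gamma'$ and contains $C$. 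Running the modified (boundary) covering algorithm on $[Z\cap D]_\partial$ therefore produces an $\cS_u$-droplet containing $C$, and the boundary Closure lemma (the analogue of Lemma~\ref{lem:closure} valid in this setting, cf.\ \cite{Hartarsky20}*{Section 5.5}) transfers this to an $\cS_u$-droplet $\bar D$ in the output of the covering algorithm for $Z\cap D$ with $|\bar D|\ge|C| = \Omega(|D|)$; the upper bound $|\bar D| = O(|D|)$ follows since $\bar D$ is built from clusters within $D$ enlarged by $C_4$, exactly as in Observation~\ref{obs:cover:to:span}.

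Having established this reduction, the bound follows by combining it with Lemma~\ref{lem:bounds:covered:boundary}. Since $|\bar D| = \Theta(d)$ and $d\ge 1/(C_5 q^\alpha)$, we are in the third regime of \eqref{eq:bounds:covered}, which gives $\mu(\bar D\text{ is covered})\le |\bar D|^2 e^{-C_1/(C_5 q^\alpha)}$. Then one takes a union bound over all possible positions and sizes of $\bar D$: since $\bar D$ must intersect $D$ and have $|\bar D| = \Theta(d)$, the number of choices is polynomial in $d$ (there are $d^{O(1)}$ candidates, the Euclidean coordinates of the defining half-planes ranging over a window of size $O(d)$ around $D$, discretised at scale $O(1)$). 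Since $C_1/(C_5q^\alpha) \ge 2/(C_5 q^\alpha)$ with room to spare (recall $C_1 \gg 1$, though one must be slightly careful here—$C_1 \ll C_5$, so actually one simply absorbs the polynomial factors and the slightly-too-small exponent by noting $C_1/(C_5 q^\alpha) - 2/(C_5 q^\alpha) = (C_1-2)/(C_5 q^\alpha) > 0$ since $C_1 > 2$, and the polynomial prefactor $d^{O(1)}$ is negligible against the exponential), we conclude $\mu(D\text{ is spanned})\le d^{O(1)} e^{-2/(C_5 q^\alpha)}$ as claimed.

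The main obstacle, such as it is, is purely bookkeeping: one must check that all the ingredients used for whole $\cS_u$-droplets (the subadditivity Lemma~\ref{lem:subadd}, the extremality Lemma~\ref{lem:extremal}, and above all the Closure Lemma~\ref{lem:closure}) genuinely carry over to cut $\cS_u$-droplets with the size functional $|\cdot|$ replacing the diameter. The excerpt asserts (just before Lemma~\ref{lem:bounds:covered:boundary}) that ``the properties of \cite{Hartarsky20}*{Section 5.5}, analogous to Lemmas \ref{lem:subadd}--\ref{lem:extremal} and \ref{lem:closure}, remain valid for this setting,'' so I would simply cite that and \cite{Hartarsky20}*{Section 5.5} at the relevant spots rather than re-proving anything. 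In particular I do not need the finer Aizenman--Lebowitz Lemma~\ref{lem:AL} or the hierarchy machinery here—the statement is only about $\cS_u$-droplets of size at least $1/(C_5 q^\alpha)$, so one application of Lemma~\ref{lem:bounds:covered:boundary} in its third regime suffices, with no need to pass through intermediate scales. Hence the proof is short: reduce to covering via the boundary analogue of Observation~\ref{obs:cover:to:span}, then quote Lemma~\ref{lem:bounds:covered:boundary} and absorb entropy.

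\begin{proof}
By Definition~\ref{def:span:boundary} there is a set $C\subset[Z\cap D]_\partial$ connected in $\Gamma'$ such that the smallest $\cS_u$-droplet containing $C$ is $D$. Since $d=|D|\ge 1/(C_5q^\alpha)\gg C_3$, we may cover $C$ by modified clusters of $[Z\cap D]_\partial$ (in the boundary sense of \cite{Hartarsky20}*{Definition 5.3}) whose union is connected in $\Gamma'$ and contains $C$; thus the output of the modified covering algorithm on $[Z\cap D]_\partial$ contains an $\cS_u$-droplet containing $C$. By the boundary version of the Closure lemma (Lemma~\ref{lem:closure}, cf.\ \cite{Hartarsky20}*{Section 5.5}), there is an $\cS_u$-droplet $\bar D$ in the output of the covering algorithm for $Z\cap D$ with $\bar D\supset C$, so $|\bar D|\ge|C|=\Omega(d)$. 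As in Observation~\ref{obs:cover:to:span}, $\bar D$ is at most the smallest $\cS_u$-droplet containing $\{x:d(x,D)\le C_4\}$, whence $|\bar D|=\Theta(d)$, and $\bar D$ is covered by $Z$ and intersects $D$.

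Since $|\bar D|=\Theta(d)\ge 1/(C_5q^\alpha)$, the third case of Lemma~\ref{lem:bounds:covered:boundary} gives, for each such $\bar D$,
\[
\mu(\bar D\text{ is covered})\le |\bar D|^2 e^{-C_1/(C_5q^\alpha)}=d^{O(1)}e^{-C_1/(C_5q^\alpha)}.
\]
As $\bar D$ intersects $D$ and has $|\bar D|=\Theta(d)$, the number of possible $\bar D$ is $d^{O(1)}$. A union bound over these, together with $C_1/(C_5q^\alpha)-2/(C_5q^\alpha)=(C_1-2)/(C_5q^\alpha)\ge 0$ and the fact that the polynomial prefactor $d^{O(1)}$ is absorbed into the exponential (recall $d\ge 1/(C_5q^\alpha)$), yields
\[
\mu(D\text{ is spanned})\le d^{O(1)}e^{-2/(C_5q^\alpha)},
\]
as claimed.
\end{proof}
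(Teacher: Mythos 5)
Your proof is correct and follows exactly the route the paper intends: the paper gives no explicit argument beyond ``similarly to Corollary~\ref{cor:bounds:app}'', i.e.\ a boundary analogue of Observation~\ref{obs:cover:to:span} (spanned implies a covered $\cS_u$-droplet of comparable size, via the boundary Closure lemma) followed by the third regime of Lemma~\ref{lem:bounds:covered:boundary} and a union bound over the $d^{O(1)}$ candidate droplets. Your write-up fills in precisely these steps, so there is nothing to add.
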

\begin{rem}
\label{rem:directions}
Let us note that the results of this section remain valid if $\partial$ is replaced by any sufficiently regular boundary condition. Namely, if $u_\perp=u+\pi/2$ and $f$ is a $\delta$-Lipschitz function for $\delta<\tan((u^+-u^-)/2)$, then we can use any $\partial$ with topological interior 
\[\{x\in\bbR^2,\<x,u\>< f(\<x,u_\perp\>)\}\]
such that $\partial$, $\partial \cup D$ are stable for any cut $\cS_u$-droplet.

Finally, one can also remove the boundary by considering infections sufficiently far from it to recover the setting of the previous section for the directions under consideration.
\end{rem}

\section{Bound on crossing}
\label{app:crossing}
For this appendix we place ourselves in the context of Section \ref{subsec:general} (in particular, $\cT$-droplets will be parallelograms). In sections \ref{app:sec:crossing:finite} and \ref{app:sec:crossing:infinite} we show that crossings are unlikely in directions with respectively finite and infinite difficulty. Of course, though we treat $u_1$, the results are also valid for $u_2$.

\subsection{Crossing in a direction with finite difficulty}
\label{app:sec:crossing:finite}
One can use Corollary \ref{cor:bounds:app} to show that if $u_1$ has finite difficulty, a $u_1$-crossing without large droplets is extremely unlikely. To do that, we will use a concept of \emph{partition} close to the one from \cite{Bollobas14}*{Definition 8.20}.
\begin{defn}
\label{def:partition}
Assume that $0 < \alpha(u_1) < \infty$. Let $R=R(a,b;c,d)$ be a parallelogram and $Z \subset R \cap \bbZ^2$. Set $m=\lfloor (c-a)/(C_1C_6)\rfloor\ge 1$ and
\[S_i = \bbH_{u_1}(-(c-iC_1C_6))\cap\bar\bbH_{u_2}(-b) \cap \bar \bbH_{u_3}(c-(i-1)C_1C_6) \cap \bar \bbH_{u_4}(d)\]
for $1 \leq i \leq m-1$ and $S_m=R(a,b;c-(m-1)C_1C_6,d)$.
A $u_1$-partition of $R$ for $Z$ is a sequence $a_1,\dots,a_k$ of positive integers with $m=a_1+\dots+a_k$ such that, setting $t_j=a_1+\dots+a_j$, we have either
\begin{itemize}
    \item $a_j=1$ and $S_{t_j}$ contains an $\alpha(u_1)$-cluster for $Z$ (see Definition \ref{def:cluster:app}) or
    \item there exists a $\cT$-droplet $D$ spanned by $Z \cap \bigcup_{i=t_{j-1}+1}^{t_j} S_i$, with $C_1C_6a_j\ge \diam(D)\ge a_jC_6$.
\end{itemize}
\end{defn} 
The following lemma is close to \cite{Bollobas14}*{Lemma 8.21}.
\begin{lem}
\label{lem:partition}
Let $R$ be a parallelogram. If $0 < \alpha(u_1) < \infty$ and $R$ is $u_1$-crossed then there exists a $u_1$-partition for $\eta\cap R$.
\end{lem}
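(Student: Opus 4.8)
The plan is to prove Lemma~\ref{lem:partition} by running the spanning algorithm ``slice by slice'' across $R$ from the $u_1$-side inward, and recording at each block either a cluster or a spanned $\cT$-droplet, exactly as in \cite{Bollobas14}*{Lemma 8.21} but adapted to our notion of $u_1$-crossing.

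\textbf{Setup and first reduction.} First I would fix a $u_1$-crossing witness: since $R=R(a,b;c,d)$ is $u_1$-crossed, there is a strongly connected set $Y\subset[\bbH_{u_1}\cup(R\cap\eta)]$ meeting both $\bbH_{u_1}$ and $\bar\bbH_{u_3}(a)$, i.e.\ $Y$ traverses $R$ in the $u_1$ direction, covering a projection onto $u_3$ of length $\ge c-a$. By Observation~\ref{obs:connected:component} applied to the configuration $\eta\cap R$ together with the boundary half-plane $\bbH_{u_1}$ (in the sense of the boundary spanning of Appendix~\ref{app:sec:boundary}, or more elementarily just by working with $[\eta\cap R]_{\bbH_{u_1}}$), we may assume $Y=[Z\cap Y']_\partial$ for the relevant cluster structure, where $Z=\eta\cap R$. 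The key deterministic fact is that the ``infection trail'' $Y$ is built up by the spanning process merging nearby clusters of $Z$, and each slice $S_i$ of width $C_1C_6$ that $Y$ crosses must be ``explained'' locally.

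\textbf{The block decomposition.} I would scan the slices $S_m, S_{m-1}, \dots, S_1$ (ordered from the $\bar\bbH_{u_3}(a)$ end toward the $\bbH_{u_1}$ end, or vice versa — match the indexing of Definition~\ref{def:partition}) and greedily build the blocks. At slice $S_{t_{j-1}+1}$ we look at how $Y$ enters it. Two cases: (i) the portion of $Z$ inside $S_{t_{j-1}+1}$ that contributes to $Y$ there does \emph{not} form a cluster, i.e.\ it is contained in the closure of at most $\alpha(u_1)-1$ sites — but this contradicts the traversal unless those $\le\alpha(u_1)-1$ sites suffice to bridge the slice, which, because $u_1$ has difficulty $\alpha(u_1)$, they cannot do indefinitely; so one gets $a_j=1$ with $S_{t_j}$ containing an $\alpha(u_1)$-cluster. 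This is where the hypothesis $\alpha(u_1)<\infty$ and the meaning of difficulty enters: a set of fewer than $\alpha(u_1)$ infections added to the half-plane cannot propagate infection an unbounded distance past it, so a crossing of a wide enough slice forces either a genuine cluster or a genuine spanned droplet. (ii) Otherwise, the spanning process must have merged a cluster-containing component of $Z$ across several consecutive slices, and the smallest $\cT$-droplet $D$ spanned by the corresponding infections has diameter comparable to the number $a_j$ of slices it spans: the lower bound $\diam(D)\ge a_jC_6$ comes from $D$ covering $a_j$ slices of width $C_1C_6$ in projection, and the upper bound $\diam(D)\le C_1C_6a_j$ comes from stopping the block as soon as the spanned droplet first reaches diameter $\ge a_jC_6$ (controlling the overshoot via the subadditivity Lemma~\ref{lem:AL:spanned}, losing at most a factor $C_1$). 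Taking $a_j$ minimal with this property and iterating gives the sequence $a_1,\dots,a_k$ with $\sum a_j=m$.

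\textbf{Main obstacle and cleanup.} The main delicate point will be the boundary-condition bookkeeping: since the crossing is defined using the infinite half-plane $\bbH_{u_1}$ as boundary, one must be careful that the spanned $\cT$-droplets extracted in case (ii) are spanned by $Z$ \emph{alone} (not using the boundary), which is what Definition~\ref{def:partition} demands, whereas near $\bbH_{u_1}$ the trail $Y$ does use the boundary. This is handled exactly as in \cite{Bollobas14}*{Section 8.3}: slices sufficiently close to $\bbH_{u_1}$ are absorbed into the analysis of the half-plane boundary (a cut $\cS_u$-droplet or simply the $\alpha(u_1)$-cluster count), and once one is more than $O(r)$ away from $\bbH_{u_1}$ the closures involved no longer touch $\partial$, so the droplets are genuinely spanned by $Z$ in the sense of Definition~\ref{def:span:app}. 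A second minor nuisance is ensuring the greedy block lengths sum to exactly $m$ rather than overshooting, which the ``minimal $a_j$'' choice plus the special last slice $S_m$ in Definition~\ref{def:partition} takes care of. Modulo these standard adaptations, the argument is a direct transcription of \cite{Bollobas14}*{Lemma 8.21}, so I would present it concisely, citing that lemma for the routine parts and spelling out only the points where our $u_1$-crossing (with its ``no spanned critical parallelogram'' clause, which is irrelevant to \emph{this} lemma and only used later) and our choice of $\cT$-droplets differ.
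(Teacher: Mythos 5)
Your proposal follows essentially the same route as the paper: an induction (your ``greedy iteration'') over the slices $S_1,\dots,S_m$ starting from the $\bbH_{u_1}$ boundary, with the dichotomy ``the first slice contains an $\alpha(u_1)$-cluster, so $a_j=1$'' versus ``only crumbs are present near the boundary, whose closures (even with the help of $\bbH_{u_1}$) stay within $O(C_2)$ of it, so the crossing must jump to a connected component of $[(\eta\cap R)\setminus\bar Z]$ extending over $a_j$ slices, yielding via Observation~\ref{obs:connected:component} and Lemma~\ref{lem:AL:spanned} a $\cT$-droplet of diameter between $a_jC_6$ and $C_1C_6a_j$ spanned by $\eta$ alone.'' The only blemishes are presentational: your case (i) is stated backwards (it should simply be ``the slice contains a cluster''), and the boundary bookkeeping is resolved not by distance-from-$\partial$ considerations but by the closedness of $\bar Z$ together with Observation~\ref{obs:connected:component} — both of which are exactly the mechanism you describe elsewhere in the write-up.
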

\begin{proof}
 For notational convenience we assume that $R=R(-a,0;0,d)$. In this proof, all clusters and crumbs are with respect to $\alpha(u_1)$. The proof is by induction on $m$.

Suppose that the property holds for any $m' \leq m-1$. If $S_1$ contains a cluster of $\eta \cap R$, we set $a_1=1$ and we are done, since $R(-a,0;-C_1C_6,d)$ is $u_1$-crossed. Let us assume $S_1$ contains no cluster of $\eta \cap R$. Then $S_1' = R(-C_1C_6+C_1C_3,0;0,d)$ intersects no cluster of $\eta \cap R$, so if $\cK$ is the set of connected components of $\eta \cap S_1'$ in $\Gamma$, each $\kappa \in \cK$ is a crumb of $\eta \cap S_1'$. In particular, all elements of $[\kappa]$ and $[\kappa \cup \bbH_{u_1}] \setminus \bbH_{u_1}$ are at distance at most $C_1$ of $\kappa$ (see Observation 5.16 and the proof of Corollary 5.17 in \cite{Hartarsky20}). As elements of $\cK$ are at distance at least $C_2$ from one another, this means that $[\eta \cap S_1'] = \bigcup_{\kappa \in \cK}[\kappa]$, and that $\bar Z = \bigcup_{\kappa \in \cK'} [\kappa \cup \bbH_{u_1}]$ is closed, where $\cK'=\{\kappa \in \cK : d(\kappa,\bbH_{u_1}) \leq C_2\}$. Moreover, the diameter of a crumb is at most $\alpha(u_1)C_2$, so all elements of $\bar Z$ are at distance at most $(\alpha(u_1)+2)C_2$ of $\bbH_{u_1}$. Since $R$ is $u_1$-crossed, this implies that there exists $z \in \bar Z$ and $w \in [(\eta \cap R)\setminus \bar Z]$ such that $d(z,w) \leq C_2'$. Then $d(w,\bbH_{u_1})\le(\alpha(u_1)+2)C_2+C_2'$. 

Let $X$ be the connected component in $\Gamma'$ of $[(\eta \cap R)\setminus\bar Z]$ containing $w$. If $X\subset[\eta \cap S_1']$, then $X\subset\bigcup_{\kappa \in \cK}[\kappa]$, so $X\subset[\kappa]$ for some $\kappa\in\cK$, since they are at distance more than $C_2'$ from one another. Moreover, by Observation \ref{obs:connected:component}, $X = [((\eta \cap R)\setminus\bar Z) \cap X]$, so $X\not\subset\bar Z$, so $\kappa \not\in \cK'$. However, this contradicts the fact that $d(w,z)\le C_2'$, as $d(\bar Z,[\kappa])\ge C_2-2C_1$.

Therefore, $X\not\subset [\eta\cap S_1']$, so $X$ intersects $R \setminus S_1'$. Let $a_1=\max\{i\ge 1, X\cap S_i\neq\varnothing\}$
and $D$ be the smallest $\cT$-droplet containing $X$. Clearly, $\diam(D)\ge \diam(X)\ge a_1C_6$, since $d(w,\bbH_{u_1})\le C_3$. Furthermore, since $X = [((\eta \cap R)\setminus\bar Z) \cap X]$, $D$ is spanned by $\eta \cap \bigcup_{i=1}^{a_1}S_i$. We then conclude by Lemma \ref{lem:AL:spanned} and the induction hypothesis for $R(-a,0;-a_1C_1C_6,b)$.
\end{proof}

We next require a more sophisticated version of \cite{Bollobas14}*{Lemma 8.23}.
\begin{lem}
\label{lem:crossing:bounds}
Fix $K$ in Definitions \ref{def:span} and \ref{def:crossing} by
\[K=\begin{cases}
1/(C_5q^\alpha)&\text{if $\cU$ is balanced}\\
q^{-\alpha-1/4}&\text{if $\cU$ is unbalanced}.\end{cases}\]
Assume that $0<\alpha(u_1)<\infty$. Let $R=R(a,b;c,d)$ with $d-b\le \frac{\log (1/q)}{C_6^3q^{\alpha(u_1)}}$ and $1/q^{C_1}\ge c-a\ge 1/q$. Then
\[\mu(C_R^{u_1})\le\begin{cases} \exp\left(-(c-a)\exp\left(-2C_6^2(d-b)q^{\alpha(u_1)}\right)/C_6^2\right)&\text{if $\cU$ is balanced}\\
\exp\left(-(c-a)q^{1/4}/C_6\right)&\text{if $\cU$ is unbalanced}.
\end{cases}\]
\end{lem}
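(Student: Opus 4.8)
The plan is to run the partition strategy of \cite{Bollobas14}*{Section 8.3} in a refined form: first use Lemma \ref{lem:partition} to convert a crossing into a combinatorial certificate, then estimate the certificate's probability using the spanning bounds of Appendix \ref{app:spanning} together with a crude count of localised clusters, and finally sum over certificates. The delicate point is the entropy bookkeeping when the strip height $d-b$ is large.

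I would first remove the auxiliary configuration $\eta'$ appearing in the definition of $C_R^{u_1}$. If $C_R^{u_1}$ holds for $\eta$, fix $\eta'\ge\eta$ witnessing it, so $\eta'$ has fewer infections than $\eta$; Lemma \ref{lem:partition}, applied with $\eta'$ in place of $\eta$, gives a $u_1$-partition $a_1,\dots,a_k$ of $R$ for $\eta'\cap R$, where $m=\lfloor(c-a)/(C_1C_6)\rfloor$ is the number of strips $S_i$. For a droplet block the spanned $\cT$-droplet $D$ satisfies $\diam(D)\ge a_jC_6$ and is spanned by infections of $\eta'_R$; as the latter has no spanned critical parallelogram, Lemma \ref{lem:AL:body} forces $\diam(D)<K/C_1$, hence $a_j<K/(C_1C_6)$. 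Each block event is increasing in the infection set --- a cluster block at $S_{t_j}$ only asks for $\alpha(u_1)$ infections of diameter at most $C_3$ inside $S_{t_j}$, a droplet block only for a spanned $\cT$-droplet --- so it is realised already by $\eta'$ and hence by $\eta$; since distinct blocks involve disjoint strips, these witnesses occur disjointly, whence by the BK inequality \cite{BK85}
\[
\mu\bigl(C_R^{u_1}\bigr)\ \le\ \sum \prod_{j=1}^{k}\mu(E_j),
\]
the sum running over compositions $m=a_1+\dots+a_k$ with $a_j<K/(C_1C_6)$ and over the type of each block, $E_j$ denoting the corresponding block event for $\eta$. One then bounds the factors: a cluster block gives $\mu(E_j)\le p:=c_0C_1C_6(d-b)q^{\alpha(u_1)}$ by a union bound over the position of the cluster (the constant $c_0$ absorbing the number of cluster shapes of diameter $\le C_3$); a droplet block of size $a_j$ gives, after summing over the positions and shapes of a $\cT$-droplet with diameter in $[a_jC_6,C_1C_6a_j)$ --- only polynomially many, as $|R|=q^{-O(C_1)}$ --- and using Corollary \ref{cor:bounds:app} in the balanced case and the estimates underlying Lemma \ref{lem:unbalanced:crit:proba} in the unbalanced one, a probability that is exponentially small in $\diam(D)$, hence in $a_j$ (and, once $\diam(D)$ reaches the scale $q^{-\alpha}$, super-exponentially small in $1/q$, a contribution already below the target); taking $C_6$ large with respect to the remaining constants makes this decay absorb the polynomial entropy of droplet positions.

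Summing over certificates then reduces to extracting the coefficient of $x^m$ in a geometric series in a per-block generating function $\Phi$, giving $\mu(C_R^{u_1})\le x^{-m}/(1-\Phi(x))$ for any $x$ with $\Phi(x)<1$; optimising over $x$ produces the desired $\exp(-(c-a)g)$, with $g$ governing how far $x$ may be pushed above $1$ while keeping $\Phi(x)<1$. The hard part is exactly this optimisation when $d-b$ is large, since then $p$ may exceed $1$, so that a single cluster block already ruins the crude sum; one must instead group the strips into superblocks of length of order $\exp(2C_6^2(d-b)q^{\alpha(u_1)})$ and argue that the infection cannot cross such a long superblock without leaving behind a connected trail of disjointly occurring clusters and sub-critical droplets whose joint probability is genuinely exponentially small --- this is the source of the refined exponent $\exp(-2C_6^2(d-b)q^{\alpha(u_1)})$, which has no counterpart in \cite{Bollobas14}*{Lemma 8.23}. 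For unbalanced $\cU$ the analysis is lighter, since $\alpha(u_1)\ge\alpha+1$ together with $d-b\ll q^{-\alpha-1}$ makes $p\ll1$ from the outset, and one only has to feed the sharper unbalanced spanning bounds into the droplet blocks to reach $\exp(-(c-a)q^{1/4}/C_6)$.
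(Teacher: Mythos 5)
Your skeleton matches the paper's proof: reduce to the partition certificate of Lemma \ref{lem:partition} applied to $\eta'$, transfer the block events to $\eta$ by monotonicity, cap the droplet blocks at sub-critical diameter via the absence of spanned critical parallelograms, bound droplet blocks by Corollary \ref{cor:bounds:app} (plus the extra ``huge but sub-critical'' regime in the unbalanced case, whose flat bound $e^{-2/(C_5q^{\alpha})}$ per block of length $a_j\le 1/(C_6q^{\alpha+1/4})$ is precisely the source of the rate $q^{1/4}$ per unit length -- note that a single such block is \emph{not} ``already below the target'' when $c-a\gg q^{-\alpha}$; you need the per-$a_j$ rate), and control entropy by counting compositions. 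You also correctly identify the crux: the cluster blocks when $(d-b)q^{\alpha(u_1)}$ is of order $\log(1/q)$, where the naive union bound $p=O((d-b)q^{\alpha(u_1)})$ exceeds $1$.

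However, your proposed resolution of that crux is a genuine gap. The partition certificate gives, for a run of consecutive cluster blocks, only the event ``each of these strips contains an $\alpha(u_1)$-cluster somewhere''; these clusters are scattered independently in their strips and there is no ``connected trail of disjointly occurring clusters and sub-critical droplets'' across a superblock to exploit -- that claim does not follow from Lemma \ref{lem:partition} and would require a different (and unproved) deterministic statement. The paper instead closes this step with the Harris inequality: the complement of ``$S_i$ contains a cluster'' contains the intersection over the $O(C_6^2(d-b))$ candidate sets $C$ of the increasing events $\{\eta_C\neq 0\}$, so each cluster block has probability at most $1-(1-q^{\alpha(u_1)})^{C_6^2(d-b)}\le \exp(-\exp(-2C_6^2(d-b)q^{\alpha(u_1)}))$. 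This per-strip factor, strictly below $1$ by exactly the right amount, is what produces the exponent $\exp(-2C_6^2(d-b)q^{\alpha(u_1)})$ in the balanced bound; once you have it, no superblock grouping is needed and the rest is the bookkeeping you describe. Without it (or an equivalent per-strip bound), your sum over certificates does not converge to anything below $1$ when many blocks are cluster blocks.
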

\begin{proof}
For notational convenience, we assume that $R=R(-a,0;0,d)$. If $C_R^{u_1}$ holds, there exists $\eta' \geq \eta$ such that $R$ is $u_1$-crossed for $\eta'$ and there is no spanned critical $\cT$-droplet for $\eta' \cap R$. By Lemma \ref{lem:partition}, there exists a $u_1$-partition for $\eta' \cap R$ and, by Lemma \ref{lem:AL:spanned}, all corresponding spanned $\cT$-droplets have diameter at most $K/C_1$. We notice that any empty site or spanned droplet for $\eta'$ is still an empty site or spanned droplet for $\eta$.

We first assume that $\cU$ is balanced. Given a partition $\cP$ we define its numbers and total sizes of big/small/cluster parts by
\begin{align*}
\cB&{}=\left\{j:1/\sqrt{q}<a_j\le 1/(C_5C_6q^\alpha)\right\}&b&{}=|\cB|&B&{}=\sum_{j\in \cB}a_j\\
\cS&{}=\{j:1<a_j\le 1/\sqrt{q}\}&s&{}=|\cS|&S&{}=\sum_{j\in \cS}a_j\\
\cC&{}=\{j:a_j=1\}&c&{}=|\cC|.
\end{align*}
We denote by $\cP(b,s,c,B,S)$ the set of partitions $\cP$ with the corresponding numbers and total sizes of parts.

Then, using Corollary \ref{cor:bounds:app}, we get that the probability of a given $\cP$ occurring is at most
\begin{align*}
\Pi(\cP)&{}=\prod_{j\in\cC}(1-(1-q^{\alpha(u_1)})^{C_6^2d})\prod_{j\in\cS}q^{a_j\sqrt{C_6}}\prod_{j\in\cB}e^{-C_3C_6 a_j}\\
&{}=(1-(1-q^{\alpha(u_1)})^{C_6^2d})^cq^{S\sqrt{C_6}}e^{-C_3C_6 B}
\end{align*}
by the union bound on all possible droplets and their positions, recalling that $d=q^{-O(1)}$. Indeed, the probability that there is no set of $\alpha(u_1)$ zeroes connected in $\Gamma'$ in a given $S_i$ is the probability that for any possible such set $C$, $\eta_C \neq 0$, which, by the Harris inequality, is bigger than the product of this probability for each set $C$.

Assuming for simplicity that $1/\sqrt{q}$ and $1/(C_5C_6q^\alpha)$ are integers, we can count $\cP(b,s,c,B,S)$ in the following way (the first binomial coefficient corresponds to the decomposition of $\cB$ into ordered parts, the second one to the decomposition of $\cS$, and the last two to the ordering of the parts of $\cB$, $\cS$, $\cC$):
\begin{align*}|\cP(b,s,c,B,S)|&{}\le \binom{B-b/\sqrt{q}-1}{b-1}\binom{S-s-1}{s-1}\binom{b+s+c}{b}\binom{s+c}{s}\\
&{}\le 2^{B+S}(b+s+c)^b(s+c)^s\le e^{B+S}q^{-C_1s},
\end{align*}
recalling that $C_6(B+S+c)<a\le 1/q^{C_1}$. Therefore, denoting by $m=\lfloor a/(C_1C_6)\rfloor =B+S+c$ the total number of strips, we have
\begin{multline*}
\sum_{B,S,b,s}\sum_{\cP\in\cP(b,s,m-B-S,B,S)}\Pi(\cP)\\\begin{aligned}[t]&{}\le m^4\max_{B,S}\left(1-\left(1-q^{\alpha(u_1)}\right)^{C_6^2d}\right)^{m-B-S}q^{S\sqrt{C_6}/2}e^{-C_3C_6 B/2}\\
&{}\le m^4\max_{0\le c\le m}e^{-c\exp\left(-2C_6^2dq^{\alpha(u_1)}\right)}e^{-C_2C_6(m-c)}\\
&{}\le\exp\left(-\frac{m}{2}\exp\left(-2C_6^2dq^{\alpha(u_1)}\right)\right),
\end{aligned}
\end{multline*}
which concludes the proof in the balanced case, recalling the hypotheses of the lemma.

We next consider $\cU$ to be unbalanced. Notice that, since $K=q^{-\alpha-1/4}$, there may be droplets with diameter larger than $1/(C_5q^\alpha)$. Therefore, we further set
\begin{align*}
\cH&{}=\left\{j: 1/(C_5C_6q^\alpha)<a_j\le 1/(C_6q^{\alpha+1/4})\right\}&h&{}=|\cH|&H&{}=\sum_{j\in \cH}a_j.
\end{align*}
Then Corollary \ref{cor:bounds:app} gives that the probability of a given $\cP$ occurring is at most
\[\Pi(\cP)\times \left(q^{O(1)}e^{-2/(C_5q^\alpha)}\right)^{h}\le\Pi(\cP)\times\exp\left(-Hq^{1/4} C_6/C_5\right).\]
We further easily check that
\begin{align*}\binom{H-h/(C_5C_6q^\alpha)-1}{h-1}&{}\le e^{H\sqrt{q}}&\binom{h+b+s+c}{h}&{}\le e^{H\sqrt{q}},
\end{align*}
so, as above the probability of any $\cP$ occurring is at most
\[m^6\exp\left(-m.\min\left(C_6q^{1/4}/(2C_5),\exp\left(-2C_6^2dq^{\alpha(u_1)}\right)\right)\right)\le\exp\left(-\frac{C_6mq^{1/4}}{3C_5}\right),\]
which concludes the proof.
\end{proof}

\subsection{Crossing in a direction with infinite difficulty}
\label{app:sec:crossing:infinite}

If $\cU$ has an infinite number of stable directions, we need to treat an infected boundary condition. This is essential, as we will work in exponentially large regions, for which the bounds from the previous section cannot be applied.

We place ourselves in the setting of Section \ref{subsec:infinite}.  We will write (cut/whole) droplet for (cut/whole) $\cS_{u_1}$-droplet in the sense of Definition \ref{def:span:boundary}, with $u_1^-$ and $u_1^+$ sufficiently close to $u_1$. These should not be confused with $\cT$-droplets, which are called parallelograms to avoid any confusion.
 
We will seek to apply Corollary \ref{cor:bounds:app:cut} rather than \ref{cor:bounds:app} to prove the following.
\begin{lem}
\label{lem:crossing:bounds:boundary}
Fix 
\[K=\begin{cases}q^{-\alpha}&\text{if $\cU$ is balanced}\\
q^{-\alpha-{1/4}}&\text{if $\cU$ is unbalanced}
\end{cases}\]
for Definitions \ref{def:span} and \ref{def:crossing}. Let $R=R(a,b;c,d)$ with $C_1 \leq d-b\le \exp(q^{-3\alpha})$ and $c-a\ge q^{-4\alpha}$. Then
\[\mu(C_R^{u_1})\le\exp\left(-q^{-3\alpha}\right).\]
\end{lem}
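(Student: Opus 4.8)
The plan is to mimic the partition strategy of Appendix~\ref{app:sec:crossing:finite}, but with a cut-droplet partition adapted to the infected boundary $\partial=\bbH_{u_1}$, so that Corollary~\ref{cor:bounds:app:cut} replaces Corollary~\ref{cor:bounds:app}. First I would set $R=R(-a,0;0,d)$ with $a=c-a\ge q^{-4\alpha}$ and $d=d-b\le\exp(q^{-3\alpha})$ for notational convenience. If $C_R^{u_1}$ holds, there is $\eta'\ge\eta$ such that $R$ is $u_1$-crossed for $\eta'$ with no spanned critical $\cT$-parallelogram. I would first reduce the $u_1$-crossing (which involves the closure with the half-plane $\bbH_{u_1}$) to the language of cut $\cS_{u_1}$-droplets: the strongly connected set in $[\bbH_{u_1}\cup(R\cap\eta')]$ reaching from $\bbH_{u_1}$ to $\bar\bbH_{u_3}(-a)$ is, after passing to $[\cdot]_\partial$ (recall $[Z]_\partial=[Z\cup\partial]\setminus\partial$) and using that $u_1^\pm$ are close enough to $u_1$ for $\bbH_{u_1}$ to be a legal boundary in the sense of Remark~\ref{rem:directions}, exactly a sequence of merging cut droplets that grows to size $\Omega(a)$ in the $u_3$ direction. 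The key point, as in Lemma~\ref{lem:partition}, is that crossing the whole width $a$ forces a ``partition'' $a_1,\dots,a_k$ of the strips $S_i$ of width $C_1C_6$ into blocks, where each block is either a single strip containing an $\alpha(u_1)=\infty$-type obstruction or a block spanned by a (cut or whole) $\cS_{u_1}$-droplet of comparable size; since here $\alpha(u_1)=\infty$ there is no cluster alternative, so \emph{every} block must be of the spanned-droplet type (this is where having an infinite number of stable directions and the cut-droplet formalism of Appendix~\ref{app:sec:boundary} is essential — a purely crumb-based argument would not close).

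Having such a partition, I would bound $\mu(C_R^{u_1})$ by a union bound over partitions. The relevant size thresholds: by Lemma~\ref{lem:AL:spanned}/Lemma~\ref{lem:AL} applied inside $R$ with the no-critical-parallelogram constraint, all the spanned (cut or whole) $\cS_{u_1}$-droplets appearing have size at most $K/C_1$, where $K=q^{-\alpha}$ (balanced) or $q^{-\alpha-1/4}$ (unbalanced). For droplets of size at least $1/(C_5q^\alpha)$ Corollary~\ref{cor:bounds:app:cut} gives probability $d^{O(1)}e^{-2/(C_5q^\alpha)}$, and for smaller ones Corollary~\ref{cor:bounds:app} (applied to the whole droplets) gives $e^{-2C_4 d'}$ where $d'$ is the droplet size, i.e. exponentially small in the size; cut droplets of size between $C_3$ and $1/(C_5q^\alpha)$ are handled by Lemma~\ref{lem:bounds:covered:boundary} together with Observation~\ref{obs:cover:to:span}-type reasoning in the boundary setting, giving the same $e^{-\Omega(d')}$ bound. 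In all cases the per-unit-width cost is at least $e^{-c}$ for a constant $c$ in the balanced case (since then even the largest allowed droplets of size $\le q^{-\alpha}/C_1$ have probability $\le e^{-2/(C_5q^\alpha)}\le e^{-c\cdot(\text{size})}$ with room to spare) and at least $e^{-c' q^{1/4}\cdot(\text{size})}$ in the unbalanced case because of the slightly larger $K$. Thus a partition covering total width $m=\lfloor a/(C_1C_6)\rfloor$ costs at most $e^{-\Omega(m)}$ (balanced) or $e^{-\Omega(m q^{1/4})}$ (unbalanced).

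Finally I would control the entropy of partitions exactly as in the proof of Lemma~\ref{lem:crossing:bounds}: the number of partitions with prescribed block-count and total sizes is at most $2^{m}m^{O(1)}$ by the stars-and-bars bounds $\binom{n-1}{k-1}\le 2^{n}$, times the $d^{O(1)}$ entropy of droplet positions within each strip (which is polynomial in $1/q$ since $d\le\exp(q^{-3\alpha})$ and $a\le$ ... wait, here $a$ can be as large as $\exp(q^{-3\alpha})$ too, so one must be a little careful: the positional entropy of a droplet within a block of $a_j$ strips is $O((a_j C_6 d)^2)$, which is absorbed by the exponential decay $e^{-\Omega(a_j)}$ or $e^{-\Omega(a_jq^{1/4})}$ since each $a_j\le K/(C_1C_6)=q^{-O(1)}$, so the number of strips $a_j$ in a single block is polynomial in $1/q$ and the positional factor is $q^{-O(1)}$ per block, hence $q^{-O(m)}$ overall, which the $e^{-\Omega(m)}$ or $e^{-\Omega(mq^{1/4})}$ beats). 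Combining, $\mu(C_R^{u_1})\le \exp(-\Omega(m))$ in the balanced case and $\exp(-\Omega(m q^{1/4}))$ in the unbalanced case, and since $m=\Theta(a)\ge \Omega(q^{-4\alpha})$ we get in both cases $\mu(C_R^{u_1})\le \exp(-q^{-3\alpha})$ for $q$ small (using $q^{-4\alpha}\cdot q^{1/4}=q^{-4\alpha+1/4}\gg q^{-3\alpha}$). The main obstacle I expect is the first step: correctly setting up the cut-droplet partition in the presence of the boundary $\partial$ and checking that the no-critical-parallelogram hypothesis of $C_R^{u_1}$, which is phrased in terms of $\cT$-parallelograms, genuinely bounds the sizes of the $\cS_{u_1}$-droplets occurring — this requires translating between the two droplet shapes via Observation~\ref{obs:cover:to:span} and the Aizenman–Lebowitz lemmas, and verifying that $u_1^\pm$ can be chosen close enough to $u_1$ that $\bbH_{u_1}$ and its unions with cut droplets remain stable (Remark~\ref{rem:directions}). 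The probabilistic and entropy bookkeeping afterwards is essentially identical to Lemma~\ref{lem:crossing:bounds}.
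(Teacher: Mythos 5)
There is a genuine gap here, and it is precisely the reason the paper abandons the partition method for this lemma. Your entropy accounting for droplet positions is wrong: each strip $S_i$ has width $C_1C_6$ in the $u_3$ direction but full height $d-b$, which in this lemma may be as large as $\exp(q^{-3\alpha})$. Hence the positional entropy of the spanned droplet witnessing a block is $\exp(O(q^{-3\alpha}))$, not $q^{-O(1)}$ as you assert, while the probabilistic gain per block is at best $\exp(-O(q^{-\alpha-1/4}))$, since block widths are capped at roughly $K$. As $3\alpha>\alpha+1/4$, the union bound over positions loses in every single block and the argument collapses; you flag exactly this worry mid-proof but then resolve it incorrectly by conflating ``polynomially many strips per block'' with ``polynomial positional entropy per block''. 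A second problem: the hypothesis of $C_R^{u_1}$ forbids spanned critical \emph{parallelograms}, which (via Observation~\ref{obs:cover:to:span} and Lemma~\ref{lem:AL:body}) does bound the size of spanned \emph{whole} $\cS_{u_1}$-droplets, but it does not bound cut droplets --- the crossing itself yields a spanned cut droplet of diameter at least $c-a\ge q^{-4\alpha}\gg K$, so your claim that all spanned cut or whole $\cS_{u_1}$-droplets have size at most $K/C_1$ is false for the cut ones. Moreover Corollary~\ref{cor:bounds:app:cut} only covers droplets of size at least $1/(C_5q^{\alpha})$, so you have no spanning bound for the small cut droplets your partition would require.

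The paper's proof (Appendix~\ref{app:sec:crossing:infinite}) explicitly replaces partitions by a Holroyd-type \emph{hierarchy} of cut droplets to fix the entropy issue: Lemma~\ref{lem:AL} extracts a spanned cut droplet $D$ of size $\Theta(q^{-4\alpha})$, the no-critical-parallelogram hypothesis forces all droplets in the hierarchy to be cut, Lemma~\ref{lem:hierarchies:existence} produces an occurring hierarchy whose labels are all nested inside $D$, seeds of size $\Theta(q^{-\alpha-1/4})$ are bounded by Corollary~\ref{cor:bounds:app:cut}, and the unary growth steps are bounded by Lemma~\ref{lem:growth}, itself a small-scale crossing argument producing a spanned cut $\cS_{u_1^-}$-droplet. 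Because each vertex's droplet is contained in its parent, the exponential positional entropy $(d-b)q^{-O(1)}$ is paid only once, for the root, and is overwhelmed by the product of $\Omega(q^{-3\alpha+1/4})$ factors of $e^{-q^{-\alpha}/C_5}$ coming from the BK inequality. Your partition could only be rescued by chaining consecutive blocks, using the connectivity of the crossing to pin each block's droplet near the previous one, which amounts to re-deriving the hierarchy structure.
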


Our strategy is as follows. Instead of considering $u_1$-partitions, we directly retrace the spanning algorithm to obtain a hierarchy of droplets reaching a cut droplet of size roughly $c-a$. We reassure the reader familiar with \cite{Holroyd03} that our hierarchies will be very simple and imprecise, as the a priori hypothesis that there are no critical parallelograms removes the metastability (it is no longer easy for large droplets to grow) together with the need of fine tuning. Namely, their seeds will be of size roughly $K$ which will also be the increment of the size of unary vertices (the reader unfamiliar with hierarchies is invited to consult the definitions below). The lack of critical parallelograms entails that all droplets in the hierarchy are cut (so they are simply very flat triangles). The bound on the probability of seeds being spanned is provided by Corollary \ref{cor:bounds:app:cut} and entropy is easily subdominant, so we can focus on the probability that the infections around a cut droplet are such that if that droplet is infected, the infection can expand to fill a slightly larger cut droplet. However, this would imply that there is a (smaller scale) $u_1^\pm$-crossing from the side of the smaller one to side of the larger one (see Figure \ref{fig:small:crossing}). The probability of this event is again bounded directly by Corollary \ref{cor:bounds:app:cut}, taking into account Remark \ref{rem:directions}.

Let us begin by introducing our hierarchies following Holroyd \cite{Holroyd03}. Let $T=q^{-\alpha-1/4}$. Fix a droplet $D$. A \emph{hierarchy} $\cH$ for $D$ is a rooted unary-binary tree with each vertex $x$ labelled by a droplet $D_x\subset D$, so that the label of the root is $D$. We denote by $N(x)$ the set of children of $x\in V(\cH)$, so that $|N(x)|\in\{0,1,2\}$ for all $x$. The leaves are called \emph{seeds} and the binary vertices are called \emph{splitters}. We alert the reader that in reality there will only be cut droplets in our hierarchies, but for technical reasons we define them in general. A hierarchy is defined to satisfy the following conditions.
\begin{itemize}
    \item If $y\in N(x)$, then $D_y\subset D_x$.
    \item If $D_x$ is a whole droplet, then $x$ is a seed and $T/3\le|D_x|$.
    \item If $D_x$ is a cut droplet, then $T/3\le |D_x|\le T$ if and only if $x$ is a seed.
    \item If $N(x)=\{y\}$ and $|N(y)|=1$, then $T<|D_x|-|D_y|\le 2T$.
    \item If $N(x)=\{y\}$, then $|D_x|-|D_y|\le 2T$.
    \item If $N(x)=\{y,z\}$, then $|D_x|-|D_y|> T$ and $\<D_y\cup D_z\>=\{D_x\}$.
\end{itemize}
We set 
\begin{align*}
S(\cH)={}&\{x\in V(\cH):\,|N(x)|=0\}\\
N(\cH)={}&\{(x,y)\in (V(\cH))^2:\,N(x)=\{y\},|N(y)|=1\}\end{align*}
and remark that $|S(\cH)|-1$ is the number of splitters. We say that a hierarchy $\cH$ \emph{occurs} if the following events occur disjointly (are witnessed by disjoint sets of infections, see \cite{BK85}).
\begin{itemize}
    \item For every seed $x\in S(\cH)$ we have that $D_x$ is spanned.
    \item For every $x\in V(\cH)$ such that $N(x)=\{y\}$ we have $D_x\in\<D_y\cup(\eta\cap D_x)\>$.
\end{itemize}
\begin{lem}
\label{lem:hierarchies:existence}
If $D$ is a spanned droplet with $|D|\ge T/3$, then some hierarchy for $D$ occurs.
\end{lem}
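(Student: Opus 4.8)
The plan is to prove Lemma~\ref{lem:hierarchies:existence} by induction on $|D|$, mimicking the classical construction of hierarchies of Holroyd~\cite{Holroyd03} but in the degenerate regime where there are no metastable scales between seeds. The base case is when $T/3 \le |D| \le T$ (for cut droplets) or when $D$ is a whole droplet with $|D| \ge T/3$: here the hierarchy consisting of a single seed-vertex labelled $D$ works, since $D$ is spanned by hypothesis. For the induction step assume $|D| > T$ and $D$ is cut (the whole-droplet case only arises at seeds). Since $D$ is spanned by $Z = \eta$, by the characterisation of spanning (Observation, Lemma~6.17 of~\cite{Bollobas14}) we have $D \in \<D \cap \eta\>$, so there is a connected component $\kappa$ of $[(D\cap\eta)]_\partial$ in $\Gamma'$ whose smallest enclosing $\cS_{u_1}$-droplet is $D$. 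In particular $\kappa$ has at least two infections (else $|D| \le \alpha C_2 < T$), so Lemma~\ref{lem:6.19} yields a nontrivial partition $Z_\kappa = Z_1 \sqcup Z_2$ with $[Z_1]_\partial$, $[Z_2]_\partial$ and $[Z_1]_\partial \cup [Z_2]_\partial$ all connected in $\Gamma'$.

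The key step is the standard ``pod''/descent argument: starting from $D$ one repeatedly applies Lemma~\ref{lem:6.19} to split the current set of infections, always following the branch whose enclosing droplet is larger, and stops the descent along a branch as soon as the size of the enclosing droplet first drops below $|D| - T$. Concretely, one obtains a sequence $D = D^{(0)} \supset D^{(1)} \supset \dots$ of spanned droplets, each the enclosing droplet of one side of a split of the previous one, with $|D^{(i)}| - |D^{(i+1)}| \le |D^{(i+1)}_{\text{other}}| + O(C_2')$ by Lemma~\ref{lem:sizes} — but more usefully, by the Aizenman--Lebowitz Lemma~\ref{lem:AL} applied to $D^{(i)}$ one can always find a spanned sub-droplet whose size lies in $[k, 3k]$ for any prescribed $C_3 \le k \le |D^{(i)}|$, which controls the decrements. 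One then distinguishes two cases at each step. If at some point the split of $D^{(i)}$ produces two droplets $D_y, D_z$ with $\<D_y \cup D_z\> = \{D^{(i)}\}$ and $|D^{(i)}| - |D_y| > T$ and $|D^{(i)}| - |D_z| > T$ (i.e.\ both sides are substantially smaller), we declare $D^{(i)}$ a splitter, recurse on $D_y$ and $D_z$ by the induction hypothesis (both have size $\ge T/3$: if not, shrink them via Lemma~\ref{lem:AL} to a seed of size in $[T/3, T]$, which is still spanned since spanning is hereditary along the component), and splice the two resulting hierarchies as the two subtrees. Otherwise one side, say $D_y$, has $|D^{(i)}| - |D_y| \le T$; we attach $y$ as the unique child of $x$ (the vertex for $D^{(i)}$) — this is a unary vertex — and continue the descent from $D_y$. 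Along a maximal run of unary vertices one must check the constraint $T < |D_x| - |D_y| \le 2T$ for interior unary edges and $|D_x| - |D_y| \le 2T$ for the last one: this is exactly arranged by, instead of recording \emph{every} split, recording a droplet only once the accumulated size decrease since the last recorded vertex exceeds $T$ — using Lemma~\ref{lem:AL} to interpolate a spanned droplet of the right size when a single split would overshoot $2T$. Each recorded unary vertex $x$ with unary child $y$ then satisfies $D_x \in \<D_y \cup (\eta \cap D_x)\>$ because the infections realising the intermediate splits between $D_y$ and $D_x$ all lie inside $D_x$, and disjointness of the witnessing infection sets across the whole tree follows from the fact that the components $\kappa$ in Lemma~\ref{lem:6.19} are genuinely partitioned.

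The construction terminates because $|D^{(i)}|$ strictly decreases and the recursion is on strictly smaller droplets; the resulting labelled tree satisfies all six defining conditions of a hierarchy, and it occurs because every seed is spanned and every unary vertex satisfies the span-extension condition, all via disjoint infections. The main obstacle I anticipate is the bookkeeping needed to ensure the unary-edge size constraints $T < |D_x| - |D_y| \le 2T$ are met exactly: a raw descent can jump by an arbitrary amount at a single split, so one genuinely needs the Aizenman--Lebowitz Lemma~\ref{lem:AL} to insert an intermediate spanned droplet whenever the gap would exceed $2T$, and one must verify that this inserted droplet can itself be taken spanned by infections disjoint from those used elsewhere (which follows since it is enclosed in the relevant component and the component's infections are only partitioned, never reused). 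This is precisely the routine-but-delicate part of the original hierarchy lemma in~\cite{Holroyd03}, here substantially simplified because all seeds and unary increments live at the single scale $T$ rather than a geometric ladder of scales.
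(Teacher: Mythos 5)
Your overall architecture matches the paper's: induction on the droplet, repeated application of Lemma~\ref{lem:6.19} following the larger branch, and a seed/unary/splitter trichotomy, with Lemma~\ref{lem:sizes} controlling sizes. However, the step you single out as the essential delicate point --- using the Aizenman--Lebowitz Lemma~\ref{lem:AL} to ``interpolate a spanned droplet of the right size when a single split would overshoot $2T$'' --- is both unnecessary and, as described, not justified. An AL-extracted sub-droplet $D'$ of prescribed size inside $D_x$ is just \emph{some} spanned droplet; nothing guarantees $D_x\in\<D'\cup(\eta\cap D_x)\>$, nor that a witness for $D'$ being spanned can be taken disjoint from the witnesses used elsewhere in the tree. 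The unary condition requires the child to be the enclosing droplet of one block of a genuine partition of the component's infections, which is exactly what the splitting sequence produces and what an arbitrary AL droplet is not.

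The paper's resolution, which is also latent in your own per-split dichotomy, is that no interpolation is ever needed. Let $m$ be the first index at which the accumulated decrease $|D_0|-|D_m|$ exceeds $T$ (or $D_m$ becomes whole or has size at most $T$). If the decrease at step $m$ lands in $(T,2T]$, attach $D_m$ as a unary child. If it overshoots $2T$, then since $|D_0|-|D_{m-1}|\le T$ the single step from $D_{m-1}$ to $D_m$ decreased the size by more than $T$, so by Lemma~\ref{lem:sizes} the discarded piece $D_m'$ has size at least $T-O(C_2')$: this split is declared a splitter at $D_{m-1}$, which is attached to the root with gap at most $T\le 2T$ --- permissible because the lower bound $T<|D_x|-|D_y|$ is only imposed when the child is itself unary, not when it is a splitter or a seed. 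So the overshoot case \emph{is} the splitter case; there is no unary edge with a gap exceeding $2T$ to repair. Relatedly, your parenthetical about shrinking a too-small child to size in $[T/3,T]$ via Lemma~\ref{lem:AL} is backwards (AL only extracts smaller droplets) and moot: in the splitter case both children automatically have size at least $T-O(C_2')$ by Lemma~\ref{lem:sizes}. With the AL detour deleted and replaced by this observation, your argument coincides with the paper's.
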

\begin{proof}
The proof is very similar e.g.\ to \cite{Bollobas14}*{Lemma 8.7}. Assuming that $D_0$ is a spanned droplet with $|D_0|\ge T/3$, we construct an occurring hierarchy by induction on $D_0$ with respect to inclusion. If $|D_0|\le T$ or $D_0$ is a whole droplet, the hierarchy with only vertex labelled by $D_0$ is as desired.

Assume that $D_0$ is a cut droplet and $|D_0|>T$. Let $Z$ be a connected component for $\Gamma'$ of $[D_0 \cap \eta]_\partial$ such that the smallest droplet containing $Z$ is $D_0$, and let $Z_0=Z \cap \eta$. We then have $[Z_0]_\partial=Z$. By Lemma \ref{lem:6.19} there exist sequences $Z_1,\dots,Z_m$ and $Z_1',\dots,Z_m'$ of subsets of $Z_0$ and $D_1,\dots,D_m$ and $D_1',\dots,D_m'$ such that the following conditions hold for all $0<i\le m$.
\begin{itemize}
    \item $Z_{i-1}=Z_i\sqcup Z_i'$, $[Z_i]_\partial$, $[Z_i']_\partial$ and $[Z_i]_\partial\cup[Z_i']_\partial$ are connected in $\Gamma'$.
    \item $D_i=D([Z_i]_\partial)$ and $D_i'=D([Z_i']_\partial)$.
    \item $\<D_i\cup D_i'\>=\{D_{i-1}\}$ and $|D_i|\ge |D_i'|$
    \item $m\ge 1$ is the minimal index such that one of the following holds:
    \begin{enumerate}
        \item\label{cond:ii} $|D_0|-|D_m|> T$;
        \item\label{cond:i} $D_m$ is a whole droplet;
        \item\label{cond:iii} $|D_m|\le T$.
    \end{enumerate}
\end{itemize}

If \ref{cond:ii} does not hold, then we attach a seed labelled by $D_m$ to the root and we are done, as $|D_m|\ge |D_{m-1}|/3> T/3$ by Lemma \ref{lem:sizes} and minimality of $m$. Indeed, $D_m$ being spanned is witnessed by $Z_m$, while $D_0\in\<D_m\cup(\eta\cap D_0)\>$ is witnessed by $Z_0\setminus Z_m$.

Assume that \ref{cond:ii} holds. Then we consider two cases. If $T<|D_0|-|D_m|\le 2T$, we attach a hierarchy for $D_m$ (occurring for $Z_m$) to the root $D_0$ and we are done using Lemma \ref{lem:sizes} to get that $|D_m|> T/3$ as above. Otherwise we attach a splitter labelled by $D_{m-1}$ to the root $D_0$ (if $m=1$, then $D_0$ is the splitter) and hierarchies for $D_m$ and $D_m'$ to that splitter. Then we are done, recalling Lemma \ref{lem:sizes}, to get that $|D_m|\ge |D_m'|\ge |D_{m-1}|-|D_m|-O(C_2')\ge T-O(C_2')$.
\end{proof}

In order to bound the probability that a hierarchy occurs, we will need the following result.
\begin{lem}
\label{lem:growth}
Let $D_1\subset D_2$ be two cut droplets for $\partial=\bbH_{u_1}$ such that $T<|D_2|-|D_1|\le 2T$ and $|D_2|\le q^{-4\alpha}$. Then
\[\mu(D_2\in\<D_1\cup(\eta\cap D_2)\>)\le e^{-q^{-\alpha}/C_5}.\]
\end{lem}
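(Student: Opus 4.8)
\emph{Strategy and geometry.} The plan is to show that the event $E:=\{D_2\in\langle D_1\cup(\eta\cap D_2)\rangle\}$ deterministically forces a spanned cut droplet of size of order $T$ to occur inside $D_2$, and then to conclude by a union bound and Corollary~\ref{cor:bounds:app:cut}. Write $D_i=\big(\bar\bbH_{u_1^-}(x_i)\cap\bar\bbH_{u_1^+}(y_i)\big)\setminus\partial$ for $i\in\{1,2\}$; since $D_1\subset D_2$ we have $x_1\le x_2$ and $y_1\le y_2$. Each $D_i$ is a flat triangle whose two corners on the boundary line of $\partial$ depend affinely on $(x_i,y_i)$, so $\diam(D_2)-\diam(D_1)=O\big((x_2-x_1)+(y_2-y_1)\big)$; as $|D_i|=\diam(D_i)/C_1$, the hypothesis $|D_2|-|D_1|>T$ forces $\max(x_2-x_1,\,y_2-y_1)\ge c_0T$ for some constant $c_0>0$. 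I treat the case $x_2-x_1\ge c_0T$; the case $y_2-y_1\ge c_0T$ is symmetric, with $u_1^+$ in place of $u_1^-$.

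\emph{Reduction to a spanned droplet near a stable boundary.} Set $\partial':=\partial\cup D_1$. Since $D_1$ is a cut $\cS_{u_1}$-droplet, $\partial'$ is stable, whence $[\partial'\cup(\eta\cap D_2)]=[D_1\cup(\eta\cap D_2)\cup\partial]$ by idempotency of the closure, and $[\partial']_\partial=D_1$ — that is, with no infections present, nothing of $D_2\setminus D_1$ is reached. By the choice of the auxiliary directions and Remark~\ref{rem:directions}, $\partial'$ is an admissible infected boundary for the boundary-spanning formalism of Appendix~\ref{app:sec:boundary}. On $E$, the set $C:=[D_1\cup(\eta\cap D_2)]_\partial$ is $\Gamma'$-connected, lies in $D_2$, contains $[D_1]_\partial=D_1$, and (lest the smallest cut droplet containing $C$ have $u_1^-$-parameter $<x_2$) reaches $\{\langle u_1^-,\cdot\rangle>x_2-C_1\}$. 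Since $C\cap(D_2\setminus D_1)\subset[(\eta\cap D_2)]_{\partial'}$ and $C$ is $\Gamma'$-connected, there is a $\Gamma'$-component $\kappa$ of $[(\eta\cap D_2)]_{\partial'}$ lying within distance $C_2'$ of $D_1$ (hence reaching $\{\langle u_1^-,\cdot\rangle\le x_1+O(1)\}$) and containing a point with $\langle u_1^-,\cdot\rangle>x_2-C_1$. Thus the cut $\cS$-droplet $D(\kappa)\in\langle\eta\cap D_2\rangle_{\partial'}$ has $u_1^-$-extent at least $x_2-x_1-O(1)\ge c_0T/2$, so $|D(\kappa)|\ge c_1T$ for some constant $c_1>0$, and $D(\kappa)$ is spanned by $\eta$ relative to $\partial'$.

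\emph{Conclusion.} As $T=q^{-\alpha-1/4}$, for $q$ small we have $1/(C_5q^\alpha)\le c_1T\le|D(\kappa)|$, so the boundary Aizenman--Lebowitz lemma (Lemma~\ref{lem:AL}) yields a cut $\cS$-droplet $D'\subset D(\kappa)\subset D_2$, spanned by $\eta$ relative to $\partial'$, with $1/(C_5q^\alpha)\le|D'|\le 3/(C_5q^\alpha)\le q^{-4\alpha}$. There are only $q^{-O(1)}$ cut $\cS$-droplets contained in $D_2$ (each is determined by two parameters taking $O(\diam(D_2))=q^{-O(1)}$ relevant values, using $|D_2|\le q^{-4\alpha}$). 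Summing Corollary~\ref{cor:bounds:app:cut} (valid for the boundary $\partial'$ by Remark~\ref{rem:directions}) over these candidates gives
\[
\mu(E)\le q^{-O(1)}\big(3/(C_5q^\alpha)\big)^{O(1)}e^{-2/(C_5q^\alpha)}\le q^{-O(1)}e^{-2/(C_5q^\alpha)}\le e^{-1/(C_5q^\alpha)}=e^{-q^{-\alpha}/C_5},
\]
the last bound holding for $q$ small since $2/(C_5q^\alpha)-O(\log(1/q))\ge 1/(C_5q^\alpha)$. This is the claimed inequality.

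\emph{Main obstacle.} The delicate point is the reduction step: one must verify that $\partial'=\partial\cup D_1$ is a legitimate boundary for the formalism of Appendix~\ref{app:sec:boundary} — a Lipschitz graph in a suitable direction with slope strictly below $\tan$ of the half-aperture of the smaller-scale cut droplets, with $\partial'$ and $\partial'\cup D'$ stable for all such $D'$ — which constrains the order in which the directions $u_1^\pm$ (and those used to define the smaller cut droplets) must be chosen, precisely as permitted by \cite{Hartarsky20}*{Lemma~4.1} and Remark~\ref{rem:directions}. The other subtlety, also in that step, is to extract from $E$ a single $\Gamma'$-component of $\eta$-infectable sites bridging the two $u_1^-$-sides of $D_1$ and $D_2$; this is exactly what the identity $[\partial']_\partial=D_1$ (stability of $\partial'$) and the connectedness built into the span $\langle\,\cdot\,\rangle$ make possible.
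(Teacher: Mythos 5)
Your proposal is correct in substance and follows the same route as the paper: in the direction where the gap between $D_1$ and $D_2$ is $\Omega(T)$, the growth event forces a spanned cut droplet of size $\Omega(T)\gg 1/(C_5q^\alpha)$ bridging that gap, and one concludes by Corollary~\ref{cor:bounds:app:cut} plus a union bound over the polynomially many candidate droplets (using $|D_2|\le q^{-4\alpha}$). Two points where the paper's execution differs and is more careful. First, the ``delicate point'' you flag about the admissibility of $\partial'=\partial\cup D_1$ is real: the boundary of $\partial\cup D_1$ is a tent whose slopes equal $\tan$ of the half-aperture of $\cS_{u_1}$, so Remark~\ref{rem:directions} does not strictly apply with the same direction set. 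The paper sidesteps this by taking $\partial'=\bbH_{u_1}\cup\bar\bbH_{u_1^-}(y_1)$ (a union of two half-planes, hence a genuinely sub-critical Lipschitz graph) and by recentering the smaller-scale cut droplets on $\cS_{u_1^-}$ with directions $u_1^+$ and $u_1^{--}=2u_1^--u_1^+$; your version would need an analogous recentering to be rigorous. Second, your assertion that $C=[D_1\cup(\eta\cap D_2)]_\partial$ is $\Gamma'$-connected is not justified: the event $E$ only guarantees that \emph{some} $\Gamma'$-component of this closure has span $D_2$, and that component need not meet $D_1$. The paper handles this with a clean dichotomy (restricting to the strip $Y$ and splitting on whether $D_2\in\<\eta\cap Y\cap D_2\>$): if the spanning component avoids the vicinity of $D_1$ it is itself a large spanned droplet for $\eta$ alone, and otherwise one extracts the bridging component as you do. Both branches land on the same probability bound, so the gap is repairable, but as written your connectivity claim is false.
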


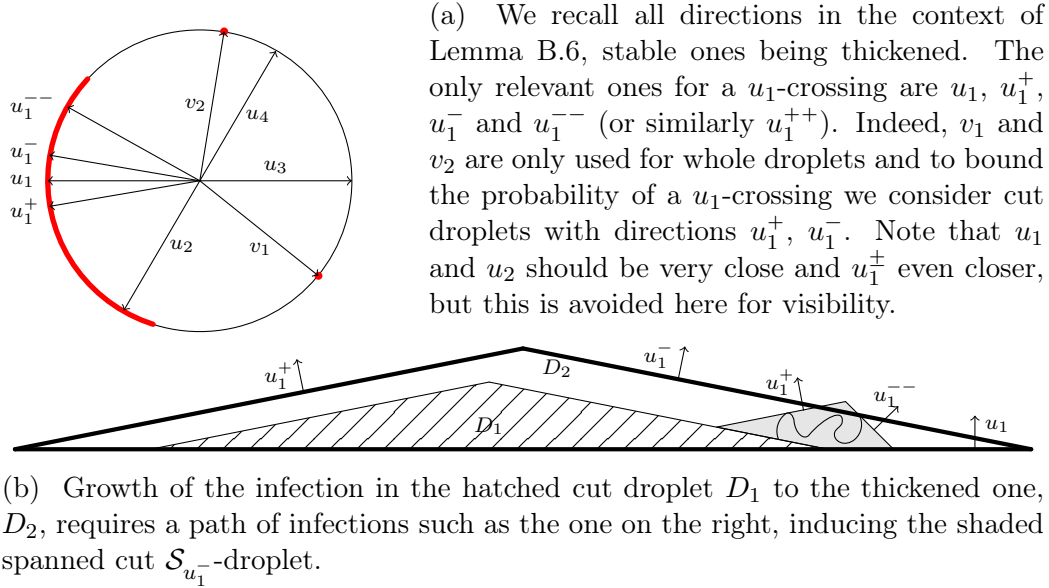
\begin{figure}
\parbox[b]{.4\linewidth}{\begin{tikzpicture}[line cap=round,line join=round,x=2.0cm,y=2.0cm]
\clip(-1.3,-1.05) rectangle (1.05,1.05);
\draw(0,0) circle (2cm);
\fill [color=red] (0.16,0.99) circle (1.5pt);
\fill [color=red] (0.78,-0.63) circle (1.5pt);
\draw [shift={(0,0)},line width=2pt,color=red]  plot[domain=2.4:4.4,variable=\t]({1*1*cos(\t r)+0*1*sin(\t r)},{0*1*cos(\t r)+1*1*sin(\t r)});
\draw [->] (0,0) -- (-0.99,0.17);
\draw [->] (0,0) -- (-0.87,0.49);
\draw [->] (0,0) -- (-0.99,-0.17);
\draw [->] (0,0) -- (0.78,-0.63);
\draw [->] (0,0) -- (0.16,0.99);
\draw [->] (0,0) -- (1,0);
\draw [->] (0,0) -- (0.5,0.86);
\draw [->] (0,0) -- (-0.5,-0.86);
\draw [->] (0,0) -- (-1,0);
\begin{scriptsize}
\draw[color=black,anchor=west] (-1.3,0.2) node {$u_1^-$};
\draw[color=black,anchor=west] (-1.3,0.5) node {$u_1^{--}$};
\draw[color=black,anchor=west] (-1.3,-0.2) node {$u_1^+$};
\draw[color=black,anchor=west] (-1.3,0) node {$u_1$};
\draw[color=black] (0.4,-0.45) node {$v_1$};
\draw[color=black,anchor=east] (0.1,0.5) node {$v_2$};
\draw[color=black,anchor=west] (0.25,0.43) node {$u_4$};
\draw[color=black,anchor=west] (-0.25,-0.43) node {$u_2$};
\draw[color=black,anchor=south] (0.5,0) node {$u_3$};
\end{scriptsize}
\end{tikzpicture}}
\parbox[b]{.59\linewidth}
{\subcaption{\label{fig:directions} We recall all directions in the context of Lemma \ref{lem:growth}, stable ones being thickened. The only relevant ones for a $u_1$-crossing are $u_1$, $u_1^+$, $u_1^-$ and $u_1^{--}$ (or similarly $u_1^{++}$). Indeed, $v_1$ and $v_2$ are only used for whole droplets and to bound the probability of a $u_1$-crossing we consider cut droplets with directions $u_1^+$, $u_1^-$. Note that $u_1$ and $u_2$ should be very close and $u_1^\pm$ even closer, but this is avoided here for visibility.
}}
\\
\begin{subfigure}{\textwidth}
\centering
\begin{tikzpicture}[line cap=round,line join=round,x=0.065\textwidth,y=0.065\textwidth]
\clip(-7.1,-0.1) rectangle (8.1,1.6);
\fill[fill=black,fill opacity=0.1]  (5.26,0.71) -- (3.36,0.33) -- (5,0) -- (5.97,0) -- cycle;
\fill[fill=black,pattern=my north east lines] (-5,0) -- (0,1) -- (5,0) -- cycle;
\draw (4.27,0.15) to [curve through ={(4.38,0.53)..([tension=0.55]4.5,0.42)..(4.72,0.21)..(5.21,0.4)..(5.1,0.25)..(5.4,0.15)}] (5.46,0.51);
\draw (5.26,0.71) -- (3.36,0.33);
\draw (3.36,0.33) -- (5,0);
\draw (5,0) -- (5.97,0);
\draw (5.97,0) -- (5.26,0.71);
\draw (-5,0) -- (0,1);
\draw (0,1) -- (5,0);
\draw (5,0) -- (-5,0);
\draw [line width=1.6pt] (0.5,1.5)-- (8,0);
\draw [line width=1.6pt] (8,0)-- (-7,0);
\draw [line width=1.6pt] (-7,0)-- (0.5,1.5);
\draw [->] (-2.74,0.85) -- (-2.84,1.34);
\draw [->] (2.79,1.04) -- (2.89,1.53);
\draw [->] (4.64,0.58) -- (4.55,1.07);
\draw [->] (7.17,0) -- (7.17,0.5);
\draw [->] (5.68,0.29) -- (6.04,0.64);
\begin{scriptsize}
\draw (-3.1,1.1) node {$u_1^+$};
\draw (2.5,1.4) node {$u_1^-$};
\draw (4.3,1) node {$u_1^+$};
\draw (6,0.82) node {$u_1^{--}$};
\draw (7.5,0.35) node {$u_1$};
\draw (0,0.35) node {$D_1$};
\draw (1,1.2) node {$D_2$};
\end{scriptsize}
\end{tikzpicture}
\caption{\label{fig:small:crossing} Growth of the infection in the hatched cut droplet $D_1$ to the thickened one, $D_2$, requires a path of infections such as the one on the right, inducing the shaded spanned
cut $\cS_{u_1^{-}}$-droplet.}
\end{subfigure}
\caption{\label{fig:growth} Illustration of the proof of Lemma \ref{lem:growth} bounding the probability that the infections around a cut droplet, $D_1$, allow an infection filling $D_1$ to grow and fill the slightly larger cut droplet, $D_2$.}
\end{figure}

\begin{proof}
The proof is illustrated in Figure \ref{fig:growth}. Let us denote $D_i=(\bar \bbH_{u_1^+}(x_i)\cap\bar \bbH_{u_1^-}(y_i))\setminus \bbH_{u_1}$ for $i\in\{1,2\}$. Define the strips $X=\bar\bbH_{u_1^+}(x_2)\setminus\bar \bbH_{u_1^+}(x_1)$ and $Y=\bar \bbH_{u_1^-}(y_2)\setminus\bar \bbH_{u_1^-}(y_1)$ and assume without loss of generality that $y_2-y_1=\Omega(T)$. Assume that $D_2\in\<D_1\cup(\eta\cap D_2)\>$ occurs. Setting $\eta'=\eta\cap Y\cap D_2$, this implies $D_2\in\<(D_2\setminus Y)\cup \eta'\>$. We consider two cases. 

Assume that $D_2\in \<\eta'\>$. By Corollary \ref{cor:bounds:app:cut} the probability of this event is at most $q^{-O(1)}e^{-2/(C_5q^\alpha)}$.

Assume that, on the contrary, $D_2\not\in\<\eta'\>$ and set $\partial'=\bbH_{u_1}\cup\bar\bbH_{u_1^-}(y_1)$. Then by Observation \ref{obs:connected:component} there exists a set $C\subset [D_2 \cap\eta]_{\partial'}$ connected in $\Gamma'$ such that $d(C,\bbH_{u_1^-}(y_1))\le C_2'$ and $C\not\subset \bbH_{u_1^-}(y_2)$. By definition this implies the existence of a cut $\cS_{u_1^-}$-droplet spanned by $D_2\cap\eta$ with boundary $\partial'$, where the two directions of cut droplets in $\cS_{u_1^-}$ are $u_1^+$ and $u_1^{--}=2u_1^--u_1^+$
(recall Remark \ref{rem:directions}). Hence, by the union bound over all possible such droplets and Corollary \ref{cor:bounds:app:cut} we obtain the desired result.
\end{proof}

We are now ready to assemble the proof of Lemma \ref{lem:crossing:bounds:boundary} as outlined at the beginning of the section.
\begin{proof}[Proof of Lemma \ref{lem:crossing:bounds:boundary}]
Assume that $C_R^{u_1}$ occurs and let $\eta'\ge\eta$ be as in Definition \ref{def:crossing}. Then there exists a spanned cut droplet for $\eta'\cap R$ with boundary $\bbH_{u_1}$ of diameter at least $c-a$. By Lemma \ref{lem:AL} this implies the existence of a droplet $D$ spanned for $\eta'\cap R$ with $q^{-4\alpha}/C_1\le |D|\le 3q^{-4\alpha}/C_1$. We set $Z=\eta' \cap R\cap D$.

Let us assume for a contradiction that there exists a whole droplet of size at least $q^{-\alpha-1/4}/3$ spanned for $Z$. It is easy to check that there exists a parallelogram of diameter at least $q^{-\alpha-1/4}/C_1$ spanned by $Z$ (consider a connected component satisfying Definition \ref{def:span:boundary}, take the smallest parallelogram containing it and use Observation \ref{obs:connected:component}). By Lemma \ref{lem:AL:body} this contradicts the absence of spanned critical parallelograms for $\eta'\cap R$.

Therefore, $D$ is a cut droplet and by Lemma \ref{lem:hierarchies:existence} there exists a hierarchy for $D$ occurring for the zero set $Z$, whose labels are all cut droplets. Let $\mathscr H(D)$ denote the set of such hierarchies. Now, by the BK inequality \cite{BK85}, for any hierarchy $\cH$ we have the following analogue of \cite{Holroyd03}*{Equation (37)}:
\[\mu(\cH\text{ occurs}) \leq \prod_{x\in S(\cH)}\mu(D_x\text{ is spanned})\prod_{(x,y)\in N(\cH)}\mu(D_x\in\<D_y\cup(\eta\cap D_x)\>).\]
Thanks to Corollary \ref{cor:bounds:app:cut} and Lemma \ref{lem:growth}, we deduce 
\[\mu(C_R^{u_1})\le\sum_D\sum_{\cH\in\mathscr H(D)}\exp\left(-q^{-\alpha}(|S(\cH)|+|N(\cH)|)/C_5\right).\]
The number of choices for $D$ is $O((d-b)q^{-4\alpha})$. We separate the sum over hierarchies according to their number of vertices $v(\cH)=\Theta(|S(\cH)|+|N(\cH)|)$. By Lemma \ref{lem:sizes} we have that $v(\cH)=\Omega (|D|/T)=\Omega(q^{-3\alpha+1/4}/C_1)$. Finally, the number of hierarchies for a given cut droplet $D$ with $v$ vertices is at most $q^{-O(v)}$. Combining these bounds we have
\[\mu(C_R^{u_1})\le (d-b)\sum_{v=\Omega(q^{-3\alpha+1/4}/C_1)}\exp\left(-q^{-\alpha}\Omega(v)/C_5\right),\]
which concludes the proof.
\end{proof}
\bibliographystyle{plain}
\bibliography{Bib}
\end{document}